\documentclass[reqno]{amsart}
\usepackage{amsmath, amssymb, amsthm, mathtools, amsfonts, bbm}
\usepackage[utf8]{inputenc}
\usepackage[all]{xy}
\usepackage{graphicx, epsfig, multirow}
\usepackage{enumerate, xcolor, color, enumitem}
\usepackage[colorlinks,allcolors=blue]{hyperref} 
\usepackage{cleveref}
\usepackage{appendix,pdfsync}
\usepackage{float}
\usepackage{setspace}
\usepackage{xpatch}
\usepackage{subcaption}
\usepackage{pst-node}
\usepackage{tikz-cd}
\usetikzlibrary{positioning}
\usetikzlibrary{shapes,arrows}
\usepackage{tikz}

\newtheorem{theorem}{Theorem}[section]
\newtheorem{proposition}[theorem]{Proposition}

\newtheorem{corollary}[theorem]{Corollary}
\newtheorem{lemma}[theorem]{Lemma}
\newtheorem{claim}[theorem]{Claim}
\newtheorem{question}[theorem]{Question}
\newtheorem{remark}[theorem]{Remark}

\theoremstyle{definition}
\newtheorem{definition}[theorem]{Definition}

\AddToHook{env/proposition/begin}{\crefalias{theorem}{proposition}}
\AddToHook{env/proposition/end}{\crefalias{proposition}{theorem}}
\AddToHook{env/conjecture/begin}{\crefalias{theorem}{conjecture}}
\AddToHook{env/conjecture/end}{\crefalias{conjecture}{theorem}}
\AddToHook{env/corollary/begin}{\crefalias{theorem}{corollary}}
\AddToHook{env/corollary/end}{\crefalias{corollary}{theorem}}
\AddToHook{env/lemma/begin}{\crefalias{theorem}{lemma}}
\AddToHook{env/lemma/end}{\crefalias{lemma}{theorem}}
\AddToHook{env/claim/begin}{\crefalias{theorem}{claim}}
\AddToHook{env/claim/end}{\crefalias{claim}{theorem}}
\AddToHook{env/question/begin}{\crefalias{theorem}{question}}
\AddToHook{env/question/end}{\crefalias{question}{theorem}}
\AddToHook{env/remark/begin}{\crefalias{theorem}{remark}}
\AddToHook{env/remark/end}{\crefalias{remark}{theorem}}
\AddToHook{env/definition/begin}{\crefalias{theorem}{definition}}
\AddToHook{env/definition/end}{\crefalias{definition}{theorem}}
\AddToHook{env/example/begin}{\crefalias{theorem}{example}}
\AddToHook{env/example/end}{\crefalias{example}{theorem}}

\def\os{<_{\Omega}} 
\def\s{\sigma}
\def\om{\omega}
\def\al{\alpha}
\def\bt{\beta}
\def\gm{\gamma}
\def\Sg{\Sigma}
\newcommand{\cn}[1]{C_n^p[#1^c]}
\newcommand{\Frs}[2]{{F_s({#1},{#2})}}
\newcommand{\Fuv}[2]{{F({#1},{#2})}}
\newcommand{\md}{M(\Delta_3(C_n^p))}
\newcommand{\mcl}[1]{\mathcal{#1}}
\newcommand{\mb}[1]{\mathbbm{#1}}
\newcommand{\X}[2]{$(\mcl{X}_{#1}^{#2})$}
\newcommand{\Cl}{\mathsf{Cl}}
\newcommand{\bK}{\mathbb{K}}

\begin{document}
	
	\title{Shellability of $3$-cut complexes of powers of cycle graphs}
	
	\author{Pratiksha Chauhan}
	\address{School of Mathematical and Statistical Sciences, IIT Mandi, India}
	\email{d22037@students.iitmandi.ac.in}
	
	\author{Samir Shukla}
	\address{School of Mathematical and Statistical Sciences, IIT Mandi, India}
	\email{samir@iitmandi.ac.in}
	
	\subjclass[2020]{57M15, 52B22, 55U05, 05C69, 05E45}
	\keywords{Cut complex, Shellability, Powers of cycle graphs, Homotopy}
	
	\begin{abstract}
		In connection with commutative algebra, Bayer et al.\ introduced \textit{cut complexes} in [Topology of cut complexes of graphs, SIAM J.\ Discrete Math., 38(2):1630-1675, 2024].
		For a positive integer $k$, the $k$-cut complex of a graph $G$, denoted as $\Delta_k(G)$, is the simplicial complex whose facets are the $(|V(G)|-k)$-subsets $\sigma$ of the vertex set $V(G)$ of $G$ such that the induced subgraph $G[V(G) \setminus \sigma]$ is disconnected. Let $C_n^p$ denote the $p$-th power graph of the cycle graph $C_n$ on $n$ vertices.
		In this article, we show that $\Delta_3(C_n^p)$ is shellable for $n \geq 6p-3$, and therefore these complexes are homotopy equivalent to a wedge of spheres of dimension $n-4$. We provide an explicit shelling order on the facets of $\Delta_3(C_n^p)$.  We also characterize and count the number of spanning facets in this shelling order, and determine the number of spheres appearing in the wedge in the homotopy type of $\Delta_3(C_n^p)$. 
	\end{abstract}
	
	\maketitle
	
	\section{Introduction}
	
	In this article, all graphs are assumed to be finite and simple (that is, without loops or multiple edges). The vertex set and edge set of a graph $G$ are denoted by $V(G)$ and $E(G)$, respectively.
	
	A graph complex is a simplicial complex associated to a graph, where the simplices are determined by using certain combinatorial properties of the graph. In recent years, investigating the topological properties of graph complexes has become an increasingly active area of research. 
	Numerous graph complexes have been introduced and extensively studied, including the neighborhood complex \cite{Bjorner2003,Lovasz1978,Samir2019}, independence complex \cite{Barmak2013,SSA2021, Meshulam_domination}, clique complex \cite{Aharoni_Berger_Meshulam, Clique_line_2022,Kahle_Flag}, and matching complex \cite{Bjorner1994,Wachs2001,Wachs2007}. 
	These complexes establish a connection between topology and graph theory. In particular, the combinatorial properties of the underlying graphs can be investigated through topological invariants of these complexes, such as homotopy type, Betti numbers, homology groups, topological connectivity, etc. For further details on graph complexes, we refer the reader to \cite{JonssonBook,Kozlov2008}. 
	
	Recently, Bayer et al.\ \cite{Bayer2024Cutcomplex}  introduced a new family of graph complexes, called cut complexes. For $k \ge 1$, the {\it $k$-cut complex} of a graph $G$, denoted as $\Delta_k(G)$, is the simplicial complex whose facets (maximal simplices) are $\s \subseteq V(G)$ such that $|\s| = |V(G)|-k$ (where $|\cdot |$ denotes cardinality) and the induced subgraph $G[V(G) \setminus \s]$ is disconnected.   
	One of the principal motivations for studying cut complexes arises from a celebrated theorem of  Ralf Fr{\"o}berg connecting commutative algebra and graph theory through topology (see \Cref{thm:Fr}). For more  on connections  of graph complexes  with commutative algebra, see \cite{DochtermannEngstrom2009, ReinerRoberts2000, Woodroofe2014}.

	Let $\Delta$ be a simplicial complex on the vertex set $V(\Delta)= \{v_1, v_2, \ldots, v_n \}$ and let $\bK$ be a field. The Stanley-Reisner ideal  $I_{\Delta}$ of $\Delta$ is the ideal of the polynomial ring $\bK[x_1, x_2,\ldots, x_n]$ generated by the monomials corresponding to  minimal  subsets of $V(\Delta)$, which are not simplices  of $\Delta$, \textit{i.e.}, $I_{\Delta} = \langle x_{i_1}x_{i_2}\ldots x_{i_k} : \{v_{i_1},v_{i_2}, \ldots, v_{i_k}\} \notin \Delta  \rangle$. The Stanley-Reisner ring  $\bK[\Delta]$ is the quotient ring  $\bK[x_1, \ldots, x_n]/I_{\Delta}$. For more details, we refer the reader to  \cite{Eagon1998}.
	
	The Alexander dual $\Delta^{\vee}$ of the simplicial complex $\Delta$ is the simplicial complex on the vertex set $V(\Delta)$, whose simplices are the subsets of $V(\Delta)$ such that their complements are not simplices of $\Delta$, \textit{i.e.}, 
	$$
	\Delta^{\vee} = \{\sigma \subset V(\Delta): V(\Delta) \setminus \sigma \notin \Delta\}.
	$$
	
	The clique complex  $\mathsf{Cl}(G)$ of a graph $G$ is the simplicial complex whose simplices are $\sigma \subseteq V(G)$ such that the induced subgraph $G[\sigma]$ is a complete graph. It is easy to check that the  Stanley-Reisner ideal  $I_{\Cl(G)}$ of $\Cl(G)$,  is generated by quadratic square-free monomials.

	\begin{theorem} [{\cite[p.\ 274]{Eagon1998}, \cite[Theorem 1]{Froberg1990}}] \label{thm:Fr}
		A Stanley–Reisner ideal $I_{\Delta}$ generated by quadratic square-free monomials has a $2$-linear resolution if and only if $\Delta$ is the clique complex $\Cl(G)$ of a chordal graph $G$.
	\end{theorem}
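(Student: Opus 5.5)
The plan is to go through Alexander duality and Hochster's formula, and to split the equivalence into the combinatorial statement about chordal graphs and the homological statement about linear resolutions. Put $I = I_\Delta$. Since $I$ is generated by quadratic square-free monomials, the minimal non-faces of $\Delta$ are exactly the edges of a graph $H$. Then $\Delta$ is the flag complex whose simplices are the independent sets of $H$. Equivalently, $\Delta = \Cl(G)$, where $G = \overline{H}$ is the complement graph. So every such $\Delta$ is a clique complex, and the statement reduces to: $I_{\Cl(G)}$ has a $2$-linear resolution if and only if $G$ is chordal.

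The first step translates linearity of the resolution into topology. By Hochster's formula,
$$\beta_{i,\sigma}(I_\Delta) = \dim \tilde H_{|\sigma|-i-2}(\Delta[\sigma];\bK)$$
for $\sigma \subseteq V(\Delta)$. A $2$-linear resolution means $\beta_{i,\sigma} = 0$ unless $|\sigma| = i+2$. This is equivalent to the vanishing $\tilde H_j(\Delta[\sigma];\bK) = 0$ for all $j \ge 1$ and all $\sigma$. Here $\Delta[\sigma] = \Cl(G[\sigma])$, and induced subgraphs of chordal graphs are chordal. So we must show two things: a clique complex of a chordal graph has vanishing higher homology (all components contractible), and a non-chordal $G$ has an induced subgraph whose clique complex has $\tilde H_1 \ne 0$.

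For the \emph{only if} direction: if $G$ is not chordal, take an induced cycle $C_m$ with $m \ge 4$ and $\sigma = V(C_m)$. Then $\Cl(G[\sigma]) = C_m$ viewed as a $1$-dimensional complex, which is a circle, so $\tilde H_1 \neq 0$. This gives the nonzero Betti number $\beta_{m-3,\sigma}$ with $|\sigma| = m > (m-3)+2$, so the resolution is not linear.

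For the \emph{if} direction, use Dirac's theorem that a chordal graph has a simplicial vertex $v$, i.e.\ $N(v)$ is a clique. Then in $\Cl(G)$ the link and star of $v$ are determined by the clique $N[v]$, so $\Cl(G)$ collapses onto $\Cl(G - v)$, since the star of $v$ is a simplex and $v$ is a free-type cone point. Inducting on $|V(G)|$ with $G - v$ chordal, every component of $\Cl(G)$ is collapsible, hence contractible. Applying this to each induced subgraph gives the homology vanishing.

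The main obstacle is keeping the Hochster indexing honest. The first syzygies (generators) sit in degree $2$ automatically, and linearity must be matched precisely to $\tilde H_j = 0$ for $j \ge 1$; reduced $\tilde H_0$ is allowed and corresponds to the degree-$2$ generators. I would write out this degree bookkeeping carefully, and cite Fr\"oberg for the remaining algebraic details.
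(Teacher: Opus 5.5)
Your proof is correct, given the two standard inputs you cite (Hochster's formula and Dirac's simplicial-vertex lemma). It takes a different route from the one the paper points to.

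The paper does not prove this statement; it quotes it from Fr\"oberg and Eagon--Reiner. The Eagon--Reiner reference goes through Alexander duality. There, $I_\Delta$ has a linear resolution if and only if $\Delta^{\vee}$ is Cohen--Macaulay, and for $\Delta=\Cl(G)$ one has $\Delta^{\vee}=\Delta_2(G)$. Shellability of $\Delta_2(G)$ for chordal $G$, and its failure to be Cohen--Macaulay otherwise, then gives the result.

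Despite your opening sentence, your argument never uses Alexander duality. It works directly on the induced subcomplexes $\Cl(G[\sigma])$: an induced cycle of length $m\ge 4$ produces the non-linear Betti number $\beta_{m-3,\sigma}$, and simplicial-vertex collapses kill $\tilde H_{\ge 1}$. This is more elementary, since it needs no Cohen--Macaulay theory. But it yields only the homological statement. The duality route also gives shellability of $\Delta_2(G)$ for chordal $G$, which is the equivalence the paper uses to motivate studying shellability of $\Delta_k(G)$.

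A few details should be tightened.

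\begin{itemize}
\item If the simplicial vertex $v$ is isolated, $\Cl(G)$ does not collapse onto $\Cl(G-v)$. It is $\Cl(G-v)$ plus a new one-point component. This is harmless for your conclusion that every component is collapsible, but you should say so.
\item When $N(v)\neq\emptyset$, replace ``free-type cone point'' by the actual collapse. Every face containing $v$ lies in the simplex on $N[v]$. Fix $w\in N(v)$ and take the pairs $\bigl(\{v\}\cup\tau,\ \{v,w\}\cup\tau\bigr)$ with $\tau\subseteq N(v)\setminus\{w\}$. Removed in order of decreasing $|\tau|$, these are elementary collapses that delete exactly the faces containing $v$.
\item In the Hochster bookkeeping, equating $2$-linearity with $\tilde H_j(\Delta[\sigma];\bK)=0$ for all $j\ge 1$ also needs $\tilde H_{-1}(\Delta[\sigma];\bK)=0$ for $\sigma\neq\emptyset$. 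This holds because $I_\Delta$ has no linear generators, so every variable is a vertex of $\Delta$.
\item $\tilde H_0$ accounts for the entire linear strand $\beta_{i,i+2}$, not only the generators.
\end{itemize}
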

	
	The following can be inferred from \cite[Proposition 8]{Eagon1998}.
	
	\begin{theorem}[\cite{Eagon1998}] \label{theorem:equivalence} 
		$G$ is chordal if and only if   $\Cl(G)^{\vee}$  is shellable. 
	\end{theorem}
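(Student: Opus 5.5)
The plan is to deduce the statement by combining \Cref{thm:Fr} with two standard facts from Stanley--Reisner theory. The first is the Eagon--Reiner theorem: $I_{\Delta}$ has a linear resolution if and only if the Stanley--Reisner ring $\bK[\Delta^{\vee}]$ is Cohen--Macaulay. The second is that a shellable complex which is pure is Cohen--Macaulay over every field. We also use the description of $\Cl(G)^{\vee}$ itself. The minimal non-faces of $\Cl(G)$ are exactly the non-edges $\{u,v\}\notin E(G)$. Hence $\sigma\in\Cl(G)^{\vee}$ if and only if $V\setminus\sigma$ is not a clique, that is, $V\setminus \sigma$ contains a non-edge. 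So the facets of $\Cl(G)^{\vee}$ are precisely the sets $V\setminus\{u,v\}$ with $uv\notin E(G)$. In particular $\Cl(G)^{\vee}$ is pure of dimension $|V|-3$; in the language of the paper it is $\Delta_2(G)$.

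For the direction ($\Leftarrow$), suppose $\Cl(G)^{\vee}$ is shellable. Since it is pure, it is Cohen--Macaulay over $\bK$. By Eagon--Reiner, $I_{\Cl(G)}$ then has a linear resolution. Because $I_{\Cl(G)}$ is generated in degree $2$, this resolution is $2$-linear. \Cref{thm:Fr} now gives that $G$ is chordal.

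For the direction ($\Rightarrow$), I would give a direct shelling rather than pass through algebra, since Cohen--Macaulayness alone does not imply shellability. Let $G$ be chordal. Take a perfect elimination ordering $v_1,\dots,v_n$, in which each $v_i$ is simplicial in $G[\{v_i,\dots,v_n\}]$. Order the facets $F_{uv}=V\setminus\{u,v\}$, for non-edges $uv$, lexicographically by the pair of positions written as (smaller index, larger index). Shellability requires the following: for each earlier facet $F_{xy}$, there is an earlier facet $F_{x'y'}$ with $F_{xy}\cap F_{uv}\subseteq F_{x'y'}\cap F_{uv}$ and $|F_{x'y'}\cap F_{uv}|=|F_{uv}|-1$. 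Equivalently, $\{x',y'\}$ shares one vertex with $\{u,v\}$, and its other vertex lies in $\{x,y\}\setminus\{u,v\}$.

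This exchange step is the main obstacle. I would handle it by cases on how $\{x,y\}$ meets $\{u,v\}$, using the simplicial property to guarantee the needed non-edges. For instance, let $u$ be the earliest of the vertices involved, and suppose $x$ is not adjacent to $u$. Then $F_{ux}$ is a valid earlier facet. Otherwise $x\sim u$. If also $y\sim u$, then simpliciality of $u$ among the later vertices forces $x\sim y$ whenever both come after $u$, which contradicts $xy\notin E(G)$. Resolving each such case may require adjusting the ordering, for example by reversing the elimination order, and checking this carefully is where the work lies. Alternatively, one can cite \cite[Proposition 8]{Eagon1998} directly, which shows that $\Cl(G)^{\vee}$ is shellable, via vertex decomposability, when $G$ is chordal.
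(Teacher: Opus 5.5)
The paper gives no proof of its own; it infers the statement from Eagon--Reiner's Proposition~8 together with Fr\"oberg's theorem. Your ($\Leftarrow$) chain (pure shellable $\Rightarrow$ Cohen--Macaulay $\Rightarrow$ linear resolution of $I_{\Cl(G)}$ by Eagon--Reiner $\Rightarrow$ $G$ chordal by Fr\"oberg) combined with citing Proposition~8 for ($\Rightarrow$) is exactly that inference, so on this route the proposal is fine. One small point: Fr\"oberg only gives $\Cl(G)=\Cl(H)$ for some chordal $H$. You should add that $H=G$ because a clique complex determines its $1$-skeleton.

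Your direct shelling for ($\Rightarrow$), however, is unfinished, and the case you analyse is set up the wrong way round.
\begin{itemize}
\item Suppose $F_{xy}$ precedes $F_{uv}$ and the two non-edges are disjoint. Then the lexicographic order forces the earliest of the four vertices to lie in $\{x,y\}$. So your case ``$u$ is the earliest'' is vacuous.
\item The non-edge that should yield the contradiction is $uv$, not $xy$.
\item No adjustment or reversal of the ordering is needed.
\end{itemize}

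Here is the completion. Write each non-edge with its earlier vertex first, so $x$ precedes $y$ and $u$ precedes $v$. If $\{x,y\}\cap\{u,v\}\neq\emptyset$, then $F_{xy}$ itself meets $F_{uv}$ in codimension one, and the removed vertex lies in $F_{uv}\setminus F_{xy}$.

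Otherwise $x$ strictly precedes both $u$ and $v$. Since $x$ is simplicial in the subgraph induced by $x$ and the vertices after it, and $uv\notin E(G)$, $x$ is non-adjacent to $u$ or to $v$. The corresponding facet $F_{xu}$ or $F_{xv}$ has smaller first index than $F_{uv}$, so it precedes $F_{uv}$. It meets $F_{uv}$ in $F_{uv}\setminus\{x\}$, and $x\in F_{uv}\setminus F_{xy}$.

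With this, your lexicographic order along a perfect elimination ordering is an explicit shelling. That makes ($\Rightarrow$) elementary and independent of the vertex-decomposability result you cite. The ($\Leftarrow$) direction still genuinely needs the algebraic input, just as in the cited route.
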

	For the definition of a shellable complex, see \Cref{definition:shellability}.
	Observe that  $\Cl(G)^{\vee} = \Delta_2(G)$ for any graph $G$.  Therefore,  \Cref{theorem:equivalence} implies the following.
	\begin{equation*}\label{equation:equivalence}
		G  \ \text{is chordal} \Longleftrightarrow  \Delta_2(G) \  \text{is shellable}.
	\end{equation*}
	
	The above equivalence shows that 
	shellability of the $2$-cut complex characterizes chordal graphs. 
	This connection strongly motivates the systematic study 
	of higher cut complexes $\Delta_k(G)$ as natural generalizations of 
	$\Delta_2(G)$.
	
	Shellability itself is a  well-known  concept in topological combinatorics that provides deep insights into the combinatorial structure, topological properties, and algebraic invariants of simplicial complexes. One important consequence is that shellable complexes are homotopy equivalent to a wedge of spheres.
	Shellability  has proven useful in many areas, including polytope theory \cite{Bruggesser_Mani_1972,McMullen_1970},  poset theory \cite{Bruhat_order_1982,Bjorner_lexicographically_1983}, combinatorial topology \cite{Gerrit_chain_2015,Zhu_Turkish_2016}, topological combinatorics \cite{Higher_Independence_2025,Ziegler_chessboard_1994}, algebraic combinatorics \cite{Bjorner_Coxeter_1984,Meulen_2017}, commutative algebra \cite{Bjorner_Cohen_1980,Tuyl_2008}, etc. Determining whether a given simplicial complex is shellable is an important and active research direction in topological combinatorics.
	
	The shellability of cut complexes has recently attracted considerable attention.
	Bayer et al.\ \cite{Bayer2024Cutcomplex} determined shellability criteria for cut complexes of complete bipartite graphs and multipartite graphs, and showed that, for a chordal graph $G$, the  3-cut complex $\Delta_3(G)$ is shellable.  
	Subsequent works established shellability results for various structured graph families, including squared path graphs \cite{bayer2025topology}, $2 \times n$ grid graphs \cite{chandrakar2024topology} and hexagonal grids \cite{chandrakar2026hexagonal}.

	In this article, we consider the shellability of $3$-cut complexes of $C_n^p$, the $p$-th power of the cycle graph $C_n$ on $n$ vertices (see \Cref{defn:graph_power} for power of graphs).
	
	\begin{definition}\label{defn:graph_power}
		For a graph $G$ and a positive integer $p$, the $p$-th power graph of $G$ is a graph $G^p$ with $V(G^p)=V(G)$ and $\{u, v\}\in E(G^p)$ if and only if there exists a path between $u$ and $v$ of length at most $p$ in $G$ (here, the length of a path is the number of edges in the path). Clearly, $G^1 = G$.
	\end{definition}
	
	The graphs $C_n^p$ are also a family of circulant graphs, the Cayley graph of the cyclic group of order $n$ (see \Cref{definintion:Cayley} for the definitions of circulant graphs and Cayley graphs). 
	
	Powers of cycles have already been  studied in the context of  graph complexes. Adamaszek described the homotopy types of independence  complexes of $C_n^p$ \cite{Adamaszek_power_of_cyles} and also investigated their clique complexes \cite{Adamaszek_2013}.
	In \cite{Shukla_Chauhan_Vinayak_2025}, the authors determined the homotopy type of $2$-cut complexes of $C_n^p$ for $n=2p+2$ and $n\ge 3p+1$. Further, Bravo \cite{Andres_total_cut_dual} extended the study of $2$-cut complexes to all powers of cycle graphs and proved that $\Delta_2(C_n^p)$ is homotopy equivalent to a wedge of spheres. However, the topology of $k$-cut complexes of graph powers remains largely unexplored for $k \geq 3$.
	
	The study of shellability of cut complexes of powers of cycle graphs was initiated by Bayer et al.\ in \cite{Bayer2024Cutcomplex} and \cite{Bayer2024TotalCutcomplex}. It was shown that the $2$-cut complexes of the cycle graphs $C_n$ (\cite[Theorem 4.15]{Bayer2024TotalCutcomplex}) and the squared cycle graphs $C_n^2$ (\cite[Proposition 4.19]{Bayer2024TotalCutcomplex}) are not shellable. 
	However,  $\Delta_{k}(C_n)$ is shellable for $k \geq 3$ (\cite[Proposition 7.11]{Bayer2024Cutcomplex}). Motivated by computational evidence, Bayer et al.\ conjectured that $\Delta_k(C_n^2)$ is shellable for $k \geq 3$ and  $n \geq k+6$  (\cite[Conjecture 7.25]{Bayer2024Cutcomplex}). This conjecture was verified for $k=3$ in  \cite{Shellability2025}, where the authors  proved that   $\Delta_{3}(C_n^2)$ is shellable for $n\geq  9$. 
	
	In this article, we extend these results to all powers of cycle graphs for $3$-cut complexes. We prove the following.
	
	\begin{theorem}\label{theorem:main}
		Let $p\ge2$ and $n\ge 6p-3$. Then $\Delta_3(C_n^p)$ is shellable. Moreover, 
		$$\Delta_3(C_n^p) \simeq \bigvee\limits_{\binom{n-2p}{2}-(2p^2+p-1)} \mathbb{S}^{n-4}.$$
	\end{theorem}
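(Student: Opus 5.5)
We work with complements. Label $V(C_n^p)=\{1,\dots,n\}$ cyclically. A facet of $\Delta_3(C_n^p)$ is $\sigma=V\setminus\{a,b,c\}$ with $C_n^p[\{a,b,c\}]$ disconnected. For three vertices this means at least one of them has circular distance greater than $p$ from both of the others. We call such triples \emph{separating} and write $F(a,b,c)$ for the corresponding facet. The proof has three steps: define an order on separating triples, verify the shelling condition, and count the spanning facets.

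\textbf{Step 1: the order.} Every separating triple is one of two types.
\begin{itemize}
\item Type I: an ``isolated'' vertex $c$ together with an adjacent pair $\{a,b\}$ (circular distance at most $p$).
\item Type II: a pairwise nonadjacent triple.
\end{itemize}
We order the facets lexicographically by the sorted triple $a<b<c$. If needed, we modify this so that Type II triples come first, or so that the order is by a rotation-adapted lex order $<_\Omega$, as in the $C_n^2$ case of \cite{Shellability2025}.

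For an $(n-3)$-facet $\sigma$, the condition to check is the standard one for pure complexes. For every earlier facet $\tau$, there must exist an earlier facet $\tau'$ such that $\tau\cap\sigma\subseteq\tau'\cap\sigma$ and $|\tau'\cap\sigma|=n-4$. In complement language, with $\sigma^c=\{a,b,c\}$ and $\tau^c=\{x,y,z\}$, we need an earlier separating triple $\tau'^c$ that differs from $\{a,b,c\}$ in exactly one element, where the element of $\sigma^c$ that is swapped out lies in $\tau^c\setminus\sigma^c$. Equivalently, we replace one vertex $u\in\{a,b,c\}\setminus\tau^c$ by a vertex $w\in\tau^c\setminus\sigma^c$, keeping the triple separating and earlier in the order.

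\textbf{Step 2: the shelling condition.} We do a case analysis on which of $a,b,c$ is smallest and which vertex is isolated. Typically we try to replace the largest element of $\sigma^c$ not in $\tau^c$ by the smallest element of $\tau^c$ not in $\sigma^c$; this gives a lexicographically earlier triple. The only thing to verify is that the new triple is still disconnected.

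The hypothesis $n\ge 6p-3$ enters here. It guarantees that the arcs of length about $2p$ around two chosen vertices leave enough room for a third vertex far from both. So whenever the naive swap makes the triple connected, there is an alternative swap that keeps a vertex isolated. This case analysis is the main obstacle. The delicate configurations are those where $\tau^c$ and $\sigma^c$ are both Type I and the isolated vertices sit near the wrap-around point $n\to1$, where lex order and circular geometry disagree. We will handle these by an exhaustive split according to the circular gaps between $a,b,c$, using the bound on $n$ in each subcase.

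\textbf{Step 3: the spanning facets and the count.} Shellability gives $\Delta_3(C_n^p)\simeq\bigvee \mathbb{S}^{n-4}$, with one sphere for each facet whose restriction is the whole facet. In complement language, these are triples $\{a,b,c\}$ such that every single-element swap of each of $a,b,c$ to an earlier position yields an earlier separating triple. We will characterize these as the nonadjacent-type triples avoiding a neighbourhood of vertex $1$. Counting them should give $\binom{n-2p}{2}-(2p^2+p-1)$, presumably as pairs chosen from an arc of length $n-2p$ minus the boundary configurations that fail.

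As a sanity check on the count, the reduced Euler characteristic is
$$\tilde\chi=\sum_{i}(-1)^i f_i.$$
Its $f$-vector is computable, since the non-faces of $\Delta_3$ are the complements of connected $3$-sets, and it must give $(-1)^{n-4}$ times the stated number. Because homology is determined once shellability is known, this alternative Euler characteristic computation is a fallback if the direct characterization of spanning facets is messy.
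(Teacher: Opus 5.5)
\textbf{The gap: no working order.} Steps 1--2 never commit to an order, and the orders you list are not shelling orders. So the claim that ``whenever the naive swap makes the triple connected, there is an alternative swap'' is false. (Recall the correct condition: you may remove \emph{any} vertex of $\sigma^c$, and the inserted vertex must come from $\tau^c\setminus\sigma^c$.)

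Here is a counterexample. Use labels $0,\dots,n-1$ and pick $a$ with $a+2p+1\le n-1$. Since $n\ge 6p-3>3p+1$, wrap-around creates no extra adjacencies among the vertices below. Let $\tau^c=\{a,a+p,a+2p+1\}$ and $\sigma^c=\{a,a+p+1,a+p+2\}$; both are separating.
\begin{itemize}
\item Inserting $a+p$ always gives a connected triple, because $a+p$ is adjacent to every vertex of $\sigma^c$.
\item Inserting $a+2p+1$ gives a separating triple only as $T_1=\{a,a+p+1,a+2p+1\}$ or $T_2=\{a,a+p+2,a+2p+1\}$.
\item Exchanging the roles of $\sigma$ and $\tau$ yields exactly the same two candidates.
\end{itemize}
Hence any shelling must place $T_1$ or $T_2$ before whichever of $\sigma,\tau$ comes later. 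But $T_1$ and $T_2$ dominate both $\sigma^c$ and $\tau^c$ coordinatewise as sorted triples. So every linear extension of the coordinatewise order places $T_1,T_2$ after both $\sigma$ and $\tau$, and the shelling condition fails. This covers:
\begin{itemize}
\item lex and colex;
\item ``Type~II first, then lex'', since all four triples are Type~I;
\item the $\Omega$-adapted lex order you allude to (the paper's $\ll$). Take $a=\mb{c}$, possible when $\mb{c}\le n-2p-2$; then all four triples lie in $\mathcal{A}_1$ and are compared lexicographically on the last two entries.
\end{itemize}
The obstruction is the order, not disconnectedness, and no choice of alternative swap repairs it.

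\textbf{What the paper does instead.} Starting from $\ll$, the paper splits off the ``tight'' facets, whose complements lie in a window of length at most $2p$. These are cut out by the conditions $(\mathcal{X}^{1}_{\alpha})$--$(\mathcal{X}^{10}_{\alpha})$ with an offset $\alpha\in[p-1]$. They are placed into blocks $\mathcal{M}_1,\dots,\mathcal{M}_{p-1}$ after $\mathcal{M}_0$, in increasing $\alpha$. In the example with $a=\mb{c}$, $\sigma$ satisfies $(\mathcal{X}^{9}_{p-1})$ and goes to the very end, while $T_1\in\mathcal{M}_0$ precedes it. Calibrating these conditions and checking that the resulting order $\prec$ is a shelling is the bulk of \Cref{section:mainproof}; your sketch has neither this idea nor a substitute.

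\textbf{Step 3 is also unsupported.} For any lex-type order, the largest label must lie in the complement of every spanning facet, since inserting it is never an earlier swap. That contradicts ``avoiding a neighbourhood of vertex $1$''. In the paper's shelling, the spanning facets are exactly those with complement $\{\omega_i,i_1,n-1\}$ and $i_1\notin\mathcal{V}^{\omega_i}$. These include triples with an adjacent pair, e.g.\ $i_1=\omega_i-1$ with $\omega_i\in\mathcal{U}_2$ for large $n$.

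\textbf{The Euler-characteristic fallback.} It is a legitimate way to count spheres once shellability is known, since that number is $(-1)^{n-4}\tilde\chi$ for any shelling. It does not address the missing shelling. Also, a set of size less than $n-3$ is a face if and only if its complement contains \emph{some} separating triple, so the $f$-vector is not read off from connected $3$-sets alone.
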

	
	To prove \Cref{theorem:main}, we define an order on the facets of $\Delta_3(C_n^p)$ and show that this order is a shelling order. Then, we characterize and count the number of spanning facets in this shelling order to determine the number of spheres appearing in the wedge in the homotopy type of $\Delta_3(C_n^p)$. 
	
	This paper is organized as follows: \Cref{section:prel} introduces preliminaries from graph theory and simplicial complexes. \Cref{section:mainproof} presents the proof of \Cref{theorem:main}; we first define an order on the facets of $\Delta_3(C_n^p)$ and then proceed in two subsections. In \Cref{subsection:shelling order}, we show that this order is a shelling order. In \Cref{subsection:spanning facets}, we determine the  spanning facets for the shelling order. Finally, \Cref{section:future_directions} discusses the conclusions and future directions.
	
	\section{Preliminaries} \label{section:prel}
	
	In this section, we recall some basic definitions and results used in this article. 
	
	\subsection{Graph}\label{subsection:graph}
	A \textit{graph} $G$ is a pair $(V(G), E(G))$, where $V(G)$ is the set of vertices of $G$ and $E(G) \subseteq \binom{V(G)}{2}$
	denotes the set of edges. For any $u,v\in V(G)$, we say that $u$ and $v$ are adjacent if $\{u,v\} \in E(G)$. We write $u\sim v$ for adjacency and $u\nsim v$ for non-adjacency. 
	A {\it subgraph} $H$ of $G$ is a graph with $V(H) \subseteq V(G)$ and $E(H) \subseteq E(G)$. For a subset $U \subseteq V(G)$, the \textit{induced subgraph} $G[U]$ is the subgraph with $V(G[U]) = U$ and $E(G[U]) = \{\{a, b\} \in E(G) \ | \ a, b \in U\}$.
	
	For $u,v\in V(G)$, a {\it path} from $u$ to $v$ is a sequence of distinct vertices $u=v_0,v_1, \ldots, v_n=v$ such that $v_i \sim v_{i+1}$ for all $0 \le i\le n-1$. The {\it length} of a path is the number of edges in the path. 
	A graph is \textit{connected} if there exists a path between each pair of its vertices; otherwise, it is \textit{disconnected}. The graph with the empty set $\emptyset$ as its set of vertices is considered connected.
	
	\begin{definition}\label{definintion:Cayley}
		Let $\Gamma$ be a group and let $S\subset\Gamma$ be a subset not containing the identity element.
		The \textit{Cayley graph}  of $\Gamma$ with respect to $S$ is the graph $G(\Gamma,S)$ having vertex set $\Gamma$, and $\{u, v\}\in E(G(\Gamma,S))$ if and only if $uv^{-1} \in S \cup S^{-1}$, where $S^{-1} = \{x^{-1} \ | \ x \in S\}$.
		Let  $\mathbb{Z}_n$ denotes  the cyclic group of order $n$. For $n\ge 2$ and $S \subset\mathbb{Z}_n$ such that $0 \notin S$,  the   Cayley graph $G(\mathbb{Z}_n,S)$ is called the \textit{circulant graph}  on the generating set $S$.  
	\end{definition}
	
	For $n\ge 3$, let $C_n$ denote the cycle graph on $n$ vertices $\{0,1,\ldots,n-1\}$. Observe that for $p \ge 1$, the $p$-th power $C_n^p$ of the cycle graph is a graph with $V(C_n^p) = V(C_n)$ and $E(C_n^p) =\{ \{i,i+j\ (\text{mod $n$})\}\,|\,0\le i\le n-1 $ and $1\le j\le p$\}. 
	It is easy to check that for $n\ge 2p+1$, $C_n^p$ is a circulant graph on the generating set $\{1, 2,\ldots, p\}$.
	
	We refer the reader to \cite{bondy1976graph} and \cite{west} for more details about the graphs.
	
	\subsection{Simplicial complex}
	
	A {\it finite abstract simplicial complex} $\Delta$ is a collection of finite sets such that if $\tau \in\Delta$ and $\s \subset \tau$, then $\s \in\Delta$. 
	The elements of $\Delta$ are called {\it simplices} of $\Delta$. If $\s \subset \tau$, we say that $\s$ is a {\it face} of $\tau$. The \textit{dimension of a simplex} $\s$ is equal to $|\s|-1$. The \textit{dimension of an abstract simplicial complex} is the maximum of the dimensions of its simplices. 
	If a simplex has dimension $d$, it is said to be $d$-{\it dimensional}. The $0$-dimensional simplices are called \textit{vertices} of $\Delta$. An abstract simplicial complex which is an empty collection of sets is called the \textit{void} abstract simplicial complex, and is denoted by $\emptyset$. The {\it boundary} of a $d$-dimensional simplex $\sigma $ is the simplicial complex, consisting of  all  faces of $\sigma$ of dimension $\le d-1$ and it is denoted by $Bd(\sigma).$
	
	A simplex that is not a face of any other simplex is called a {\it maximal simplex} or \textit{facet}. The set of maximal simplices of $\Delta$ is denoted by $M(\Delta)$. 
	A simplicial complex is called {\it pure $d$-dimensional}, if all of its maximal simplices are of dimension $d$. A \textit{subcomplex} $\Delta'$ of $\Delta$ is a simplicial complex such that $\sigma\in\Delta'$ implies $\sigma\in\Delta$. 
	
	In this article, we consider any simplicial complex as a topological space, namely, its geometric realization (see \cite{Kozlov2008} for details). For terminologies related to algebraic topology, we refer to \cite{hatcher2005algebraic}.

	\begin{definition}[{\cite[Definition 12.1]{Kozlov2008}}]\label{definition:shellability} 
		A simplicial complex $\Delta$ is called \textit{shellable} if its facets can be arranged in a linear order $F_1, F_2, \ldots, F_j$ in such a way that the subcomplex $(\bigcup_{r=1}^{s-1} F_r) \cap F_s$ is pure and of dimension $(\dim F_s-1)$ for all $2\le s\le j$. Such an ordering of facets is called a \textit{shelling order}.
	\end{definition} 
	
	In other words, a simplicial complex $\Delta$ has a \textit{shelling order} $F_1, F_2, \ldots, F_j$ of its facets if and only if for any $r, s$ satisfying $1\le r<s\le j$, there exists $1\le t<s$ such that $F_t \cap F_s=F_s\setminus\{u\}$ for some $u\in F_s\setminus F_r$.
	
	A facet $F_i$ $(1<i \le j)$  is called {\it spanning} with respect to the given shelling order if $Bd(F_i) \subseteq \bigcup\limits_{l=1}^{i-1} F_l$.
	It is easy to check that $F_i$ is a spanning facet if for each $u\in F_i$, there exists $1\le t<i$ such that $F_t \cap F_i=F_i \setminus\{u\}$.
	
	The following theorem can be inferred from \cite[Theorem 12.3]{Kozlov2008}.
	
	\begin{theorem}\label{theorem:wedge}
		Let $\Delta$ be a pure shellable simplicial complex of dimension $d$. Then $\Delta$ has the homotopy type of a wedge of $\bt$ spheres of dimension  $d$, where $\bt$ is the number of total spanning facets in a given shelling. Hence $\Delta \simeq \bigvee\limits_{\bt} \mathbb{S}^{d}.$
	\end{theorem}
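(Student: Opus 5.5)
The plan is to induct along the shelling order. Let $F_1, F_2, \ldots, F_j$ be the given shelling order of the facets of $\Delta$, and set $\Delta_s = \bigcup_{r=1}^{s} F_r$, where each $F_r$ also stands for the full simplex together with all its faces. Since $\Delta$ is pure of dimension $d$, every $F_s$ is a $d$-simplex. I will prove by induction on $s$ that $\Delta_s$ is homotopy equivalent to a wedge of $\beta_s$ spheres $\mathbb{S}^d$. Here $\beta_s$ is the number of spanning facets among $F_2, \ldots, F_s$, and the empty wedge means a point. The base case is immediate: $\Delta_1 = F_1$ is a simplex, hence contractible.

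For the inductive step, put $K_s = \Delta_{s-1} \cap F_s$. By the definition of shelling (\Cref{definition:shellability}), $K_s$ is a pure $(d-1)$-dimensional subcomplex of $Bd(F_s)$. So $K_s$ is generated by some codimension-one faces $F_s \setminus \{w\}$, with $w$ ranging over a nonempty set $W \subseteq F_s$. There are two cases.

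\emph{$F_s$ not spanning.} Then $W \neq F_s$, so we can pick $u \in F_s \setminus W$. Every generator $F_s \setminus \{w\}$ with $w \in W$ contains $u$. Hence every face of $K_s$ remains a face of $K_s$ after adding $u$, so $K_s$ is a cone with apex $u$ and is contractible. Now $\Delta_s$ is obtained from $\Delta_{s-1}$ by gluing the contractible space $F_s$ along the contractible subcomplex $K_s$. Since $(F_s, K_s)$ is a CW pair, the homotopy extension property (the gluing lemma) gives $\Delta_s \simeq \Delta_{s-1}$. Thus $\beta_s = \beta_{s-1}$, as required.

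\emph{$F_s$ spanning.} Then $K_s = Bd(F_s) \cong \mathbb{S}^{d-1}$, and $\Delta_s$ is obtained from $\Delta_{s-1}$ by attaching a $d$-cell along the inclusion $Bd(F_s) \hookrightarrow \Delta_{s-1}$. By induction, $\Delta_{s-1}$ is homotopy equivalent to a wedge of $d$-spheres, which has a CW structure with one $0$-cell and otherwise only $d$-cells. Transport the attaching map through this homotopy equivalence and apply cellular approximation. The map from $\mathbb{S}^{d-1}$ then lands in the $(d-1)$-skeleton, which is a point, so it is null-homotopic. The homotopy type of an adjunction space depends only on the homotopy class of the attaching map. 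Therefore $\Delta_s \simeq \Delta_{s-1} \cup_{\mathrm{const}} D^d \simeq \Delta_{s-1} \vee \mathbb{S}^d$, and $\beta_s = \beta_{s-1} + 1$.

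Taking $s = j$ gives $\Delta \simeq \bigvee_{\beta} \mathbb{S}^d$ with $\beta$ the total number of spanning facets. The trivial case $d = 0$, where $\Delta$ is a finite set of points and every $F_s$ with $s \geq 2$ is spanning, is consistent with this count and can be checked directly. The only step needing care is the spanning case. There I must justify that homotopy equivalences are compatible with cell attachment, so that the attaching map may be replaced by a homotopic one; this is the standard fact for adjunction spaces along CW pairs, which I would cite. The cone argument in the non-spanning case is purely combinatorial and routine.
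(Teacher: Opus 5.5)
Your proof is correct and complete for the pure case. The paper gives no argument of its own for this statement: it only remarks that the result ``can be inferred from'' \cite[Theorem 12.3]{Kozlov2008}. So the comparison is between a citation and your self-contained proof.

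Your argument is the standard one behind that result. You induct along the shelling and show that $K_s=\Delta_{s-1}\cap F_s$ is either a cone, when $F_s$ is non-spanning, or all of $Bd(F_s)$, when $F_s$ is spanning. In the first case you glue along a contractible subcomplex; in the second you attach a $d$-cell by a null-homotopic map. You have correctly identified the two topological inputs this needs:
\begin{enumerate}
\item gluing a contractible $F_s$ along a contractible subcomplex $K_s$ of the CW pair $(F_s,K_s)$ does not change the homotopy type;
\item adjunction spaces are invariant under a homotopy equivalence of the base and under a homotopy of the attaching map.
\end{enumerate}
You also correctly confined the cellular-approximation step to $d\ge 1$. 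What your version buys over the citation is transparency: it shows exactly why each spanning facet contributes one sphere, which is the mechanism the paper relies on when it counts $|\Sg|$. The citation buys brevity.

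If you want the non-spanning case to be purely combinatorial, there is a simpler route. The faces added at step $s$ are exactly the $\sigma$ with $W\subseteq\sigma\subseteq F_s$, because a face of $F_s$ lies in $K_s$ if and only if it misses some $w\in W$. For $u\in F_s\setminus W$, the pairing $\sigma\leftrightarrow\sigma\triangle\{u\}$ removes this Boolean interval by elementary collapses, taken in decreasing dimension. Hence $\Delta_s\searrow\Delta_{s-1}$, and no homotopy extension argument is needed. In the spanning case the interval is just $\{F_s\}$, which is the single new $d$-cell.
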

	
	\section{Proof of \Cref{theorem:main}}\label{section:mainproof}
	
	To prove \Cref{theorem:main}, we construct an order on the facets of $\Delta_3(C_n^p)$ and show that it is a shelling order. We then identify and count all spanning facets in this shelling order to determine the exact number of spheres in the wedge in the homotopy type of $\Delta_3(C_n^p)$. A particularly notable aspect of this order is that, for the $p$-th power of the cycle graph, the set of all facets, $\md$, is partitioned into exactly $p$ ordered subsets. 
	These subsets are denoted by $\mcl{M}_{\al}$ with $\al\in\{0,1,\ldots,p-1\}$, which will be described in detail later. The shelling order is then constructed by arranging the sets $\mcl{M}_{\al}$ such that $\mcl{M}_{\al-1}$ precedes $\mcl{M}_{\al}$ for all $\al\in\{1,2,\ldots,p-1\}$.
	
	Throughout the section, we fix $p\ge2$ and $n\ge 6p-3$. Before proceeding to the main proof, we first introduce some notation and establish important results that will be used later. Define		
	$$\mb{c}:=\begin{cases}
		\frac{n+1}{2} & \text{ if $n$ is odd},\\
		\phantom{1}\frac{n}{2} & \text{ if $n$ is even}.
	\end{cases} $$
	
	\begin{proposition}\label{proposition: c and p relation} We have the following.
		\begin{enumerate}[label=(\roman*)]
			\item \label{range of c} $p<3p-1\le\mb{c}\le n-3p+2<n-p$.
			
			\item \label{c-p/2<=n-2p-1} $\mb{c}-\frac{p}{2}\le n-2p-1$.
			
			\item \label{c-(bt+1)/2<=n-2p-1} Let $\bt\ge 1$. Then $\mb{c}-\frac{\bt+1}{2}\le n-2p-1$. Moreover, if $\mb{c}-\frac{\bt+1}{2}=n-2p-1$, then $p=2$. For any integer $z$, if $z\ge\mb{c}+\frac{\bt}{2}$, then $2\mb{c}-z\le n-2p-1$.
			
			\item \label{<=n-p} $\mb{c}+\frac{3p-2}{2}\le n-p$. 
			
			\item \label{>=p} $\mb{c}-\frac{3p}{2}\ge p$.
		\end{enumerate}
	\end{proposition}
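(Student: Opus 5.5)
The plan is to reduce every item to an elementary inequality in $n$ and $p$ using only two facts. First, by definition $n\le 2\mb{c}\le n+1$, so $\frac{n}{2}\le \mb{c}\le \frac{n+1}{2}$. Second, we have the standing hypotheses $p\ge 2$ and $n\ge 6p-3$. In each item I will bound $\mb{c}$ by $\frac{n+1}{2}$ from above or by $\frac n2$ from below, clear denominators, and compare with $n\ge 6p-3$. Where the resulting linear bound is too weak by less than one, I will exploit integrality of the relevant quantities.

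For \ref{range of c}: $\mb{c}\ge \frac n2\ge \frac{6p-3}{2}=3p-\frac32$, and since $\mb{c}$ is an integer this gives $\mb{c}\ge 3p-1$. Next, $\mb{c}\le\frac{n+1}{2}\le n-3p+2$ is equivalent to $n\ge 6p-3$. The outer inequalities $p<3p-1$ and $n-3p+2<n-p$ are just $p\ge 2$.

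For \ref{c-p/2<=n-2p-1}: $\mb{c}-\frac p2\le \frac{n+1-p}{2}\le n-2p-1$ is equivalent to $n\ge 3p+3$, which follows from $6p-3\ge 3p+3$.

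For \ref{<=n-p}: the inequality $\frac{n+1}{2}+\frac{3p-2}{2}\le n-p$ is equivalent to $n\ge 5p-1$, which holds since $p\ge2$.

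For \ref{>=p}: $\frac n2-\frac{3p}{2}\ge p$ is equivalent to $n\ge 5p$, which follows from $6p-3\ge 5p$ when $p\ge 3$. In the case $p=2$ I will check directly: $n\ge 9$ gives $\mb{c}\ge 5>4=p+\frac{3p}{2}$.

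Item \ref{c-(bt+1)/2<=n-2p-1} is where care is needed. For the first claim, $\bt\ge1$ gives
$$\mb{c}-\frac{\bt+1}{2}\le \frac{n+1}{2}-1=\frac{n-1}{2},$$
and $\frac{n-1}{2}\le n-2p-1$ is equivalent to $n\ge 4p-1$, which holds.

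For the "moreover" part, I will argue that equality forces the whole chain to be tight, in particular $\frac{n-1}{2}=n-2p-1$, i.e.\ $n=4p-1$. Combined with $n\ge 6p-3$ this gives $p\le 1$, contradicting $p\ge2$. So equality cannot occur and the implication holds (vacuously). I will double-check this against the paper's later usage, but logically this suffices.

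For the last claim, $z\ge \mb{c}+\frac{\bt}{2}$ gives
$$2\mb{c}-z\le \mb{c}-\frac{\bt}{2}\le \frac{n+1}{2}-\frac12=\frac n2.$$
Now $\frac n2\le n-2p-1$ is equivalent to $n\ge 4p+2$. This holds for $p\ge3$, and for $p=2$ whenever $n\ge 10$. The main obstacle is the boundary case $p=2$, $n=9$. There $\mb{c}=5$ and the bound is only $2\mb{c}-z\le 4.5$, but since $2\mb{c}-z$ is an integer it is at most $4=n-2p-1$. I will handle this by invoking integrality of $2\mb{c}-z$ in general: $2\mb{c}-z\le\lfloor \frac n2\rfloor$, and $\lfloor\frac n2\rfloor\le n-2p-1$ holds for all $n\ge 6p-3$ with $p\ge 2$.
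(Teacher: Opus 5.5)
The ``moreover'' part of (iii) contains an arithmetic error, and it leads you to a false statement. Your overall strategy is the paper's: sandwich $\mb{c}$ between $\frac{n}{2}$ and $\frac{n+1}{2}$, compare with $n\ge 6p-3$ and $p\ge 2$, and use integrality where the linear bound falls short by less than one. Items (i), (ii), (iv) and the last claim of (iii) go through as you wrote them.

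The error is this. Doubling $\frac{n-1}{2}\le n-2p-1$ gives $n-1\le 2n-4p-2$, i.e.\ $n\ge 4p+1$, not $n\ge 4p-1$.
\begin{itemize}
\item For the first claim of (iii) this is harmless, since $6p-3\ge 4p+1$ exactly when $p\ge 2$.
\item In the equality case it matters. Tightness of your chain forces $\frac{n-1}{2}=n-2p-1$, i.e.\ $n=4p+1$. Combined with $n\ge 6p-3$ this gives $p\le 2$, not $p\le 1$, so there is no contradiction.
\end{itemize}
Your claim that equality never occurs is false. For $p=2$, $n=9$, $\beta=1$ one has $\mb{c}=5$ and $\mb{c}-\frac{\beta+1}{2}=4=n-2p-1$. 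So the implication is not vacuous, and the corrected computation gives the required conclusion directly: $n\le 4p+1$ and $n\ge 6p-3$ force $p=2$. This is exactly the paper's argument.

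The distinction matters downstream. In the proof of \Cref{lemma: case M_0 a} (Case III(a)), the paper reaches $\omega_s=\mb{c}-\frac{\alpha+1}{2}=n-2p-1$ and then uses $p=2$ (hence $\alpha=1$ and $\omega_s=\mb{c}-1$) to finish the subcase. A version of (iii) declaring that situation impossible would be a false shortcut there.

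There is also a small slip in (v). For $p=2$ one has $p+\frac{3p}{2}=5$, not $4$, so the required bound is $\mb{c}\ge 5$. This does hold for $n\ge 9$ (with equality at $n=9,10$), so your conclusion survives. A cleaner route, which the paper takes, is to read (v) off from (i): $\mb{c}-\frac{3p}{2}\ge 3p-1-\frac{3p}{2}=\frac{3p}{2}-1\ge p$ for $p\ge 2$. This avoids the case split on $p$.
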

	
	\begin{proof}
		\begin{enumerate}[label=(\roman*)]
			\item By the definition of $\mb{c}$, we get $\frac{n}{2}\le\mb{c}\le\frac{n+1}{2}$. We have $n\ge 6p-3$. If $n=6p-3$ (odd), then $\mb{c}=3p-1$; and if $n\ge 6p-2$, then $\mb{c}\ge\frac{n}{2}$ implies that $\mb{c}\ge 3p-1$. Hence $\mb{c}\ge3p-1$. 
			It follows that $3p-1\le2\mb{c}-\mb{c}\le n+1-\mb{c} $, and thus $\mb{c}\le n-3p+2$. Therefore $3p-1\le\mb{c}\le n-3p+2$. Moreover, since $p\ge2$, we get $p<3p-1$ and $n-3p+2<n-p$. Thus $p<3p-1\le\mb{c}\le n-3p+2<n-p$. 
			
			\item By \ref{range of c}, $\mb{c}\le n-3p+2$. This implies $\mb{c}-\frac{p}{2}\le n-\frac{7p}{2}+2$. Since $p\ge2$, we get $\mb{c}-\frac{p}{2}\le n-2p-1$.
			
			\item We have $\bt\ge 1$. Since $\mb{c}\le n-3p+2$, it follows that $\mb{c}-\frac{\bt+1}{2}\le n-3p+1.$ Using $p\ge2$, we get $\mb{c}-\frac{\bt+1}{2}\le n-2p-1$.
			
			Suppose $\mb{c}-\frac{\bt+1}{2}=n-2p-1$. Then $\mb{c}\le\frac{n+1}{2}$ and $\bt\ge 1$ imply $n\le 4p+1$. Since $n\ge 6p-3$, we get $p\le2$. Hence $p=2$ (as $p\ge2$).
			
			Now, suppose $z\ge\mb{c}+\frac{\bt}{2}$. Then $2\mb{c}-z\le\mb{c}-\frac{\bt}{2}\le n-2p-\frac{1}{2}$. Since $n$, $p$, $\mb{c}$ and $z$ are all integers, we obtain $2\mb{c}-z\le n-2p-1$.
			
			\item Since $\mb{c}\le n-3p+2$, we get $ \mb{c}+\frac{3p-2}{2}\le n-\frac{3p-2}{2}=n-p-\frac{p-2}{2}$. Then $p\ge2$ implies that $ \mb{c}+\frac{3p-2}{2}\le n-p$.
			
			\item Since $\mb{c}\ge 3p-1$ and $p\ge2$, it follows that $\mb{c}-\frac{3p}{2}\ge\frac{3p}{2}-1=p+\frac{p}{2}-1\ge p$. 
		\end{enumerate}
		\vspace{-0.2cm}
	\end{proof}
	
	We now define an ordered set $\Omega:=(\om_1,\om_2,\ldots,\om_n)$ by arranging the elements of $V(C_n^p)=\{0,1,2,\ldots,n-1\}$ such that, for all $ i \in\{1,2,\ldots,n\}$, the $i$-th element is given by $$\om_i=\mb{c}+(-1)^{i-1} \lfloor i/2 \rfloor \ ( \text{mod} \ n),$$ where $\lfloor i/2 \rfloor$ denotes the greatest integer less than or equal to $i/2$. 
	Observe that $$\Omega=\begin{cases}
		(\mb{c},\mb{c}-1, \mb{c}+1,\mb{c}-2, \mb{c}+2, \dots,n-2,2,n-1,1, 0), &\text{ if $n$ is odd},\\
		(\mb{c},\mb{c}-1, \mb{c}+1,\mb{c}-2, \mb{c}+2,\dots,2,n-2,1,n-1, 0), & \text{ if $n$ is even}.
	\end{cases} $$
	
	Using the ordered set $\Omega$, we define an order $\os$ on the elements of $V(C_n^p)$ as follows: for any $u,v\in V(C_n^p)$, we say that $u\os v$ if and only if $u=\om_i$ and $v=\om_j$ for some $i,j\in\{1,2,\ldots,n\}$ with $i<j$.
	
	\begin{remark}[{\cite[Remark 3.1]{Shellability2025}}]\label{remark:order in S}
		Let $u,v\in V(C_n^p)$. 
		\begin{enumerate}[label=(\roman*)]
			\item \label{part:i} If $u<\mb{c}$, then $u\os v$ if and only if either $v<u$ or $v\ge2\mb{c}-u$.
			
			\item \label{part:ii} If $u\ge\mb{c} $, then $u\os v$ if and only if either $v<2\mb{c}-u$ or $v>u$. 
			
			\item \label{part:iii} If $v<\mb{c}$, then $u\os v$ if and only if $v<u<2\mb{c}-v$. 
			
			\item \label{part:iv} If $v>\mb{c}$, then $u\os v$ if and only if $2\mb{c}-v\le u<v$. 
		\end{enumerate}			
	\end{remark} 
	
	The complement of a set $X \subseteq V(C_n^p)$ is denoted by $X^c$ throughout the proof. For a positive integer $m$, let $[m]=\{1,2,\ldots,m\}$ and $[0,m]=[m]\cup\{0\}$.
	
	Let $\mcl{A}=\{A \subseteq V(C_n^p) : |A|=n-3 \}$. For $i\in[n-2]$, define the subsets $\mcl{A}_i$ of $\mcl{A}$ as follows:
	\begin{equation}\label{equation:Ai}
		\mcl{A}_i:=\begin{cases}
			\{A \in\mcl{A}: \om_1\in A^c \}, &\text{if $i=1$,}\\
			\{A \in\mcl{A}: \om_i\in A^c \ \text{and } \om_1,\om_2, \dots,\om_{i-1}\notin A^c \}, &\text{if $i>1$.}
		\end{cases}
	\end{equation}
	Observe that the sets $\mcl{A}_i$ are pairwise disjoint and form a partition of $\mcl{A}$. So $\mcl{A}=\bigsqcup_{i=1}^{n-2}\mcl{A}_i$.
	
	Let $A\in\mcl{A}$ such that $A\in\mcl{A}_i$ for some $i\in[n-2]$. By the definition of $\mcl{A}_i$, we have $\om_i\in A^c$. It follows that $A^c=\{\om_i,i_1, i_2\}$ for some $i_1,i_2\in V(C_n^p)$. 
	Throughout this section, we assume that if $A \in\mcl{A}_i$ and $A^c=\{\om_i\}\sqcup\{i_1,i_2\}$, then $i_1<i_2$. 
	
	\begin{remark}[{\cite[Remark 3.2]{Shellability2025}}]\label{remark:Aj}
		Let $A\in\mcl{A}$ such that $A\in\mcl{A}_i$ and $A^c=\{\om_i\}\sqcup\{i_1,i_2\}$. Then $\om_i\os i_1, i_2$ by the definition of $\mcl{A}_i$.
	\end{remark} 
	
	We know that each $A\in\mcl{A}$ is uniquely associated with subset $\mcl{A}_i$ for some $i$. Using this partitioning of $\mcl{A}$ into subsets $\mcl{A}_i$, we define an order $\ll$ on the elements of $\mcl{A}$. The following definition precisely formulates this order.
	\begin{definition}\label{definition:order_ll}
		Define an order $\ll$ on the elements of $\mcl{A}$ as follows: for $ A, A' \in\mcl{A}$, let $A\in\mcl{A}_i$ and $ A' \in\mcl{A}_j$ such that $A^c=\{\om_i\}\sqcup\{i_1,i_2\}$ and $ A'^c=\{\om_j\} \sqcup \{j_1, j_2\}$. Then $ A \ll A'$ if and only if any one of the following conditions is true: 
		\begin{enumerate}[label=(\roman*)]
			\item \label{om_i=om_j} $i=j$, \textit{i.e.}, $\om_i=\om_j$, and either $i_1<j_1$ or, $i_1=j_1$ and $i_2<j_2$.
			
			\item \label{om_i<om_j} $i<j$, \textit{i.e.}, $\om_i\os\om_j$.
		\end{enumerate} 
	\end{definition}
	It is easy to see that \Cref{definition:order_ll} defines a total order on $\mcl{A}$.
	
	The set of all the facets of $\Delta_3(C_n^p)$ is denoted by $\md$. By the definition of the $3$-cut complex, for any $F \in\md$, we have $|F|=n-3$ and the induced subgraph $\cn{F}$ is disconnected. Clearly, $\md \subset \mcl{A}=\bigsqcup_{i=1}^{n-2}\mcl{A}_i$. 
	Therefore, the order $\ll$ defined on $\mcl{A}$ can be naturally restricted to $\md$, resulting in a total order on the elements of $\md$. 
	
	To obtain the desired shelling order, we modify the order $\ll$. This is done by partitioning $\md$ into $p$ disjoint sets $\mcl{M}_{0}, \mcl{M}_1, \ldots, \mcl{M}_{p-1}$. 
	For $\al\in[p-1]$, define the subsets $\mcl{M}_{\al}$ (the subset $\mcl{M}_0$ is defined later) of $\md$ as follows:
	a facet $F \in\mcl{M}_{\al}$ if and only if $F\in\mcl{A}_i$ for some $i\in[n-2]$ such that $F^c=\{\om_i\}\sqcup\{i_1,i_2\}$, and $F$ satisfies one of the following conditions:		
	\begin{enumerate}[label=$(\mcl{X}_{\al}^{\arabic*})$]
		
		\item $i_1<i_2<\om_i$ with $\om_i<\mb{c}+\frac{p}{2}$ and $i_1=2\mb{c}-\om_i-p+\al-1$.
		
		\item $i_1<i_2<\om_i$ with $\om_i\ge\mb{c}+\frac{p}{2}$ and $i_1=\om_i-2p+\al-1$.
		
		\item $i_1<\om_i<i_2$ with $\om_i<\mb{c}-\frac{p}{2}$ and $i_2=i_1+2p-\al+1$.
		
		\item $i_1<\om_i<i_2$ with $\om_i\ge\mb{c}-\frac{p}{2}$, $\om_i-p+\al\le i_1\le2\mb{c}-\om_i-p-1$ and $i_2=i_1+2p-\al+1$.
		
		\item $i_1<\om_i<i_2$ with $\om_i\le\mb{c}-\frac{\al+1}{2}$, $i_2=2\mb{c}-\om_i+p-\al$ and $i_1\ge i_2-2p+\al$ ($\implies$ $i_1\ge2\mb{c}-\om_i-p$).
		
		\item $i_1<\om_i<i_2$ with $\om_i\ge\mb{c}+\frac{\al}{2}$, $i_1=2\mb{c}-\om_i-p+\al-1$ and $i_2\le i_1+2p-\al$ ($\implies$ $i_2\le2\mb{c}-\om_i+p-1$).
		
		\item $i_1<\om_i<i_2$ with $\om_i<\mb{c}+\frac{p}{2}$, $i_1=i_2-2p+\al-1$ and $2\mb{c}-\om_i+p\le i_2\le\om_i+p-\al$.
		
		\item $i_1<\om_i<i_2$ with $\om_i\ge\mb{c}+\frac{p}{2}$ and $i_1=i_2-2p+\al-1$.
		
		\item $\om_i<i_1<i_2$ with $\om_i\le n-2p-2$, $i_1\ge\om_i+p+1$ and $i_2=\om_i+2p-\al+1$.
		
		\item $\om_i<i_1<i_2$ with $\om_i>n-2p-2$, $i_1\ge\om_i+p+1$ and $i_2=n-\al-1$.
	\end{enumerate}
	
	We now show that the sets $\mcl{M}_{\al}$, where $\al\in[p-1]$, are pairwise disjoint. To do so, we first derive a few preliminary results. 
	Recall that for any $u, v \in V(C_n^p)$, the notation $u\sim v$ denotes that the vertices $u$ and $v$ are adjacent in $C_n^p$. This adjacency holds if and only if 
	$v=u \pm j \ (\text{mod $n$}) \text{ for some } j \in [p].$ 
	If $u$ and $v$ are not adjacent, then we write $u \nsim v$.
	
	\begin{proposition}\label{proposition:al_i>c}
		Let $F\in\md$ such that $F\in\mcl{A}_i$ and $F^c=\{\om_i\}\sqcup\{i_1,i_2\}$. If $i_1<i_2<\om_i$ and $i_1\ge2\mb{c}-\om_i-p$, then $i_2\le\om_i-p-1$.
	\end{proposition}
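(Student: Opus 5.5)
The plan is to argue by contradiction, using only the facts that $F^c$ induces a disconnected subgraph on three vertices and that, by \Cref{remark:Aj}, $\om_i\os i_1$ and $\om_i\os i_2$. Suppose $i_2\ge \om_i-p$. Since $i_2<\om_i$, we then have $1\le \om_i-i_2\le p$, so $i_2\sim\om_i$ in $C_n^p$. In general, any two vertices $u<v$ in $\{0,\dots,n-1\}$ with $1\le v-u\le p$ are adjacent, because $v=u+j$ with $j\in[p]$; no wrap-around is needed.

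The graph $\cn{F}$ has vertex set $\{i_1,i_2,\om_i\}$ and is disconnected. It already contains the edge $\{i_2,\om_i\}$, so it suffices to show that $i_1$ is adjacent to $i_2$ or to $\om_i$. I will split into two cases according to the position of $\om_i$ relative to $\mb{c}$.

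\emph{Case $\om_i\ge \mb{c}$.} By \Cref{remark:order in S}\ref{part:ii}, $\om_i\os i_2$ together with $i_2<\om_i$ forces $i_2<2\mb{c}-\om_i$. Combining this with the hypothesis $i_1\ge 2\mb{c}-\om_i-p$ gives
$$0<i_2-i_1<(2\mb{c}-\om_i)-(2\mb{c}-\om_i-p)=p.$$
Hence $i_1\sim i_2$, and the path $i_1,i_2,\om_i$ makes $\cn{F}$ connected. This contradicts $F\in\md$.

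\emph{Case $\om_i<\mb{c}$.} Here $2\mb{c}-\om_i\ge \om_i+1$ (in fact $\ge\om_i+2$). The hypothesis therefore gives $i_1\ge 2\mb{c}-\om_i-p\ge \om_i-p+1$. Together with $i_1<\om_i$ this yields $1\le \om_i-i_1\le p-1$, so $i_1\sim\om_i$. Then $\om_i$ is adjacent to both other vertices, and $\cn{F}$ is again connected, a contradiction.

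In both cases the assumption $i_2\ge\om_i-p$ fails, so $i_2\le \om_i-p-1$. The only delicate step is the first case, where the hypothesis on $i_1$ alone does not give adjacency to $\om_i$. There the ordering constraint $\om_i\os i_2$ from \Cref{remark:Aj} and \Cref{remark:order in S} is what squeezes $i_1$ and $i_2$ into a window of width less than $p$. Everything else is direct arithmetic with the explicit adjacency rule of $C_n^p$.
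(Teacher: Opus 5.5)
Your proof is correct and uses the same ingredients as the paper. Both split at $\mb{c}$. When $\om_i\ge\mb{c}$, both use \Cref{remark:order in S}\ref{part:ii} to squeeze $i_1,i_2$ into a window of width less than $p$, giving $i_1\sim i_2$; in the other case, both get $i_1\sim\om_i$. The only difference is packaging: the paper first rules out $\om_i\le\mb{c}$ outright, since, as your second case in fact shows, it is impossible regardless of $i_2$. It then concludes $i_2\nsim\om_i$ directly from $i_1\sim i_2$, rather than by contradiction.
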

	
	\begin{proof}
		Since $F\in\md$, $\cn{F}$ is disconnected. Suppose $\om_i\le\mb{c} $. Then $2\mb{c}-\om_i\ge\om_i$, which implies $i_1\ge\om_i-p$. Hence $\om_i-p\le i_1<i_2<\om_i$. This means that $\cn{F}$ is connected, a contradiction. Therefore $\om_i>\mb{c}$.
		By \Cref{remark:Aj}, $\om_i\os i_2$. Since $i_2<w_i$, \Cref{remark:order in S}\ref{part:ii} gives $i_2<2\mb{c}-\om_i$. Then $2\mb{c}-\om_i-p\le i_1<i_2$ implies $i_2-p<i_1<i_2$. Hence $i_1\sim i_2$. Using the fact that $\cn{F}$ is disconnected, it follows that $i_2\nsim\om_i$, and therefore $i_2\le\om_i-p-1$.
	\end{proof}
	
	The following result provides additional information about a facet $F\in\mcl{M}_{\al}$ for some $\al\in[p-1]$.
	
	\begin{proposition} \label{proposition:M_al conditions}
		Let $F\in\md$ such that $F\in\mcl{A}_i$ and $F^c=\{\om_i\}\sqcup\{i_1,i_2\}$. Assume $F\in\mcl{M}_{\al}$ for some $\al\in[p-1]$. 
		\begin{enumerate}[label=(\roman*)]
			\item \label{ob_x1} If $F$ satisfies \X{\al}{1}, then $\mb{c}+\frac{\al+1}{2}\le\om_i<n-p$.
			
			\item \label{ob_x2} If $F$ satisfies \X{\al}{2}, then $\om_i<n-p$.
			
			\item \label{ob_x3} If $F$ satisfies \X{\al}{3}, then $i_1\ge2\mb{c}-\om_i-2p+\al-1$, $i_1\ge\om_i-p+\al$, $i_1>p$, $i_2<n-p$ and $i_1\sim\om_i$.
			
			\item \label{ob_x4} If $F$ satisfies \X{\al}{4}, then $\om_i\le\mb{c}-\frac{\al+1}{2}$, $i_1>p$, $i_2\le2\mb{c}-\om_i+p-\al$, $i_2\le n-p$ and $i_1\sim\om_i$.
			
			\item \label{ob_x5} If $F$ satisfies \X{\al}{5}, then $\om_i\ge\mb{c}-\frac{p-1}{2}$, $i_1>p$, $i_2<n-p$ and $i_1\sim\om_i$.
			
			\item \label{ob_x6} If $F$ satisfies \X{\al}{6}, then $\om_i\le\mb{c}+\frac{p-2}{2}$, $i_1>p$, $i_2\le\mb{c}-\frac{\al}{2}+p-1\le\om_i+p-\al-1$, $i_2<n-p$ and $i_2\sim\om_i$.
			
			\item \label{ob_x7} If $F$ satisfies \X{\al}{7}, then $\om_i\ge\mb{c}+\frac{\al}{2}$, $i_1\ge2\mb{c}-\om_i-p+\al-1$, $i_1>p$, $i_2<n-p$ and $i_2\sim\om_i$.
			
			\item \label{ob_x8} If $F$ satisfies \X{\al}{8}, then $i_1>p$, $i_2\le\om_i+p-\al<\om_i+p$, $i_2\le\mb{c}+\frac{3p}{2}-1$, $i_2\le n-p$ and $i_2\sim\om_i$.
			
			\item \label{ob_x9} If $F$ satisfies \X{\al}{9}, then $\om_i>2p$ and $i_1\sim i_2$.
			
			\item \label{ob_x10} If $F$ satisfies \X{\al}{10}, then $\om_i\ge\mb{c}$, $\om_i\ge2p$ and $i_1\sim i_2$.
		\end{enumerate}
	\end{proposition}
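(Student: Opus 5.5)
The plan is to verify each of the ten parts separately. Each uses three ingredients, applied to the defining inequalities of the relevant condition $(\mathcal{X}_{\alpha}^{r})$:
\begin{enumerate}[label=(\alph*)]
\item the order constraint $\omega_i <_{\Omega} i_1,i_2$ from \Cref{remark:Aj}, translated into linear inequalities by \Cref{remark:order in S};
\item the numerical bounds on $\mathbb{c}$ from \Cref{proposition: c and p relation};
\item the fact that $C_n^p[F^c]$ is disconnected, since $F\in M(\Delta_3(C_n^p))$.
\end{enumerate}
Throughout, $1\le\alpha\le p-1$. Adjacency among vertices $x<y$ is read off as $y-x\le p$ or $x+n-y\le p$. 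Since $F^c$ has only three vertices, disconnectedness means that at least one vertex of $F^c$ is adjacent to neither of the other two.

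\textbf{The cases $(\mathcal{X}_\alpha^3)$--$(\mathcal{X}_\alpha^8)$.} Here $i_1<\omega_i<i_2$, so the order constraints take a two-sided form. If $\omega_i<\mathbb{c}$, \Cref{remark:order in S}\ref{part:i} forces $i_2\ge 2\mathbb{c}-\omega_i$. If $\omega_i\ge \mathbb{c}$, \Cref{remark:order in S}\ref{part:ii} forces $i_1<2\mathbb{c}-\omega_i$.

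For $(\mathcal{X}_\alpha^3)$, the relation $i_2=i_1+2p-\alpha+1$ combined with $i_2\ge 2\mathbb{c}-\omega_i$ gives the lower bound $i_1\ge 2\mathbb{c}-\omega_i-2p+\alpha-1$. Next, $i_2-i_1\le 2p$ together with wrap-around considerations shows that $i_1\not\sim i_2$. Disconnectedness then forces exactly one of $i_1,i_2$ to be adjacent to $\omega_i$. I would show it must be $i_1$, which yields $i_1\ge \omega_i-p+\alpha$. The bounds $i_1>p$ and $i_2<n-p$ should come from $\omega_i$ lying near $\mathbb{c}$ together with \Cref{proposition: c and p relation}\ref{>=p},\ref{<=n-p}; these also rule out adjacency across $0\equiv n$.

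The cases $(\mathcal{X}_\alpha^4)$--$(\mathcal{X}_\alpha^8)$ are handled the same way. One first combines the stated range for $i_1$ or $i_2$ with the order constraint to get the bound on $\omega_i$. For example, in $(\mathcal{X}_\alpha^4)$ the interval $\omega_i-p+\alpha\le i_1\le 2\mathbb{c}-\omega_i-p-1$ must be nonempty, which gives $\omega_i\le \mathbb{c}-\frac{\alpha+1}{2}$. In $(\mathcal{X}_\alpha^6)$, $i_1=2\mathbb{c}-\omega_i-p+\alpha-1<\omega_i$ gives $\omega_i\ge$ roughly $\mathbb{c}+\frac{\alpha}{2}$, and $i_1<i_2$ chained with the order constraint gives the upper bound. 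One then identifies which of $i_1,i_2$ is adjacent to $\omega_i$ by disconnectedness, and bounds $i_1>p$ and $i_2<n-p$ via \Cref{proposition: c and p relation}. In $(\mathcal{X}_\alpha^8)$, for instance, $i_2-i_1=2p-\alpha+1\le 2p$ and $i_1<\omega_i<i_2$ mean $\omega_i$ is within $p$ of one endpoint. Disconnectedness then pins $i_2\sim\omega_i$, hence $i_2\le \omega_i+p-\alpha$ because $i_1\not\sim\omega_i$. The bound $i_2\le \mathbb{c}+\frac{3p}{2}-1$ follows from $\omega_i\ge\mathbb{c}+\frac p2$ and $i_1<2\mathbb{c}-\omega_i$.

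\textbf{The cases $(\mathcal{X}_\alpha^1)$, $(\mathcal{X}_\alpha^2)$, $(\mathcal{X}_\alpha^9)$, $(\mathcal{X}_\alpha^{10})$.} For $(\mathcal{X}_\alpha^1)$, the condition $i_1<i_2<\omega_i$ with $i_1=2\mathbb{c}-\omega_i-p+\alpha-1\ge 2\mathbb{c}-\omega_i-p$ lets me invoke \Cref{proposition:al_i>c}. Its proof also gives $\omega_i>\mathbb{c}$, $i_2<2\mathbb{c}-\omega_i$ and $i_2\le\omega_i-p-1$. From $i_1<i_2\le 2\mathbb{c}-\omega_i-1$ I get $\alpha\le p-1$ automatically. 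The sharper bound $\omega_i\ge\mathbb{c}+\frac{\alpha+1}{2}$ should come from requiring $i_1+1\le i_2\le \omega_i-p-1$. Finally, $\omega_i<n-p$ holds because otherwise $\omega_i$ would be adjacent to small vertices through the wrap-around, making $C_n^p[F^c]$ connected. The same reasoning handles $(\mathcal{X}_\alpha^2)$.

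For $(\mathcal{X}_\alpha^9)$ and $(\mathcal{X}_\alpha^{10})$, the condition $\omega_i<i_1<i_2$ combined with \Cref{remark:order in S} gives $\omega_i\ge\mathbb{c}$ when $\omega_i$ is large, which is the $(\mathcal{X}_\alpha^{10})$ case; \Cref{proposition: c and p relation}\ref{range of c} then gives $\omega_i\ge 2p$. Next, $i_1\ge\omega_i+p+1$ means $i_1\not\sim\omega_i$, and $i_2-i_1\le p-\alpha<p$ in $(\mathcal{X}_\alpha^9)$ gives $i_1\sim i_2$. Disconnectedness then forces $\omega_i\not\sim i_2$ across the wrap, which yields $\omega_i>2p$ in $(\mathcal{X}_\alpha^9)$. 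In $(\mathcal{X}_\alpha^{10})$, $i_1\sim i_2$ follows from $i_1\ge\omega_i+p+1>n-p-1$.

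\textbf{Main obstacle.} The hard part is the bookkeeping around wrap-around adjacency modulo $n$, in particular establishing $i_1>p$, $i_2<n-p$ and the non-adjacency claims, and doing so uniformly in the parity of $n$ (through $\mathbb{c}$). I expect to rely repeatedly on \Cref{proposition: c and p relation}\ref{c-(bt+1)/2<=n-2p-1}--\ref{>=p} for this. The half-integer bounds such as $\mathbb{c}-\frac{\alpha+1}{2}$ need care: I would first derive a strict inequality and then use integrality to round it to the stated form.
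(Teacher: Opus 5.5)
Your overall strategy matches the paper's: go case by case, translate $\omega_i<_{\Omega}i_1,i_2$ through \Cref{remark:order in S}, then use the bounds on $\mb{c}$ and disconnectedness. The specific bounds you actually derive (the lower bound in (i), the bound on $\omega_i$ in (iv), and $i_2\le\mb{c}+\frac{3p}{2}-1$ in (viii)) agree with it.

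The real problem is part (ix). Your argument does not work, and no argument can, because the strict inequality $\omega_i>2p$ is false. Under $(\mathcal{X}^9_\alpha)$ we have $p+1\le i_1-\omega_i\le 2p-\alpha$ and $i_2-\omega_i=2p-\alpha+1$. Since $\alpha\le p-1$ and $n\ge 6p-3$, $\omega_i$ is adjacent to neither $i_1$ nor $i_2$ in either direction around the cycle, \emph{whatever} $\omega_i$ is, so disconnectedness carries no information about $\omega_i$.

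The only lower bound comes from the order constraint. If $\omega_i<\mb{c}$, then $2\mb{c}-\omega_i\le i_1\le i_2-1=\omega_i+2p-\alpha$, so $\omega_i\ge\mb{c}-p+\frac{\alpha}{2}\ge 2p-\frac12$. If $\omega_i\ge\mb{c}$, then $\omega_i\ge 3p-1$. Hence $\omega_i\ge 2p$, and equality occurs. Take $n=15$, $p=3$ (so $\mb{c}=8$ and $\Omega=(8,7,9,6,10,5,11,4,12,\dots)$) and $F^c=\{6,10,12\}$. Then $6$ is isolated in $C_{15}^3[F^c]$, $\omega_i=6$, $i_1=10$, $i_2=12$, and $F$ satisfies $(\mathcal{X}^9_1)$ with $\omega_i=2p$.

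The paper's proof of (ix) is exactly this order argument, but it ends with the arithmetic slip $3p-1-\frac{2p-1}{2}>2p$ (the left side equals $2p-\frac12$). Its later uses (e.g.\ \Cref{claim:9_10}) only need $\omega_i\ge 2p$, which is what you should prove.

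Several other steps cite a mechanism that is not the one doing the work.
\begin{itemize}
\item In (x), $\omega_i\ge\mb{c}$ does not come from \Cref{remark:order in S}. You must note that $n-p\le\omega_i+p+1\le i_1<i_2=n-\alpha-1\le n-2$ forces $p\ge 3$. Hence $n\ge 6p-3\ge 4p+3$ and $\omega_i\ge n-2p-1\ge\frac{n+1}{2}\ge\mb{c}$. This step matters: for $(p,n)=(2,9)$ the bound $\omega_i\ge n-2p-1=4$ lies below $\mb{c}=5$.
\item In (ii), $\omega_i<n-p$ is not a wrap-around adjacency effect; $\omega_i$ and $i_1=\omega_i-2p+\alpha-1$ are never adjacent. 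It follows from $\omega_i-2p+\alpha-1=i_1<i_2<2\mb{c}-\omega_i$, i.e.\ $\omega_i\le\mb{c}+\frac{2p-\alpha-1}{2}$, together with part (iv) of \Cref{proposition: c and p relation}. In (i) it is immediate from the hypothesis $\omega_i<\mb{c}+\frac p2$.
\item In (iii) and (viii), disconnectedness only says that at most one of $i_1,i_2$ is adjacent to $\omega_i$. Which one is decided by the order constraint: $i_2\ge 2\mb{c}-\omega_i>\omega_i+p$ when $\omega_i<\mb{c}-\frac p2$, and $i_1<2\mb{c}-\omega_i\le\omega_i-p$ when $\omega_i\ge\mb{c}+\frac p2$.
\item In (vi), the bound $\omega_i\le\mb{c}+\frac{p-2}{2}$ comes from $\omega_i<i_2\le i_1+2p-\alpha=2\mb{c}-\omega_i+p-1$. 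It does not come from chaining $i_1<i_2$ with the order constraint, which is automatically satisfied there.
\end{itemize}
These are all repairable with the tools you list, but as written those steps are unjustified.
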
 
	
	\begin{proof} 
		By \Cref{remark:Aj}, we have $\om_i\os i_1,i_2$.
		\begin{enumerate}[label=(\roman*)]
			\item Suppose $F$ satisfies \X{\al}{1}. Then $i_1<i_2<\om_i<\mb{c}+\frac{p}{2}$ and $i_1=2\mb{c}-\om_i-p+\al-1$. By \Cref{proposition:al_i>c}, $i_2\le\om_i-p-1$. 
			It follows that $2\mb{c}-\om_i-p+\al-1<\om_i-p-1$, which implies $\om_i\ge\mb{c}+\frac{\al+1}{2}$. Since $p\ge2$, we have $\mb{c}+\frac{p}{2}<\mb{c}+\frac{3p-2}{2}$. Moreover, $\mb{c}+\frac{3p-2}{2}\le n-p$ by \Cref{proposition: c and p relation}\ref{<=n-p}. Thus, $\om_i<\mb{c}+\frac{p}{2}$ implies $\om_i<n-p$.
			
			\item Suppose $F$ satisfies \X{\al}{2}. Then $i_1<i_2<\om_i$, $\om_i\ge\mb{c}+\frac{p}{2}$ and $i_1=\om_i-2p+\al-1$. Note that $\om_i>\mb{c}$. 
			Since $\om_i\os i_2$ and $i_2<\om_i$, we have $i_2<2\mb{c}-\om_i$ by \Cref{remark:order in S}\ref{part:ii}. This gives $\om_i-2p+\al-1=i_1<i_2<2\mb{c}-\om_i$, which implies $\om_i\le\mb{c}+\frac{2p-\al-1}{2}$. 
			Since $\al\ge 1$ and $p\ge2$, it follows that $\om_i\le\mb{c}+\frac{2p-2}{2}<\mb{c}+\frac{3p-2}{2}$. Hence $\om_i<n-p$ by \Cref{proposition: c and p relation}\ref{<=n-p}.
			
			\item Suppose $F$ satisfies \X{\al}{3}. Then $i_1<\om_i<i_2$, $\om_i<\mb{c}-\frac{p}{2}$ and $i_2=i_1+2p-\al+1$. Clearly, $\om_i<\mb{c}$. By \Cref{remark:order in S}\ref{part:i}, $\om_i\os i_2$ and $i_2>\om_i$ yield $i_2\ge2\mb{c}-\om_i$.
			Since $i_2=i_1+2p-\al+1$, this implies $i_1\ge2\mb{c}-\om_i-2p+\al-1$. Using $\om_i<\mb{c}-\frac{p}{2}$, we get $i_1>\mb{c}-\frac{3p}{2}+\al-1>\om_i-p+\al-1\ge\om_i-p$ (as $\al\ge 1$). This means that $\om_i-p<i_1<\om_i$, and hence $i_1\sim\om_i$.
			Moreover, $i_1>\mb{c}-\frac{3p}{2}+\al-1\ge\mb{c}-\frac{3p}{2}$. Since $\mb{c}-\frac{3p}{2}\ge p$ by \Cref{proposition: c and p relation}\ref{>=p}, we obtain $i_1>p$. 
			We have $i_2=i_1+2p-\al+1\le i_1+2p$ and $i_1<\om_i<\mb{c}-\frac{p}{2}$. It follows that $i_2<\mb{c}+\frac{3p-2}{2}$. Thus, $i_2<n-p$ by \Cref{proposition: c and p relation}\ref{<=n-p}.
			
			\item Suppose $F$ satisfies \X{\al}{4}. Then $i_1<\om_i<i_2$, $\om_i\ge\mb{c}-\frac{p}{2}$, $\om_i-p+\al\le i_1\le2\mb{c}-\om_i-p-1$ and $i_2=i_1+2p-\al+1$.
			Using $\om_i-p+\al\le i_1\le2\mb{c}-\om_i-p-1$, we obtain $\om_i\le\mb{c}-\frac{\al+1}{2}$. Note that $\om_i\ge\mb{c}-\frac{p}{2}$ and $i_1\ge\om_i-p+\al$ imply $i_1\ge\mb{c}-\frac{3p}{2}+\al>\mb{c}-\frac{3p}{2}$. 
			Therefore $i_1>p$ by \Cref{proposition: c and p relation}\ref{>=p}. Since $i_1\le2\mb{c}-\om_i-p-1$ and $i_2=i_1+2p-\al+1$, we get $i_2\le2\mb{c}-\om_i+p-\al\le2\mb{c}-\om_i+p-1$. Further, $\om_i\ge\mb{c}-\frac{p}{2}$ implies $i_2\le\mb{c}+\frac{3p-2}{2}$. 
			Hence $i_2\le n-p$ by \Cref{proposition: c and p relation}\ref{<=n-p}. Finally, $\om_i-p<\om_i-p+\al\le i_1<\om_i$ implies $i_1\sim\om_i$.
			
			\item Suppose $F$ satisfies \X{\al}{5}. Then $i_1<\om_i<i_2$, $\om_i\le\mb{c}-\frac{\al+1}{2}$, $i_2=2\mb{c}-\om_i+p-\al$ and $i_1\ge2\mb{c}-\om_i-p$. Since $\om_i<\mb{c}$, we have $\om_i<2\mb{c}-\om_i$.
			From $2\mb{c}-\om_i-p\le i_1<\om_i$, we obtain $\om_i\ge\mb{c}-\frac{p-1}{2}$ and $\om_i-p<i_1<\om_i$. It follows that $i_1\sim\om_i$ and $i_1>\mb{c}-\frac{3p-1}{2}$. By \Cref{proposition: c and p relation}\ref{>=p}, $i_1>p$. 
			Moreover, $i_2=2\mb{c}-\om_i+p-\al$ and $\om_i\ge\mb{c}-\frac{p-1}{2}$ imply $i_2\le\mb{c}+\frac{3p-1}{2}-\al\le\mb{c}+\frac{3p-3}{2}$. Thus $i_2<n-p$ by \Cref{proposition: c and p relation}\ref{<=n-p}. 
			
			\item Suppose $F$ satisfies \X{\al}{6}. Then $i_1<\om_i<i_2$, $\om_i\ge\mb{c}+\frac{\al}{2}$, $i_1=2\mb{c}-\om_i-p+\al-1$ and $i_2\le2\mb{c}-\om_i+p-1$. 
			Using $\om_i\ge\mb{c}+\frac{\al}{2}$ and $i_2\le2\mb{c}-\om_i+p-1$, we obtain $i_2\le\mb{c}-\frac{\al}{2}+p-1\le\om_i+p-\al-1$.
			Note that $\om_i>\mb{c}$ implies $2\mb{c}-\om_i<\om_i$. Since $\om_i<i_2\le2\mb{c}-\om_i+p-1$, we have $\om_i\le\mb{c}+\frac{p-2}{2}$ and $\om_i<i_2<\om_i+p$. This gives $i_2\sim\om_i$ and $i_2\le\mb{c}+\frac{3p-4}{2}$. Hence $i_2<n-p$ by \Cref{proposition: c and p relation}\ref{<=n-p}. 
			Now, since $i_1=2\mb{c}-\om_i-p+\al-1$ and $\om_i\le\mb{c}+\frac{p-2}{2}$, we get $i_1\ge\mb{c}-\frac{3p-2}{2}+\al-1\ge\mb{c}-\frac{3p-2}{2}$. By \Cref{proposition: c and p relation}\ref{>=p}, $i_1>p$.
			
			\item Suppose $F$ satisfies \X{\al}{7}. Then $i_1<\om_i<i_2$, $\om_i<\mb{c}+\frac{p}{2}$, $i_1=i_2-2p+\al-1$ and $2\mb{c}-\om_i+p\le i_2\le\om_i+p-\al$. From the bounds on $i_2$, we obtain $\om_i\ge\mb{c}+\frac{\al}{2}$. 
			Moreover, $\om_i<\mb{c}+\frac{p}{2}$ and $i_2\le\om_i+p-\al$ imply $i_2<\mb{c}+\frac{3p}{2}-\al\le\mb{c}+\frac{3p-2}{2}$. Therefore $i_2<n-p$ by \Cref{proposition: c and p relation}\ref{<=n-p}.
			Since $i_2\ge2\mb{c}-\om_i+p$ and $i_1=i_2-2p+\al-1$, we get $i_1\ge2\mb{c}-\om_i-p+\al-1\ge2\mb{c}-\om_i-p$. Further, $\om_i<\mb{c}+\frac{p}{2}$ implies $i_1>\mb{c}-\frac{3p}{2}$. By \Cref{proposition: c and p relation}\ref{>=p}, $i_1>p$. Now, $\om_i<i_2\le\om_i+p-\al<\om_i+p$ implies $i_2\sim\om_i$.
			
			\item Suppose $F$ satisfies \X{\al}{8}. Then $i_1<\om_i<i_2$, $\om_i\ge\mb{c}+\frac{p}{2}$ and $i_1=i_2-2p+\al-1$. Clearly, $\om_i>\mb{c}$. Since $\om_i\os i_1$ and $i_1<\om_i$, \Cref{remark:order in S}\ref{part:ii} implies $i_1<2\mb{c}-\om_i$. Using $i_1=i_2-2p+\al-1$, we obtain $i_2\le2\mb{c}-\om_i+2p-\al$. 
			Further, $\om_i\ge\mb{c}+\frac{p}{2}$ gives $i_2\le\mb{c}+\frac{3p}{2}-\al\le\om_i+p-\al<\om_i+p$. Since $\om_i<i_2$, we get $i_2\sim\om_i$. 
			Moreover, $i_2\le\mb{c}+\frac{3p}{2}-\al\le\mb{c}+\frac{3p-2}{2}$. By \Cref{proposition: c and p relation}\ref{<=n-p}, $i_2\le n-p$.
			Now, $i_1=i_2-2p+\al-1\ge i_2-2p$ and $\mb{c}+\frac{p}{2}\le\om_i<i_2$ imply $i_1\ge\mb{c}-\frac{3p}{2}+1$. Hence $i_1>p$ by \Cref{proposition: c and p relation}\ref{>=p}.
			
			\item Suppose $F$ satisfies \X{\al}{9}. Then $\om_i<i_1<i_2$, $\om_i\le n-2p-2$, $i_1\ge\om_i+p+1$ and $i_2=\om_i+2p-\al+1$. Since $i_1<i_2=\om_i+2p-\al+1\le\om_i+2p<i_1+p$, we have $i_1\sim i_2$. By \Cref{proposition: c and p relation}\ref{range of c}, $\mb{c}\ge 3p-1$. If $\om_i\ge\mb{c} $, then $\om_i>2p$ (as $p\ge2$). Otherwise, if $\om_i<\mb{c}$, then $\om_i\os i_1$ and $i_1>\om_i$ imply $i_1\ge2\mb{c}-\om_i$ by \Cref{remark:order in S}\ref{part:i}. Since $i_1<i_2=\om_i+2p-\al+1$, it follows that $\om_i\ge\mb{c}-\frac{2p-\al}{2}\ge 3p-1-\frac{2p-1}{2}>2p$. Hence, in either case, $\om_i>2p$.
			
			\item Suppose $F$ satisfies \X{\al}{10}. Then $\om_i<i_1<i_2$, $\om_i>n-2p-2$, $i_1\ge\om_i+p+1$ and $i_2=n-\al-1$. Since $n\ge 6p-3$ and $p\ge2$, it follows that $\om_i\ge n-2p-1\ge 4p-4\ge2p$. Note that $i_1<i_2=n-\al-1<\om_i+2p-\al+1\le\om_i+2p<i_1+p$. Hence $i_1\sim i_2$.
			Moreover, $n-p\le\om_i+p+1\le i_1<i_2=n-\al-1\le n-2$. This implies $p\ge 3$. Now, suppose $\om_i<\mb{c}$. Then $n-2p-1\le\om_i<\mb{c}\le\frac{n+1}{2}$, which yields $n<4p+3$. Since $n\ge 6p-3$, we get $p<3$, a contradiction. Therefore $\om_i\ge\mb{c}$.
		\end{enumerate} 
		\vspace{-0.2 cm}
	\end{proof} 
	
	\begin{remark}\label{remark:x4_x5 and x6_x7}
		Let $F\in\md$ such that $F\in\mcl{A}_i$ and $F^c=\{\om_i\}\sqcup\{i_1,i_2\}$. Suppose $F\in\mcl{M}_{\bt}$ for some $\bt\in[p-1]$. 
		\begin{enumerate}[label=(\roman*)]
			\item \label{x4_x5} If $F$ satisfies \X{\bt}{4} or \X{\bt}{5}, then $i_2\le2\mb{c}-\om_i+p-\bt$.
			\item \label{x6_x7} If $F$ satisfies \X{\bt}{6} or \X{\bt}{7}, then $i_1\ge2\mb{c}-\om_i-p+\bt-1$. 
		\end{enumerate}
	\end{remark}
	
	\begin{proposition}\label{proposition:om_i>c x6 x7 x8}
		Let $F\in\md$ such that $F\in\mcl{A}_i$ and $F^c=\{\om_i\}\sqcup\{i_1,i_2\}$ with $i_1<\om_i<i_2$ and $\om_i\ge\mb{c}$. 
		Suppose $F\in\mcl{M}_{\bt}$ for some $\bt\in[p-1]$. Then $F$ satisfies \X{\bt}{6} or \X{\bt}{7} or \X{\bt}{8}. Moreover, if $ \om_i<\mb{c}+\frac{p}{2}$, then $F$ satisfies \X{\bt}{6} or \X{\bt}{7}.
	\end{proposition}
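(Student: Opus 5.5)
The argument is elimination: $F$ satisfies at least one of the ten conditions $(\mcl{X}_{\bt}^{1}),\ldots,(\mcl{X}_{\bt}^{10})$. We rule out every condition other than \X{\bt}{6}, \X{\bt}{7}, \X{\bt}{8}, using the hypotheses $i_1<\om_i<i_2$ and $\om_i\ge\mb{c}$.

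\textbf{Conditions excluded by vertex position.} Conditions \X{\bt}{1}, \X{\bt}{2}, \X{\bt}{9} and \X{\bt}{10} each require either $i_2<\om_i$ or $\om_i<i_1$. Both contradict $i_1<\om_i<i_2$, so these four are excluded at once.

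\textbf{Conditions excluded by the bound on $\om_i$.} Each of the remaining conditions \X{\bt}{3}, \X{\bt}{4}, \X{\bt}{5} forces $\om_i<\mb{c}$.
\begin{itemize}
\item For \X{\bt}{3}, the condition itself states $\om_i<\mb{c}-\frac{p}{2}<\mb{c}$.
\item For \X{\bt}{4}, \Cref{proposition:M_al conditions}\ref{ob_x4} gives $\om_i\le\mb{c}-\frac{\bt+1}{2}<\mb{c}$.
\item For \X{\bt}{5}, the definition states $\om_i\le\mb{c}-\frac{\bt+1}{2}<\mb{c}$.
\end{itemize}
In all three cases we use $\bt\ge1$, and each contradicts $\om_i\ge\mb{c}$. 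Hence $F$ satisfies \X{\bt}{6}, \X{\bt}{7} or \X{\bt}{8}.

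\textbf{The refinement.} Suppose in addition that $\om_i<\mb{c}+\frac{p}{2}$. Condition \X{\bt}{8} explicitly requires $\om_i\ge\mb{c}+\frac{p}{2}$, so it is excluded. Therefore $F$ satisfies \X{\bt}{6} or \X{\bt}{7}.

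Every step is a direct comparison, so no real obstacle is expected. The only point needing care is \X{\bt}{4}: its defining inequality $\om_i\ge\mb{c}-\frac{p}{2}$ does not by itself contradict $\om_i\ge\mb{c}$. The contradiction comes from the derived bound $\om_i\le\mb{c}-\frac{\bt+1}{2}$ in \Cref{proposition:M_al conditions}\ref{ob_x4}, which follows from $\om_i-p+\bt\le i_1\le2\mb{c}-\om_i-p-1$.
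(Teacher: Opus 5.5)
Your proof is correct and matches the paper's argument: $i_1<\om_i<i_2$ leaves only \X{\bt}{3} through \X{\bt}{8}. The bound $\om_i<\mb{c}$ then rules out \X{\bt}{3}, \X{\bt}{4} and \X{\bt}{5}; for \X{\bt}{3} and \X{\bt}{5} it is read off the definitions, and for \X{\bt}{4} it comes from \Cref{proposition:M_al conditions}\ref{ob_x4}. Finally, the defining inequality $\om_i\ge\mb{c}+\frac{p}{2}$ of \X{\bt}{8} excludes it and gives the refinement.
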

	
	\begin{proof} 
		Since $F\in\mcl{M}_{\bt}$ for some $\bt\in[p-1]$ and $F^c=\{\om_i\}\sqcup\{i_1,i_2\}$ with $i_1<\om_i<i_2$, $F$ satisfies one of the conditions from \X{\bt}{3} to \X{\bt}{8}. If $F$ satisfies \X{\bt}{3} or \X{\bt}{5}, then $\om_i<\mb{c}$; and if $F$ satisfies \X{\bt}{4}, then $\om_i<\mb{c}$ by \Cref{proposition:M_al conditions}\ref{ob_x4}. Therefore, $\om_i\ge\mb{c}$ implies that $F$ satisfies \X{\bt}{6} or \X{\bt}{7} or \X{\bt}{8}.
		Moreover, if $\om_i<\mb{c}+\frac{p}{2}$, then $F$ does not satisfy \X{\bt}{8}, so $F$ satisfies \X{\bt}{6} or \X{\bt}{7}.
	\end{proof}
	
	\begin{proposition}\label{proposition:M_al M_be disjoint}
		For any distinct $\al, \bt\in[p-1]$, $\mcl{M}_{\al}\cap\mcl{M}_{\bt}=\emptyset$.
	\end{proposition}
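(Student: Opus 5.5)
We argue by contradiction. Take $F\in\mcl{M}_{\al}\cap\mcl{M}_{\bt}$ with, without loss of generality, $1\le\al<\bt\le p-1$. Write $F\in\mcl{A}_i$ and $F^c=\{\om_i\}\sqcup\{i_1,i_2\}$ with $i_1<i_2$. Then $F$ satisfies some condition \X{\al}{a} and some condition \X{\bt}{b}, and we show that no pair $(a,b)$ is compatible. The relative position of $\om_i$ with respect to $i_1,i_2$ is fixed by $F$: either $i_1<i_2<\om_i$, or $i_1<\om_i<i_2$, or $\om_i<i_1<i_2$. This splits the problem into three independent groups: $\{1,2\}$, $\{3,\dots,8\}$ and $\{9,10\}$. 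In each condition one of $i_1,i_2$ is pinned down as an explicit function of $\om_i$ and the index. The general strategy is to compare these pinned values for $\al$ and $\bt$; since $\al\neq\bt$ they differ, and we derive a contradiction from the resulting inequalities.

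\textbf{Groups $\{1,2\}$ and $\{9,10\}$.} These are easy. The inequalities $\om_i<\mb{c}+\frac{p}{2}$ and $\om_i\ge\mb{c}+\frac p2$ are independent of the index, so \X{\al}{1} forces \X{\bt}{1}. The pinned value of $i_1$ is then $2\mb{c}-\om_i-p+\al-1$ for $\al$ and $2\mb{c}-\om_i-p+\bt-1$ for $\bt$, and these are different. The same argument handles \X{\cdot}{2}, where $i_1=\om_i-2p+\al-1$. Similarly, $\om_i\le n-2p-2$ and $\om_i>n-2p-2$ are index-free, and $i_2$ is pinned to $\om_i+2p-\al+1$ or to $n-\al-1$. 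Hence two different indices give two different values of $i_2$.

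\textbf{Group $\{3,\dots,8\}$.} This is the main obstacle. First split by the sign of $\om_i-\mb{c}$.

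If $\om_i\ge\mb{c}$, then \Cref{proposition:om_i>c x6 x7 x8} shows that both memberships come from \X{\cdot}{6}, \X{\cdot}{7} or \X{\cdot}{8}. Moreover, \X{\cdot}{8} requires $\om_i\ge\mb{c}+\frac p2$, and in that range the proposition shows \X{\cdot}{6} and \X{\cdot}{7} cannot occur (both force $\om_i<\mb{c}+\frac p2$, by \Cref{proposition:M_al conditions}\ref{ob_x6} and the definition). So \X{\cdot}{8} only pairs with itself, and then $i_2-i_1=2p-\al+1\ne 2p-\bt+1$. For the pairs among \X{\cdot}{6} and \X{\cdot}{7} we argue as follows.
\begin{itemize}
\item \X{\al}{6} with \X{\bt}{6}: the values of $i_1$ differ.
\item \X{\al}{7} with \X{\bt}{7}: the differences $i_2-i_1$ differ.
\item Mixed case, say \X{\al}{6} and \X{\bt}{7}: condition \X{\al}{6} gives $i_2\le 2\mb{c}-\om_i+p-1$, while \X{\bt}{7} requires $i_2\ge 2\mb{c}-\om_i+p$. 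This is a contradiction, and the same argument works with $\al$ and $\bt$ exchanged.
\end{itemize}

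If $\om_i<\mb{c}$, only \X{\cdot}{3}, \X{\cdot}{4}, \X{\cdot}{5} are possible. Condition \X{\cdot}{6} needs $\om_i\ge \mb{c}+\frac{\al}{2}$, and \X{\cdot}{7}, \X{\cdot}{8} also force $\om_i>\mb{c}$ via \Cref{proposition:M_al conditions}. Here \X{\cdot}{3} and \X{\cdot}{4} are separated by the index-free threshold $\mb{c}-\frac p2$. In \X{\cdot}{3} and in \X{\cdot}{4} the difference $i_2-i_1=2p-\al+1$ depends on the index, so no same-type collision is possible. For \X{\cdot}{5} the value $i_2=2\mb{c}-\om_i+p-\al$ is pinned, so two indices give different $i_2$.
\begin{itemize}
\item \X{\al}{5} with \X{\bt}{3}: \Cref{proposition:M_al conditions}\ref{ob_x5} gives $\om_i\ge\mb{c}-\frac{p-1}{2}>\mb{c}-\frac p2$, which is incompatible with \X{\bt}{3}.
\item \X{\cdot}{4} with \X{\cdot}{5}: the \X{}{4} side has $i_1\le 2\mb{c}-\om_i-p-1$, whereas \X{}{5} forces $i_1\ge 2\mb{c}-\om_i-p$. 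This is a contradiction.
\end{itemize}

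The real work is bookkeeping these cross cases carefully, checking each time that the \X{}{} inequalities which do not involve the index already exclude the pair. The threshold inequalities on $\om_i$ come from \Cref{proposition:M_al conditions}.
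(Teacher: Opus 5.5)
Your proof is correct and follows the paper's approach. You assume $F\in\mcl{M}_{\al}\cap\mcl{M}_{\bt}$, use the relative position of $\om_i$ and the index-free thresholds to force compatible condition types, and then compare the index-dependent pinned quantities. You also write out the cases \X{\al}{4}--\X{\al}{8}, which the paper dismisses as ``a similar argument'', organizing them by the sign of $\om_i-\mb{c}$ via \Cref{proposition:om_i>c x6 x7 x8}; every pair of conditions is accounted for.
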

	
	\begin{proof}
		On the contrary, suppose that $\mcl{M}_{\al}\cap\mcl{M}_{\bt}\ne\emptyset$. Then there exists a facet $F\in\md$ such that $F\in\mcl{M}_{\al}\cap\mcl{M}_{\bt}$. Since $F\in\mcl{M}_{\al}$, it follows that $F\in\mcl{A}_i$ such that $F^c=\{\om_i\}\sqcup\{i_1,i_2\}$, and $F$ satisfies one of the conditions from \X{\al}{1} to \X{\al}{10}. 
		
		Suppose $F$ satisfies \X{\al}{1}. Then $i_1<i_2<\om_i$, $\om_i<\mb{c}+\frac{p}{2}$ and $i_1=2\mb{c}-\om_i-p+\al-1$.
		Therefore, $F\in\mcl{M}_{\bt}$ implies $F$ satisfies \X{\bt}{1}. It follows that $i_1=2\mb{c}-\om_i-p+\bt-1$, thereby implying that $\al=\bt$, a contradiction. 
		Now, if $F$ satisfies \X{\al}{2} or \X{\al}{9} or \X{\al}{10}, then a similar argument leads to a contradiction to $\al\ne\bt$.
		
		Suppose $F$ satisfies \X{\al}{3}. Then $i_1<\om_i<i_2$, $\om_i<\mb{c}-\frac{p}{2}$ and $i_2=i_1+2p-\al+1$. 
		Therefore, $F\in\mcl{M}_{\bt}$ implies $F$ satisfies \X{\bt}{3} or \X{\bt}{5} or \X{\bt}{7}. If $F$ satisfies \X{\bt}{3}, then $i_2=i_1+2p-\bt+1$, which implies $\al=\bt$, a contradiction. Now, if $F$ satisfies \X{\bt}{5}, then $\om_i\ge\mb{c}-\frac{p-1}{2}$ by \Cref{proposition:M_al conditions}\ref{ob_x5}; and if $F$ satisfies \X{\bt}{7}, then $\om_i\ge\mb{c}+\frac{\bt}{2}$ by \Cref{proposition:M_al conditions}\ref{ob_x7}. Both contradict $\om_i<\mb{c}-\frac{p}{2}$. 
		
		Suppose $F$ satisfies any of the conditions from \X{\al}{4} to \X{\al}{8}. Then, using \Cref{proposition:M_al conditions} and a similar argument as above, we again get a contradiction to $\al\ne\bt$. 
		
		Hence our assumption that $\mcl{M}_{\al}\cap\mcl{M}_{\bt}\ne\emptyset$ is false.
	\end{proof}
	
	Define the subset $\mcl{M}_0$ of $\md$ as $$\mcl{M}_0:=\md\setminus(\sqcup_{\al=1}^{p-1}\mcl{M}_{\al}).$$ 
	Using \Cref{proposition:M_al M_be disjoint}, we conclude that $\md=\bigsqcup_{\al=0}^{p-1} \mcl{M}_{\al}$. 
	
	We now define an order $\prec$ on the elements of $\md$, which we prove to be a shelling order for the facets of $\Delta_3(C_n^p)$. For this, we modify the order $\ll$ by repositioning the elements of the sets $\mcl{M}_{\al}$ for $\al\in[p-1]$ in the poset $(\md, \ll)$, leading to a new poset $(\md, \prec)$.
	
	\begin{definition}\label{def:prec} 
		The order $\prec$ on the elements of $\md$ is defined as follows. Let $F,F'\in\md$. Then $F \prec F'$ if and only if any one of the following conditions is true:
		\begin{enumerate}[label=(\roman*)]
			\item \label{def 1} $F \ll F'$ and $F, F' \in\mcl{M}_{\al}$ for some $\al\in[0,p-1]$.
			\item \label{def 2} $F\in\mcl{M}_{\al}$ and $F' \in\mcl{M}_{\bt}$ for some $\al,\bt\in[0,p-1]$ and $\al<\bt$. 
		\end{enumerate}		
	\end{definition}
	
	Observe that the order $\prec$ on the elements of $\md$ is defined such that $\mcl{M}_{\al-1}$ precedes $\mcl{M}_{\al}$ for all $\al\in[p-1]$. Moreover, within each $\mcl{M}_{\al}$, where $\al\in[0,p-1]$, all facets are ordered using the order $\ll$. Hence, $\prec$ is a total order. 
	To show that $\prec$ provides a shelling order for the facets of $\Delta_3(C_n^p)$, we first prove several key results.
	
	\begin{proposition}\label{proposition:i1_i2 nsim al_i}
		Let $F\in\md$ such that $F\in\mcl{A}_i$ and $F^c=\{\om_i\}\sqcup\{i_1,i_2\}$.
		\begin{enumerate}[label=(\roman*)]
			\item \label{sim i1} If $\om_i-p\le i_1<\om_i$, then $i_2\nsim\om_i$ and $i_2\ge\om_i+p+1>\om_i$. 
			\item \label{sim i2} If $\om_i<i_2\le\om_i+p$, then $i_1\nsim\om_i$ and $i_1\le\om_i-p-1<\om_i$.
			\item \label{nsim i1} If $i_1<i_2<\om_i$, then $i_1\nsim\om_i$. 
			\item \label{nsim i2} If $\om_i<i_1<i_2$, then $i_2\nsim\om_i$.
		\end{enumerate} 
	\end{proposition}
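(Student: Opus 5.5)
The plan is to argue each part directly from the fact that $F\in\md$, i.e. $\cn{F}$, the induced subgraph on the three vertices $\{\om_i,i_1,i_2\}$, is disconnected, together with the ordering information $\om_i\os i_1,i_2$ from \Cref{remark:Aj} and \Cref{remark:order in S}. A graph on three vertices is disconnected exactly when at most one of the three pairs is an edge. So whenever we know one pair is adjacent, the other two pairs must be non-adjacent.

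For \ref{sim i1}: if $\om_i-p\le i_1<\om_i$, then $i_1\sim\om_i$, so $i_2\nsim\om_i$ and $i_2\nsim i_1$. It remains to locate $i_2$. Since $i_2>i_1\ge\om_i-p$ and $i_2\ne\om_i$, either $i_1<i_2<\om_i$ or $i_2>\om_i$.
\begin{itemize}
\item In the first case $i_2\in(\om_i-p,\om_i)$ would be adjacent to $\om_i$, a contradiction.
\item In the second case, non-adjacency to $\om_i$ forces $i_2\ge\om_i+p+1$ or $i_2$ to wrap around, i.e.\ $i_2\ge n-p+\om_i$ modulo $n$.
\end{itemize}
The wrap-around sub-case must be excluded. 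If $i_2\ge \om_i-p+n$ and $i_1\ge\om_i-p$ with $i_1<\om_i$, then $i_2-i_1 > n-p$. Since also $i_2-i_1\le n-1$, the vertices $i_1,i_2$ are within cyclic distance $p$, hence adjacent, contradicting $i_2\nsim i_1$. Here I use $n\ge 6p-3>2p$ so the cyclic distance is well defined. Thus $i_2\ge\om_i+p+1$.

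Part \ref{sim i2} is the mirror image. From $i_2\sim\om_i$ we get $i_1\nsim\om_i$ and $i_1\nsim i_2$. The alternative $\om_i<i_1<i_2\le \om_i+p$ makes $i_1\sim\om_i$. A wrap-around with $i_1\le \om_i+p-n$ makes $i_1$ cyclically close to $i_2$, since $i_2-i_1\ge n-p$. Hence $i_1\le\om_i-p-1$.

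For \ref{nsim i1}, with $i_1<i_2<\om_i$, I would split on whether $i_1\sim\om_i$ linearly or only via wrap-around.
\begin{itemize}
\item If $\om_i-p\le i_1$, then $i_1<i_2<\om_i$ puts all three within a window of length $p$. They then form a triangle, so $\cn F$ is connected, a contradiction.
\item The wrap-around case $i_1\le \om_i-n+p$ needs $\om_i\ge n-p+i_1\ge n-p$. Then $\om_i\os i_1,i_2$ with $i_1<i_2<\om_i$ and \Cref{remark:order in S}\ref{part:ii} give $i_1<i_2<2\mb{c}-\om_i\le 2\mb c-n+p\le p+1$. So $i_1,i_2\in[0,p]$ and $\om_i\in[n-p,n-1]$, and all three lie in a cyclic window of length at most $p$. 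Again they form a triangle, a contradiction.
\end{itemize}

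Part \ref{nsim i2} is symmetric. If $\om_i<i_1<i_2\le\om_i+p$, we get a triangle. The wrap-around case $i_2\ge n-p+\om_i$ forces $\om_i\le p-1<\mb c$. Then \Cref{remark:order in S}\ref{part:i} gives $i_1\ge 2\mb c-\om_i\ge n-\om_i$. Combined with $\om_i\le p-1$, this puts $i_1,i_2$ in $[n-p,n-1]$ near $\om_i$ cyclically, so again we get a triangle.

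The main obstacle is the careful handling of the cyclic wrap-around cases. In particular, one must check that the distances really are at most $p$ in the cyclic sense, which uses $\mb c\ge n/2$ and the bounds of \Cref{proposition: c and p relation}\ref{range of c}.
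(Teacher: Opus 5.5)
Your proof is correct and follows the paper's own argument. Disconnectedness of the three-vertex graph $\cn{F}$ means one adjacency rules out the other two, and in the wrap-around cases of (iii) and (iv) you use $\om_i\os i_1,i_2$ with \Cref{remark:order in S}, just as the paper does.

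There are two harmless slips. In (i) and (ii), the ``wrap-around'' alternative you exclude is actually an adjacency case, so $i_2\nsim\om_i$ with $i_2>\om_i$ (resp.\ $i_1\nsim\om_i$ with $i_1<\om_i$) gives the bound directly. In the wrap-around cases of (iii) and (iv), the three vertices need not form a triangle (e.g.\ $n$ even, $\om_i=n-p$, $i_1=0$, $i_2=p-1$); however, $i_1\sim i_2$ together with the assumed adjacency to $\om_i$ already makes $\cn{F}$ connected, which is all you need.
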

	
	\begin{proof} 
		Since $F\in\md$, $\cn{F}$ is disconnected. Recall that $F\in\mcl{A}_i$ implies $i_1<i_2$ by our assumption.
		\begin{enumerate}[label=(\roman*)]
			\item Since $\om_i-p\le i_1<\om_i$, we have $i_1\sim\om_i$. The fact that $\cn{F}$ is disconnected implies $i_2\nsim\om_i$. Using $i_1<i_2$, we obtain $i_2\ge\om_i+p+1>\om_i$.
			
			\item Since $\om_i<i_2\le\om_i+p$, we get $i_2\sim\om_i$. Then, $\cn{F}$ is disconnected implies $i_1\nsim\om_i$. Moreover, $i_1<i_2$ gives $i_1\le\om_i-p-1<\om_i$.
			
			\item We have $0\le i_1<i_2<\om_i\le n-1$. Suppose $i_1 \sim \om_i$. Since $\cn{F}$ is disconnected, we get $i_1\nsim i_2$, and hence $i_1\le\om_i+p\ (\text{mod } n)$. This means that $\om_i\ge n-p$. Since $n-p>\mb{c}$ by \Cref{proposition: c and p relation}\ref{range of c}, we get $\om_i>\mb{c}$. 
			By \Cref{remark:Aj}, $\om_i\os i_2$. Therefore $i_2<2\mb{c}-\om_i$ by \Cref{remark:order in S}\ref{part:ii}. Using $\om_i\ge n-p$, we obtain $i_2<2\mb{c}-n+p\le p+1$ (as $2\mb{c}\in\{n,n+1\}$). It follows that $0\le i_1<i_2\le p$, which implies $i_1\sim i_2$, a contradiction. Hence $i_1\nsim\om_i$. 
			
			\item We have $0\le\om_i<i_1<i_2\le n-1$. Suppose $i_2 \sim \om_i$. Using the fact that $\cn{F}$ is disconnected, we get $i_1\nsim i_2$, and therefore $\om_i\le i_2+p\ (\text{mod } n)$. This means that $\om_i\le p-1$. Since $p<\mb{c}$ by \Cref{proposition: c and p relation}\ref{range of c}, $\om_i<\mb{c}$. 
			By \Cref{remark:Aj}, $\om_i\os i_1$, and hence by \Cref{remark:order in S}\ref{part:i}, $i_1\ge2\mb{c}-\om_i$. Now, $\om_i\le p-1$ implies $i_1\ge2\mb{c}-p+1\ge n-p+1$. This gives $n-p+1\le i_1<i_2\le n-1$, and thus $i_1\sim i_2$, a contradiction. Hence $i_2\nsim\om_i$.
		\end{enumerate}
		\vspace{-0.2 cm}
	\end{proof}
	
	\begin{proposition}\label{proposition:in M_bt+gm} 
		Let $F,F'\in\md$ and let $\bt\in[p-1]$. Assume that $F\in\mcl{A}_i$ such that $F^c=\{\om_i\}\sqcup\{i_1,i_2\}$ and $F'\in\mcl{M}_{\bt}$. 
		\begin{enumerate}[label=(\arabic*)]
			\item \label{satisfies x1} Suppose $i_1<i_2<\om_i$ and $\om_i<\mb{c}+\frac{p}{2}$. 
			\begin{enumerate}[label=(\roman*)]
				\item \label{satisfies x1 i} If $i_1=2\mb{c}-\om_i-p+\bt-1$, then $F\notin\mcl{M}_0$. Moreover, if $F'\ll F$, then $F'\prec F$.
				
				\item \label{satisfies x1 ii} If $i_1\ge2\mb{c}-\om_i-p+\bt$, then $F\notin\mcl{M}_0$ and $F'\prec F$.
			\end{enumerate}
			
			\item \label{satisfies x2} Suppose $i_1<i_2<\om_i$ and $\om_i\ge\mb{c}+\frac{p}{2}$. 
			\begin{enumerate}[label=(\roman*)]
				\item \label{satisfies x2 i} If $i_1=\om_i-2p+\bt-1$, then $F\notin\mcl{M}_0$. Moreover, if $F'\ll F$, then $F'\prec F$.
				
				\item \label{satisfies x2 ii} If $i_1\ge\om_i-2p+\bt$, then $F\notin\mcl{M}_0$ and $F'\prec F$.
			\end{enumerate}
			
			\item \label{satisfies x3} Suppose $i_1<\om_i<i_2$ and $\om_i<\mb{c}-\frac{p}{2}$. 
			\begin{enumerate}[label=(\roman*)]
				\item \label{satisfies x3 i} If $i_2=i_1+2p-\bt+1$, then $F\notin\mcl{M}_0$. Moreover, if $F'\ll F$, then $F'\prec F$.
				
				\item \label{satisfies x3 ii} If $i_2\le i_1+2p-\bt$, then $F\notin\mcl{M}_0$ and $F'\prec F$.
			\end{enumerate}
			
			\item \label{satisfies x4} Suppose $i_1<\om_i$, $\om_i\ge\mb{c}-\frac{p}{2}$ and $\om_i-p\le i_1\le2\mb{c}-\om_i-p-1$. 
			\begin{enumerate}[label=(\roman*)]
				\item \label{satisfies x4 i} If $i_2=i_1+2p-\bt+1$, then $F\notin\mcl{M}_0$. Moreover, if $F'\ll F$, then $F'\prec F$.
				
				\item \label{satisfies x4 ii} If $i_2\le i_1+2p-\bt$, then $F\notin\mcl{M}_0$ and $F'\prec F$.
			\end{enumerate}
			
			\item \label{satisfies x5} Suppose $i_1<\om_i$, $\om_i\ge\mb{c}-\frac{p}{2}$, $i_1\ge\om_i-p$ and $i_1\ge2\mb{c}-\om_i-p$. 
			\begin{enumerate}[label=(\roman*)]
				\item \label{satisfies x5 i} If $i_2=2\mb{c}-\om_i+p-\bt$, then $F\notin\mcl{M}_0$. Moreover, if $F'\ll F$, then $F'\prec F$.
				
				\item \label{satisfies x5 ii} If $i_2\le2\mb{c}-\om_i+p-\bt-1$, then $F\notin\mcl{M}_0$ and $F'\prec F$.
			\end{enumerate}
			
			\item \label{satisfies x6} Suppose $\om_i<i_2$, $\om_i<\mb{c}+\frac{p}{2}$, $i_2\le\om_i+p$ and $i_2\le2\mb{c}-\om_i+p-1$. 
			\begin{enumerate}[label=(\roman*)]
				\item \label{satisfies x6 i} If $i_1=2\mb{c}-\om_i-p+\bt-1$, then $F\notin\mcl{M}_0$. Moreover, if $F'\ll F$, then $F'\prec F$.
				
				\item \label{satisfies x6 ii} If $i_1\ge2\mb{c}-\om_i-p+\bt$, then $F\notin\mcl{M}_0$ and $F'\prec F$.
			\end{enumerate}
			
			\item \label{satisfies x7} Suppose $\om_i<i_2$, $\om_i<\mb{c}+\frac{p}{2}$ and $2\mb{c}-\om_i+p\le i_2\le\om_i+p$. 
			\begin{enumerate}[label=(\roman*)]
				\item \label{satisfies x7 i} If $i_1=i_2-2p+\bt-1$, then $F\notin\mcl{M}_0$. Moreover, if $F'\ll F$, then $F'\prec F$.
				
				\item \label{satisfies x7 ii} If $i_1\ge i_2-2p+\bt$, then $F\notin\mcl{M}_0$ and $F'\prec F$.
			\end{enumerate}
			
			\item \label{satisfies x8} Suppose $i_1<\om_i<i_2$ and $\om_i\ge\mb{c}+\frac{p}{2}$. 
			\begin{enumerate}[label=(\roman*)]
				\item \label{satisfies x8 i} If $i_1=i_2-2p+\bt-1$, then $F\notin\mcl{M}_0$. Moreover, if $F'\ll F$, then $F'\prec F$.
				
				\item \label{satisfies x8 ii} If $i_1\ge i_2-2p+\bt$, then $F\notin\mcl{M}_0$ and $F'\prec F$. 
			\end{enumerate}
			
			\item \label{satisfies x9} Suppose $\om_i<i_1<i_2$ and $\om_i\le n-2p-2$. Further, assume that $i_1\ge\om_i+p+1$ or $i_1\ge i_2-p$.
			\begin{enumerate}[label=(\roman*)]
				\item \label{satisfies x9 i} If $i_2=\om_i+2p-\bt+1$, then $F\notin\mcl{M}_0$. Moreover, if $F'\ll F$, then $F'\prec F$.
				
				\item \label{satisfies x9 ii} If $i_2\le\om_i+2p-\bt$, then $F\notin\mcl{M}_0$ and $F'\prec F$. 
			\end{enumerate}
			
			\item \label{satisfies x10} Suppose $\om_i<i_1<i_2$ and $\om_i>n-2p-2$. Further, assume that $i_1\ge\om_i+p+1$ or $i_1\ge i_2-p$. 
			\begin{enumerate}[label=(\roman*)]
				\item \label{satisfies x10 i} If $i_2=n-\bt-1$, then $F\notin\mcl{M}_0$. Moreover, if $F'\ll F$, then $F'\prec F$.
				
				\item \label{satisfies x10 ii} If $i_2\le n-\bt-2$, then $F\notin\mcl{M}_0$ and $F'\prec F$. 
			\end{enumerate}
		\end{enumerate}
	\end{proposition}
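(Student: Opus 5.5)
The plan is to handle all ten parts uniformly. In each part, sub-item (i) says that $F$ satisfies condition $(\mcl{X}_{\bt}^{k})$ for the matching $k$. Sub-item (ii) says that $F$ satisfies $(\mcl{X}_{\gm}^{k})$ for some $\gm\in[p-1]$ with $\gm>\bt$.

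Once this membership is established, the conclusions follow directly from \Cref{def:prec}. If $F\in\mcl{M}_{\bt}$, then $F\notin\mcl{M}_0$, because the sets $\mcl{M}_0,\dots,\mcl{M}_{p-1}$ partition $\md$ (\Cref{proposition:M_al M_be disjoint}). Since $F$ and $F'$ both lie in $\mcl{M}_{\bt}$, $F'\ll F$ gives $F'\prec F$ by \Cref{def:prec}\ref{def 1}. If instead $F\in\mcl{M}_{\gm}$ with $\gm>\bt$, then $F\notin\mcl{M}_0$ and $F'\prec F$ unconditionally by \Cref{def:prec}\ref{def 2}.

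So the real work is to identify the parameter. In part (ii) of each item, I define $\gm$ by solving the defining equation of $(\mcl{X}_{\gm}^{k})$:
\begin{itemize}
\item in \ref{satisfies x1}, $\gm=i_1-2\mb{c}+\om_i+p+1$;
\item in \ref{satisfies x3} and \ref{satisfies x4}, $\gm=i_1-i_2+2p+1$;
\item in \ref{satisfies x5}, $\gm=2\mb{c}-\om_i+p-i_2$;
\item in \ref{satisfies x9}, $\gm=\om_i+2p+1-i_2$;
\item in \ref{satisfies x10}, $\gm=n-1-i_2$;
\item and similarly in the remaining parts.
\end{itemize}
The hypothesis of (ii) immediately gives $\gm\ge\bt+1>\bt$. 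It remains to show $\gm\le p-1$ and that every side inequality of $(\mcl{X}_{\gm}^{k})$ holds. In many cases this is the same verification as for (i), with $\bt$ replaced by $\gm$. So I would prove one lemma per part: if the basic hypotheses of the part hold and the relevant parameter $\al$ lies in $[p-1]$, then $F$ satisfies $(\mcl{X}_{\al}^{k})$.

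The upper bound $\gm\le p-1$ comes from the disconnectedness of $\cn{F}$, via \Cref{proposition:i1_i2 nsim al_i} and \Cref{proposition:al_i>c}. In \ref{satisfies x1}, \Cref{proposition:al_i>c} gives $i_1<i_2\le\om_i-p-1$. We also have $i_2<2\mb{c}-\om_i$, from \Cref{remark:Aj} and \Cref{remark:order in S}. Combining these bounds $i_1$, hence $\gm$.

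In \ref{satisfies x3} and \ref{satisfies x4}, we have $i_1\sim\om_i$, so \Cref{proposition:i1_i2 nsim al_i}\ref{sim i1} gives $i_2\ge\om_i+p+1$. Together with $i_1<\om_i$, this gives $i_2-i_1\ge p+2$, so $\gm\le p-1$.

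In \ref{satisfies x9} and \ref{satisfies x10}, the case $i_1\ge\om_i+p+1$ gives the bound directly. In the alternative case $i_1\ge i_2-p$, we have $i_1\sim i_2$, and \Cref{proposition:i1_i2 nsim al_i}\ref{nsim i2} forces $i_2\ge\om_i+p+1$, hence $i_1\ge\om_i+1$. I would then also check the stated condition $i_1\ge\om_i+p+1$ required by $(\mcl{X}_{\gm}^{9})$ and $(\mcl{X}_{\gm}^{10})$. This needs care: if $i_1$ lies in $(\om_i,\om_i+p]$, then $i_1\sim\om_i$ and $i_1\sim i_2$, and $\cn{F}$ would be connected, a contradiction. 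So $i_1\ge\om_i+p+1$ after all.

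The lower-type constraints, namely $\gm\ge1$ and the range conditions, use \Cref{proposition: c and p relation}. In particular, \ref{c-(bt+1)/2<=n-2p-1} is used in \ref{satisfies x5} and \ref{satisfies x6} to turn the bounds on $\om_i$ into the conditions $\om_i\le\mb{c}-\frac{\gm+1}{2}$ or $\om_i\ge\mb{c}+\frac{\gm}{2}$ demanded by $(\mcl{X}_{\gm}^{5})$ and $(\mcl{X}_{\gm}^{6})$. These follow from $i_1<\om_i$, $i_1\ge2\mb{c}-\om_i-p$, and the defining equation, as in \Cref{proposition:M_al conditions}\ref{ob_x4}.

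I expect the main obstacle to be parts \ref{satisfies x4}--\ref{satisfies x7}. There the conditions of $(\mcl{X}_{\al}^{4})$--$(\mcl{X}_{\al}^{7})$ carry extra $\al$-dependent side constraints, such as $\om_i-p+\al\le i_1$ in $(\mcl{X}_{\al}^4)$ and $i_2\le\om_i+p-\al$ in $(\mcl{X}_{\al}^7)$. These must be derived from the hypotheses together with the disconnectedness, and not merely assumed. For example, in \ref{satisfies x4}, I would derive $i_1\ge\om_i-p+\gm$ from $i_2\ge\om_i+p+1$ and $i_2=i_1+2p-\gm+1$. In \ref{satisfies x7}, I would obtain $i_2\le\om_i+p-\gm$ from $i_1\le\om_i-p-1$ (\Cref{proposition:i1_i2 nsim al_i}\ref{sim i2}) and $i_1=i_2-2p+\gm-1$. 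The proof then ends by the case-by-case bookkeeping, treating the ten parts in order.
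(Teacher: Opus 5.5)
The architecture of your plan is the paper's: in each part, (i) shows that $F$ satisfies $(\mcl{X}_{\bt}^{k})$, and (ii) shows that $F$ satisfies $(\mcl{X}_{\gm}^{k})$ for some $\gm\in[p-1]$ with $\gm>\bt$ (your $\gm$ is the paper's $\bt+\gm$); \Cref{def:prec} then finishes. Your arguments for parts (1), (4), (7), (9) and (10) are sound. Two points need fixing.

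\textbf{Parts (5) and (6): this step fails as written.} The $\om_i$-conditions in $(\mcl{X}_{\gm}^{5})$ and $(\mcl{X}_{\gm}^{6})$ do not come from part (iii) of \Cref{proposition: c and p relation}. That part only compares $\mb{c}$ with $n-2p-1$ and is not used anywhere in this proposition; likewise $\gm\ge1$ is immediate from $\gm\ge\bt$. Nor do these conditions follow from $i_1<\om_i$, $i_1\ge2\mb{c}-\om_i-p$ and the defining equation. In part (6), for instance, $i_1<\om_i$ together with $i_1=2\mb{c}-\om_i-p+\gm-1$ only gives $\om_i>\mb{c}-\frac{p-\gm+1}{2}$.

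The missing input is the disconnectedness bound of \Cref{proposition:i1_i2 nsim al_i}, as the paper uses:
\begin{itemize}
\item In part (5), $\om_i-p\le i_1<\om_i$ gives $i_2\ge\om_i+p+1$. Substituting $i_2=2\mb{c}-\om_i+p-\gm$ yields $\om_i\le\mb{c}-\frac{\gm+1}{2}$. Together with $\om_i\ge\mb{c}-\frac{p}{2}$, this also gives $\gm\le p-1$.
\item In part (6), $\om_i<i_2\le\om_i+p$ gives $i_1\le\om_i-p-1$, hence $\om_i\ge\mb{c}+\frac{\gm}{2}$. Then $\om_i<\mb{c}+\frac{p}{2}$ gives $\gm\le p-1$.
\end{itemize}

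\textbf{Part (3): an unsupported claim.} The adjacency $i_1\sim\om_i$ is not a hypothesis of part (3). You can derive it: by \Cref{remark:Aj} and \Cref{remark:order in S}\ref{part:i}, $i_2\ge2\mb{c}-\om_i>\om_i+p$, so $i_1\ge i_2-2p+\bt-1>\om_i-p$. Alternatively, bypass it. Since $i_1<\om_i<i_2$, the inequality $i_2-i_1\le p+1$ would make $\om_i$ adjacent to both $i_1$ and $i_2$. So disconnectedness alone forces $i_2-i_1\ge p+2$, i.e.\ $\gm\le p-1$.

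The same observation settles $\gm\le p-1$ in parts (2) and (8): the middle vertex of $F^c$ is adjacent to both outer ones once these are at distance at most $p+1$. There the paper instead uses the $\Omega$-order bounds $i_2<2\mb{c}-\om_i$ and $i_1<2\mb{c}-\om_i$, respectively.
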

	
	\begin{proof}
		\begin{enumerate}[label=(\arabic*)]
			\item If $i_1=2\mb{c}-\om_i-p+\bt-1$, then $F$ satisfies \X{\bt}{1}, and therefore $F\in\mcl{M}_{\bt}$. Hence $F\notin\mcl{M}_0$. Moreover, if $F'\ll F$, then $F'\in\mcl{M}_{\bt}$ implies $F'\prec F$ by \Cref{def:prec}\ref{def 1}.
			
			Now, let $i_1\ge2\mb{c}-\om_i-p+\bt$. Then $i_1=2\mb{c}-\om_i-p+\bt+\gm-1$ for some $\gm\ge 1$. By \Cref{proposition:al_i>c}, $i_2\le\om_i-p-1$. Using $i_1<i_2$, we get $\om_i>\mb{c}+\frac{\bt+\gm}{2}$.
			Since $\om_i<\mb{c}+\frac{p}{2}$, $\bt+\gm<p-1$. Moreover, $\bt+\gm>\bt\ge 1$. This gives $\bt+\gm\in[p-1]$. Therefore $F$ satisfies \X{\bt+\gm}{1}, and thus $F\in\mcl{M}_{\bt+\gm}$.
			Hence $F\notin\mcl{M}_0$, and $F'\in\mcl{M}_{\bt}$ implies $F'\prec F$ by \Cref{def:prec}\ref{def 2}.
			
			\item If $i_1=\om_i-2p+\bt-1$, then $F$ satisfies \X{\bt}{2}, and thus $F\in\mcl{M}_{\bt}$. Hence $F\notin\mcl{M}_0$. Moreover, if $F'\ll F$, then $F'\in\mcl{M}_{\bt}$ implies $F'\prec F$ by \Cref{def:prec}\ref{def 1}.
			
			Now, let $i_1\ge\om_i-2p+\bt$. Then $i_1=\om_i-2p+\bt+\gm-1$ for some $\gm\ge 1$. By \Cref{remark:Aj}, $\om_i\os i_2$. Since $\om_i>\mb{c}$ and $i_2<\om_i$, \Cref{remark:order in S}\ref{part:ii} gives $i_2<2\mb{c}-\om_i$. Using $i_1<i_2$, we obtain $i_1\le2\mb{c}-\om_i-2$, which implies $\om_i\le\mb{c}+\frac{2p-\bt-\gm-1}{2}$. Since $\om_i\ge\mb{c}+\frac{p}{2}$, we have $\bt+\gm\le p-1$. 
			Therefore $\bt+\gm>\bt\ge 1$ implies $\bt+\gm\in[p-1]$. Note that $F$ satisfies \X{\bt+\gm}{2}, and thus $F\in\mcl{M}_{\bt+\gm}$.
			Hence $F\notin\mcl{M}_0$, and $F'\in\mcl{M}_{\bt}$ implies $F'\prec F$ by \Cref{def:prec}\ref{def 2}.
			
			\item If $i_2=i_1+2p-\bt+1$, then $F$ satisfies \X{\bt}{3}, and therefore $F\in\mcl{M}_{\bt}$. Hence $F\notin\mcl{M}_0$. Moreover, if $F'\ll F$, then $F'\in\mcl{M}_{\bt}$ implies $F'\prec F$ by \Cref{def:prec}\ref{def 1}.
			
			Now, let $i_2\le i_1+2p-\bt$. Then $i_2=i_1+2p-\bt-\gm+1$ for some $\gm\ge 1$. By \Cref{remark:Aj}, $\om_i\os i_2$. Since $\om_i<\mb{c}$ and $i_2>\om_i$, we have $i_2\ge2\mb{c}-\om_i$ \Cref{remark:order in S}\ref{part:i}. This implies $\om_i\ge2\mb{c}-i_2=2\mb{c}-i_1-2p+\bt+\gm-1$. Using $i_1<\om_i$, we obtain $\om_i\ge2\mb{c}-\om_i-2p+\bt+\gm$, and hence $\om_i\ge\mb{c}-\frac{2p-\bt-\gm}{2}$. 
			Further, $\om_i<\mb{c}-\frac{p}{2}$ implies $\bt+\gm\le p-1$. Since $\bt+\gm>\bt\ge 1$, we get $\bt+\gm\in[p-1]$. Therefore $F$ satisfies \X{\bt+\gm}{3}, and thus $F\in\mcl{M}_{\bt+\gm}$. Hence $F\notin\mcl{M}_0$, and $F'\in\mcl{M}_{\bt}$ implies $F'\prec F$ by \Cref{def:prec}\ref{def 2}.
			
			\item Since $\om_i-p\le i_1<\om_i$, we get $i_2\ge\om_i+p+1>\om_i$ by \Cref{proposition:i1_i2 nsim al_i}\ref{sim i1}. Hence $i_1<\om_i<i_2$ and $i_2\ge i_1+p+2$.
			
			If $i_2=i_1+2p-\bt+1$, then $i_2\ge\om_i+p+1$ implies $i_1\ge\om_i-p+\bt$. This means that $F$ satisfies \X{\bt}{4}, and therefore $F\in\mcl{M}_{\bt}$. Hence $F\notin\mcl{M}_0$. Moreover, if $F'\ll F$, then $F'\in\mcl{M}_{\bt}$ implies $F'\prec F$ by \Cref{def:prec}\ref{def 1}.
			
			Now, let $i_2\le i_1+2p-\bt$. Then $i_2\ge i_1+p+2$ implies $i_2=i_1+2p-\bt-\gm+1$ for some $1\le\gm\le p-\bt-1$. Note that $\bt+\gm\le p-1$. Since $\bt+\gm>\bt\ge 1$, we have $\bt+\gm\in[p-1]$. Moreover, $i_2\ge\om_i+p+1$ implies $i_1\ge\om_i-p+\bt+\gm$. 
			Therefore $F$ satisfies \X{\bt+\gm}{4}, and thus $F\in\mcl{M}_{\bt+\gm}$. Hence $F\notin\mcl{M}_0$, and $F'\in\mcl{M}_{\bt}$ implies $F'\prec F$ by \Cref{def:prec}\ref{def 2}.
			
			\item Since $\om_i-p\le i_1<\om_i$, \Cref{proposition:i1_i2 nsim al_i}\ref{sim i1} gives $i_2\ge\om_i+p+1>\om_i$. Hence $i_1<\om_i<i_2$.
			
			If $i_2=2\mb{c}-\om_i+p-\bt$, then $2\mb{c}-\om_i+p-\bt\ge\om_i+p+1$, which implies $\om_i\le\mb{c}-\frac{\bt+1}{2}$.
			Observe that $F$ satisfies \X{\bt}{5}, and hence $F\in\mcl{M}_{\bt}$. Therefore $F\notin\mcl{M}_0$. Moreover, if $F'\ll F$, then $F'\in\mcl{M}_{\bt}$ implies $F'\prec F$ by \Cref{def:prec}\ref{def 1}.
			
			Now, let $i_2\le2\mb{c}-\om_i+p-\bt-1$. Then $i_2=2\mb{c}-\om_i+p-\bt-\gm$ for some $\gm\ge 1$. Therefore, $i_2\ge\om_i+p+1$ implies $\om_i\le\mb{c}-\frac{\bt+\gm+1}{2}$. Since $\om_i\ge\mb{c}-\frac{p}{2}$, we get $\bt+\gm\le p-1$. Moreover, $\bt+\gm>\bt\ge 1$. It follows that $\bt+\gm\in[p-1]$.
			Thus $F$ satisfies \X{\bt+\gm}{5}, which implies $F\in\mcl{M}_{\bt+\gm}$. Hence $F\notin\mcl{M}_0$, and $F'\in\mcl{M}_{\bt}$ implies $F'\prec F$ by \Cref{def:prec}\ref{def 2}.
			
			\item By \Cref{proposition:i1_i2 nsim al_i}\ref{sim i2}, $\om_i<i_2\le\om_i+p$ implies $i_1\le\om_i-p-1<\om_i$. Hence $i_1<\om_i<i_2$.
			
			If $i_1=2\mb{c}-\om_i-p+\bt-1$, then $2\mb{c}-\om_i-p+\bt-1\le\om_i-p-1$, which implies $\om_i\ge\mb{c}+\frac{\bt}{2}$.
			Therefore $F$ satisfies \X{\bt}{6}, and hence $F\in\mcl{M}_{\bt}$. Thus $F\notin\mcl{M}_0$. Moreover, if $F'\ll F$, then $F'\in\mcl{M}_{\bt}$ implies $F'\prec F$ by \Cref{def:prec}\ref{def 1}.
			
			Now, let $i_1\ge2\mb{c}-\om_i-p+\bt$. Then $i_1=2\mb{c}-\om_i-p+\bt+\gm-1$ for some $\gm\ge 1$. Using $i_1\le\om_i-p-1$, we obtain $\om_i\ge\mb{c}+\frac{\bt+\gm}{2}$. Further, $\om_i<\mb{c}+\frac{p}{2}$ implies $\bt+\gm\le p-1$. Since $\bt+\gm>\bt\ge 1$, it follows that $\bt+\gm\in[p-1]$. 
			Therefore $F$ satisfies \X{\bt+\gm}{6}, and thus $F\in\mcl{M}_{\bt+\gm}$. Hence $F\notin\mcl{M}_0$, and $F'\in\mcl{M}_{\bt}$ implies $F'\prec F$ by \Cref{def:prec}\ref{def 2}.
			
			\item Since $\om_i<i_2\le\om_i+p$, \Cref{proposition:i1_i2 nsim al_i}\ref{sim i2} gives $i_1\le\om_i-p-1<\om_i$. Hence $i_1<\om_i<i_2$ and $i_1\le i_2-p-2$.
			
			If $i_1=i_2-2p+\bt-1$, then $i_1\le\om_i-p-1$ implies $i_2\le\om_i+p-\bt$. Therefore $F$ satisfies \X{\bt}{7}, and hence $F\in\mcl{M}_{\bt}$. Thus $F\notin\mcl{M}_0$. Moreover, if $F'\ll F$, then $F'\in\mcl{M}_{\bt}$ implies $F'\prec F$ by \Cref{def:prec}\ref{def 1}.
			
			Now, let $i_1\ge i_2-2p+\bt$. Then $i_1\le i_2-p-2$ implies $i_1=i_2-2p+\bt+\gm-1$ for some $1\le\gm\le p-\bt-1$. Clearly, $\bt+\gm\le p-1$. Since $\bt+\gm>\bt\ge 1$, we have $\bt+\gm\in[p-1]$. Moreover, $i_1\le\om_i-p-1$ implies $i_2\le\om_i+p-\bt-\gm$. 
			Therefore $F$ satisfies \X{\bt+\gm}{7}, and thus $F\in\mcl{M}_{\bt+\gm}$. Hence $F\notin\mcl{M}_0$, and $F'\in\mcl{M}_{\bt}$ implies $F'\prec F$ by \Cref{def:prec}\ref{def 2}.
			
			\item If $i_1=i_2-2p+\bt-1$, then $F$ satisfies \X{\bt}{8} and therefore $F\in\mcl{M}_{\bt}$. Hence $F\notin\mcl{M}_0$. Moreover, if $F'\ll F$, then $F'\in\mcl{M}_{\bt}$ implies $F'\prec F$ by \Cref{def:prec}\ref{def 1}.
			
			Now, let $i_1\ge i_2-2p+\bt$. Then $i_1=i_2-2p+\bt+\gm-1$ for some $\gm\ge 1$. By \Cref{remark:Aj}, $\om_i\os i_1$. Therefore, $\om_i>\mb{c}$ and $i_1<\om_i$ imply $i_1<2\mb{c}-\om_i$ by \Cref{remark:order in S}\ref{part:ii}. It follows that $\om_i\le2\mb{c}-i_1-1=2\mb{c}-i_2+2p-\bt-\gm$. 
			Since $i_2>\om_i$, we get $\om_i\le2\mb{c}-\om_i+2p-\bt-\gm-1$, and thus $\om_i\le\mb{c}+\frac{2p-\bt-\gm-1}{2}$. Using $\om_i\ge\mb{c}+\frac{p}{2}$, we obtain $\bt+\gm\le p-1$. Moreover, $\bt+\gm>\bt\ge 1$, which implies $\bt+\gm\in[p-1]$. 
			Observe that $F$ satisfies \X{\bt+\gm}{8}, and thus $F\in\mcl{M}_{\bt+\gm}$. Hence $F\notin\mcl{M}_0$, and $F'\in\mcl{M}_{\bt}$ implies $F'\prec F$ by \Cref{def:prec}\ref{def 2}.
			
			\item We have $\om_i<i_1<i_2$ and $\om_i\le n-2p-2$. Furthermore, $i_1\ge\om_i+p+1$ or $i_1\ge i_2-p$.
			\begin{enumerate}[label=(\alph*)]
				\item Let $i_1\ge\om_i+p+1$. If $i_2=\om_i+2p-\bt+1$, then $F$ satisfies \X{\bt}{9} and thus $F\in\mcl{M}_{\bt}$. Hence $F\notin\mcl{M}_0$. Moreover, if $F'\ll F$, then $F'\prec F$ by \Cref{def:prec}\ref{def 1} (as $F'\in\mcl{M}_{\bt}$).
				
				Now, suppose $i_2\le\om_i+2p-\bt$. Since $i_2>i_1\ge\om_i+p+1$, $i_2=\om_i+2p-\bt-\gm+1$ for some $1\le\gm\le p-\bt-1$. Note that $\bt+\gm\in[p-1]$. 
				Therefore $F$ satisfies \X{\bt+\gm}{9}, and hence $F\in\mcl{M}_{\bt+\gm}$. Thus $F\notin\mcl{M}_0$, and $F'\in\mcl{M}_{\bt}$ implies $F'\prec F$ by \Cref{def:prec}\ref{def 2}.
				
				\item Let $i_1\ge i_2-p$. Then $i_1<i_2$ implies $i_1\sim i_2$. Since $F\in\md$, $\cn{F}$ is disconnected. Hence $i_1\nsim\om_i$, and thus $i_1\ge\om_i+p+1$ (as $i_1>\om_i$). Therefore, the result follows from case (a).
			\end{enumerate}
			
			\item We have $\om_i<i_1<i_2$ and $\om_i>n-2p-2$. Furthermore, $i_1\ge\om_i+p+1$ or $i_1\ge i_2-p$.
			\begin{enumerate}[label=(\alph*)]
				\item Let $i_1\ge\om_i+p+1$. If $i_2=n-\bt-1$, then $F$ satisfies \X{\bt}{10} and therefore $F\in\mcl{M}_{\bt}$. Hence $F\notin\mcl{M}_0$. Moreover, if $F'\ll F$, then $F'\prec F$ by \Cref{def:prec}\ref{def 1} (as $F'\in\mcl{M}_{\bt}$).
				
				Now, assume $i_2\le n-\bt-2$. Since $\om_i>n-2p-2$ and $i_2>i_1\ge\om_i+p+1$, we get $i_2>n-p$. This means that $i_2=n-\bt-\gm-1$ for some $1\le\gm<p-\bt-1$.
				Clearly, $\bt+\gm\in[p-1]$. Therefore $F$ satisfies \X{\bt+\gm}{10}, and hence $F\in\mcl{M}_{\bt+\gm}$. Thus $F\notin\mcl{M}_0$, and $F'\in\mcl{M}_{\bt}$ implies $F'\prec F$ by \Cref{def:prec}\ref{def 2}. 
				
				\item Let $i_1\ge i_2-p$. Since $i_1<i_2$, we have $i_1\sim i_2$. Then, $F\in\md$ implies $i_1\nsim\om_i$. Hence $i_1\ge\om_i+p+1$ (as $i_1>\om_i$). Therefore, the result follows from case (a).
			\end{enumerate}
		\end{enumerate} 
		\vspace{-0.2 cm}
	\end{proof} 
	
	\begin{corollary}\label{corollary:notin M_0}
		Let $F,F'\in\md$, and let $\bt\in[p-1]$. Assume $F\in\mcl{A}_i$ with $F^c=\{\om_i\}\sqcup\{i_1,i_2\}$, and $F'\in\mcl{M}_{\bt}$. 
		\begin{enumerate}[label=(\roman*)]
			\item \label{satisfies 3_4_5} If $\om_i-p\le i_1<\om_i$, $i_2\le i_1+2p-\bt+1$ and $i_2\le2\mb{c}-\om_i+p-\bt$, then $F\notin\mcl{M}_0$. Moreover, if $F'\ll F$, then $F'\prec F$.
			
			\item \label{satisfies 6_7_8} If $\om_i<i_2\le\om_i+p$, $i_1\ge i_2-2p+\bt-1$ and $i_1\ge2\mb{c}-\om_i-p+\bt-1$, then $F\notin\mcl{M}_0$. Moreover, if $F'\ll F$, then $F'\prec F$.
		\end{enumerate}
	\end{corollary}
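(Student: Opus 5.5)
This corollary should follow by routing each part into the appropriate items of \Cref{proposition:in M_bt+gm}, splitting according to the position of $\om_i$ relative to $\mb{c}$ and the size of $i_2$ (resp.\ $i_1$).

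For part \ref{satisfies 3_4_5}, first apply \Cref{proposition:i1_i2 nsim al_i}\ref{sim i1}: since $\om_i-p\le i_1<\om_i$, we get $i_2\ge\om_i+p+1$, so $i_1<\om_i<i_2$. We then split into three cases.
\begin{enumerate}[label=(\alph*)]
\item If $\om_i<\mb{c}-\frac{p}{2}$, use \Cref{proposition:in M_bt+gm}\ref{satisfies x3}. The hypothesis $i_2\le i_1+2p-\bt+1$ gives either equality, which is item (i), or strict inequality, which is item (ii).
\item If $\om_i\ge\mb{c}-\frac{p}{2}$ and $i_1\le 2\mb{c}-\om_i-p-1$, use \Cref{proposition:in M_bt+gm}\ref{satisfies x4}, again splitting on $i_2=i_1+2p-\bt+1$ versus $i_2\le i_1+2p-\bt$.
\item If $\om_i\ge\mb{c}-\frac{p}{2}$ and $i_1\ge 2\mb{c}-\om_i-p$, the hypotheses of \Cref{proposition:in M_bt+gm}\ref{satisfies x5} hold. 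Here we use the other bound $i_2\le 2\mb{c}-\om_i+p-\bt$: equality gives item (i), and strict inequality gives item (ii).
\end{enumerate}
In every case we obtain $F\notin\mcl{M}_0$. We also obtain $F'\prec F$, either unconditionally (items (ii)) or under the assumption $F'\ll F$ (items (i)).

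Part \ref{satisfies 6_7_8} is symmetric. By \Cref{proposition:i1_i2 nsim al_i}\ref{sim i2}, $\om_i<i_2\le\om_i+p$ forces $i_1\le\om_i-p-1$, so $i_1<\om_i<i_2$. We then split into three cases.
\begin{enumerate}[label=(\alph*)]
\item If $\om_i\ge\mb{c}+\frac{p}{2}$, apply \Cref{proposition:in M_bt+gm}\ref{satisfies x8}, using $i_1\ge i_2-2p+\bt-1$ and splitting into equality versus $i_1\ge i_2-2p+\bt$.
\item If $\om_i<\mb{c}+\frac{p}{2}$ and $i_2\le 2\mb{c}-\om_i+p-1$, apply \Cref{proposition:in M_bt+gm}\ref{satisfies x6}, splitting on $i_1=2\mb{c}-\om_i-p+\bt-1$ versus $i_1\ge 2\mb{c}-\om_i-p+\bt$.
\item If $\om_i<\mb{c}+\frac{p}{2}$ and $i_2\ge 2\mb{c}-\om_i+p$, then $2\mb{c}-\om_i+p\le i_2\le\om_i+p$, so \Cref{proposition:in M_bt+gm}\ref{satisfies x7} applies, splitting on the bound $i_1\ge i_2-2p+\bt-1$.
\end{enumerate}

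The main obstacle is bookkeeping: checking that the case hypotheses match the stated preconditions of the corresponding items of \Cref{proposition:in M_bt+gm} exactly. The subtle points are these.
\begin{enumerate}[label=(\alph*)]
\item Item \ref{satisfies x4} requires $\om_i-p\le i_1$, which is given directly by the hypothesis of part \ref{satisfies 3_4_5}.
\item Item \ref{satisfies x5} requires $i_1\ge\om_i-p$, which holds for the same reason.
\item Item \ref{satisfies x6} requires $i_2\le\om_i+p$, which is given, and the case condition $i_2\le 2\mb{c}-\om_i+p-1$, which is exactly what defines case (b) of part \ref{satisfies 6_7_8}.
\end{enumerate}
Since the cases in each part are exhaustive, no further estimates are needed.
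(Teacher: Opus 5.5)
Your proposal is correct and matches the paper's proof. The paper likewise uses \Cref{proposition:i1_i2 nsim al_i} to get $i_1<\omega_i<i_2$, then routes part (i) through items (3), (4), (5) and part (ii) through items (6), (7), (8) of \Cref{proposition:in M_bt+gm}, with exactly your three-way case split. Your version also writes out the equality versus strict-inequality bookkeeping for part (ii), which the paper leaves as ``a similar argument.''
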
 
	
	\begin{proof}
		\begin{enumerate}[label=(\roman*)] 
			\item By \Cref{proposition:i1_i2 nsim al_i}\ref{sim i1}, we have $i_2>\om_i$. Observe that if $\om_i<\mb{c}-\frac{p}{2}$, then $F\notin\mcl{M}_0$ by \Cref{proposition:in M_bt+gm}\ref{satisfies x3}. 
			Now, assume that $\om_i\ge\mb{c}-\frac{p}{2}$. If $i_1\le2\mb{c}-\om_i-p-1$, then $F\notin\mcl{M}_0$ by \Cref{proposition:in M_bt+gm}\ref{satisfies x4}, and if $i_1\ge2\mb{c}-\om_i-p$, then $F\notin\mcl{M}_0$ by \Cref{proposition:in M_bt+gm}\ref{satisfies x5}. 
			Similarly, using \Cref{proposition:in M_bt+gm}\ref{satisfies x3}, \ref{satisfies x4} and \ref{satisfies x5}, if $F'\ll F$, then $F'\prec F$.
			
			\item By \Cref{proposition:i1_i2 nsim al_i}\ref{sim i2}, we have $i_1<\om_i$. The proof follows a similar argument as in \ref{satisfies 3_4_5}, using \Cref{proposition:in M_bt+gm}\ref{satisfies x6}, \ref{satisfies x7} and \ref{satisfies x8}. 
		\end{enumerate}
		\vspace{-0.2 cm}
	\end{proof} 
	
	Let $F \in\md$. For $u\in F$ and $v\in F^c$, we introduce the notation $\Fuv{u}{v}$ to represent the set obtained by replacing $u$ in $F$ with $v$, formally defined as $$\Fuv{u}{v}:=(F\setminus\{u\})\sqcup\{v\}.$$
	It is easy to see that $\Fuv{u}{v} \cap F=F\setminus\{u\}$ and $(\Fuv{u}{v})^c=(F^c\setminus\{v\})\sqcup\{u\}$. These relations will be used later in our arguments.
	
	\subsection{Shelling Order}\label{subsection:shelling order}
	
	This section focuses on proving that the order $\prec$, defined on the facets of  $\Delta_3(C_n^p)$ is a shelling order. We begin by introducing several results that will be used in the proof.
	
	\begin{proposition}\label{proposition: om_i<=2p-1}
		Let $F, F' \in\md$ such that $F, F' \in\mcl{A}_i$. Suppose $F^c=\{\om_i\}\sqcup\{i_1,i_2\}$ and $F'^c=\{\om_i\} \sqcup \{j_1, j_2\}$. 
		If $i_1\in F'$, $ i_1<\om_i<j_2$ with $j_2\nsim\om_i$, and $F'(i_1,j_1)\notin\md$, then $\om_i\le2p-1$, $\om_i<i_2<j_2$, $i_2\nsim\om_i$ and $i_1\nsim i_2$.
	\end{proposition}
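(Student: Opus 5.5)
The plan is to first read off the structure forced by $F'(i_1,j_1)\notin\md$, and only then bring in the fact that $F\in\md$. Since $i_1\in F'$, we have $i_1\notin\{j_1,j_2\}$, so $(F'(i_1,j_1))^c=(F'^c\setminus\{i_1\})\sqcup\{j_1\}$ is replaced by $\{\om_i,i_1,j_2\}$. More precisely, $(F'(i_1,j_1))^c=\{\om_i,i_1,j_2\}$, and $|F'(i_1,j_1)|=n-3$. Hence $F'(i_1,j_1)\notin\md$ means exactly that $C_n^p[\{\om_i,i_1,j_2\}]$ is connected. Because $j_2\nsim\om_i$, a connected graph on these three vertices must be the path $\om_i\sim i_1\sim j_2$. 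So the first step is to record $i_1\sim\om_i$ and $i_1\sim j_2$.

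Next I will convert these adjacencies into inequalities, using $0\le i_1<\om_i<j_2\le n-1$.
\begin{itemize}
\item From $j_2>\om_i$ and $j_2\nsim\om_i$ we get $j_2\ge\om_i+p+1$, so $j_2-i_1\ge p+2$.
\item Therefore $i_1\sim j_2$ can only hold ``around the cycle'', i.e. $i_1+n-j_2\le p$. This gives $j_2\ge n-p+i_1$ and $i_1\le p-1$.
\item For $i_1\sim\om_i$, the wrap-around alternative $n-\om_i+i_1\le p$ would force $\om_i\ge n-p+i_1$. Then $j_2\ge\om_i+p+1>n$, which is impossible. So $\om_i-i_1\le p$.
\end{itemize}
Combining $\om_i\le i_1+p$ with $i_1\le p-1$ gives $\om_i\le 2p-1$, which is the first claim.

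Now I will use that $F\in\md$, so $C_n^p[\{\om_i,i_1,i_2\}]$ is disconnected. We already have $i_1<\om_i$ and $\om_i-p\le i_1$ (from $\om_i-i_1\le p$). So \Cref{proposition:i1_i2 nsim al_i}\ref{sim i1} applies directly and yields $i_2\nsim\om_i$ and $i_2\ge\om_i+p+1>\om_i$. Disconnectedness together with $i_1\sim\om_i$ then forces $i_1\nsim i_2$ as well, since otherwise $\om_i\sim i_1\sim i_2$ would be connected.

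It remains to prove $i_2<j_2$, and I expect this to be the only mildly delicate step. The point is that $i_2$ and $j_2$ are both greater than $\om_i$, but $j_2\sim i_1$ while $i_2\nsim i_1$, so $i_2\neq j_2$. Suppose $i_2>j_2$. Then $i_2>j_2\ge n-p+i_1$, so $i_1+n-i_2<p$, which makes $i_1\sim i_2$. This contradicts $i_1\nsim i_2$. Hence $\om_i<i_2<j_2$, completing all four conclusions. Throughout, the only external facts needed are \Cref{proposition:i1_i2 nsim al_i}\ref{sim i1} and the description of adjacency in $C_n^p$ modulo $n$.
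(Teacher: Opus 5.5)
Your proposal is correct and follows the paper's argument: connectivity of $C_n^p[\{\om_i,i_1,j_2\}]$ together with $j_2\nsim\om_i$ forces $\om_i\sim i_1\sim j_2$, which yields $\om_i\le 2p-1$ (the paper compresses your inequality chain into $\om_i\le j_2+2p\ (\text{mod } n)\le 2p-1$). Disconnectedness of $C_n^p[F^c]$ then gives $i_2\nsim\om_i$ and $i_1\nsim i_2$, and your wrap-around argument spells out the step $\om_i<i_2<j_2$ that the paper only asserts. One small slip: the complement is $(F'(i_1,j_1))^c=(F'^c\setminus\{j_1\})\sqcup\{i_1\}$, not $(F'^c\setminus\{i_1\})\sqcup\{j_1\}$, although the set $\{\om_i,i_1,j_2\}$ you end up using is correct.
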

	
	\begin{proof}
		We have $(F'(i_1,j_1))^c=(F'^c\setminus\{j_1\}) \sqcup \{ i_1\}=\{\om_i,j_2,i_1\}$. Since $F'(i_1,j_1)\notin\md$, we get $\cn{(F'(i_1,j_1))}$ is connected. Therefore, $i_1<\om_i<j_2$ with $j_2\nsim\om_i$ implies $j_2\sim i_1$ and $ i_1\sim\om_i$. 
		Hence $\om_i\le j_2+2p\ (\text{mod } n)\le2p-1$ (as $j_2\le n-1$). Moreover, $F\in\md$ implies $\cn{F}$ is disconnected. Since $i_1\sim\om_i$, we have $i_2\nsim\om_i$ and $i_1\nsim i_2$. 
		Using $i_1<\om_i<j_2$, $j_2\nsim\om_i$, $j_2\sim i_1$, $i_1\sim\om_i$ and $i_1\nsim i_2$, we obtain $\om_i<i_2<j_2$.
	\end{proof}
	
	\begin{proposition}\label{proposition:in M_0}
		Let $F\in\md$ such that $F\in\mcl{A}_i$ and $F^c=\{\om_i\}\sqcup\{i_1,i_2\}$. If $i_1<\om_i<i_2$ and $\om_i\le2p-1$, then $F\in\mcl{M}_0$.
	\end{proposition}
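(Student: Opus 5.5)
Since $\mcl{M}_0$ is the complement of $\bigsqcup_{\al=1}^{p-1}\mcl{M}_{\al}$, the plan is to fix an arbitrary $\al\in[p-1]$ and show that $F$ satisfies none of \X{\al}{1}--\X{\al}{10}. The hypothesis $i_1<\om_i<i_2$ immediately rules out \X{\al}{1}, \X{\al}{2} (which need $i_2<\om_i$) and \X{\al}{9}, \X{\al}{10} (which need $\om_i<i_1$). So only \X{\al}{3}--\X{\al}{8} remain, and each will be excluded by a lower bound on $\om_i$ that exceeds $2p-1$.

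The key numerical input is \Cref{proposition: c and p relation}: $\mb{c}\ge 3p-1$ by \ref{range of c}, and $\mb{c}-\frac{3p}{2}\ge p$ by \ref{>=p}. The cases then go as follows.
\begin{itemize}
\item \X{\al}{6}, \X{\al}{7}, \X{\al}{8} force $\om_i\ge\mb{c}+\frac{\al}{2}>\mb{c}\ge 3p-1>2p-1$, using \Cref{proposition:M_al conditions}\ref{ob_x6} (or the defining inequality) and \ref{ob_x7}, and $\om_i\ge\mb{c}+\frac p2$ for \X{\al}{8}.
\item \X{\al}{4} requires $\om_i\ge\mb{c}-\frac p2\ge 3p-1-\frac p2>2p-1$, since $\frac p2>0$.
\item \X{\al}{5} gives $\om_i\ge\mb{c}-\frac{p-1}{2}$ by \Cref{proposition:M_al conditions}\ref{ob_x5}, which is again larger than $2p-1$.
\item \X{\al}{3} has only the upper bound $\om_i<\mb{c}-\frac p2$ in its definition, so it needs a separate argument.
\end{itemize}

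For \X{\al}{3}, I would use \Cref{proposition:M_al conditions}\ref{ob_x3}, which gives $i_1>p$ and $i_1\sim\om_i$, so $i_1\ge\om_i-p$. Combined with $i_1<\om_i$, this yields $\om_i>i_1>p$, which is too weak on its own. Instead I would use $i_1\ge 2\mb{c}-\om_i-2p+\al-1$ together with $i_1<\om_i$. This gives $2\om_i>2\mb{c}-2p+\al-1\ge 2(3p-1)-2p = 4p-2$, so $\om_i>2p-1$, contradicting $\om_i\le 2p-1$. This is the step I expect to need the most care, because the inequality is tight: it relies on $\al\ge1$ and on the exact bound $\mb{c}\ge 3p-1$, and the arithmetic must be checked without any slack.

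Since all ten conditions fail for every $\al\in[p-1]$, we get $F\notin\mcl{M}_{\al}$ for every such $\al$, and therefore $F\in\mcl{M}_0$.
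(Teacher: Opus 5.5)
Your proposal is correct and matches the paper's proof step for step. The ordering $i_1<\om_i<i_2$ excludes \X{\al}{1}, \X{\al}{2}, \X{\al}{9} and \X{\al}{10}; the lower bounds on $\om_i$ (from the definitions or from \Cref{proposition:M_al conditions}) exclude \X{\al}{4}--\X{\al}{8} because $\mb{c}\ge 3p-1$; and \X{\al}{3} is excluded, as in the paper, by combining $i_1\ge 2\mb{c}-\om_i-2p+\al-1$ with $i_1<\om_i$, where your inequality $2\om_i>4p-2$ is just a rearrangement of the paper's $i_1\ge\mb{c}-p\ge\om_i$.
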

	
	\begin{proof}
		Suppose that $F\in\mcl{M}_{\al}$ for some $\al\in[p-1]$. Since $i_1<\om_i<i_2$, it follows that $F$ satisfies one of the conditions from \X{\al}{3} to \X{\al}{8}. By \Cref{proposition: c and p relation}\ref{range of c}, $\mb{c}\ge 3p-1$. Hence, $\om_i\le2p-1$ implies $\om_i\le\mb{c}-p<\mb{c}-\frac{p}{2}$.
		This means that $F$ satisfies \X{\al}{3} or \X{\al}{5} or \X{\al}{7}. If $F$ satisfies \X{\al}{5} or \X{\al}{7}, then \Cref{proposition:M_al conditions}\ref{ob_x5} and \ref{ob_x7} contradict $\om_i<\mb{c}-\frac{p}{2}$.
		Therefore $F$ satisfies \X{\al}{3}. By \Cref{proposition:M_al conditions}\ref{ob_x3}, $i_1\ge2\mb{c}-\om_i-2p+\al-1$. Since $\om_i\le\mb{c}-p$ and $\al\ge 1$, it follows that $i_1\ge\mb{c}-p\ge\om_i$, a contradiction. Hence $F\in\mcl{M}_0$.
	\end{proof}
	
	\begin{proposition}\label{proposition:2c-i_2<=al_i<i_2}
		Let $F,F'\in\md$ and let $\bt\in[p-1]$. Assume $F\in\mcl{A}_i$ with $F^c=\{\om_i\}\sqcup\{i_1,i_2\}$, and $F'\in\mcl{M}_{\bt}$. If $i_1<\om_i<i_2$, $2\mb{c}-i_2\le\om_i<i_2$, $i_1=2\mb{c}-i_2-p+\bt-1$, $i_2<\mb{c}+\frac{p}{2}$ and $F\ll F'$, then $F\prec F'$.
	\end{proposition}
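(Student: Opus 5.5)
The plan is to show that $F$ cannot lie in any $\mcl{M}_{\gm}$ with $\gm>\bt$. Once this is established, \Cref{def:prec} finishes the proof:
\begin{itemize}
\item if $F\in\mcl{M}_0$ or $F\in\mcl{M}_{\gm}$ with $\gm<\bt$, then $F\prec F'$ by \Cref{def:prec}\ref{def 2};
\item if $F\in\mcl{M}_{\bt}$, then the hypothesis $F\ll F'$ gives $F\prec F'$ by \Cref{def:prec}\ref{def 1}.
\end{itemize}
So suppose, for contradiction, that $F\in\mcl{M}_{\gm}$ with $\gm\in[p-1]$ and $\gm>\bt$. Since $i_1<\om_i<i_2$, $F$ satisfies one of \X{\gm}{3} through \X{\gm}{8}. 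I would rule these out one at a time. Throughout, the only tools are the hypotheses $2\mb{c}-i_2\le\om_i<i_2<\mb{c}+\frac{p}{2}$, the formula $i_1=2\mb{c}-i_2-p+\bt-1$, and the auxiliary bounds of \Cref{proposition:M_al conditions}.

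The condition \X{\gm}{8} is excluded at once, because $\om_i<i_2<\mb{c}+\frac p2$. For \X{\gm}{3} and \X{\gm}{7}, I would use a bound on $i_2$ coming from $\om_i<\mb{c}$ or from the lower bound in \X{\gm}{7}:
\begin{itemize}
\item In \X{\gm}{3} we have $\om_i<\mb{c}-\frac p2$. Together with $\om_i\ge 2\mb{c}-i_2$ this gives $i_2>\mb{c}+\frac p2$, a contradiction.
\item In \X{\gm}{7} we have $i_2\ge 2\mb{c}-\om_i+p>2\mb{c}-i_2+p$. This forces $i_2>\mb{c}+\frac p2$, again a contradiction.
\end{itemize}
For \X{\gm}{6}, equate the two expressions for $i_1$, namely $2\mb{c}-\om_i-p+\gm-1=2\mb{c}-i_2-p+\bt-1$. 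This gives $i_2-\om_i=\bt-\gm<0$, which contradicts $\om_i<i_2$.

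The remaining cases \X{\gm}{4} and \X{\gm}{5} are where the interplay between $\gm$ and $\bt$ actually matters, and I expect them to be the main (if modest) obstacle.
\begin{itemize}
\item In \X{\gm}{4} the requirement is $i_1\ge\om_i-p+\gm$. But $\om_i\ge 2\mb{c}-i_2$ gives $i_1=2\mb{c}-i_2-p+\bt-1\le\om_i-p+\bt-1<\om_i-p+\gm$, since $\gm>\bt$. This is a contradiction.
\item In \X{\gm}{5} we have $i_2=2\mb{c}-\om_i+p-\gm$, and \Cref{proposition:M_al conditions}\ref{ob_x5} together with the definition gives $\om_i\le\mb{c}-\frac{\gm+1}{2}$. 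Substituting yields $i_2\ge\mb{c}+p-\frac{\gm-1}{2}$. Since $\gm\le p-1$, this is at least $\mb{c}+\frac p2+1$, contradicting $i_2<\mb{c}+\frac p2$.
\end{itemize}
This exhausts \X{\gm}{3} through \X{\gm}{8}, so $F\notin\mcl{M}_{\gm}$ for every $\gm>\bt$, and the opening reduction gives $F\prec F'$.
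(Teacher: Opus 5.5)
Your proof is correct and follows the paper's argument. Both assume $F\in\mcl{M}_{\gm}$ with $\gm>\bt$, reduce to conditions $(\mcl{X}_{\gm}^{3})$--$(\mcl{X}_{\gm}^{8})$, and rule each out using $2\mb{c}-i_2\le\om_i<i_2<\mb{c}+\frac{p}{2}$, with the same contradictions for $(\mcl{X}_{\gm}^{4})$, $(\mcl{X}_{\gm}^{6})$ and $(\mcl{X}_{\gm}^{7})$. The only minor variation is in $(\mcl{X}_{\gm}^{5})$: you bound $i_2$ from below via $\om_i\le\mb{c}-\frac{\gm+1}{2}$, whereas the paper uses $i_1\ge 2\mb{c}-\om_i-p$ to force $\om_i>\mb{c}-\frac{\gm-1}{2}$; both arguments are valid.
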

	
	\begin{proof}
		We show that $F \in\mcl{M}_{\alpha}$ for some $0\le\al\le\bt$. Then, since $F\ll F'$, it follows that $F\prec F'$ by \Cref{def:prec}. Suppose that $F\in\mcl{M}_{\alpha}$ for some $\bt<\alpha\le p-1$. We have $i_1<\om_i<i_2$ and $\mb{c}-\frac{p}{2}<2\mb{c}-i_2\le\om_i<i_2<\mb{c}+\frac{p}{2}$. Thus, $F$ satisfies one of the conditions from \X{\al}{4} to \X{\al}{7}. 
		
		Suppose $F$ satisfies \X{\al}{4}. Then $i_1\ge\om_i-p+\al$. Since $\om_i\ge2\mb{c}-i_2$ and $\al>\bt$, it follows that $i_1>2\mb{c}-i_2-p+\bt$, a contradiction. Hence $F$ does not satisfy \X{\al}{4}.
		Suppose $F$ satisfies \X{\al}{5}. Then $\om_i\le\mb{c}-\frac{\al+1}{2}$ and $i_1\ge2\mb{c}-\om_i-p$. We get $2\mb{c}-\om_i-p\le i_1=2\mb{c}-i_2-p+\bt-1<\om_i-p+\al-1$, which implies that $\om_i>\mb{c}-\frac{\al-1}{2}$, a contradiction. Hence $F$ does not satisfy \X{\al}{5}. 
		Suppose $F$ satisfies \X{\al}{6}. Then $i_1=2\mb{c}-\om_i-p+\al-1$. Since $i_1=2\mb{c}-i_2-p+\bt-1$ and $\al>\bt$, we get $\om_i=i_2+\al-\bt>i_2$, a contradiction as $\om_i<i_2$. Hence $F$ does not satisfy \X{\al}{6}. 
		Suppose $F$ satisfies \X{\al}{7}. Then $i_2\ge2\mb{c}-\om_i+p$. Since $\om_i<i_2$ and $i_2<\mb{c}+\frac{p}{2}$, it follows that $i_2>2\mb{c}-i_2+p>\mb{c}+\frac{p}{2}$, a contradiction. Hence $F$ does not satisfy \X{\al}{7}.
		
		Therefore $F \in\mcl{M}_{\alpha}$ for some $0\le\al\le\bt$.
	\end{proof} 
	
	\begin{proposition}\label{proposition:x1 and x6}
		Let $F, F' \in\md$. Let $F \in\mcl{A}_i$ and $F' \in\mcl{A}_j$ such that $F^c=\{\om_i\}\sqcup\{i_1,i_2\}$ and $F'^c=\{\om_j\} \sqcup \{j_1, j_2\}$ with $j_1<\om_j<i_2$, $j_1\le i_1<i_2$ and $F\prec F'$. 
		Assume that if $\om_i=\om_j$, then $j_1<i_1$. 
		Let $F'\in\mcl{M}_{\bt}$ for some $\beta \in [p-1]$, where $\mb{c}+\frac{\bt}{2}\le\om_j<\mb{c}+\frac{p}{2}$ and $j_1=2\mb{c}-\om_j-p+\bt-1$.
		Suppose $i_2\in F'$, $(F'(i_2,j_2))^c=\{\om_j\}\sqcup\{j_1,i_2\}$ and $F'(i_2,j_2)\in\mcl{M}_{\gm}$ for some $\bt\le\gm\le p-1$. Then $i_2\le2\mb{c}-\om_j+p$ and $2\mb{c}-\om_j\le\om_i<\om_j$.
	\end{proposition}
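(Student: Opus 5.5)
Write $G:=F'(i_2,j_2)$, so that $G^c=\{\om_j\}\sqcup\{j_1,i_2\}$ with $j_1<\om_j<i_2$ and $G\in\mcl{A}_j$. The plan has two parts: first read off the bound on $i_2$ from the class of $G$, then locate $\om_i$ relative to $\om_j$ by contradiction against $F\prec F'$.

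\emph{Bound on $i_2$.} Since $\mb{c}\le\mb{c}+\frac{\bt}{2}\le\om_j<\mb{c}+\frac{p}{2}$, \Cref{proposition:om_i>c x6 x7 x8} applied to $G\in\mcl{M}_{\gm}$ shows that $G$ satisfies \X{\gm}{6} or \X{\gm}{7}.
\begin{itemize}
\item In case \X{\gm}{6}, the definition gives $i_2\le2\mb{c}-\om_j+p-1$ directly.
\item In case \X{\gm}{7}, we have $j_1=i_2-2p+\gm-1$. Substituting $j_1=2\mb{c}-\om_j-p+\bt-1$ gives $i_2=2\mb{c}-\om_j+p+\bt-\gm$, and $\gm\ge\bt$ yields $i_2\le2\mb{c}-\om_j+p$.
\end{itemize}
In either case $i_2\le2\mb{c}-\om_j+p$. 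Since $\om_j\ge\mb{c}$, this also gives $\om_j<i_2\le\om_j+p$, which we use below.

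\emph{Reduction to $i<j$.} Because $\om_j>\mb{c}$, \Cref{remark:order in S}\ref{part:iv} says $u\os\om_j$ iff $2\mb{c}-\om_j\le u<\om_j$. So it suffices to rule out $i=j$ and $i>j$. In both cases we have $F'\ll F$, and the aim is to show $F'\prec F$, which contradicts $F\prec F'$. The tool is \Cref{corollary:notin M_0}\ref{satisfies 6_7_8} and \Cref{proposition:in M_bt+gm}, applied to $F$ with $F'\in\mcl{M}_{\bt}$.

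\emph{Case $i=j$.} Here $\om_i=\om_j$ and $j_1<i_1$ by hypothesis. We have $\om_i<i_2\le\om_i+p$ and $i_1\ge j_1+1=2\mb{c}-\om_i-p+\bt$. Moreover,
$$i_2-2p+\bt-1\le2\mb{c}-\om_i-p+\bt-1<i_1.$$
So all hypotheses of \Cref{corollary:notin M_0}\ref{satisfies 6_7_8} hold, giving $F'\prec F$, a contradiction.

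\emph{Case $i>j$ (main obstacle).} Now $\om_j\os\om_i$, so either $\om_i>\om_j$ or $\om_i<2\mb{c}-\om_j$. We also have $\om_i\os i_1,i_2$, the constraints $i_1\ge j_1=2\mb{c}-\om_j-p+\bt-1$ and $\mb{c}<i_2\le2\mb{c}-\om_j+p$, and disconnectedness of $\cn{F}$.

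\begin{itemize}
\item \emph{Subcase $\om_i>\om_j$.} Then $i_2\le2\mb{c}-\om_j+p<\om_j+p<\om_i+p$. By \Cref{remark:order in S}\ref{part:ii} applied to $\om_i\os i_1,i_2$, each of $i_1,i_2$ is either $<2\mb{c}-\om_i$ or $>\om_i$.
\begin{itemize}
\item If $i_2>\om_i$, then $\om_i<i_2\le\om_i+p$. We aim to apply \Cref{corollary:notin M_0}\ref{satisfies 6_7_8}, checking $i_1\ge2\mb{c}-\om_i-p+\bt-1$; this follows from $i_1\ge j_1$ and $\om_i>\om_j$.
\item If $i_2<\om_i$, then $i_1<i_2<\om_i$. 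We use \Cref{proposition:in M_bt+gm}\ref{satisfies x1} or \ref{satisfies x2}, according as $\om_i<\mb{c}+\frac{p}{2}$ or not. The required lower bound on $i_1$ again comes from $i_1\ge j_1$ together with $\om_i>\om_j\ge\mb{c}+\frac{\bt}{2}$.
\end{itemize}
\item \emph{Subcase $\om_i<2\mb{c}-\om_j$.} Then $\om_i<\mb{c}-\frac{\bt}{2}$. Since $i_2>\mb{c}>\om_i$, by \Cref{remark:order in S}\ref{part:i} we get $i_2\ge2\mb{c}-\om_i>\om_j$. The position of $i_1$ then decides the route.
\begin{itemize}
\item If $i_1\ge2\mb{c}-\om_i$, we are in \Cref{proposition:in M_bt+gm}\ref{satisfies x9} or \ref{satisfies x10}.
\item If $i_1<\om_i$, the bound $i_1\ge2\mb{c}-\om_j-p+\bt-1>\om_i-p+\bt-1$ places $i_1$ adjacent to $\om_i$. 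Then \Cref{corollary:notin M_0}\ref{satisfies 3_4_5} applies, after checking $i_2\le i_1+2p-\bt+1$ and $i_2\le2\mb{c}-\om_i+p-\bt$ from $i_2\le2\mb{c}-\om_j+p$.
\end{itemize}
\end{itemize}
In every subcase $F\notin\mcl{M}_0$ and $F'\prec F$, a contradiction. Hence $2\mb{c}-\om_j\le\om_i<\om_j$.

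I expect this last subcase analysis to be the delicate part: the inequality bookkeeping may fail at boundary values and need \Cref{proposition: c and p relation} or disconnectedness arguments as in \Cref{proposition:i1_i2 nsim al_i}.
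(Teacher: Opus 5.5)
Your proposal is correct and follows the paper's proof. Your direct bound $i_2\le 2\mb{c}-\om_j+p$ already yields the inequalities the paper works with, namely $j_1\ge i_2-2p+\bt-1$ and $i_2\le\om_j+p-\bt$; the paper also records $\gm=\bt$, which you do not need. You then exclude $\om_i<2\mb{c}-\om_j$ via \Cref{corollary:notin M_0}\ref{satisfies 3_4_5} and \Cref{proposition:in M_bt+gm}\ref{satisfies x9}, and exclude $\om_i\ge\om_j$ via \Cref{corollary:notin M_0}\ref{satisfies 6_7_8}, exactly as the paper does.

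Two points in your sketch need filling in, and both are harmless.

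\begin{itemize}
\item The branch you left unchecked goes through. First, $\om_i<2\mb{c}-\om_j\le n-2p-1$ by \Cref{proposition: c and p relation}\ref{c-(bt+1)/2<=n-2p-1}, so only part (9) can arise, not part (10). Second, $i_1\ge 2\mb{c}-\om_i\ge i_2-p$. Third, $\om_i\ge 2\mb{c}-i_1>2\mb{c}-i_2\ge\om_j-p$, which gives $i_2\le\om_i+2p-\bt$. So \Cref{proposition:in M_bt+gm}\ref{satisfies x9}\ref{satisfies x9 ii} applies.
\item Your subcase $i_2<\om_i$ (inside $\om_i>\om_j$) is vacuous. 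Since $i_2>\om_j>2\mb{c}-\om_i$ and $\om_i\os i_2$, \Cref{remark:order in S} forces $i_2>\om_i$.
\end{itemize}
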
 
	
	\begin{proof}
		We have $(F'(i_2,j_2))^c=\{\om_j\}\sqcup\{j_1,i_2\}$ and $F'(i_2,j_2)\in\mcl{M}_{\gm}$ for some $1\le\bt\le\gm\le p-1$. Since $j_1<\om_j<i_2$ and $\mb{c}<\mb{c}+\frac{\bt}{2}\le\om_j<\mb{c}+\frac{p}{2}$, $F'(i_2,j_2)$ satisfies \X{\gm}{6} or \X{\gm}{7} by \Cref{proposition:om_i>c x6 x7 x8}.
		
		If $F'(i_2,j_2)$ satisfies \X{\gm}{6}, then $j_1=2\mb{c}-\om_j-p+\gm-1$ and $j_1\ge i_2-2p+\gm$, and by \Cref{proposition:M_al conditions}\ref{ob_x6}, $i_2\le\om_j+p-\gm-1$. Moreover, if $F'(i_2,j_2)$ satisfies \X{\gm}{7}, then $j_1=i_2-2p+\gm-1$ and $i_2\le\om_j+p-\gm$, and by \Cref{proposition:M_al conditions}\ref{ob_x7}, $j_1\ge2\mb{c}-\om_j-p+\gm-1$.
		Therefore, in both cases, we have $j_1\ge2\mb{c}-\om_j-p+\gm-1$, $j_1\ge i_2-2p+\gm-1$, and $i_2\le\om_j+p-\gm$. 
		Further, using $j_1=2\mb{c}-\om_j-p+\bt-1$ and $\gm\ge\bt$, we obtain $\gm=\bt$. Thus $j_1\ge i_2-2p+\bt-1$ and $i_2\le\om_j+p-\bt$. Note that $i_2\le j_1+2p-\bt+1=2\mb{c}-\om_j+p$. Since $\om_j>\mb{c}$, we get $2\mb{c}-\om_j<\mb{c}<\om_j$. 
		
		We first show that $\om_i\ge2\mb{c}-\om_j$. Suppose that $\om_i<2\mb{c}-\om_j$. Since $\om_j>\mb{c}$, $\om_j\os\om_i$ by \Cref{remark:order in S}\ref{part:ii}. 
		Hence $F'\ll F$ by \Cref{definition:order_ll}\ref{om_i<om_j}. We have $i_1\ge j_1\ge i_2-2p+\bt-1$, $i_1\ge j_1=2\mb{c}-\om_j-p+\bt-1>\om_i-p+\bt-1\ge\om_i-p$ and $i_2\le\om_j+p-\bt<2\mb{c}-\om_i+p-\bt$. 
		Therefore, if $i_1<\om_i$, then $F'\prec F$ by \Cref{corollary:notin M_0}\ref{satisfies 3_4_5}, a contradiction to our assumption that $F\prec F'$. Hence $i_1>\om_i$. 
		Since $\om_i\os i_1$ by \Cref{remark:Aj}, and $\om_i<2\mb{c}-\om_j<\mb{c}$, we get $i_1\ge2\mb{c}-\om_i$ by \Cref{remark:order in S}\ref{part:i}. This implies $i_1>\om_j$.
		Using $i_2\le\om_j+p-\bt$, we obtain $i_1>i_2-p+\bt>i_2-p$. Moreover, $\om_i\ge2\mb{c}-i_1>2\mb{c}-i_2\ge2\mb{c}-\om_j-p+\bt=j_1+1$. 
		Therefore, $j_1\ge i_2-2p+\bt-1$ implies that $i_2\le j_1+2p-\bt+1<\om_i+2p-\bt$. Since $\om_j\ge\mb{c}+\frac{\bt}{2}$, by \Cref{proposition: c and p relation}\ref{c-(bt+1)/2<=n-2p-1}, we get $2\mb{c}-\om_j\le n-2p-1$. Thus, $\om_i<2\mb{c}-\om_j$ implies that $\om_i<n-2p-1$. We have $\om_i<i_1<i_2$. Using \Cref{proposition:in M_bt+gm}\ref{satisfies x9}\ref{satisfies x9 ii}, it follows that $F'\prec F$, which is a contradiction. 
		Hence $\om_i\ge2\mb{c}-\om_j$.
		
		Now, suppose that $\om_i\ge\om_j$. Then $i_2>\om_j>2\mb{c}-\om_j\ge2\mb{c}-\om_i$. Since $\om_i\os i_2$ by \Cref{remark:Aj}, and $\om_i\ge\om_j>\mb{c}$, we have $i_2>\om_i$ by \Cref{remark:order in S}\ref{part:i}.
		We have $i_2\le\om_j+p-\bt\le\om_i+p-\bt$, $i_1\ge j_1\ge i_2-2p+\bt-1$ and $i_1\ge j_1=2\mb{c}-\om_j-p+\bt-1\ge2\mb{c}-\om_i-p+\bt-1$.
		If $\om_i>\om_j$, then $\om_j>\mb{c}$ implies that $\om_j\os\om_i$ by \Cref{remark:order in S}\ref{part:ii}, and hence $F'\ll F$ by \Cref{definition:order_ll}\ref{om_i<om_j}. Moreover, if $\om_i=\om_j$, then $j_1<i_1$ implies that $F'\ll F$ by \Cref{definition:order_ll}\ref{om_i=om_j}. In either case, $F'\ll F$. 
		It follows that $F'\prec F$ by \Cref{corollary:notin M_0}\ref{satisfies 6_7_8}, a contradiction. Hence $\om_i<\om_j$.
		
		We conclude that $2\mb{c}-\om_j\le\om_i<\om_j$.
	\end{proof}
	
	\begin{proposition}\label{proposition: al_i precedes lambda}
		Let $F\in\md$ such that $F\in\mcl{A}_i$ and $F^c=\{\om_i\}\sqcup\{i_1,i_2\}$. Let $u\in V(C_n^p)$. If either $u<i_1<\om_i$, or $\om_i<i_2<u $, then $\om_i\os u $.
	\end{proposition}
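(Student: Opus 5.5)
We argue using \Cref{remark:Aj} together with the characterizations in \Cref{remark:order in S}. By \Cref{remark:Aj}, $\om_i\os i_1$ and $\om_i\os i_2$. The plan is to treat the two hypotheses separately. In each case we split according to whether $\om_i<\mb{c}$ or $\om_i\ge\mb{c}$, turn the known relation $\om_i\os i_1$ (respectively $\om_i\os i_2$) into an inequality on $i_1$ (respectively $i_2$), and then pass that inequality to $u$ by monotonicity.

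\emph{Case $u<i_1<\om_i$.} Suppose first that $\om_i<\mb{c}$. By \Cref{remark:order in S}\ref{part:i}, $\om_i\os v$ holds if and only if $v<\om_i$ or $v\ge2\mb{c}-\om_i$. Since $u<\om_i$, this gives $\om_i\os u$ directly. Now suppose $\om_i\ge\mb{c}$. By \Cref{remark:order in S}\ref{part:ii}, $\om_i\os v$ holds if and only if $v<2\mb{c}-\om_i$ or $v>\om_i$. Because $i_1<\om_i$ and $\om_i\os i_1$, we must have $i_1<2\mb{c}-\om_i$. Then $u<i_1$ gives $u<2\mb{c}-\om_i$, and hence $\om_i\os u$.

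\emph{Case $\om_i<i_2<u$.} This is symmetric. If $\om_i\ge\mb{c}$, then $u>\om_i$, so \Cref{remark:order in S}\ref{part:ii} gives $\om_i\os u$. If $\om_i<\mb{c}$, then $\om_i\os i_2$ together with $i_2>\om_i$ forces $i_2\ge2\mb{c}-\om_i$ by \Cref{remark:order in S}\ref{part:i}. Then $u>i_2\ge2\mb{c}-\om_i$, and \Cref{remark:order in S}\ref{part:i} gives $\om_i\os u$.

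No real obstacle is expected. The only care needed is to use the correct part of \Cref{remark:order in S} in each branch, and to note that the remark is stated for the integer representatives in $\{0,\dots,n-1\}$, which is exactly how $u$, $i_1$, $i_2$ and $\om_i$ are compared here.
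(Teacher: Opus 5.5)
Your proof is correct and follows the paper's argument essentially step for step. Both use \Cref{remark:Aj} to get $\om_i\os i_1,i_2$, split on the position of $\om_i$ relative to $\mb{c}$, and use \Cref{remark:order in S} to pass the inequality from $i_1$ (resp.\ $i_2$) to $u$. Your split $\om_i<\mb{c}$ versus $\om_i\ge\mb{c}$ is slightly tidier than the paper's $\om_i<\mb{c}$ versus $\om_i>\mb{c}$, since it also covers $\om_i=\mb{c}$ (where the conclusion is trivial). You also correctly cite part (i), not (ii), of \Cref{remark:order in S} when deducing $i_2\ge2\mb{c}-\om_i$.
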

	
	\begin{proof}
		By \Cref{remark:Aj}, $\om_i\os i_1,i_2$. First, suppose $u<i_1<\om_i$. If $\om_i<\mb{c}$, then $u<\om_i$ implies that $\om_i\os u$ by \Cref{remark:order in S}\ref{part:i}. Now, let $\om_i>\mb{c}$. Since $\om_i\os i_1$ and $i_1<\om_i$, using \Cref{remark:order in S}\ref{part:ii}, we get $i_1<2\mb{c}-\om_i$. Thus $u<2\mb{c}-\om_i$, which implies that $\om_i\os u$ by \Cref{remark:order in S}\ref{part:ii}. 
		
		Now, suppose $\om_i<i_2<u $. Let $\om_i<\mb{c}$. Since $\om_i\os i_2$ and $\om_i<i_2$, from \Cref{remark:order in S}\ref{part:ii}, we get $i_2\ge2\mb{c}-\om_i$. Thus $u>2\mb{c}-\om_i$, and we get $\om_i\os u$. Now, if $\om_i>\mb{c}$, then $u>\om_i$ implies that $\om_i\os u$ by \Cref{remark:order in S}\ref{part:ii}. 
	\end{proof}
	
	\begin{proposition}\label{proposition: F_r1_s1 or F_r2_s2}
		Let $F\in\md$ such that $F\in\mcl{A}_i$ and $F^c=\{\om_i\}\sqcup\{i_1,i_2\}$ with $i_1<\om_i<i_2$. Let $u\in F$.
		\begin{enumerate}[label=(\arabic*)]
			\item\label{F_r1_s1 is a facet} 
			Suppose $u<i_1$. Then $(\Fuv{u}{i_1})^c=\{\om_i\}\sqcup\{u,i_2\}$. Moreover, if either $i_2\le i_1+2p$ and $i_2\nsim\om_i$, or $i_2<n-p$ and $i_1\nsim\om_i$, then $\Fuv{u}{i_1}\in\md$.
			
			\item\label{F_r2_s2 is a facet} 
			Suppose $u>i_2$. Then $(\Fuv{u}{i_2})^c=\{\om_i\}\sqcup\{i_1, u\}$. Moreover, if either $i_1\ge i_2-2p$ and $i_1\nsim\om_i$, or $i_1>p$ and $i_2\nsim\om_i$, then $\Fuv{u}{i_2}\in\md$.
			
			\item\label{F_s1_s2 precedes F_s} Let $F\in\mcl{M}_{\bt}$ for some $\bt\in [p-1]$ and $i_1=i_2-2p+\bt-1$. 
			\begin{enumerate}[label=(\roman*)] 
				\item \label{prob:f_r1_s1_3_i} If $u<i_1$ and $\Fuv{u}{i_1}\in\md$, then $\Fuv{u}{i_1}\prec F$. 
				\item \label{prob:f_r1_s1_3_ii} If $u>i_2$ and $\Fuv{u}{i_2}\in\md$, then $\Fuv{u}{i_2}\prec F$.
			\end{enumerate} 
		\end{enumerate} 
	\end{proposition}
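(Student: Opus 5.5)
Parts \ref{F_r1_s1 is a facet} and \ref{F_r2_s2 is a facet} are connectivity checks on a three-vertex induced subgraph, and part \ref{F_s1_s2 precedes F_s} is an ordering check built on \Cref{proposition:in M_bt+gm}.

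\textbf{Part \ref{F_r1_s1 is a facet}.} The complement formula is immediate from $(\Fuv{u}{v})^c=(F^c\setminus\{v\})\sqcup\{u\}$. For membership in $\md$ it suffices that $\cn{\Fuv{u}{i_1}}$ on $\{u,\om_i,i_2\}$ has an isolated vertex, or misses two of its three possible edges.

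In the first case ($i_2\le i_1+2p$ and $i_2\nsim\om_i$), I will show $i_2\nsim u$. Since $u<i_1<\om_i<i_2$, the cyclic distance from $u$ to $i_2$ can only be at most $p$ going around through $0$, which would force $i_2\ge n-p$ and $u<p$. In that case $i_1\le p-1$ lies within $p$ of $i_2$ the other way round. Together with $i_2\le i_1+2p$, this puts $\om_i$, which sits strictly between $i_1$ and $i_2$, within distance $p$ of one of them cyclically. I expect to derive a contradiction with $i_2\nsim\om_i$ or with the order condition $\om_i\os i_2$ (via \Cref{remark:order in S}), so $i_2$ is isolated. I expect this wrap-around bookkeeping to be the main nuisance.

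In the second case ($i_2<n-p$ and $i_1\nsim\om_i$), we have $u<i_1\le\om_i-p-1$, so $u\nsim\om_i$ unless wrap-around occurs, i.e.\ $\om_i\ge n-p+u$. I will rule this out using $\om_i<i_2<n-p$. Also $u\nsim i_2$, since $i_2-u>p$ and $u+n-i_2>p$ because $i_2<n-p$. Hence $u$ is isolated.

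\textbf{Part \ref{F_r2_s2 is a facet}.} This is symmetric under reflection $x\mapsto n-1-x$ (up to the $\os$ order, which is not needed for these adjacency checks). The roles are: $i_1\ge i_2-2p$ with $i_1\nsim\om_i$ shows $i_1$ is isolated, and $i_1>p$ with $i_2\nsim\om_i$ shows $u$ is isolated.

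\textbf{Part \ref{F_s1_s2 precedes F_s}.} Here $F\in\mcl{M}_\bt$ and $i_1=i_2-2p+\bt-1$, so $i_2-i_1=2p-\bt+1\le 2p$, and $F$ satisfies one of \X{\bt}{3}, \X{\bt}{4}, \X{\bt}{7}, \X{\bt}{8}.

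For \ref{prob:f_r1_s1_3_i}, the new facet $G=\Fuv{u}{i_1}$ has $G^c=\{\om_i,u,i_2\}$ with $u<\om_i<i_2$. By \Cref{proposition: al_i precedes lambda}, $\om_i\os u$, so $G\in\mcl{A}_i$ with the same $\om_i$. Its smaller element is $u<i_1$, so $G\ll F$. By \Cref{def:prec}, it then suffices to show $G\in\mcl{M}_\al$ with $\al\le\bt$.

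Suppose instead $G\in\mcl{M}_\al$ with $\al>\bt$. I will run through the conditions \X{\al}{3}–\X{\al}{8} applicable to $G$; each pins its first or second coordinate by an equation in $\al$. Comparing with $u<i_1$ and the same $i_2$:
\begin{itemize}
\item \X{\al}{3}, \X{\al}{4}, \X{\al}{7}, \X{\al}{8} give $u=i_2-2p+\al-1>i_1$, a contradiction.
\item \X{\al}{5} gives $u\ge 2\mb{c}-\om_i-p$; combined with $F$'s own condition and \Cref{proposition:M_al conditions}, it yields $\om_i$ bounds that contradict.
\item \X{\al}{6} gives $u=2\mb{c}-\om_i-p+\al-1$. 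The conditions forcing $F$ into \X{\bt}{7}/\X{\bt}{8} then contradict $i_1\ge 2\mb{c}-\om_i-p+\bt-1$ (\Cref{proposition:M_al conditions}\ref{ob_x7}). The sub-case where $F$ satisfies \X{\bt}{8} will use \Cref{proposition:om_i>c x6 x7 x8}.
\end{itemize}

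Part \ref{prob:f_r1_s1_3_ii} is analogous. Now $G^c=\{\om_i,i_1,u\}$ with $u>i_2$, so $G\ll F$ fails by coordinates, but the shared $\om_i$ and larger gap $u-i_1>2p-\bt+1$ mean the defining equations force $\al<\bt$. In the remaining cases I will show $G\in\mcl{M}_\al$ with $\al<\bt$ or $G\in\mcl{M}_0$, so $G\prec F$ by \Cref{def:prec}\ref{def 2}. I expect this case analysis over the ten conditions to be the main obstacle, especially the \X{}{5}/\X{}{6} cases, where the pinned coordinate depends on $\om_i$ rather than on the gap.
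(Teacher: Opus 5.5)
\textbf{Parts (1) and (2).} The first case of part \ref{F_r1_s1 is a facet} aims at a false intermediate claim. Under $i_2\le i_1+2p$ and $i_2\nsim\om_i$, the vertex $i_2$ need \emph{not} be isolated, and no contradiction with $i_2\nsim\om_i$ or with $\om_i\os i_2$ is available. Take $p=2$ and $n=9$, so $\mb{c}=5$ and $\Omega=(5,4,6,3,7,2,8,1,0)$.
\begin{itemize}
\item $F^c=\{3,4,7\}$ is the complement of a facet: $3\sim4$, and $7$ is adjacent to neither.
\item $\om_i=4$ is the $\os$-minimum of $F^c$, with $i_1=3<\om_i<i_2=7=i_1+2p$ and $i_2\nsim\om_i$.
\item With $u=0$ we get $u\sim i_2$ (cyclic distance $2$), so the isolated vertex of $C_9^2[\{0,4,7\}]$ is $\om_i$, not $i_2$.
\end{itemize}
Your step ``$i_1\le p-1$'' is also unfounded: $i_2\ge n-p$ and $i_2\le i_1+2p$ give $i_1\ge n-3p$.

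The missing idea is a dichotomy. Since $i_2-\om_i\ge p+1$, we have $i_2-u>p$ and $\om_i\le n-p-2$. Hence $u\sim i_2$ forces $n-i_2+u\le p$, and $u\sim\om_i$ forces $\om_i-u\le p$. Together these give $n-(i_2-\om_i)\le 2p$. But $i_2-\om_i\le i_2-i_1-1\le 2p-1$ yields $n-(i_2-\om_i)\ge n-2p+1\ge 4p-2>2p$. So $u$ misses one of $i_2,\om_i$, and that vertex is isolated.

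Part \ref{F_r2_s2 is a facet} needs the same dichotomy: one of $\om_i,i_1$ is isolated. Appealing to $x\mapsto n-1-x$ is not valid, because that reflection does not preserve $\os$ (which is symmetric about $\mb{c}$), and it would transport the false claim above. The claim that $i_1$ is isolated does hold there, but only because $u\sim i_1$ would force $\om_i<i_2\le 3p-1\le\mb{c}$. Then $\om_i\os i_2$ would need $i_2\ge 2\mb{c}-\om_i\ge 3p$, and that uses exactly the $\os$ hypothesis you set aside.

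Also, writing the complement as $\{\om_i\}\sqcup\{u,i_2\}$ asserts $\Fuv{u}{i_1}\in\mcl{A}_i$. That requires $\om_i\os u$ from \Cref{proposition: al_i precedes lambda}, so it is not merely set arithmetic.

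\textbf{Part (3).} In part \ref{F_s1_s2 precedes F_s} you are near the right idea but miss that it is uniform. Each of \X{\al}{3}--\X{\al}{8} contains the gap bound $i_1\ge i_2-2p+\al-1$: in \X{\al}{5} as $i_1\ge i_2-2p+\al$, and in \X{\al}{6} as $i_2\le i_1+2p-\al$.
\begin{itemize}
\item For $\Fuv{u}{i_1}$ the bound reads $u\ge i_2-2p+\al-1$, so $\al\ge\bt$ would give $u\ge i_1$.
\item For $\Fuv{u}{i_2}$ it reads $i_1\ge u-2p+\al-1>i_2-2p+\al-1$, so $\al\ge\bt$ contradicts $i_1=i_2-2p+\bt-1$.
\end{itemize}
Thus the new facet lies in some $\mcl{M}_{\al'}$ with $\al'<\bt$ and precedes $F$ by \Cref{def:prec}. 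No $\ll$-comparison and no $\om_i$-dependent case analysis is needed, so the \X{\al}{5}/\X{\al}{6} ``obstacle'' you anticipate does not exist.

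As written, your \X{\al}{6} step is not a contradiction: $u=2\mb{c}-\om_i-p+\al-1<i_1$ is consistent with $i_1\ge 2\mb{c}-\om_i-p+\bt-1$.
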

	
	\begin{proof}
		\begin{enumerate}[label=(\arabic*)]
			\item Since $u<i_1<\om_i$, we get $\om_i\os u$ by \Cref{proposition: al_i precedes lambda}. Moreover, $\om_i\os i_2$ by \Cref{remark:Aj}, and $u<i_1<i_2$. Thus $(\Fuv{u}{i_1})^c=\{\om_i\}\sqcup\{u,i_2\}$. First, assume that $i_2\le i_1+2p$ and $i_2\nsim\om_i$. Since $i_2>\om_i$, we have $\om_i-i_2\ ( \text{mod} \ n)=n+\om_i-i_2$. Then $i_2\le i_1+2p\le\om_i+2p-1$ and $n\ge 6p-3$ implies that $\om_i-i_2\ ( \text{mod} \ n)\ge 4p-2$.
			Further, $p\ge2$ implies that $\om_i>i_2+2p\ ( \text{mod} \ n)$. Therefore, since $u<\om_i<i_2$ and $i_2\nsim\om_i$, we get $u\nsim i_2$ or $u\nsim\om_i$. This means that $i_2$ or $\om_i$ is an isolated vertex in $\cn{(\Fuv{u}{i_1})}$. Thus $\cn{(\Fuv{u}{i_1})}$ is disconnected, and hence $\Fuv{u}{i_1}\in\md$.
			
			Now, assume that $i_2<n-p$ and $i_1\nsim\om_i$. We have $u<i_1<\om_i<i_2<n-p$. Since $i_1\nsim\om_i$, we get $u\nsim\om_i$ and $u\nsim i_2$. Hence $u $ is an isolated vertex in $\cn{(\Fuv{u}{i_1})}$, and thus $\Fuv{u}{i_1}\in\md$.
			
			\item Since $\om_i<i_2<u$, \Cref{proposition: al_i precedes lambda} implies $\om_i\os u$. We have $\om_i\os i_1$ by \Cref{remark:Aj}, and $i_1<i_2<u$.
			Thus $(\Fuv{u}{i_2})^c=\{\om_i\}\sqcup\{i_1, u\}$. First, assume that $i_1\ge i_2-2p$ and $i_1\nsim\om_i$. 
			Since $i_1<\om_i$, $i_1\ge i_2-2p\ge\om_i-2p+1$ and $n\ge 6p-3$, it follows that $i_1-\om_i\ ( \text{mod} \ n)=n+i_1-\om_i\ge 4p-2$.
			Further, $p\ge2$ implies that $i_1>\om_i+2p\ (\text{mod} \ n)$. Using $i_1<\om_i<u$ and $i_1\nsim\om_i$, we obtain $\om_i\nsim u$ or $u\nsim i_1$. Hence $\Fuv{u}{i_2}\in\md$.
			
			Now, assume that $i_1>p$ and $i_2\nsim\om_i$. Since $p<i_1<\om_i<i_2<u $ and $i_2\nsim\om_i$, we get $u\nsim\om_i$ and $u\nsim i_1$. Hence $\Fuv{u}{i_1}\in\md$.
			
			\item First, let $u<i_1$.
			Then $(\Fuv{u}{i_1})^c=\{\om_i\}\sqcup\{u,i_2\}$ by \ref{F_r1_s1 is a facet}. Suppose $\Fuv{u}{i_1}\in\mcl{M}_{\al}$ for some $\bt\le\al\le p-1$. Since $u<\om_i<i_2$, $\Fuv{u}{i_1}$ satisfies one of the conditions from \X{\al}{3} to \X{\al}{8}. This implies $u\ge i_2-2p+\al-1$. It follows that $i_1>i_2-2p+\bt-1$ (as $u<i_1$ and $\al\ge\bt$), a contradiction to our assumption that $i_1=i_2-2p+\bt-1$.
			Therefore, $\Fuv{u}{i_1}\in\mcl{M}_{\al'}$ for some $0\le\al'<\bt$. Hence $\Fuv{u}{i_1}\prec F$ by \Cref{def:prec}\ref{def 2}.
			
			Now, let $u>i_2$.
			Then $(\Fuv{u}{i_2})^c=\{\om_i\}\sqcup\{i_1, u\}$ by \ref{F_r2_s2 is a facet}. Suppose $\Fuv{u}{i_2}\in\mcl{M}_{\gm}$ for some $\bt\le\gm\le p-1$. Then $i_1<\om_i<u $ implies that $\Fuv{u}{i_2}$ satisfies one of the conditions from \X{\gm}{3} to \X{\gm}{8}. We get $i_1\ge u-2p+\gm-1$. Since $i_2<u $ and $\gm\ge\bt$, it follows that $i_1>i_2-2p+\bt-1$, a contradiction.
			Therefore, $\Fuv{u}{i_1}\in\mcl{M}_{\gm'}$ for some $0\le\gm'<\bt$. Hence $\Fuv{u}{i_1}\prec F$ by \Cref{def:prec}\ref{def 2}.
		\end{enumerate}
		\vspace{-0.2 cm}
	\end{proof}
	
	\begin{proposition}\label{proposition:s_1<=r_1<r_2<=s_2}
		Let $F, F' \in\md$. Let $F \in\mcl{A}_i$ and $F' \in\mcl{A}_j$ for some $i\ne j$ such that $F^c=\{\om_i\}\sqcup\{i_1,i_2\}$ and $F'^c=\{\om_j\} \sqcup \{j_1, j_2\}$ with $j_1<\om_j<j_2$ and $j_1\le i_1<i_2\le j_2$.
		\begin{enumerate}[label=(\roman*)]
			\item \label{al_r<s_1} Suppose $\om_i<j_1$. Then $(F'(\om_i,\om_j))^c=\{j_1\}\sqcup\{\om_i,j_2\}$. If $j_2\le n-p$ and $j_1\nsim j_2$, then $F'(\om_i,\om_j)\in\md$. 
			
			\item \label{al_r>s_2} Suppose $\om_i>j_2$. Then $(F'(\om_i,\om_j))^c=\{j_2\}\sqcup\{j_1,\om_i\}$. If $j_1>p$ and $j_1\nsim j_2$, then $F'(\om_i,\om_j)\in\md$. 
			
			\item \label{F_al_s precedes F_s} Suppose $F'(\om_i,\om_j)\in\md$, and either $\om_i<j_1$ or $\om_i>j_2$. If $F'\in\mcl{M}_{\bt}$ for some $\bt\in[p-1]$ and $j_1=j_2-2p+\bt-1$, then $F'(\om_i,\om_j)\prec F'$.
		\end{enumerate}
	\end{proposition}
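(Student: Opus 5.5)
Parts \ref{al_r<s_1} and \ref{al_r>s_2} follow the template of \Cref{proposition: F_r1_s1 or F_r2_s2}, and part \ref{F_al_s precedes F_s} follows its part (3).

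\textbf{Identifying the complement.} We have $(F'(\om_i,\om_j))^c=\{j_1,\om_i,j_2\}$. To decide which element plays the role of the ``$\om$'' in the $\mcl{A}$-decomposition, we need the $\os$-minimal of these three.

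Since $F\in\mcl{A}_i$ with $i\ne j$ and $F^c=\{\om_i,i_1,i_2\}$, we first show $i<j$. If instead $j<i$, then $\om_j\os\om_i$ and $\om_j\notin F^c$. Because $j_1\le i_1<i_2\le j_2$ and $j_1<\om_j<j_2$, either $i_1<\om_j<i_2$ or $\om_j$ lies outside $[i_1,i_2]$.
\begin{itemize}
\item If $i_1<\om_j<i_2$, then $\om_i\os i_1,i_2$ combined with the interval description in \Cref{remark:order in S} gives $\om_i\os\om_j$. Indeed, elements of $\Omega$ appear in nested symmetric intervals around $\mb{c}$, so anything strictly between two $\os$-later points is $\os$-earlier. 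This is a contradiction.
\item Otherwise $\om_j$ lies in $(j_1,i_1]$ or $[i_2,j_2)$, and the same nested-interval argument applies with $j_1,j_2$ in place of $i_1,i_2$.
\end{itemize}
Either way, $\om_i\os\om_j$, so the key fact is $\om_i\os\om_j\os j_1,j_2$. Since $j_1<\om_j<j_2$ and $\om_j\os j_1,j_2$, \Cref{proposition: al_i precedes lambda}-type reasoning shows that any $u<j_1$ or $u>j_2$ satisfies $\om_j\os u$.

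Now fix the case $\om_i<j_1$. Then $\om_i$ lies outside $[j_1,j_2]$, so $\om_j\os\om_i$ (one must double-check this is consistent with the previous paragraph). Comparing $j_1$ with $\om_i$ and $j_2$, the interval characterization gives $j_1\os\om_i,j_2$. I expect this to reduce to \Cref{remark:order in S}\ref{part:iii}--\ref{part:iv}. The complement is then $\{j_1\}\sqcup\{\om_i,j_2\}$ with $\om_i<j_2$, as claimed. The case $\om_i>j_2$ is symmetric and gives $j_2$ as the $\os$-minimal element. This ordering bookkeeping is the main obstacle, and I would settle it by a careful case split on whether $\om_j\ge\mb{c}$ using \Cref{remark:order in S}.

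\textbf{Disconnectedness in \ref{al_r<s_1}.} Here $\om_i<j_1<j_2\le n-p$ and $j_1\nsim j_2$. If $\om_i\nsim j_1$, then $j_1$ is isolated: $j_1\nsim j_2$, and $j_1\nsim\om_i$ holds in both linear and wrap-around senses. If $\om_i\sim j_1$, then $j_2-\om_i>p$ linearly, and $j_2\le n-p$ rules out adjacency across the wrap. Hence $j_2$ is isolated. In either case $C_n^p[(F'(\om_i,\om_j))^c]$ is disconnected.

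\textbf{Disconnectedness in \ref{al_r>s_2}.} This is the mirror argument, using $j_1>p$ to exclude wrap-around adjacency between $j_1$ and $\om_i$.

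\textbf{Part \ref{F_al_s precedes F_s}.} Suppose $F'(\om_i,\om_j)\in\mcl{M}_\al$ with $\al\ge\bt$. Its complement has the pattern ``$\om<i_1<i_2$'' (case $\om_i>j_2$: $\om=j_2$, $i_1=j_1$... rather, the two non-$\om$ elements lie on one side). So it must satisfy one of \X{\al}{1}, \X{\al}{2}, \X{\al}{9}, \X{\al}{10}. In each of these the partner condition forces a relation contradicting $j_1=j_2-2p+\bt-1$. For example, when $\om=j_1$ we have $j_2\le j_1+2p-\al+1\le j_1+2p-\bt+1$, with equality only if $\al=\bt$.

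The remaining work is to exclude $\al=\bt$ with equality. If the direct inequalities do not do this, I would instead compare positions: $j_1\os\om_j$ is false, so $F'(\om_i,\om_j)$ lies in an earlier $\mcl{A}$-block. Hence $F'(\om_i,\om_j)\ll F'$, which gives $\prec$ whenever they share the same $\mcl{M}_\bt$. Combining the two cases: if $\al<\bt$ we conclude by \Cref{def:prec}\ref{def 2}, and if $\al=\bt$ by \Cref{def:prec}\ref{def 1}, so $F'(\om_i,\om_j)\prec F'$.
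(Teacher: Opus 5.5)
Your proposal has a genuine gap in part \ref{F_al_s precedes F_s}, and the complement identification in \ref{al_r<s_1} and \ref{al_r>s_2} is incomplete. There is also an incorrect claim at the start.

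\textbf{The claim $i<j$ is false.} In both \ref{al_r<s_1} and \ref{al_r>s_2} one actually has $j<i$. In case \ref{al_r<s_1}, $\om_j\os j_1$ by \Cref{remark:Aj} and $j_1\os\om_i$, as you yourself find a paragraph later, so $\om_j\os\om_i$. The nested-interval heuristic does not give what you claim.

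\textbf{The complement identification is not proved.} You never establish $j_1\os j_2$ (respectively $j_2\os j_1$). This cannot come from $F'$ alone, because $\om_j\os j_1,j_2$ with $j_1<\om_j<j_2$ does not order $j_1$ and $j_2$. The missing input is the structure of $F$:
\begin{enumerate}[label=(\alph*)]
\item From $j_1<\om_j$ and $\om_j\os j_1$ you get $j_1<\mb{c}$, so $j_1\os\om_i$ by \Cref{remark:order in S}\ref{part:i}.
\item Then $\om_i<\mb{c}$, $\om_i<i_1$ and $\om_i\os i_1$ force $i_1\ge2\mb{c}-\om_i$.
\item Hence $j_2\ge i_2>i_1\ge2\mb{c}-\om_i>2\mb{c}-j_1$, which gives $j_1\os j_2$.
\end{enumerate}
Case \ref{al_r>s_2} is the mirror image. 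Separately, $j_2\le n-p$ rules out the wrap-around adjacency $\om_i\sim j_2$ only because $\om_i\ne0$. This holds since $0=\om_n$ is $\os$-last while $\om_i\os i_1$.

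\textbf{The argument for part \ref{F_al_s precedes F_s} fails.} In the new complement the $\os$-minimal element is the \emph{middle} one: $\om_i<j_1<j_2$ in case \ref{al_r<s_1}, and $j_1<j_2<\om_i$ in case \ref{al_r>s_2}. So $F'(\om_i,\om_j)$ can only satisfy $(\mathcal{X}_{\al}^{3})$--$(\mathcal{X}_{\al}^{8})$, not the one-sided conditions $(\mathcal{X}_{\al}^{1})$, $(\mathcal{X}_{\al}^{2})$, $(\mathcal{X}_{\al}^{9})$, $(\mathcal{X}_{\al}^{10})$ you list.

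Because you compare $j_2$ with $j_1$ rather than with $\om_i$, your inequality is not strict and leaves $\al=\bt$ open. Your rescue for that case goes the wrong way. Since $\om_j\os j_1$ (respectively $\om_j\os j_2$), $F'$ lies in an \emph{earlier} $\mathcal{A}$-block, so $F'\ll F'(\om_i,\om_j)$. With $\al=\bt$, \Cref{def:prec}\ref{def 1} would then give $F'\prec F'(\om_i,\om_j)$, the opposite of what you need.

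\textbf{The correct step.} Each of $(\mathcal{X}_{\al}^{3})$--$(\mathcal{X}_{\al}^{8})$ gives $i_2\le i_1+2p-\al+1$ for the new facet.
\begin{enumerate}[label=(\alph*)]
\item In case \ref{al_r<s_1} this reads $j_2\le\om_i+2p-\al+1<j_1+2p-\al+1\le j_1+2p-\bt+1=j_2$ for every $\al\ge\bt$, which is impossible.
\item In case \ref{al_r>s_2} it reads $\om_i\le j_1+2p-\al+1\le j_1+2p-\bt+1=j_2<\om_i$, also impossible.
\end{enumerate}
Hence $F'(\om_i,\om_j)\in\mcl{M}_{\al'}$ with $\al'<\bt$, and \Cref{def:prec}\ref{def 2} concludes. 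This is the paper's argument, and it never needs $\ll$.
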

	
	\begin{proof}
		By \Cref{remark:Aj}, we have $\om_i\os i_1,i_2$ and $\om_j\os j_1,j_2$.
		\begin{enumerate}[label=(\roman*)]
			\item If $\om_j<\mb{c}$, then $j_1<\om_j$ implies $j_1<\mb{c}$. If $\om_j\ge\mb{c}$, then by \Cref{remark:order in S}\ref{part:ii}, $\om_j\os j_1$ and $j_1<\om_j$ imply $j_1<2\mb{c}-\om_j$, and hence $j_1<\mb{c}$. Therefore $\om_i<j_1<\mb{c}$. This gives $j_1\os\om_i$. We have $\om_i<\mb{c}$, $\om_i<j_1\le i_1$ and $\om_i\os i_1$. By \Cref{remark:order in S}\ref{part:i}, $i_1\ge2\mb{c}-\om_i$.
			It follows that $2\mb{c}-j_1<2\mb{c}-\om_i\le i_1<i_2\le j_2$. Since $j_1<\mb{c}$, we get $j_1\os j_2$ by \Cref{remark:order in S}\ref{part:i}. Moreover, $\om_i<j_1<j_2$ implies $\om_i<j_2$. Hence $(F'(\om_i,\om_j))^c=\{j_1\}\sqcup\{\om_i,j_2\}$.
			
			Suppose $j_2\le n-p$ and $j_1\nsim j_2$. Observe that since $\om_i\os i_1,i_2$, using the definition of $\Omega$, $\om_i\ne 0$. This means that $0<\om_i<j_1<j_2\le n-p$. Therefore, $j_1\nsim j_2$ implies that $\om_i\nsim j_2$. Hence $F'(\om_i,\om_j)\in\md$.
			
			\item If $\om_j<\mb{c}$, then $\om_j\os j_2$ and $\om_j<j_2$ imply $j_2\ge2\mb{c}-\om_j$ by \Cref{remark:order in S}\ref{part:i}, and hence $j_2>\mb{c}$. If $\om_j\ge\mb{c}$, then $\om_j<j_2$ implies $j_2>\mb{c}$. Therefore $\om_i>j_2>\mb{c}$. This means that $j_2\os\om_i$. 
			Since $\om_i>\mb{c}$, $i_2\le j_2<\om_i$ and $\om_i\os i_2$, by \Cref{remark:order in S}\ref{part:ii}, $i_2<2\mb{c}-\om_i$. This gives $j_1\le i_1<i_2<2\mb{c}-\om_i<2\mb{c}-j_2$. By \Cref{remark:order in S}\ref{part:ii}, $j_2>\mb{c}$ implies $j_2\os j_1$. Since $j_1<j_2<\om_i$, we have $j_1<\om_i$. Hence $(F'(\om_i,\om_j))^c=\{j_2\}\sqcup\{j_1,\om_i\}$.
			
			Suppose $j_1>p$ and $j_1\nsim j_2$. Then $p<j_1<j_2<\om_i$. Since $j_1\nsim j_2$, it follows that $\om_i\nsim j_1$. Hence $F'(\om_i,\om_j)\in\md$.
			
			\item First, let $\om_i<j_1$. By \ref{al_r<s_1}, $(F'(\om_i,\om_j))^c=\{j_1\}\sqcup\{\om_i,j_2\}$. Suppose $F'(\om_i,\om_j)\in\mcl{M}_{\al}$ for some $\bt\le\al\le p-1$. Then $\om_i<j_1<j_2$ implies $F'(\om_i,\om_j)$ satisfies one of the conditions from \X{\al}{3} to \X{\al}{8}. This yields $\om_i\ge j_2-2p+\al-1$. Since $\om_i<j_1$ and $\al\ge\bt$, we get $j_1>j_2-2p+\bt-1$, which contradicts our assumption $j_1=j_2-2p+\bt-1$.
			Therefore, $F'(\om_i,\om_j)\in\mcl{M}_{\al'}$ for some $0\le\al'<\bt$. Hence $F'(\om_i,\om_j)\prec F$ by \Cref{def:prec}\ref{def 2}.
			
			Now, let $\om_i>j_2$.
			Then $(F'(\om_i,\om_j))^c=\{j_2\}\sqcup\{j_1,\om_i\}$ by \ref{al_r>s_2}. Suppose $F'(\om_i,\om_j)\in\mcl{M}_{\gm}$ for some $\bt\le\gm\le p-1$. Since $j_1<j_2<\om_i$, it follows that $F'(\om_i,\om_j)$ satisfies one of the conditions from \X{\gm}{3} to \X{\gm}{8}. We get $j_1\ge\om_i-2p+\gm-1$. Using $j_2<\om_i$ and $\gm\ge\bt$, we obtain $j_1>j_2-2p+\bt-1$, a contradiction.
			Therefore, $F'(\om_i,\om_j)\in\mcl{M}_{\gm'}$ for some $0\le\gm'<\bt$. Hence $F'(\om_i,\om_j)\prec F'$ by \Cref{def:prec}\ref{def 2}.
		\end{enumerate}
		\vspace{-0.2 cm}
	\end{proof}
	
	\begin{proposition}\label{proposition:al_i j_1 j_2 is a facet}
		Let $F\in\md$ such that $F\in\mcl{A}_i$ and $F^c=\{\om_i\}\sqcup\{i_1,i_2\}$. Let $j_1,j_2\in V(C_n^p)$ and $F'^c=\{\om_i,j_1,j_2\}$. If $j_1\nsim j_2$, $j_1<\om_i<j_2$ and $j_1\le i_1<i_2\le j_2$, then $F'\in\md$.
	\end{proposition}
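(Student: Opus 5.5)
Because $j_1<\om_i<j_2$, the three vertices $\om_i,j_1,j_2$ are distinct, so $|F'|=n-3$. It therefore suffices to show that $\cn{F'}$, the induced subgraph on $\{\om_i,j_1,j_2\}$, is disconnected. A graph on three vertices is connected exactly when it has at least two edges. Since $j_1\nsim j_2$ by hypothesis, I would argue by contradiction and assume $\om_i\sim j_1$ and $\om_i\sim j_2$.

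First I rule out adjacencies that use the wrap-around of the cycle. Suppose $\om_i\sim j_1$ only because $n-(\om_i-j_1)\le p$. Then $\om_i-j_1\ge n-p$, and since $j_2>\om_i$ this gives $j_2-j_1\ge n-p+1$. Hence $j_1=j_2+(n-(j_2-j_1)) \pmod n$ with $1\le n-(j_2-j_1)\le p-1$, so $j_1\sim j_2$, a contradiction. In the same way, if $\om_i\sim j_2$ only because $n-(j_2-\om_i)\le p$, then $j_2-j_1>j_2-\om_i\ge n-p$ again forces $j_1\sim j_2$, a contradiction. So both adjacencies are direct, that is, $\om_i-j_1\le p$ and $j_2-\om_i\le p$.

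Next I use the fact that $F\in\md$. Every vertex $x$ with $j_1\le x\le j_2$ and $x\neq\om_i$ satisfies $0<|x-\om_i|\le p$, so $x\sim\om_i$. By hypothesis $j_1\le i_1<i_2\le j_2$, and $i_1,i_2\neq\om_i$ because $F^c=\{\om_i\}\sqcup\{i_1,i_2\}$. Hence $i_1\sim\om_i$ and $i_2\sim\om_i$, which makes $\cn{F}$ connected with $\om_i$ as a hub. This contradicts $F\in\md$. Therefore $\om_i$ is non-adjacent to at least one of $j_1,j_2$, so $\cn{F'}$ has at most one edge and is disconnected, and $F'\in\md$.

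The only delicate point is the wrap-around case in the second paragraph, where the cyclic adjacency has to be converted into linear inequalities. It is handled directly by the non-adjacency $j_1\nsim j_2$, so none of the estimates from \Cref{proposition: c and p relation} are needed.
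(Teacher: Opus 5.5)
Your proof is correct and follows the paper's argument: if $\om_i\sim j_1$ and $\om_i\sim j_2$, then $\om_i\sim i_1$ and $\om_i\sim i_2$, so $\cn{F}$ would be connected, a contradiction. The paper passes from $\om_i\sim j_1,j_2$ to $\om_i\sim i_1,i_2$ without comment; your use of $j_1\nsim j_2$ to rule out wrap-around adjacencies supplies the justification for that step.
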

	
	\begin{proof}
		We have $j_1\nsim j_2$, $j_1<\om_i<j_2$ and $j_1\le i_1<i_2\le j_2$. Observe that if $\om_i\sim j_1$ and $\om_i\sim j_2$, then $\om_i\sim i_1$ and $\om_i\sim i_2$. This implies $\cn{F}$ is connected, a contradiction to $F\in\md$. 
		Hence $\om_i\nsim j_1$ or $\om_i\nsim j_2$. Therefore, since $j_1\nsim j_2$, it follows that $F'\in\md$.
	\end{proof}
	
	\begin{proposition}\label{proposition:al_r s_1 s_2}
		Let $F, F' \in\md$. Let $F \in\mcl{A}_i$ and $F' \in\mcl{A}_j$ such that $F^c=\{\om_i\}\sqcup\{i_1,i_2\}$ and $F'^c=\{\om_j\} \sqcup \{j_1, j_2\}$ with $j_1<\om_j<j_2$. 
		Suppose that either $\om_j<\om_i<2\mb{c}-\om_j$ with $\om_j<\mb{c}$, or $2\mb{c}-\om_j\le\om_i<\om_j$ with $\om_j>\mb{c}$. 
		\begin{enumerate}[label=(\roman*)]
			\item \label{al_r s_1 s_2 complement} Then $\om_i\in F'\setminus F$, $(F'(\om_i,\om_j))^c=\{\om_i\}\sqcup\{j_1,j_2\}$ and $j_1<\om_i<j_2$.
			
			\item \label{al_r s_1 s_2 is a facet} If $j_1\le i_1<i_2\le j_2$ and $j_1\nsim j_2$, then $F'(\om_i,\om_j)\in\md$.
			
			\item \label{al_r s_1 s_2 precedes F} If $F'(\om_i,\om_j)\in\md$, $F'\in\mcl{M}_{\bt}$ for some $\bt\in[p-1]$ and $j_1=j_2-2p+\bt-1$, then $F'(\om_i,\om_j)\prec F'$.
		\end{enumerate}
	\end{proposition}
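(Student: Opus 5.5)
We argue the three parts in order, following the pattern of \Cref{proposition:s_1<=r_1<r_2<=s_2}.

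For \ref{al_r s_1 s_2 complement}, we first show that $\om_i\os\om_j$ fails, i.e.\ $\om_j\os\om_i$. If $\om_j<\mb{c}$ and $\om_j<\om_i<2\mb{c}-\om_j$, this is \Cref{remark:order in S}\ref{part:i}. If $\om_j>\mb{c}$ and $2\mb{c}-\om_j\le\om_i<\om_j$, it is \Cref{remark:order in S}\ref{part:ii}. In particular $\om_i\ne\om_j$, so $\om_i\notin F^c$ fails only trivially: $\om_i\in F^c$, hence $\om_i\notin F$. Since $F'\in\mcl{A}_j$, the vertices of $F'^c$ are $\om_j$ and elements after $\om_j$ in $\Omega$; thus $\om_i\in F'^c$ would force $\om_i\in\{j_1,j_2\}$. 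We exclude this using the positions. In the first case $j_1<\om_j<\om_i$, and $\om_j\os j_2$ with $j_2>\om_j$ gives $j_2\ge2\mb{c}-\om_j>\om_i$. In the second case $\om_i<\om_j<j_2$, and $\om_j\os j_1$ with $j_1<\om_j$ gives $j_1<2\mb{c}-\om_j\le\om_i$. Either way $j_1<\om_i<j_2$, so $\om_i\in F'$. The complement formula is then $(F'^c\setminus\{\om_j\})\sqcup\{\om_i\}$. To write it as $\{\om_i\}\sqcup\{j_1,j_2\}$ with the distinguished element first, we need $\om_i\os j_1,j_2$. We check this with \Cref{remark:order in S}: for instance, when $\om_i<\mb{c}$ we need $j_1<\om_i$ and $j_2\ge2\mb{c}-\om_i$, and these follow from the inequalities just obtained together with $\om_j\os j_1,j_2$. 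The subcases $\om_i\ge\mb{c}$ are symmetric. This sign-bookkeeping over four subcases ($\om_j$ below or above $\mb{c}$, and $\om_i$ below or above $\mb{c}$) is the only fiddly step.

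For \ref{al_r s_1 s_2 is a facet}, we apply \Cref{proposition:al_i j_1 j_2 is a facet} to $F$ with the triple $\{\om_i,j_1,j_2\}$. Its hypotheses are $j_1\nsim j_2$, $j_1<\om_i<j_2$ (from part \ref{al_r s_1 s_2 complement}) and $j_1\le i_1<i_2\le j_2$, so $F'(\om_i,\om_j)\in\md$.

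For \ref{al_r s_1 s_2 precedes F}, suppose $F'(\om_i,\om_j)\in\mcl{M}_{\al}$ with $\al\ge\bt$, and derive a contradiction as in \Cref{proposition:s_1<=r_1<r_2<=s_2}\ref{F_al_s precedes F_s}. Since its complement is $\{\om_i\}\sqcup\{j_1,j_2\}$ with $j_1<\om_i<j_2$, it satisfies one of \X{\al}{3}--\X{\al}{8}. Each of these forces $j_1\ge j_2-2p+\al-1$: this is an equality in \X{\al}{3}, \X{\al}{4}, \X{\al}{7} and \X{\al}{8}, and an inequality in \X{\al}{5} and \X{\al}{6}, from $i_1\ge i_2-2p+\al$ and $i_2\le i_1+2p-\al$.

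We must handle $\al=\bt$ separately, since then the inequality is not strict. In that case $F'(\om_i,\om_j)$ and $F'$ both lie in $\mcl{M}_{\bt}$. By part \ref{al_r s_1 s_2 complement}, $\om_j\os\om_i$, so $F'\ll F'(\om_i,\om_j)$. This is the wrong direction, so the plan must instead show that $\al=\bt$ is impossible. We expect to do this by checking that $j_1=j_2-2p+\bt-1$ is incompatible with the extra constraints in \X{\bt}{3}--\X{\bt}{8} at the new centre $\om_i$, compared with the constraints at $\om_j$ that held for $F'$. This case check is the main obstacle. If $\al>\bt$, the inequality $j_1\ge j_2-2p+\al-1>j_2-2p+\bt-1$ contradicts $j_1=j_2-2p+\bt-1$ outright.

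Hence $\al<\bt$, and \Cref{def:prec}\ref{def 2} gives $F'(\om_i,\om_j)\prec F'$.
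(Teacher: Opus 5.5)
Your proposal has a genuine gap in part \ref{al_r s_1 s_2 precedes F}, and it comes from reversing the order at the start. Under either hypothesis we have $\om_i\os\om_j$, not $\om_j\os\om_i$. In the first case, take $u=\om_j<\mb{c}$ in \Cref{remark:order in S}\ref{part:i}: then $\om_j\os\om_i$ would need $\om_i<\om_j$ or $\om_i\ge 2\mb{c}-\om_j$, and neither holds. In the second case, \Cref{remark:order in S}\ref{part:ii} rules it out in the same way. Equivalently, \Cref{remark:order in S}\ref{part:iii} and \ref{part:iv} give $\om_i\os\om_j$ directly; intuitively, $\om_i$ is strictly closer to $\mb{c}$ than $\om_j$, so it appears earlier in $\Omega$.

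Your part \ref{al_r s_1 s_2 complement} survives, because you check $j_1<\om_i<j_2$ and $\om_i\os j_1,j_2$ by position. With the correct order it is immediate, and no four-subcase bookkeeping is needed: $\om_i\os\om_j\os j_1,j_2$. Part \ref{al_r s_1 s_2 is a facet} is exactly the paper's argument.

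In part \ref{al_r s_1 s_2 precedes F} you correctly exclude $\al>\bt$, but you leave $\al=\bt$ open and plan to show it is impossible. That plan cannot work, because $\al=\bt$ does occur. Take $p=2$ and $n=9$, so $\mb{c}=5$ and $\Omega=(5,4,6,3,7,2,8,1,0)$.
\begin{itemize}
\item The facet $F'$ with $F'^c=\{6\}\sqcup\{3,7\}$ satisfies $(\mcl{X}_{1}^{8})$. So $\bt=1$ and $j_1=j_2-2p+\bt-1$.
\item Take $\om_i=4$, which satisfies $2\mb{c}-\om_j\le\om_i<\om_j$; one can take $F=F'(4,6)$.
\item Then $(F'(4,6))^c=\{4\}\sqcup\{3,7\}$ is a facet, since $7$ is isolated. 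It satisfies $(\mcl{X}_{1}^{4})$, so $F'(4,6)\in\mcl{M}_1$ as well.
\end{itemize}
Indeed, for $p=2$ the condition $\al\ge\bt$ already forces $\al=\bt=1$.

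The missing step is short. Since $F'(\om_i,\om_j)\in\mcl{A}_i$, $F'\in\mcl{A}_j$ and $\om_i\os\om_j$, \Cref{definition:order_ll}\ref{om_i<om_j} gives $F'(\om_i,\om_j)\ll F'$. If $\al=\bt$, then $F'(\om_i,\om_j)\prec F'$ by \Cref{def:prec}\ref{def 1}. If $\al<\bt$, it follows by \Cref{def:prec}\ref{def 2}. So only $\al>\bt$ needs to be excluded, which your inequality $j_1\ge j_2-2p+\al-1$ already does; this is how the paper concludes.
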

	
	\begin{proof}
		If $\om_j<\om_i<2\mb{c}-\om_j$ with $\om_j<\mb{c}$, then $\om_i\os\om_j$ by \Cref{remark:order in S}\ref{part:iii}, and if $2\mb{c}-\om_j\le\om_i<\om_j$ with $\om_j>\mb{c}$, then $\om_i\os\om_j$ by \Cref{remark:order in S}\ref{part:iv}. Hence $\om_i\os\om_j$.
		\begin{enumerate}[label=(\roman*)]
			\item Since $\om_j\os j_1,j_2$ (\Cref{remark:Aj}) and $\om_i\os\om_j$, we get $\om_i\os j_1,j_2$. It follows that $\om_i\notin\{\om_j,j_1,j_2\}$, and thus $\om_i\in F'\setminus F$. Moreover, $(F'(\om_i,\om_j))^c=\{\om_i\}\sqcup\{j_1,j_2\}$. 
			
			First, assume that $\om_j<\om_i<2\mb{c}-\om_j$ and $\om_j<\mb{c}$. Since $\om_j\os j_2$ and $\om_j<j_2$, we get $j_2\ge2\mb{c}-\om_j$ by \Cref{remark:order in S}\ref{part:i}, which implies $j_1<\om_j<\om_i<2\mb{c}-\om_j\le j_2$. 
			Now, assume that $2\mb{c}-\om_j\le\om_i<\om_j$ and $\om_j>\mb{c}$. Since $\om_j\os j_1$ and $j_1<\om_j$, we get $j_1<2\mb{c}-\om_j$ by \Cref{remark:order in S}\ref{part:ii}, which implies $j_1<2\mb{c}-\om_j\le\om_i<\om_j<j_2$. In either case, $j_1<\om_i<j_2$. 
			
			\item From \ref{al_r s_1 s_2 complement}, $j_1<\om_i<j_2$. Therefore, $j_1\nsim j_2$ and $j_1\le i_1<i_2\le j_2$ imply $F'(\om_i,\om_j)\in\md$ by \Cref{proposition:al_i j_1 j_2 is a facet}.
			
			\item We have $(F'(\om_i,\om_j))^c=\{\om_i\}\sqcup\{j_1,j_2\}$ and $j_1<\om_i<j_2$ by \ref{al_r s_1 s_2 complement}. Suppose $F'(\om_i,\om_j)\in\mcl{M}_{\al}$ for some $\bt<\al\le p-1$. Then $F'(\om_i,\om_j)$ satisfies one of the conditions from \X{\al}{3} to \X{\al}{8}. 
			We get $j_1\ge j_2-2p+\al-1$. Since $\al>\bt$, it follows that $j_1>j_2-2p+\bt-1$, a contradiction. 
			Therefore, $F'(\om_i,\om_j)\in\mcl{M}_{\al'}$ for some $0\le\al'\le\bt$. Further, $\om_i\os\om_j$ implies that $F'(\om_i,\om_j)\ll F'$ by \Cref{definition:order_ll}\ref{om_i<om_j}. Hence $F'(\om_i,\om_j)\prec F'$ by \Cref{def:prec}.
		\end{enumerate}
		\vspace{-0.2 cm}
	\end{proof}
	
	\begin{proposition}\label{proposition:9 or 10 u<om_i}
		Let $F\in\md$ such that $F\in\mcl{A}_i$ and $F^c=\{\om_i\}\sqcup\{i_1,i_2\}$. Suppose $F\in\mcl{M}_{\bt}$ for some $\bt\in[p-1]$ such that $F$ satisfies \X{\bt}{9} or \X{\bt}{10}. Let $u\in F$.
		\begin{enumerate}[label=(\roman*)]
			\item \label{u<om_i} If $u<\om_i$, then $F(u,i_1)\in\md$ and $F(u,i_1)\prec F$.
			
			\item \label{om_i<u<i_1} If $\om_i<u<i_1$, $F(u,i_1)\in\md$ and $(F(u,i_1))^c=\{\om_i\}\sqcup\{u,i_2\}$, then $F(u,i_1)\prec F$.
		\end{enumerate} 
	\end{proposition}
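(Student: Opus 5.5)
The plan is to work directly with the defining equalities of \X{\bt}{9} and \X{\bt}{10}. For (i) I first show that $F(u,i_1)$ is a facet. I then locate the class $\mcl{M}_{\gm}$ containing $F(u,i_1)$ and show that either $\gm<\bt$, or $\gm=\bt$ together with $F(u,i_1)\ll F$. Either conclusion gives $F(u,i_1)\prec F$ by \Cref{def:prec}.

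Throughout, $\om_i<i_1<i_2$ and $i_1\ge\om_i+p+1$. By \Cref{proposition:M_al conditions}\ref{ob_x9}, \ref{ob_x10} we have $i_1\sim i_2$ and $\om_i\ge 2p$. \Cref{proposition:i1_i2 nsim al_i}\ref{nsim i2} gives $i_2\nsim\om_i$, and hence $i_1\nsim\om_i$ as well.

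\emph{Facet in (i).} The complement is $(F(u,i_1))^c=\{\om_i,u,i_2\}$ with $u<\om_i<i_2$. We already have $\om_i\nsim i_2$. If $u\nsim i_2$, then $i_2$ is isolated. If $u\sim i_2$, this can only happen through the wrap-around, so $u\le p-1$ and $i_2\ge n-p+u$. Since $\om_i\ge 2p$, we get $\om_i-u>p$, and as $\om_i\le n-p-2$ also $u+n-\om_i>p$. Hence $\om_i$ is isolated. In either case $\cn{(F(u,i_1))}$ is disconnected, so $F(u,i_1)\in\md$.

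\emph{Order in (i).} I split according to which of $u,\om_i$ comes first in $\Omega$.

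\emph{Case $u\os\om_i$.} Then $F(u,i_1)\in\mcl{A}_k$ with $\om_k=u$ and $u\os\om_i$, so $F(u,i_1)\ll F$. Hence it suffices to show $\gm\le\bt$. If $\gm\ge 1$, the triple $u<\om_i<i_2$ is of type \X{\gm}{9} or \X{\gm}{10}, with $\om_i$ playing the role of the smaller non-root element.
\begin{itemize}
\item If both $F$ and $F(u,i_1)$ are of type 9, then comparing $i_2=\om_i+2p-\bt+1$ with $i_2=u+2p-\gm+1$ gives $\gm-\bt=u-\om_i<0$.
\item If both are of type 10, then $i_2=n-\bt-1=n-\gm-1$ gives $\gm=\bt$.
\item Mixed types need the thresholds $n-2p-2$. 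For instance, if $F$ is of type 9 and $F(u,i_1)$ of type 10, then $u>n-2p-2\ge\om_i$, contradicting $u<\om_i$. The other mixed case is similar.
\end{itemize}

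\emph{Case $\om_i\os u$.} Then $F(u,i_1)\in\mcl{A}_i$ and its complement has $u<\om_i<i_2$. Its first listed element is $u<i_1$, so $F(u,i_1)\ll F$ by \Cref{definition:order_ll}\ref{om_i=om_j}. If $\gm\ge1$, the facet satisfies one of \X{\gm}{3}--\X{\gm}{8}, and each of these forces $u\ge i_2-2p+\gm-1$, as used in the proofs of \Cref{proposition: F_r1_s1 or F_r2_s2} and \Cref{proposition:s_1<=r_1<r_2<=s_2}.
\begin{itemize}
\item Under \X{\bt}{9}, substituting $i_2=\om_i+2p-\bt+1$ and using $u<\om_i$ gives $\gm<\bt$.
\item Under \X{\bt}{10}, I would instead use $u\sim$-constraints: \X{\gm}{3}--\X{\gm}{8} place the smaller element within $2p$ of $i_2$, so $u>n-4p$. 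I then compare with $\om_i>n-2p-2$ and the bound $i_2\le n-p$ from \Cref{proposition:M_al conditions}. These bounds conflict, because $i_2=n-\bt-1>n-p$. So this subcase is impossible.
\end{itemize}
This last subcase is the step I expect to be the most delicate, since it mixes cyclic adjacency with the interval bounds.

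\emph{Part (ii).} By hypothesis $(F(u,i_1))^c=\{\om_i\}\sqcup\{u,i_2\}$ with $\om_i<u<i_2$, so $F(u,i_1)\in\mcl{A}_i$. Since $u<i_1$, \Cref{definition:order_ll}\ref{om_i=om_j} gives $F(u,i_1)\ll F$. If its class is $\gm\ge1$, it must be of type \X{\gm}{9} or \X{\gm}{10} with the same $\om_i$. The threshold on $\om_i$ is unchanged, so the same type as $F$ applies, and the equality for $i_2$ forces $\gm=\bt$. Hence $F(u,i_1)$ lies in $\mcl{M}_{\gm}$ with $\gm\in\{0,\bt\}$. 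In both situations $F(u,i_1)\prec F$ by \Cref{def:prec}: by \ref{def 2} when $\gm=0$, and by \ref{def 1} when $\gm=\bt$.
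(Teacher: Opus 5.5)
Your proof has the same skeleton as the paper's. The facet claim uses an isolated vertex, you split on whether $u<_{\Omega}\omega_i$ or $\omega_i<_{\Omega}u$, and you then locate the class of $F(u,i_1)$; part (ii) is essentially the paper's argument. Two local steps, however, do not work as written, although both are easy to repair.

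\emph{The mixed case under $u<_{\Omega}\omega_i$.} The case ``$F$ of type $10$, $F(u,i_1)$ of type $9$'' is not similar to the one you treat. The thresholds give $u\le n-2p-2<\omega_i$, which is consistent with $u<\omega_i$, so there is no contradiction. You can rescue it by comparing $n-\beta-1=u+2p-\gamma+1$, which gives $\gamma-\beta=u+2p+2-n\le 0$. However, the whole four-way comparison is unnecessary. If $F(u,i_1)$ were of type $(\mathcal{X}_{\gamma}^{9})$ or $(\mathcal{X}_{\gamma}^{10})$, apply \Cref{proposition:M_al conditions}\ref{ob_x9}, \ref{ob_x10} to $F(u,i_1)$, whose root is $u$ and whose smaller non-root element is $\omega_i$. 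This yields $\omega_i\sim i_2$, contradicting $i_2\nsim\omega_i$, which you had already established. Hence $F(u,i_1)\in\mathcal{M}_0$ immediately; this is the paper's one-line argument.

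\emph{The subcase $\omega_i<_{\Omega}u$ with $F$ of type $(\mathcal{X}_{\beta}^{10})$.} The inequality $n-\beta-1>n-p$ does not follow from $\beta\le p-1$, since it fails at $\beta=p-1$. The correct reason is $i_2>i_1\ge\omega_i+p+1\ge n-p$, using $\omega_i\ge n-2p-1$; in particular $\beta\le p-2$. With this, the bound $i_2\le n-p$ from \Cref{proposition:M_al conditions}\ref{ob_x3}--\ref{ob_x8}, applied to $F(u,i_1)$, is already the contradiction. Your remarks about $u>n-4p$ and $\omega_i>n-2p-2$ can be dropped.

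In this subcase your route differs mildly from the paper's. The paper excludes $(\mathcal{X}^{6})$--$(\mathcal{X}^{8})$ via $i_2\sim\omega_i$. It then uses $i_2\le u+2p-\alpha+1$ to contradict $(\mathcal{X}_{\beta}^{9})$. For $(\mathcal{X}_{\beta}^{10})$ it sets $\omega_i\ge\mathbbm{c}$ against the bound $\omega_i<\mathbbm{c}$ forced by $(\mathcal{X}^{3})$--$(\mathcal{X}^{5})$. Your uniform bounds, $u\ge i_2-2p+\gamma-1$ for type $9$ and $i_2\le n-p$ for type $10$, cover all of $(\mathcal{X}^{3})$--$(\mathcal{X}^{8})$ at once. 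They are a valid and slightly cleaner alternative. With these two repairs the proof is complete.
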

	
	\begin{proof}
		Since $F$ satisfies \X{\bt}{9} or \X{\bt}{10}, it follows that $\om_i<i_1<i_2$. 
		\begin{enumerate}[label=(\roman*)]
			\item By \Cref{proposition:M_al conditions}\ref{ob_x9} and \ref{ob_x10}, we have $\om_i\ge2p$ and $i_1\sim i_2$. Therefore, $F\in\md$ implies $i_2\nsim\om_i$. Note that $(F(u,i_1))^c=\{\om_i,i_2, u\}$, where $u<\om_i<i_2\le n-1$ and $i_2\nsim\om_i$. 
			If $i_2\sim u $ and $u\sim\om_i$, then $\om_i\le i_2+2p\ (\text{mod} \ n)\le2p-1$, a contradiction. Hence $i_2\nsim u$ or $u\nsim\om_i$. Thus $F(u,i_1)\in\md$. 
			
			We now show that $F(u,i_1)\prec F$. By \Cref{remark:Aj}, $\om_i\os i_2$. Since $u\ne\om_i$, either $\om_i\os u$ or $u\os\om_i$.
			
			First, assume that $\om_i\os u$. Then $u<\om_i<i_2$ and $\om_i\os i_2$ imply $(F(u,i_1))^c=\{\om_i\}\sqcup\{u,i_2\}$. We show that $F \in\mcl{M}_{\alpha}$ for some $0\le\alpha<\beta$, which implies $F\prec F'$ by \Cref{def:prec}\ref{def 2}. Suppose $F(u,i_1)\in\mcl{M}_{\al}$ for some $\bt\le\al\le p-1$. 
			Since $u<\om_i<i_2$, $F(u,i_1)$ satisfies one of the conditions from \X{\al}{3} to \X{\al}{8}. If $F(u,i_1)$ satisfies \X{\al}{6} or \X{\al}{7} or \X{\al}{8}, then $i_2\sim\om_i$ by \Cref{proposition:M_al conditions}\ref{ob_x6}, \ref{ob_x7} and \ref{ob_x8}, a contradiction. 
			Therefore, $F(u,i_1)$ satisfies \X{\al}{3} or \X{\al}{4} or \X{\al}{5}. This yields $i_2\le u+2p-\al+1$. Using $u<\om_i$ and $\al\ge\bt$, we obtain $i_2<\om_i+2p-\bt+1$. If $F$ satisfies \X{\bt}{9}, then $i_2=\om_i+2p-\bt+1$, a contradiction. Hence $F$ satisfies \X{\bt}{10}. 
			By \Cref{proposition:M_al conditions}\ref{ob_x10}, $\om_i\ge\mb{c}$. If $F(u,i_1)$ satisfies \X{\al}{3} or \X{\al}{5}, then $\om_i<\mb{c}$; and if $F(u,i_1)$ satisfies \X{\al}{4}, then $\om_i<\mb{c}$ by \Cref{proposition:M_al conditions}\ref{ob_x4}. Both contradict $\om_i\ge\mb{c}$. Therefore, $F(u,i_1)\in\mcl{M}_{\al}$ for some $0\le\alpha<\beta$. 
			
			Now, assume that $u\os\om_i$. Since $\om_i\os i_2$, we get $u\os i_2$, and thus $\om_i<i_2$ implies $(F(u,i_1))^c=\{ u\}\sqcup\{\om_i,i_2\}$. Suppose $F(u,i_1)\in\mcl{M}_{\gm}$ for some $\gm\in[p-1]$. Then $u<\om_i<i_2$ implies that $F(u,i_1)$ satisfies \X{\gm}{9} or \X{\gm}{10}. By \Cref{proposition:M_al conditions}\ref{ob_x9} and \ref{ob_x10}, $i_2\sim\om_i$, a contradiction. Hence $F(u,i_1)\in\mcl{M}_0$, and thus $F(u,i_1)\prec F$ by \Cref{def:prec}\ref{def 2}.
			
			\item By \Cref{definition:order_ll}\ref{om_i=om_j}, $u<i_1$ implies $F(u,i_1)\ll F$. Suppose $F(u,i_1)\in\mcl{M}_{\al}$ for some $\bt<\al\le p-1$. We have $\om_i<u<i_2$. If $\om_i\le n-2p-2$, then $F(u,i_1)$ satisfies \X{\al}{9} and $F_i$ satisfies \X{\bt}{9}. This implies $i_2=\om_i+2p-\al+1<\om_i+2p-\bt+1=i_2$, a contradiction. 
			If $\om_i>n-2p-2$, then $F(u,i_1)$ satisfies \X{\al}{10} and $F_i$ satisfies \X{\bt}{10}. This implies $i_2=n-\al-1<n-\bt-1=i_2$, again a contradiction. Therefore, $F(u,i_1)\in\mcl{M}_{\al'}$ for some $0\le\al'\le\bt$. 
			Since $F(u,i_1)\ll F_i$, $\al'\le\bt$ implies $F(u,i_1)\prec F_i$ by \Cref{def:prec}.
		\end{enumerate}
		\vspace{-0.2 cm}
	\end{proof}
	
	We now show that $\prec$ provides a shelling order for the facets of $\Delta_3(C_n^p)$. By the definition of shellability, it suffices to prove that for any $F_r, F_s\in\md$ with $F_r \prec F_s$, there is an $F_t\in\md$ such that
	\begin{equation} \label{eqn:asterick}
		F_t \prec F_s \text{ and } F_t \cap F_s=F_s\setminus\{u\} \text{ for some } u\in F_s\setminus F_r. \hfill \tag{\raisebox{-0.35ex}{\huge$\ast$}}
	\end{equation}
	
	Let $F_r,F_s\in\md$ such that $F_r \prec F_s$. Let $F_r \in\mcl{A}_{r_0}$ and $F_s \in\mcl{A}_{s_0}$, where $F_r^c=\{\om_{r_0}\}\sqcup\{r_1,r_2\}$ and $F_s^c=\{\om_{s_0}\}\sqcup\{s_1,s_2\}$. 
	To simplify the notation, we write $\om_r$ and $\om_s$ for $\om_{r_0}$ and $\om_{s_0}$, respectively. We have $r_1<r_2$ and $s_1<s_2$. Moreover, $\om_r\os r_1,r_2$ and $\om_s\os s_1, s_2$ by \Cref{remark:Aj}. We aim to find an $F_t\in\md$ such that $F_t$ satisfies \eqref{eqn:asterick} for the pair $F_r\prec F_s$.
	
	\begin{remark}\label{remark:asterisk}
		For $u\in F_s\setminus F_r$ and $v\in F_s^c$, if $\Frs{u}{v} \in\md$ such that $\Frs{u}{v}\prec F_s$, then $F_t=\Frs{u}{v}$ satisfies \eqref{eqn:asterick} for the pair $F_r \prec F_s$. 
	\end{remark}
	
	We separately deal with the cases $F_s \in\mcl{M}_{0}$ and $F_s \notin \mcl{M}_{0}$. The former case is further divided into two subcases: $r_0=s_0$ (\Cref{lemma: case M_0 a}) and $r_0\ne s_0$ (\Cref{lemma: case M_0 b}). The latter case is addressed in \Cref{lemma: case M_al}.
	
	\begin{lemma}\label{lemma: case M_0 a}
		Suppose $F_s\in\mcl{M}_0$. If $F_r\in\mcl{A}_{s_0}$ (\textit{i.e.}, $r_0=s_0$), then there exists a facet $F_t\in\md$ that satisfies \eqref{eqn:asterick} for the pair $F_r\prec F_s$.
	\end{lemma}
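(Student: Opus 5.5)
Since $F_s\in\mcl{M}_0$ and $F_r\prec F_s$, \Cref{def:prec} forces $F_r\in\mcl{M}_0$ and $F_r\ll F_s$. With $r_0=s_0$ this means $\om_r=\om_s$ and $(r_1,r_2)$ precedes $(s_1,s_2)$ lexicographically: either $r_1<s_1$, or $r_1=s_1$ and $r_2<s_2$. We will take $F_t=\Frs{u}{v}$ with $u\in\{r_1,r_2\}\setminus\{s_1,s_2\}\subseteq F_s\setminus F_r$ and $v\in\{s_1,s_2\}$, so that $\om_s$ stays in the complement. Because $\om_s\os u$ (\Cref{remark:Aj} applied to $F_r$), we get $\Frs{u}{v}\in\mcl{A}_{s_0}$, with complement $\{\om_s\}\sqcup\{u,\cdot\}$. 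By \Cref{remark:asterisk}, it then suffices to show two things: that $\Frs{u}{v}$ is a facet, and that it precedes $F_s$.

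For the order, we replace the larger of $s_1,s_2$ (or the differing coordinate) by a smaller value, so $\Frs{u}{v}\ll F_s$ by \Cref{definition:order_ll}\ref{om_i=om_j}. To upgrade $\ll$ to $\prec$, we show $\Frs{u}{v}\in\mcl{M}_0$. If instead $\Frs{u}{v}\in\mcl{M}_\bt$ with $\bt\ge1$, we apply \Cref{proposition:in M_bt+gm}. The relevant shape of $F_s$ (which of \X{\bt}{1}--\X{\bt}{10} matches the position of $\om_s$ relative to its two partners) has the form ``coordinate equal to a threshold, or beyond it''. Moving the coordinate from $u$ to $s_1$ or $s_2$ in the right direction keeps it beyond the threshold, and that part gives $F_s\notin\mcl{M}_0$, a contradiction. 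Alternatively $\Frs{u}{v}\prec F_s$ holds directly by \Cref{def:prec}\ref{def 2}. Either way we are done.

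For the facet property, we split by the position of $\om_s$: (a) $s_1<s_2<\om_s$, (b) $s_1<\om_s<s_2$, (c) $\om_s<s_1<s_2$. Within each case we split by whether $r_1<s_1$ or $r_1=s_1$, $r_2<s_2$.
\begin{itemize}
\item If $r_1=s_1$ and $r_2<s_2$, take $u=r_2$, $v=s_2$; then $\Frs{r_2}{s_2}=F_r$ up to the pair, namely it equals $F_r$ itself. So $F_t=F_r$ works immediately, since $F_r\prec F_s$ and $F_r\cap F_s=F_s\setminus\{r_2\}$.
\item If $r_1<s_1$ and $r_2=s_2$, we likewise take $F_t=F_r$.
\item If $r_1<s_1$ and $r_2\ne s_2$, we first try $u=r_1$, $v=s_1$, giving complement $\{\om_s,r_1,s_2\}$. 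Disconnectedness is checked via \Cref{proposition:i1_i2 nsim al_i} together with the adjacency patterns of $F_r$ and $F_s$. In case (b) we use \Cref{proposition: F_r1_s1 or F_r2_s2}\ref{F_r1_s1 is a facet}. When that fails, we use $u=r_2$ instead, with \Cref{proposition: F_r1_s1 or F_r2_s2}\ref{F_r2_s2 is a facet} or \Cref{proposition:al_i j_1 j_2 is a facet}.
\end{itemize}

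The main obstacle is the subcase in (b) where $r_1<s_1$ but neither single swap gives a disconnected graph. This is exactly the situation of \Cref{proposition: om_i<=2p-1}: there we learn $\om_s\le2p-1$, $\om_s<r_2<s_2$, $r_2\nsim\om_s$ and $r_1\nsim r_2$. We will then take $u=r_2$, $v=s_2$, with complement $\{\om_s,s_1,r_2\}$, and verify disconnectedness from these adjacency facts using $n\ge6p-3$ to rule out wrap-around. For the order in this subcase, \Cref{proposition:in M_0} gives membership in $\mcl{M}_0$ directly, since $\om_s\le2p-1$, so $\ll$ suffices. The remaining cases (a) and (c) are handled by \Cref{proposition:i1_i2 nsim al_i}\ref{nsim i1},\ref{nsim i2}, which keep $\om_s$ isolated or separated after the swap.
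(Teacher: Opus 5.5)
Your outline follows the paper's skeleton. You split on the position of $\om_s$, swap $r_1$ or $r_2$ against $s_1$ or $s_2$, prove $\mcl{M}_0$-membership by contradiction through \Cref{proposition:in M_bt+gm}, and fall back on \Cref{proposition: om_i<=2p-1} and \Cref{proposition:in M_0}. However, your rule for choosing the swap in case (b) fails.

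Take $p=2$, $n=9$, so $\mb{c}=5$ and $\Omega=(5,4,6,3,7,2,8,1,0)$. Let $F_s^c=\{5\}\sqcup\{1,7\}$ and $F_r^c=\{5\}\sqcup\{0,8\}$. Both are facets in $\mcl{A}_1$, neither satisfies any of \X{1}{3}--\X{1}{8}, so both lie in $\mcl{M}_0$, and $F_r\ll F_s$.
\begin{itemize}
\item Your first try $\Frs{0}{1}$ has complement $\{0,5,7\}$, which is connected ($0\sim 7$ by wrap-around, and $5\sim 7$).
\item With $u=r_2=8$, $\Frs{8}{1}$ has connected complement $\{5,7,8\}$.
\item $\Frs{8}{7}$ has complement $\{5\}\sqcup\{1,8\}$ and is a facet, but $F_s\ll\Frs{8}{7}$, so $F_s\prec\Frs{8}{7}$ whichever $\mcl{M}_\bt$ contains it.
\item \Cref{proposition: om_i<=2p-1} does not apply either, because its hypothesis $s_2\nsim\om_s$ fails ($7\sim 5$).
\end{itemize}
The only admissible $F_t$ is $\Frs{r_1}{s_2}=\Frs{0}{7}$, with complement $\{5\}\sqcup\{0,1\}$ in $\mcl{M}_0$. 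Your plan never considers this swap.

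The paper handles $s_1<\om_s<s_2$ by testing $\Frs{r_1}{s_2}$ first.
\begin{itemize}
\item If it is a facet in $\mcl{M}_0$, you are done.
\item If it is a facet outside $\mcl{M}_0$, it satisfies \X{\al}{1} or \X{\al}{2}. Then $r_1\ge p$ and $r_1\nsim\om_s$, so $r_1$ is isolated in the complement of $\Frs{r_1}{s_1}$, which lies in $\mcl{M}_0$.
\item Only when $\Frs{r_1}{s_2}\notin\md$ does one obtain $s_2\nsim\om_s$, exactly what \Cref{proposition: om_i<=2p-1} needs.
\end{itemize}

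Your case (c), $\om_s<s_1<s_2$, is also not routine; most of the paper's work happens there. \Cref{proposition:i1_i2 nsim al_i}\ref{nsim i2} gives only $s_2\nsim\om_s$ and says nothing about $r_1$.
\begin{itemize}
\item For $r_1<\om_s$ the first try can fail. With $p=2$, $n=10$, $F_s^c=\{3\}\sqcup\{8,9\}$ and $F_r^c=\{3\}\sqcup\{1,7\}$ (both in $\mcl{M}_0$), the complement $\{1,3,9\}$ of $\Frs{1}{8}$ is connected. The paper applies \Cref{proposition: om_i<=2p-1} again here and splits on $s_1<r_2$ versus $s_1>r_2$.
\item When $\Frs{r_1}{s_1}$ is a facet, its complement has shape $r_1<\om_s<s_2$, unlike $F_s$, so your threshold monotonicity does not apply. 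One must convert \X{\al}{3}--\X{\al}{5} data into the hypotheses of \Cref{proposition:in M_bt+gm}\ref{satisfies x9} for $F_s$. This uses $s_1\ge 2\mb{c}-\om_s$ and \Cref{proposition: c and p relation}\ref{c-(bt+1)/2<=n-2p-1}, and includes a degenerate subcase $p=2$, $\om_s=n-2p-1$.
\item For $r_1>\om_s$, showing $\Frs{r_1}{s_1}$ is a facet requires excluding $\om_s\sim r_1\sim s_2$. That uses $F_s\in\mcl{M}_0$ through \Cref{proposition:in M_bt+gm}\ref{satisfies x9} and \ref{satisfies x10} with $\bt=1$. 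It also uses the disconnectedness of $C_n^p[F_r^c]$ to rule out $s_2=n-1$.
\end{itemize}

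Two smaller points. First, your remark that ``alternatively'' $\Frs{u}{v}\prec F_s$ by \Cref{def:prec}\ref{def 2} is backwards. Since $F_s\in\mcl{M}_0$, every facet in some $\mcl{M}_\bt$ with $\bt\ge1$ comes after $F_s$, so membership in $\mcl{M}_0$ is the only route. Second, your bullets miss the overlap $r_2=s_1$. There $r_2\notin F_s$, so your fallback is unavailable, but $F_t=F_r=\Frs{r_1}{s_2}$ works. Simply reduce to $\{r_1,r_2\}\cap\{s_1,s_2\}=\emptyset$ as the paper does.
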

	
	\begin{proof}
		Since $F_r\in\mcl{A}_{s_0}$, we have $\om_r=\om_s$. This means that $F_r^c=\{\om_s\}\sqcup\{r_1,r_2\}$ and $F_s^c=\{\om_s\}\sqcup\{s_1,s_2\}$. Moreover, $\om_s\os r_1,r_2,s_1,s_2$. Observe that if $\{r_1, r_2\} \cap \{s_1,s_2\}\ne \emptyset$, then \eqref{eqn:asterick} is satisfied by taking $F_t=F_r$. Henceforth, we assume that $\{r_1, r_2\} \cap \{s_1,s_2\}=\emptyset$. So $r_1,r_2\in F_s\setminus F_r$.
		
		We have $F_r \prec F_s$ and $F_s \in\mcl{M}_0$. By \Cref{def:prec}, $F_r \in\mcl{M}_0$ and $F_r \ll F_s$. Therefore, $\om_r=\om_s$ implies that either $r_1<s_1$, or $r_1=s_1$ and $r_2<s_2$ by \Cref{definition:order_ll}\ref{om_i=om_j}. 
		Since $r_1, r_2, s_1$ and $s_2$ are distinct, we get $r_1<s_1<s_2$. Recall that $r_1<r_2$. We now consider the following cases: (I) $s_1<s_2<\om_s$, (II) $s_1<\om_s<s_2$, (III) $\om_s<s_1<s_2$.
		\begin{enumerate}[label=(\MakeUppercase{\roman*})]
			\item $s_1<s_2<\om_s$.
			
			We show that $F_t=\Frs{r_1}{s_1}$ satisfies \eqref{eqn:asterick}. We have $\Frs{r_1}{s_1}=(F_s\setminus\{r_1\})\sqcup\{s_1\}$ and $r_1<s_1<s_2<\om_s$. Since $\om_s\os r_1,s_2$, we get $(\Frs{r_1}{s_1})^c=(F_s^c\setminus\{s_1\})\sqcup\{r_1\}=\{\om_s\}\sqcup\{r_1,s_2\}$. 
			By \Cref{proposition:i1_i2 nsim al_i}\ref{nsim i1} for $F_s$, we get $s_1\nsim\om_s$. Suppose $r_1 \sim \om_s$. Then $r_1<s_1<\om_s$ and $s_1\nsim\om_s$ imply $r_1\le\om_s+p\ (\text{mod} \ n)$.
			Further, since $F_r\in\md$, $\cn{F_r}$ is disconnected. Therefore $r_2\nsim\om_s$, which implies $r_2<\om_s$. Using \Cref{proposition:i1_i2 nsim al_i}\ref{nsim i1} for $F_r$, we get $r_1\nsim\om_s$, a contradiction. Hence $r_1\nsim\om_s$.
			
			If $s_2\nsim\om_s$, then $C_n^p[{(\Frs{r_1}{s_1}})^c]$ is disconnected and hence $\Frs{r_1}{s_1}\in\md$. Now, let $s_2\sim\om_s$. 
			Since $F_s\in\md$, $\cn{F_r}$ is disconnected. This implies $s_1\nsim s_2$, and therefore $r_1\nsim s_2$. Thus $\Frs{r_1}{s_1}\in\md$.
			
			Suppose $\Frs{r_1}{s_1}\in\mcl{M}_{\al}$ for some $\al\in[p-1]$. Since $r_1<s_2<\om_s$, $\Frs{r_1}{s_1}$ satisfies \X{\al}{1} or \X{\al}{2}. 
			First, suppose that $\Frs{r_1}{s_1}$ satisfies \X{\al}{1}. Then $\om_s<\mb{c}+\frac{p}{2}$ and $r_1=2\mb{c}-\om_s-p+\al-1$. 
			Since $s_1>r_1$, we get $s_1\ge2\mb{c}-\om_s-p+\al$. By \Cref{proposition:in M_bt+gm}\ref{satisfies x1}\ref{satisfies x1 ii}, $F_s\notin\mcl{M}_{0}$, a contradiction. Hence $\Frs{r_1}{s_1}$ satisfies \X{\al}{2}. Then $\om_s\ge\mb{c}+\frac{p}{2}$ and $r_1=\om_s-2p+\al-1$.
			Since $s_1>r_1$, we get $s_1\ge\om_s-2p+\al$. By \Cref{proposition:in M_bt+gm}\ref{satisfies x2}\ref{satisfies x2 ii}, $F_s\notin\mcl{M}_{0}$, again a contradiction. Hence $\Frs{r_1}{s_1}$ does not satisfy \X{\al}{2}.
			
			Therefore $\Frs{r_1}{s_1}\in\mcl{M}_0$. Since $r_1<s_1$, we have $\Frs{r_1}{s_1}\ll F_s$ by \Cref{definition:order_ll}\ref{om_i=om_j}. Hence $\Frs{r_1}{s_1}\prec F_s$ by \Cref{def:prec}\ref{def 1}. We have $r_1\in F_s\setminus F_r$. Thus $F_t=\Frs{r_1}{s_1}$ satisfies \eqref{eqn:asterick} by \Cref{remark:asterisk}. 
			
			\item $s_1<\om_s<s_2$.
			
			We have $\Frs{r_1}{s_2}=(F_s\setminus\{r_1\})\sqcup\{s_2\}$, $\om_s\os r_1,s_1$ and $r_1<s_1$. It follows that $(\Frs{r_1}{s_2})^c=\{\om_s\}\sqcup\{r_1,s_1\}$. There are two possibilities: either $\Frs{r_1}{s_2}\in\md$ or $\Frs{r_1}{s_2}\notin\md$. 
			\begin{enumerate}[label=(\alph*)]
				\item $\Frs{r_1}{s_2}\in\md$. 
				
				We first assume that $\Frs{r_1}{s_2}\in\mcl{M}_0$. Since $r_1<s_1$, we get $\Frs{r_1}{s_2}\ll F_s$ by \Cref{definition:order_ll}\ref{om_i=om_j}, and therefore $\Frs{r_1}{s_2} \prec F_s$ by \Cref{def:prec}\ref{def 1}. We have $r_1\in F_s\setminus F_r$. Hence, by taking $F_t=\Frs{r_1}{s_2}$, \eqref{eqn:asterick} is satisfied by \Cref{remark:asterisk}. 
				
				Now, assume that $\Frs{r_1}{s_2}\notin\mcl{M}_0$. We show that $F_t=\Frs{r_1}{s_1}$ satisfies \eqref{eqn:asterick}.
				
				Since $\Frs{r_1}{s_2}\notin\mcl{M}_0$, it follows that $\Frs{r_1}{s_2}\in\mcl{M}_{\al}$ for some $\al\in[p-1]$. Hence, $r_1<s_1<\om_s$ implies $\Frs{r_1}{s_2}$ satisfies \X{\al}{1} or \X{\al}{2}. By \Cref{proposition:M_al conditions}\ref{ob_x1} and \ref{ob_x2}, $\om_s>\mb{c}$.
				If $\Frs{r_1}{s_2}$ satisfies \X{\al}{1}, then $\om_s<\mb{c}+\frac{p}{2}$ and $r_1=2\mb{c}-\om_s-p+\al-1$; and if $\Frs{r_1}{s_2}$ satisfies \X{\al}{2}, then $\om_s\ge\mb{c}+\frac{p}{2}$ and $r_1=\om_s-2p+\al-1$. In either case, $r_1\ge\mb{c}-\frac{3p}{2}+\al-1$, which implies $r_1\ge\mb{c}-\frac{3p}{2}$. By \Cref{proposition: c and p relation}\ref{>=p}, $r_1\ge p$. Since $r_1<s_1<\om_s$ and $\Frs{r_1}{s_2}\in\md$, we get $r_1\nsim\om_s$ by \Cref{proposition:i1_i2 nsim al_i}\ref{nsim i1}. Therefore, $p\le r_1<\om_s<s_2\le n-1$ implies $r_1\nsim s_2$. 
				Using $\om_s\os r_1,s_2$ and $r_1<s_2$, we obtain $(\Frs{r_1}{s_1})^c=\{\om_s\}\sqcup\{r_1,s_2\}$. Observe that $\Frs{r_1}{s_1}\in M(\Delta_{3}(C_n^p))$. 
				
				\begin{claim}\label{claim:1case1}
					$\Frs{r_1}{s_1}\in\mcl{M}_{0}$.
				\end{claim}
				\begin{proof}[Proof of \Cref{claim:1case1}]
					Suppose $\Frs{r_1}{s_1}\in\mcl{M}_{\bt}$ for some $\bt \in[p-1]$. 
					We have $(\Frs{r_1}{s_1})^c$ $=\{\om_s\}\sqcup\{r_1,s_2\}$, $r_1<\om_s<s_2$ and $\om_s>\mb{c}$. By \Cref{proposition:om_i>c x6 x7 x8}, it follows that $\Frs{r_1}{s_1}$ satisfies \X{\bt}{6} or \X{\bt}{7} or \X{\bt}{8}.
					
					Suppose $\Frs{r_1}{s_1}$ satisfies \X{\bt}{6}. Then $r_1=2\mb{c}-\om_s-p+\bt-1$ and $s_2\le2\mb{c}-\om_s+p-1$. 
					Since $s_1>r_1$, $s_1\ge2\mb{c}-\om_s-p+\bt$. Moreover, $\om_s>\mb{c}$ implies $s_2<\om_s+p$. By \Cref{proposition:M_al conditions}\ref{ob_x6}, $\om_s\le\mb{c}+\frac{p-2}{2}<\mb{c}+\frac{p}{2}$. Thus $F_s\notin\mcl{M}_0$ by \Cref{proposition:in M_bt+gm}\ref{satisfies x6}\ref{satisfies x6 ii}, a contradiction.
					Hence $\Frs{r_1}{s_1}$ does not satisfy \X{\bt}{6}. 
					
					Suppose $\Frs{r_1}{s_1}$ satisfies \X{\bt}{7}. Then $\om_s<\mb{c}+\frac{p}{2}$, $r_1=s_2-2p+\bt-1$ and $2\mb{c}-\om_s+p\le s_2\le\om_s+p-\bt$. Since $s_1>r_1$, $s_1\ge s_2-2p+\bt$. Note that $s_2<\om_s+p$.  By \Cref{proposition:in M_bt+gm}\ref{satisfies x7}\ref{satisfies x7 ii}, $F_s\notin\mcl{M}_0$, a contradiction. Hence $\Frs{r_1}{s_1}$ does not satisfy \X{\bt}{7}.
					
					Suppose $\Frs{r_1}{s_1}$ satisfies \X{\bt}{8}. Then $\om_s\ge\mb{c}+\frac{p}{2}$ and $r_1=s_2-2p+\bt-1$. Since $s_1>r_1$, we get $s_1\ge s_2-2p+\bt$. 
					By \Cref{proposition:in M_bt+gm}\ref{satisfies x8}\ref{satisfies x8 ii}, $F_s\notin \mcl{M}_0$, a contradiction. Hence $\Frs{r_1}{s_1}$ does not satisfy \X{\bt}{8}.
					
					We conclude that $\Frs{r_1}{s_1}\in\mcl{M}_0$. This completes the proof of \Cref{claim:1case1}. 
				\end{proof} 
				
				Since $r_1<s_1$, we have $\Frs{r_1}{s_1}\ll F_s$ by \Cref{definition:order_ll}\ref{om_i=om_j}. Hence $\Frs{r_1}{s_1} \prec F_s$ by \Cref{def:prec}\ref{def 1}. Thus, $F_t=\Frs{r_1}{s_1}$ satisfies \eqref{eqn:asterick} by \Cref{remark:asterisk} (as $r_1\in F_s\setminus F_r$). 
				
				\item $\Frs{r_1}{s_2}\notin\md$.
				
				In this case, $C_n^p[(\Frs{r_1}{s_2})^c]$ is connected. We have $(\Frs{r_1}{s_2})^c=\{\om_s\}\sqcup\{r_1,s_1\}$, where $r_1<s_1<\om_s=\om_r$. Suppose $s_2\sim\om_s$. Then $F_s\in\md$ implies $s_1\nsim\om_s$. It follows that $r_1\sim\om_s$. Observe that $r_1\le\om_s+p\ (\text{mod} \ n)$. Moreover, since $F_r\in\md$, we have $r_2\nsim\om_s$. This implies $r_2<\om_s$. By \Cref{proposition:i1_i2 nsim al_i}\ref{nsim i1} for $F_r$, we get $r_1\nsim\om_s$, a contradiction. Hence $s_2\nsim\om_s$.
				
				We now show that if $\Frs{r_1}{s_1}\in\md$, then $F_t=\Frs{r_1}{s_1}$ satisfies \eqref{eqn:asterick}, otherwise $F_t=\Frs{r_2}{s_2}$ satisfies \eqref{eqn:asterick}.
				
				First, assume that $\Frs{r_1}{s_1}\in\md$.
				
				\begin{claim}\label{claim:2case1}
					$\Frs{r_1}{s_1}\in\mcl{M}_{0}$.
				\end{claim}
				\begin{proof}[Proof of \Cref{claim:2case1}]
					Suppose that $\Frs{r_1}{s_1}\in\mcl{M}_{\gm}$ for some $\gm \in[p-1]$. Since $\om_s\os r_1,s_2$ and $r_1<s_2$, we have ${(\Frs{r_1}{s_1}})^c=\{\om_s\}\sqcup\{r_1,s_2\}$. Therefore, $r_1<\om_s<s_2$ implies $\Frs{r_1}{s_1}$ satisfies one of the conditions from \X{\gm}{3} to \X{\gm}{8}. If $\Frs{r_1}{s_1}$ satisfies \X{\gm}{6} or \X{\gm}{7} or \X{\gm}{8}, then using \Cref{proposition:M_al conditions}\ref{ob_x6}, \ref{ob_x7} and \ref{ob_x8}, we get $s_2\sim\om_s$, a contradiction. It follows that $\Frs{r_1}{s_1}$ satisfies \X{\gm}{3} or \X{\gm}{4} or \X{\gm}{5}. 
					
					Suppose $\Frs{r_1}{s_1}$ satisfies \X{\gm}{3}. Then $\om_s<\mb{c}-\frac{p}{2}$ and $s_2=r_1+2p-\gm+1$. Since $r_1<s_1$, we get $s_2\le s_1+2p-\gm$. By \Cref{proposition:in M_bt+gm}\ref{satisfies x3}\ref{satisfies x3 ii}, $F_s\notin\mcl{M}_0$, a contradiction.
					Hence $\Frs{r_1}{s_1}$ does not satisfy \X{\gm}{3}.
					
					Suppose $\Frs{r_1}{s_1}$ satisfies \X{\gm}{4}. Then $r_1\ge\om_s-p+\gm$ and $s_2=r_1+2p-\gm+1$. Since $s_1>r_1$, we get $s_1>\om_s-p+\gm>\om_s-p$ and $s_2\le s_1+2p-\gm$.
					Moreover, $s_2\le2\mb{c}-\om_s+p-\gm$ by \Cref{proposition:M_al conditions}\ref{ob_x4}. By \Cref{corollary:notin M_0}\ref{satisfies 3_4_5}, $F_s\notin\mcl{M}_0$, a contradiction.
					Hence $\Frs{r_1}{s_1}$ does not satisfy \X{\gm}{4}. 
					
					Suppose $\Frs{r_1}{s_1}$ satisfies \X{\gm}{5}. Then $\om_s\le\mb{c}-\frac{\gm+1}{2}$, $s_2=2\mb{c}-\om_s+p-\gm$ and $r_1\ge s_2-2p+\gm$. 
					Since $s_1>r_1\ge s_2-2p+\gm$, $F_s$ satisfies \X{\gm}{5}. Therefore $F_s\in\mcl{M}_{\gm}$, a contradiction to $F_s\in\mcl{M}_0$. Hence $\Frs{r_1}{s_1}$ does not satisfy \X{\gm}{5}. 
					
					Therefore, $\Frs{r_1}{s_1}\in\mcl{M}_0$. This completes the proof of \Cref{claim:2case1}.
				\end{proof}
				
				Since $r_1<s_1$, we get $\Frs{r_1}{s_1}\ll F_s$ by \Cref{definition:order_ll}\ref{om_i=om_j}. Thus $\Frs{r_1}{s_1} \prec F_s$ by \Cref{def:prec}\ref{def 1}. Hence, $F_t=\Frs{r_1}{s_1}$ satisfies \eqref{eqn:asterick} by \Cref{remark:asterisk} (as $r_1\in F_s\setminus F_r$). 
				
				Now, assume that $\Frs{r_1}{s_1}\notin\md$. We have ${\Frs{r_1}{s_1}}=(F_s\setminus\{r_1\})\sqcup\{s_1\}$, where $r_1\in F_s$, $r_1<\om_s<s_2$ and $s_2\nsim\om_s$. By \Cref{proposition: om_i<=2p-1} (for $F=F_r$ and $F'=F_s$), we get $\om_s\le2p-1$, $\om_s<r_2<s_2$, $r_2\nsim\om_s$ and $r_1\nsim r_2$.
				Therefore, since $r_1<s_1<\om_s$, it follows that $r_2\nsim s_1$. Using $\om_s\os s_1,r_2$ and $s_1<r_2$, we get $(\Frs{r_2}{s_2})^c=\{\om_s\}\sqcup\{s_1,r_2\}$. Clearly, $\Frs{r_2}{s_2}\in\md$. 
				Further, $s_1<\om_s<r_2$ and $\om_s\le2p-1$ implies that $\Frs{r_2}{s_2}\in\mcl{M}_0$ by \Cref{proposition:in M_0}. Since $r_2<s_2$, $\Frs{r_2}{s_2}\ll F_s$ by \Cref{definition:order_ll}\ref{om_i=om_j}. Hence $\Frs{r_2}{s_2}\prec F_s$ by \Cref{def:prec}\ref{def 1}. We have $r_2\in F_s\setminus F_r$. Thus $F_t=\Frs{r_2}{s_2}$ satisfies \eqref{eqn:asterick} by \Cref{remark:asterisk}.
			\end{enumerate}	
			
			\item $\om_s<s_1<s_2$. 
			
			From \Cref{proposition:i1_i2 nsim al_i}\ref{nsim i2} for $F_s$, we have $s_2 \nsim\om_s$. Since $\om_s\os r_1$, it follows that either $r_1<\om_s$ or $r_1>\om_s$. 
			\begin{enumerate}[label=(\alph*)]
				\item $r_1<\om_s$. 
				
				Since $\om_s\os r_1,s_2$ and $r_1<s_2$, we have $(\Frs{r_1}{s_1})^c=\{\om_s\}\sqcup\{r_1,s_2\}$. We show that if $\Frs{r_1}{s_1}\in\md$, then $F_t=\Frs{r_1}{s_1}$ satisfies \eqref{eqn:asterick}. Otherwise, we find some $F_t\ne \Frs{r_1}{s_1}$ that satisfies \eqref{eqn:asterick}.
				\begin{itemize}
					\item $\Frs{r_1}{s_1}\in\md$.
					
					We show that $\Frs{r_1}{s_1}\in\mcl{M}_0$. On the contrary, suppose that $\Frs{r_1}{s_1}\in\mcl{M}_{\al}$ for some $\al \in[p-1]$. 
					Since ${(\Frs{r_1}{s_1}})^c=\{\om_s\}\sqcup\{r_1,s_2\}$ with $r_1<\om_s<s_2$, $\Frs{r_1}{s_1}$ satisfies one of the conditions from \X{\al}{3} to \X{\al}{8}. 
					
					If $\Frs{r_1}{s_1}$ satisfies \X{\al}{6} or \X{\al}{7} or \X{\al}{8}, then \Cref{proposition:M_al conditions}\ref{ob_x6}, \ref{ob_x7} and \ref{ob_x8} contradict $s_2\nsim\om_s$. Hence $\Frs{r_1}{s_1}$ satisfies \X{\al}{3} or \X{\al}{4} or \X{\al}{5}. 
					
					If $\Frs{r_1}{s_1}$ satisfies \X{\al}{3} or \X{\al}{5}, then $\om_s<\mb{c}$; and if $\Frs{r_1}{s_1}$ satisfies \X{\al}{4}, then $\om_s<\mb{c}$ by \Cref{proposition:M_al conditions}\ref{ob_x4}. Thus $\om_s<\mb{c}$. Then $\om_s\os s_1$ and $s_1>\om_s$ implies $s_1\ge2\mb{c}-\om_s$ by \Cref{remark:order in S}\ref{part:i}.
					
					Suppose $\Frs{r_1}{s_1}$ satisfies \X{\al}{3}. Then $\om_s<\mb{c}-\frac{p}{2}$ and $s_2=r_1+2p-\al+1$. By \Cref{proposition: c and p relation}\ref{c-p/2<=n-2p-1}, $\om_s<n-2p-1$. We have $s_1\ge2\mb{c}-\om_s>\mb{c}+\frac{p}{2}>\om_s+p$. 
					Moreover, $r_1<\om_s$ implies $s_2\le\om_s+2p-\al$.
					Therefore, by \Cref{proposition:in M_bt+gm}\ref{satisfies x9}\ref{satisfies x9 ii}, $F_s\notin\mcl{M}_{0}$, a contradiction.
					Hence $\Frs{r_1}{s_1}$ does not satisfy \X{\al}{3}. 
					
					Suppose $\Frs{r_1}{s_1}$ satisfies \X{\al}{4}. Then $\om_s\ge\mb{c}-\frac{p}{2}$, and by \Cref{proposition:M_al conditions}\ref{ob_x4}, $s_2\le2\mb{c}-\om_s+p-\al$. It follows that $s_2\le\om_s+2p-\al$. Moreover, $s_1\ge2\mb{c}-\om_s$ implies $s_1\ge s_2-p+\al>s_2-p$. 
					Therefore, if $\om_s\le n-2p-2$, then \Cref{proposition:in M_bt+gm}\ref{satisfies x9}\ref{satisfies x9 ii} contradicts $F_s\in\mcl{M}_{0}$. Hence $\om_s>n-2p-2$.
					By \Cref{proposition:M_al conditions}\ref{ob_x4}, $\om_s\le\mb{c}-\frac{\al+1}{2}$. Using $\al\ge 1$ and $n-2p-2<\om_s\le\mb{c}-\frac{\al+1}{2}$, we obtain $\om_s=\mb{c}-\frac{\al+1}{2}=n-2p-1$ and $p=2$ by \Cref{proposition: c and p relation}\ref{c-(bt+1)/2<=n-2p-1}. 
					Since $\al\in[p-1]=[1]$, $\al=1$. Thus $\om_s=\mb{c}-1$, and $2\mb{c}-\om_s\le s_1<s_2\le2\mb{c}-\om_s+p-\al$ implies $s_2=2\mb{c}-\om_s+1=\mb{c}+2=\om_s+3$. This means that $C_n^p[F_s^c]$ is connected, a contradiction to $F_s\in\md$. Therefore $\Frs{r_1}{s_1}$ does not satisfy \X{\al}{4}.
					
					Suppose $\Frs{r_1}{s_1}$ satisfies \X{\al}{5}. Then $\om_s\le\mb{c}-\frac{\al+1}{2}$, $s_2=2\mb{c}-\om_s+p-\al$ and $r_1\ge s_2-2p+\al$. We get $s_2\le r_1+2p-\al<\om_s+2p-\al$. Moreover, $s_1\ge2\mb{c}-\om_s$ implies $s_1\ge s_2-p+\al>s_2-p$. 
					If $\om_s\le n-2p-2$, then $F_s\notin\mcl{M}_{0}$ by \Cref{proposition:in M_bt+gm}\ref{satisfies x9}\ref{satisfies x9 ii}, a contradiction. Hence $\om_s>n-2p-2$. 
					Since $\al\ge 1$ and $\om_s\le\mb{c}-\frac{\al+1}{2}$, we get $\om_s=\mb{c}-\frac{\al+1}{2}=n-2p-1$ and $p=2$ by \Cref{proposition: c and p relation}\ref{c-(bt+1)/2<=n-2p-1}.
					This implies $\al=1$, and hence $ \om_s=\mb{c}-1$. By \Cref{proposition:M_al conditions}\ref{ob_x5}, $\om_s\ge\mb{c}-\frac{p-1}{2}=\mb{c}-\frac{1}{2}>\mb{c}-1$, again a contradiction. Hence $\Frs{r_1}{s_1}$ does not satisfy \X{\al}{5}. 
					
					We conclude that $\Frs{r_1}{s_1}\in\mcl{M}_0$. Since $r_1<s_1$, we get $\Frs{r_1}{s_1}\ll F_s$ by \Cref{definition:order_ll}\ref{om_i=om_j}. Hence $\Frs{r_1}{s_1}\prec F_s$ by \Cref{def:prec}\ref{def 1}. Moreover, $r_1\in F_s\setminus F_r$. Therefore $F_t=\Frs{r_1}{s_1}$ satisfies \eqref{eqn:asterick} by \Cref{remark:asterisk}.
					
					\item $\Frs{r_1}{s_1}\notin\md$. 
					
					We have ${(\Frs{r_1}{s_1}})^c=\{\om_s\}\sqcup\{r_1,s_2\}$, where $r_1\in F_s$, $r_1<\om_s<s_2$ and $s_2\nsim\om_s$. By \Cref{proposition: om_i<=2p-1} (for $F=F_r$ and $F'=F_s$), it follows that $\om_s\le2p-1$, $\om_s<r_2<s_2$, $r_2\nsim\om_s$ and $r_1\nsim r_2$.
					Suppose $s_1\sim\om_s$. Then $\om_s<s_1<s_2$ and $s_2\nsim\om_s$ imply $s_1\le\om_s+p\le3p-1$. By \Cref{proposition: c and p relation}\ref{range of c}, $s_1\le\mb{c}$. Since $\om_s<s_1$, $s_1\os\om_s$, a contradiction. Hence $s_1\nsim\om_s$.
					Note that either $s_1<r_2$ or $s_1>r_2$.
					
					First, let $s_1<r_2$. Then $r_1<\om_s<s_1<r_2$. Since $s_1\nsim\om_s$ and $r_1\nsim r_2$, it follows that $s_1\nsim r_1$. Observe that $(\Frs{r_1}{s_2})^c=\{\om_s\}\sqcup\{r_1,s_1\}$ and $\Frs{r_1}{s_2}\in\md$. Further, $r_1<\om_s<s_1$ and $\om_s\le2p-1$ implies that $\Frs{r_1}{s_2}\in\mcl{M}_0$ by \Cref{proposition:in M_0}. Since $r_1<s_1$, $\Frs{r_1}{s_2}\ll F_s$ by \Cref{definition:order_ll}\ref{om_i=om_j}. Hence $\Frs{r_1}{s_2}\prec F_s$ by \Cref{def:prec}\ref{def 1}. Therefore, $F_t=\Frs{r_1}{s_2}$ satisfies \eqref{eqn:asterick} by \Cref{remark:asterisk} (as $r_1\in F_s\setminus F_r$).
					
					Now, assume that $s_1>r_2$. Then $(\Frs{r_2}{s_1})^c=\{\om_s\}\sqcup\{r_2,s_2\}$. Since $r_2\nsim\om_s$ and $s_2\nsim\om_s$, it follows that $\Frs{r_2}{s_1}\in\md$. Suppose $\Frs{r_2}{s_1}\in\mcl{M}_{\al}$ for some $\al \in[p-1]$. Note that $\om_s<r_2<s_2$ implies $\Frs{r_2}{s_1}$ satisfies \X{\al}{9} or \X{\al}{10}. If $\Frs{r_2}{s_1}$ satisfies \X{\al}{9}, then $\om_s\le n-2p-2$, $r_2\ge\om_s+p+1$ and $s_2=\om_s+2p-\al+1$. Moreover, if $\Frs{r_2}{s_1}$ satisfies \X{\al}{10}, then $\om_s>n-2p-2$, $r_2\ge\om_s+p+1$ and $s_2=n-\al-1$.
					Since $s_1>r_2$, we get $s_1>\om_s+p+1$, and hence $F_s\notin\mcl{M}_0$ by \Cref{proposition:in M_bt+gm}\ref{satisfies x9}\ref{satisfies x9 i} and \ref{satisfies x10}\ref{satisfies x10 i}, a contradiction. 
					Thus $\Frs{r_2}{s_1}\in\mcl{M}_0$. Since $r_2<s_1$, \Cref{definition:order_ll}\ref{om_i=om_j} gives $\Frs{r_2}{s_1}\ll F_s$. By \Cref{def:prec}\ref{def 1}, $\Frs{r_2}{s_1}\prec F_s$. Therefore $F_t=\Frs{r_2}{s_1}$ satisfies \eqref{eqn:asterick} by \Cref{remark:asterisk} (as $r_2\in F_s\setminus F_r$). 
				\end{itemize}
				
				\item $r_1>\om_s$. 
				
				Recall that $r_1<s_1$. We have $(\Frs{r_1}{s_1})^c=\{\om_s\}\sqcup \{r_1,s_2\}$ with $\om_s<r_1<s_1<s_2$ and $s_2\nsim\om_s$. Suppose $\om_s\sim r_1$ and $r_1\sim s_2$. Then $r_1\le\om_s+p$ and $s_2\le r_1+p$. Hence $s_2\le\om_s+2p=\om_s+2p-1+1$ and $s_1>r_1\ge s_2-p$. 
				Therefore, if $\om_s\le n-2p-2$, then by \Cref{proposition:in M_bt+gm}\ref{satisfies x9} (here, if $s_2=\om_s+2p$, then condition \ref{satisfies x9 i} holds with $\beta=1$; and if $s_2<\om_s+2p$, then condition \ref{satisfies x9 ii} holds with $\beta=1$), $F_s\notin\mcl{M}_{0}$, a contradiction. Hence $\om_s>n-2p-2$.
				Now, suppose $s_2=n-1$. Then $r_2\ne s_2$ implies $\om_s<r_1<r_2<s_2$. Using $r_1\sim s_2$ and $s_2\nsim\om_s$, we get $r_1\sim r_2$. Therefore, $\om_s\sim r_1$ implies $C_n^p[F_r^c]$ is connected, contradicting $F_r\in\md$. This gives $s_2\le n-2=n-1-1$. 
				Since $\om_s<s_1<s_2$ and $s_1>s_2-p$, it follows that $F_s\notin\mcl{M}_0$ by \Cref{proposition:in M_bt+gm}\ref{satisfies x10} (for $\beta=1$), again a contradiction. Hence our assumption that $\om_s \sim r_1$ and $r_1\sim s_2$ is false. Thus $\om_s \nsim r_1$ or $r_1\nsim s_2$. Since $s_2\nsim\om_s$, we obtain $\Frs{r_1}{s_1}\in\md$. 
				
				Suppose $\Frs{r_1}{s_1}\in\mcl{M}_{\al}$ for some $\al\in[p-1]$. Then $\om_s<r_1<s_2$ implies that $\Frs{r_1}{s_1}$ satisfies \X{\al}{9} or \X{\al}{10}. If $\Frs{r_1}{s_1}$ satisfies \X{\al}{9}, then $\om_s\le n-2p-2$, $r_1\ge\om_s+p+1$ and $s_2=\om_s+2p-\al+1$; and if $\Frs{r_1}{s_1}$ satisfies \X{\al}{10}, then $\om_s>n-2p-2$, $r_1\ge\om_s+p+1$ and $s_2=n-\al-1$.
				In either case, $s_1>r_1$ implies $s_1>\om_s+p+1$. Observe that \Cref{proposition:in M_bt+gm}\ref{satisfies x9}\ref{satisfies x9 i} and \ref{satisfies x10}\ref{satisfies x10 i} yield $F_s\notin\mcl{M}_0$, a contradiction. Hence $\Frs{r_1}{s_1}\in\mcl{M}_0$. 
				Since $r_1<s_ 1$, \Cref{definition:order_ll}\ref{om_i=om_j} gives $\Frs{r_1}{s_1}\ll F_s$. By \Cref{def:prec}\ref{def 1}, $\Frs{r_1}{s_1}\prec F_s$. Therefore $F_t=\Frs{r_1}{s_1}$ satisfies \eqref{eqn:asterick} by \Cref{remark:asterisk} (as $r_1\in F_s\setminus F_r$). 
			\end{enumerate}		 
		\end{enumerate}
		\vspace{-0.2 cm}
	\end{proof}
	
	\begin{lemma}\label{lemma: case M_0 b}
		Suppose $F_s\in\mcl{M}_0$. If $F_r\notin\mcl{A}_{s_0}$ $(\textit{i.e.,}~ r_0\ne s_0)$, then there exists a facet $F_t\in\md$ that satisfies \eqref{eqn:asterick} for the pair $F_r\prec F_s$.
	\end{lemma}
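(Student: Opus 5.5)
Since $F_s\in\mcl{M}_0$ and $F_r\prec F_s$, \Cref{def:prec} forces $F_r\in\mcl{M}_0$ and $F_r\ll F_s$; as $r_0\ne s_0$, this means $\om_r\os\om_s$. Hence $\om_r\notin F_s^c$: from $\om_s\os s_1,s_2$ and $\om_r\os\om_s$, $\om_r$ differs from $\om_s,s_1,s_2$. So $\om_r\in F_s\setminus F_r$, and it is the natural vertex to remove. The main candidate is $F_t=\Frs{\om_r}{v}$ for some $v\in\{\om_s,s_1,s_2\}$. Whenever $\om_r$ precedes every remaining element of $(\Frs{\om_r}{v})^c$ in $\os$, the facet $F_t$ lies in $\mcl{A}_{r_0}$ with $r_0<s_0$, so $F_t\ll F_s$. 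Then it suffices to show $F_t\in\mcl{M}_0$ (or more generally that $F_t\prec F_s$), and \Cref{remark:asterisk} finishes.

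I will split into cases by the shape of $F_s^c$, exactly as in \Cref{lemma: case M_0 a}: (I) $s_1<s_2<\om_s$, (II) $s_1<\om_s<s_2$, (III) $\om_s<s_1<s_2$. In each case I will first try $F_t=\Frs{\om_r}{\om_s}$, whose complement is $\{\om_r,s_1,s_2\}$. This is a facet when $s_1\nsim s_2$, or when $\om_r$ is non-adjacent to both $s_1,s_2$. For case (II) with $j_1\le r_1<r_2\le j_2$-type configurations, \Cref{proposition:al_i j_1 j_2 is a facet} and \Cref{proposition:al_r s_1 s_2}\ref{al_r s_1 s_2 is a facet} give this directly. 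When it fails, $s_1\sim s_2$ and $\om_r$ is adjacent to one of them. Then I will instead use $\Frs{\om_r}{s_1}$ or $\Frs{\om_r}{s_2}$, whose complement $\{\om_r,\om_s,s_j\}$ keeps $\om_s$. Using $F_s\in\md$ (so $\om_s$ is separated from the adjacent pair $s_1\sim s_2$) and the cyclic gap bound $n\ge 6p-3$, as in \Cref{proposition: F_r1_s1 or F_r2_s2}, I will show that one of these is disconnected. Alternatively, when $\om_r\in\{s_1,s_2\}$ is impossible but some $r_k\in F_s$, replacing $r_k$ by $s_j$ as in the previous lemma gives a facet in $\mcl{A}_{s_0}$ with smaller $i_1$, hence $\ll F_s$. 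Whether the resulting $F_t$ lies in $\mcl{A}_{r_0}$ or $\mcl{A}_{s_0}$ will be decided using \Cref{remark:order in S} and \Cref{proposition: al_i precedes lambda}.

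The key remaining step, and the expected main obstacle, is to show $F_t\in\mcl{M}_0$ (or otherwise $F_t\prec F_s$). The strategy mirrors \Cref{claim:1case1} and \Cref{claim:2case1}. Suppose $F_t\in\mcl{M}_\al$ with $\al\in[p-1]$, and read off the defining condition $(\mcl{X}_\al^k)$ that its complement must satisfy; only a few $k$ are possible given the relative positions, narrowed by \Cref{proposition:om_i>c x6 x7 x8} and \Cref{proposition:M_al conditions}. Then transfer the resulting equalities and inequalities to $F_s$ via the inequalities $\om_r\os\om_s$ (using parts \ref{part:iii}, \ref{part:iv} of \Cref{remark:order in S}). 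The aim is to show that $F_s$ satisfies the hypotheses of \Cref{proposition:in M_bt+gm} or \Cref{corollary:notin M_0}, so that $F_s\notin\mcl{M}_0$, a contradiction. Since $\om_r$ and $\om_s$ now differ, the bounds like $2\mb{c}-\om_s-p+\al-1$ shift, and I expect delicate boundary subcases, especially near $\om_s\approx\mb{c}\pm\frac p2$ and $\om_s>n-2p-2$. These will need \Cref{proposition: c and p relation}\ref{c-(bt+1)/2<=n-2p-1} (forcing $p=2$ at equality) and ad hoc connectivity contradictions, as in case (III)(a) of the previous lemma. Where $\om_r$ is small (at most $2p-1$), \Cref{proposition:in M_0} will directly give $F_t\in\mcl{M}_0$. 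For the degenerate configurations where no single-swap facet lands in $\mcl{M}_0$, I will fall back on \Cref{proposition:2c-i_2<=al_i<i_2} and \Cref{proposition:x1 and x6}, which seem designed precisely to show such $F_t$ still precede $F_s$.
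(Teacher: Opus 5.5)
Your reduction matches the paper. You derive $F_r\in\mcl{M}_0$ and $\om_r\os\om_s$, hence $\om_r\in F_s\setminus F_r$. Moreover, \emph{every} $\Frs{\om_r}{v}$ with $v\in F_s^c$ lies in $\mcl{A}_{r_0}$, so no ``whenever'' is needed, because $\om_r\os\om_s\os s_1,s_2$. So it suffices to find one $v$ with $\Frs{\om_r}{v}\in\mcl{M}_0$. This is also necessary: by \Cref{def:prec}, $F_t\prec F_s$ with $F_s\in\mcl{M}_0$ forces $F_t\in\mcl{M}_0$. So there is no ``or otherwise $F_t\prec F_s$'' escape. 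For the same reason, \Cref{proposition:2c-i_2<=al_i<i_2} and \Cref{proposition:x1 and x6} are useless as fallbacks here, since both require $F'\in\mcl{M}_\bt$ with $\bt\ge1$.

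The genuine gap is your rule for choosing $v$: take $v=\om_s$, and switch to $s_1$ or $s_2$ only if $\Frs{\om_r}{\om_s}$ is not a facet. This fails when $\om_s<\mb{c}$ and $\om_s<s_1<s_2$, where $\om_s<\om_r<2\mb{c}-\om_s\le s_1$. Replacing $\om_s$ by the closer vertex $\om_r$ can put the new complement into $(\mcl{X}^{9}_\al)$ or $(\mcl{X}^{10}_\al)$ while $F_s$ really is in $\mcl{M}_0$. In that situation the contradiction you plan to derive does not exist.

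Here is an explicit instance. Take $p=3$, $n=15$, so $\mb{c}=8$ and $\Omega=(8,7,9,6,10,5,11,4,12,3,13,2,14,1,0)$. Let $F_s^c=\{5\}\sqcup\{12,13\}$ and $F_r^c=\{8\}\sqcup\{0,1\}$.
\begin{itemize}
\item Both are facets, and both lie in $\mcl{M}_0$ by \Cref{corollary:structure in M_al}, since $13-5>2p$ and $8-0>2p$.
\item $F_r\prec F_s$ holds, with $r_0=1\ne 6=s_0$.
\item $\Frs{8}{5}$ has complement $\{8\}\sqcup\{12,13\}$ and is a facet, since $8$ is isolated.
\item It satisfies $(\mcl{X}^{10}_1)$: $8>n-2p-2=7$, $12\ge 8+p+1$, and $13=n-2$. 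So $\Frs{8}{5}\in\mcl{M}_1$ and $F_s\prec\Frs{8}{5}$.
\end{itemize}

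The paper uses $v=s_1$ in this case. The facet $\Frs{\om_r}{s_1}$ has complement $\{\om_r\}\sqcup\{\om_s,s_2\}$, which keeps the far-apart pair $\om_s,s_2$. \Cref{claim:case_A(III)} shows it is a facet in $\mcl{M}_0$; in the example it is $\{8\}\sqcup\{5,13\}$.

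In general, the paper's proof \emph{is} a case-by-case choice of $v$, split first by $\om_s<\mb{c}$ versus $\om_s>\mb{c}$:
\begin{itemize}
\item $v=\om_s$ in Cases A(I) and B(III);
\item $v=s_1$ in A(III), and $v=s_2$ in B(I);
\item in A(II) and B(II), $v=\om_s$ if that gives a facet, and otherwise $v=s_2$, resp.\ $v=s_1$.
\end{itemize}
Each choice is followed by an exhaustive check of all $(\mcl{X}^k_\al)$ against \Cref{proposition:in M_bt+gm} and \Cref{corollary:notin M_0}. Your proposal defers exactly this verification, with a selection rule that cannot succeed in one case.

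Your alternative of swapping some $r_k$ for $s_j$ is also unsupported. Since $\om_r\ne\om_s$, the relation $F_r\ll F_s$ gives no comparison between $r_k$ and $s_1,s_2$. It also does not give $\om_s\os r_k$, which you would need for the result to lie in $\mcl{A}_{s_0}$ and precede $F_s$.
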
 
	
	\begin{proof}	
		Since $F_r\notin\mcl{A}_{s_0}$, we have $F_r^c=\{\om_r\}\sqcup\{r_1,r_2\}$ and $F_s^c=\{\om_s\}\sqcup\{s_1,s_2\}$ with $\om_r\ne\om_s$. By \Cref{def:prec}, $F_r \prec F_s$ and $F_s \in\mcl{M}_0$ imply $F_r\in\mcl{M}_0$ and $F_r\ll F_s$. Therefore $\om_r\os \om_s$ by \Cref{definition:order_ll}. 
		Recall that $\om_s\os s_1,s_2$. It follows that $\om_r\os s_1,s_2$. Hence $\om_r\in F_s\setminus F_r$. 
		
		For $v\in F_s^c$, we have $(\Frs{\om_r}{v})^c=(F_s^c\setminus\{v\})\sqcup\{\om_r\}$. Since $\om_r\os \om_s,s_1,s_2$, we get $\Frs{\om_r}{v}\in\mcl{A}_{r_0}$, and thus $\Frs{\om_r}{v}\ll F_s$ by \Cref{definition:order_ll}\ref{om_i<om_j}. 
		Therefore, if $\Frs{\om_r}{v}\in\mcl{M}_0\subseteq\md$, then $\Frs{\om_r}{v}\prec F_s$ by \Cref{def:prec}\ref{def 1}, and hence $F_t=\Frs{\om_r}{v}$ satisfies \eqref{eqn:asterick} by \Cref{remark:asterisk}. 
		
		In this proof, we show that $\Frs{\om_r}{v}\in\mcl{M}_0$ for some $v\in F_s^c$. 
		Clearly, $\om_r\os \om_s$ implies that $\om_s\ne\mb{c}$. Therefore, we have two cases: (A) $\om_s<\mb{c}$ and (B) $\om_s>\mb{c}$.
		
		\vspace{.5\baselineskip}
		
		\noindent{\bf Case A:} $\om_s<\mb{c}$.
		
		Since $\om_r\os \om_s$ implies, we get $\om_s<\om_r<2\mb{c}-\om_s$ by \Cref{remark:order in S}\ref{part:iii}.
		\begin{enumerate}[label=(\MakeUppercase{\roman*})]    
			\item $s_1<s_2<\om_s$.
			
			By \Cref{proposition:i1_i2 nsim al_i}\ref{nsim i1} for $F_s$, we have $s_1\nsim\om_s$. Moreover, $\Frs{\om_r}{\om_s}\in\mcl{A}_{r_0}$ and $s_1<s_2$ imply $(\Frs{\om_r}{\om_s})^c=\{\om_r\}\sqcup\{s_1,s_2\}$. 
			
			We first show that $\Frs{\om_r}{\om_s}\in\md$. Since $F_s\in\md$, $C_n^p[F_s^c]$ is disconnected. Therefore $\om_s\ge p+2$, and hence $2\mb{c}-\om_s\le2\mb{c}-p-2\le n-p-1$ (as $2\mb{c}\le n+1$). It follows that $s_1<s_2<\om_s<\om_r<2\mb{c}-\om_s\le n-p-1$.
			Since $s_1\nsim\om_s$, we get $s_1\nsim\om_r$. If $s_1\nsim s_2$, then $\Frs{\om_r}{\om_s}\in\md$. Now, let $s_1\sim s_2$. Then $C_n^p[F_s^c]$ is disconnected implies $s_2\nsim\om_s$, and thus $s_2\nsim\om_r$. Therefore $\Frs{\om_r}{\om_s}\in\md$.
			
			We now show that $\Frs{\om_r}{\om_s}\in\mcl{M}_0$. Suppose $\Frs{\om_r}{\om_s}\in\mcl{M}_{\al}$ for some $\al\in[p-1]$. Since $s_1<s_2<\om_r$, $\Frs{\om_r}{\om_s}$ satisfies \X{\al}{1} or \X{\al}{2}. Note that $s_1\nsim\om_s$ and $s_1<\om_s$ imply $s_1<\om_s-p$.
			If $\Frs{\om_r}{\om_s}$ satisfies \X{\al}{1}, then $s_1=2\mb{c}-\om_r-p+\al-1$. Using $\om_r<2\mb{c}-\om_s$, we obtain $s_1>\om_s-p+\al-1\ge\om_s-p$, a contradiction. Further, if $\Frs{\om_r}{\om_s}$ satisfies \X{\al}{2}, then $\om_r\ge\mb{c}+\frac{p}{2}$ and $s_1=\om_r-2p+\al-1$. It follows that $s_1\ge2\mb{c}-\om_r-p+\al-1\ge2\mb{c}-\om_r-p>\om_s-p$, again a contradiction. Hence $\Frs{\om_r}{\om_s}\in\mcl{M}_0$.
			
			\item $s_1<\om_s<s_2$. 
			
			Since $\om_s\os s_2$, $\om_s<\mb{c}$ and $s_2>\om_s$, \Cref{remark:order in S}\ref{part:i} implies $s_2\ge2\mb{c}-\om_s$. Thus $s_1<\om_s<\om_r<2\mb{c}-\om_s\le s_2$. Observe that $(\Frs{\om_r}{\om_s})^c=\{\om_r\}\sqcup\{s_1,s_2\}$. We show that if $\Frs{\om_r}{\om_s}\in\md$, then $\Frs{\om_r}{\om_s}\in\mcl{M}_0$. Otherwise, $\Frs{\om_r}{s_2}\in\mcl{M}_0$.
			\begin{enumerate}[label=(\alph*)]
				\item $\Frs{\om_r}{\om_s}\in\md$. 
				
				Suppose $\Frs{\om_r}{\om_s}\in\mcl{M}_{\al}$ for some $\al\in[p-1]$. Since $(\Frs{\om_r}{\om_s})^c=\{\om_r\}\sqcup\{s_1,s_2\}$ and $s_1<\om_r<s_2$, $\Frs{\om_r}{\om_s}$ satisfies one of the conditions from \X{\al}{3} to \X{\al}{8}.
				
				Suppose $\Frs{\om_r}{\om_s}$ satisfies \X{\al}{3}. Then $\om_r<\mb{c}-\frac{p}{2}$ and $s_2=s_1+2p-\al+1$. Since $\om_s<\om_r$, $F_s\notin\mcl{M}_0$ by \Cref{proposition:in M_bt+gm}\ref{satisfies x3}\ref{satisfies x3 i}, a contradiction. Hence $\Frs{\om_r}{\om_s}$ does not satisfy \X{\al}{3}.
				
				Suppose $\Frs{\om_r}{\om_s}$ satisfies \X{\al}{4}. Then $\om_r-p+\al\le s_1\le2\mb{c}-\om_r-p-1$ and $s_2=s_1+2p-\al+1$. If $\om_s<\mb{c}-\frac{p}{2}$, then $F_s\notin\mcl{M}_0$ by \Cref{proposition:in M_bt+gm}\ref{satisfies x3}\ref{satisfies x3 i}, a contradiction. 
				Hence $\om_s\ge\mb{c}-\frac{p}{2}$. 
				Since $\om_s<\om_r$, we have $\om_s-p<\om_s-p+\al<s_1<2\mb{c}-\om_s-p-1$. Therefore $F_s\notin\mcl{M}_0$ by \Cref{proposition:in M_bt+gm}\ref{satisfies x4}\ref{satisfies x4 i}, again a contradiction. It follows that $\Frs{\om_r}{\om_s}$ does not satisfy \X{\al}{4}. 
				
				Suppose $\Frs{\om_r}{\om_s}$ satisfies \X{\al}{5}. Then $s_2=2\mb{c}-\om_r+p-\al$ and $s_1\ge s_2-2p+\al=2\mb{c}-\om_r-p$. Since $\om_s<\om_r<2\mb{c}-\om_s$, it follows that $s_2<2\mb{c}-\om_s+p-\al$ and $s_1>\om_s-p$. Moreover, we have $s_2\le s_1+2p-\al$.  
				By \Cref{corollary:notin M_0}\ref{satisfies 3_4_5}, $F_s\notin\mcl{M}_0$, a contradiction. Hence $\Frs{\om_r}{\om_s}$ does not satisfy \X{\al}{5}. 
				
				Suppose $\Frs{\om_r}{\om_s}$ satisfies \X{\al}{6}. Then $s_1=2\mb{c}-\om_r-p+\al-1$ and $s_2\le s_1+2p-\al=2\mb{c}-\om_r+p-1$. If $\om_s<\mb{c}-\frac{p}{2}$, then $F_s\notin\mcl{M}_0$ by \Cref{proposition:in M_bt+gm}\ref{satisfies x3}\ref{satisfies x3 ii}, a contradiction. Hence $\om_s\ge\mb{c}-\frac{p}{2}$.
				Since $\om_s<\om_r<2\mb{c}-\om_s$, we have $s_1>\om_s-p+\al-1\ge\om_s-p$ and $s_2<2\mb{c}-\om_s+p-1$. 
				Now, if $s_1\le2\mb{c}-\om_s-p-1$, then $F_s\notin\mcl{M}_0$ by \Cref{proposition:in M_bt+gm}\ref{satisfies x4}\ref{satisfies x4 ii}; and if $s_1\ge2\mb{c}-\om_s-p$, then $F_s\notin\mcl{M}_0$ by \Cref{proposition:in M_bt+gm}\ref{satisfies x5}\ref{satisfies x5 ii} (for $\bt=1$). Each case contradicts $F_s\in\mcl{M}_0$. Therefore $\Frs{\om_r}{\om_s}$ does not satisfy \X{\al}{6}. 
				
				Suppose $\Frs{\om_r}{\om_s}$ satisfies \X{\al}{7}. Then $s_1=s_2-2p+\al-1$ and $2\mb{c}-\om_r+p\le s_2\le\om_r+p-\al$.  
				Observe that if $\om_s<\mb{c}-\frac{p}{2}$, then \Cref{proposition:in M_bt+gm}\ref{satisfies x3}\ref{satisfies x3 i} contradicts $F_s\in\mcl{M}_0$. Hence $\om_s\ge\mb{c}-\frac{p}{2}$. 
				Since $\om_r<2\mb{c}-\om_s$, we get $\om_s+p<s_2<2\mb{c}-\om_s+p-\al$. It follows that $\om_s-p<\om_s-p+\al\le s_1<2\mb{c}-\om_s-p-1$. 
				By \Cref{proposition:in M_bt+gm}\ref{satisfies x4}\ref{satisfies x4 i}, $F_s\notin\mcl{M}_0$, again a contradiction.
				Therefore $\Frs{\om_r}{\om_s}$ does not satisfy \X{\al}{7}.
				
				Suppose $\Frs{\om_r}{\om_s}$ satisfies \X{\al}{8}. Then $\om_r\ge\mb{c}+\frac{p}{2}$ and $s_1=s_2-2p+\al-1$. 
				Since $\om_r<2\mb{c}-\om_s$, we have $\om_s<2\mb{c}-\om_r\le\mb{c}-\frac{p}{2}$. Therefore, $F_s\notin\mcl{M}_0$ by \Cref{proposition:in M_bt+gm}\ref{satisfies x3}\ref{satisfies x3 i}, a contradiction. Hence $\Frs{\om_r}{\om_s}$ does not satisfy \X{\al}{8}.
				
				We conclude that $\Frs{\om_r}{\om_s}\in\mcl{M}_0$.
				
				\item $\Frs{\om_r}{\om_s}\notin\md$. 
				
				We first prove that $\Frs{\om_r}{s_2}\in\md$. We have $(\Frs{\om_r}{\om_s})^c=\{\om_r\}\sqcup\{s_1,s_2\}$ and $s_1<\om_s<\om_r<s_2$. Since $\Frs{\om_r}{\om_s}\notin\md$, $\cn{(\Frs{\om_r}{\om_s})}$ is connected. If $s_2\nsim\om_r$, then $s_1\sim s_2$ and $s_1\sim \om_r$, which implies $s_1\sim\om_s$. This contradicts $F_s\in\md$. 
				Hence $s_2\sim\om_r$. Since $C_n^p[F_s^c]$ is disconnected, $s_2\le\om_r+p$.
				
				Suppose $s_1\sim\om_r$. It follows that $s_1\ge\om_r-p$ or $s_1\le\om_r+p\ (\text{mod} \ n)$. First, let $s_1\ge\om_r-p$. Then $s_1>\om_s-p$. Moreover, $s_2\le\om_r+p$ implies $s_2\le s_1+2p=s_1+2p-1+1$. Since $\om_r<2\mb{c}-\om_s$, we have $s_2\le2\mb{c}-\om_s+p-1$. By \Cref{corollary:notin M_0}\ref{satisfies 3_4_5} (for $\bt=1$), $F_s\notin\mcl{M}_0$, a contradiction.
				Hence $s_1\le\om_r+p\ (\text{mod} \ n)$. Using $ s_1<\om_s<\om_r<s_2$, we get $s_1\sim s_2$ and $\om_r\ge n-p$. Thus $n-p\le\om_r<2\mb{c}-\om_s\le n+1-\om_s$ (as $2\mb{c}\le n+1$). This gives $\om_s<p+1$, which implies  
				$s_1\sim\om_s$, contradicting $F_s\in\md$. Therefore $s_1\nsim\om_r$. 
				
				Since $\cn{(\Frs{\om_r}{\om_s})}$ is connected and $s_1 \nsim\om_r$, it follows that $s_1\sim s_2$. Therefore $F_s\in\md$ implies $s_1\nsim\om_s$. 
				Using $\Frs{\om_r}{s_2}\in\mcl{A}_{r_0}$ and $s_1<\om_s$, we obtain $(\Frs{\om_r}{s_2})^c=\{\om_r\}\sqcup\{s_1,\om_s\}$. Observe that $\Frs{\om_r}{s_2}\in\md$. 
				
				We now show that $\Frs{\om_r}{s_2}\in\mcl{M}_0$. Suppose $\Frs{\om_r}{s_2}\in\mcl{M}_{\al}$ for some $\al\in[p-1]$. Then $s_1<\om_s<\om_r$ implies $\Frs{\om_r}{s_2}$ satisfies \X{\al}{1} or \X{\al}{2}.
				If $\Frs{\om_r}{s_2}$ satisfies \X{\al}{1}, then $s_1=2\mb{c}-\om_r-p+\al-1$. Since $\om_r<2\mb{c}-\om_s$, we get $s_1>\om_s-p+\al-1\ge\om_s-p$, which contradicts $s_1\nsim\om_s$. Hence $\Frs{\om_r}{s_2}$ satisfies \X{\al}{2}. This gives $\om_r\ge\mb{c}+\frac{p}{2}$ and $s_1=\om_r-2p+\al-1$. 
				It follows that $s_1\ge2\mb{c}-\om_r-p+\al-1>\om_s-p+\al-1$, again contradicting $s_1\nsim\om_s$.
				Therefore $\Frs{\om_r}{s_2}\in\mcl{M}_0$. 
			\end{enumerate}
			
			\item $\om_s<s_1<s_2$. 
			
			Recall that $\om_s<\om_r<2\mb{c}-\om_s$. By \Cref{remark:order in S}\ref{part:i}, $\om_s<\mb{c}$, $\om_s\os s_1$ and $s_1>\om_s$  imply $s_1\ge2\mb{c}-\om_s$. 
			Thus, $\om_s<\om_r<2\mb{c}-\om_s\le s_1<s_2$. Moreover, $s_2\nsim\om_s$ by \Cref{proposition:i1_i2 nsim al_i}\ref{nsim i2} for $F_s$. Since $\Frs{\om_r}{s_1}\in\mcl{A}_{r_0}$ and $\om_s<s_2$, we have $(\Frs{\om_r}{s_1})^c=\{\om_r\}\sqcup\{\om_s,s_2\}$. 
			
			We first show that $\Frs{\om_r}{s_1}\in\md$. If $s_2\nsim\om_r$, then $s_2\nsim\om_s$ implies $\Frs{\om_r}{s_1}\in\md$. So, let $s_2\sim \om_r$. Since $s_2\nsim\om_s$, we get $s_2\le\om_r+p$. It follows that $s_2<2\mb{c}-\om_s+p\le s_1+p$, and hence $s_1>s_2-p$. 
			Suppose $s_2\le\om_s+p+1$. Then $\om_s<s_1<s_2\le\om_s+p+1$, which implies $s_1\sim \om_s$ and $s_1\sim s_2$, contradicting $F_s\in\md$. Hence $s_2\ge\om_s+p+2$. Using $s_2<2\mb{c}-\om_s+p$, we obtain $\om_s+p+2<2\mb{c}-\om_s+p$. This yields $\om_s<\mb{c}-1=\mb{c}-\frac{1+1}{2}$. 
			By \Cref{proposition: c and p relation}\ref{c-(bt+1)/2<=n-2p-1}, $\om_s<n-2p-1$. Therefore, if $s_2\le\om_s+2p=\om_s+2p-1+1$, then $F_s\notin\mcl{M}_0$ by \Cref{proposition:in M_bt+gm}\ref{satisfies x9} (for $\bt=1$), a contradiction. Hence $s_2>\om_s+2p$.
			Since $s_2\le\om_r+p$, we have $\om_r>\om_s+p$, which implies $\om_s\nsim\om_r$ (as $s_2\nsim\om_s$). Therefore, $\Frs{\om_r}{s_1}\in\md$.
			
			\begin{claim}\label{claim:case_A(III)}
				$\Frs{\om_r}{s_1}\in\mcl{M}_{0}$.
			\end{claim}
			
			\begin{proof}[Proof of \Cref{claim:case_A(III)}]
				Suppose $\Frs{\om_r}{s_1}\in\mcl{M}_{\al}$ for some $\al\in[p-1]$. Then $\om_s<\om_r<s_2$ implies that $\Frs{\om_r}{s_1}$ satisfies one of the conditions from \X{\al}{3} to \X{\al}{8}. 
				
				Suppose $\Frs{\om_r}{s_1}$ satisfies \X{\al}{3}. Then $\om_r<\mb{c}-\frac{p}{2}$ and $s_2=\om_s+2p-\al+1$. Note that $2\mb{c}-\om_s>2\mb{c}-\om_r>\om_r+p>\om_s+p$. Since $s_1\ge2\mb{c}-\om_s$, $s_1>\om_s+p$. By \Cref{proposition: c and p relation}\ref{c-p/2<=n-2p-1}, $\mb{c}-\frac{p}{2}\le n-2p-1$, which implies $\om_s<\om_r<n-2p-1$. Therefore, $F_s\notin\mcl{M}_0$ by \Cref{proposition:in M_bt+gm}\ref{satisfies x9}\ref{satisfies x9 i}, a contradiction. Hence $\Frs{\om_r}{s_1}$ does not satisfy \X{\al}{3}.
				
				Suppose $\Frs{\om_r}{s_1}$ satisfies \X{\al}{4}. Then $\om_s\le2\mb{c}-\om_r-p-1$ and $s_2=\om_s+2p-\al+1$. By \Cref{proposition:M_al conditions}\ref{ob_x4}, $\om_r\le\mb{c}-\frac{\al+1}{2}$. 
				Therefore, since $\om_s<\om_r$, \Cref{proposition: c and p relation}\ref{c-(bt+1)/2<=n-2p-1} implies $\om_s<n-2p-1$. We have $s_1\ge2\mb{c}-\om_s\ge\om_r+p+1>\om_s+p+1$. 
				Thus $F_s\notin\mcl{M}_0$ by \Cref{proposition:in M_bt+gm}\ref{satisfies x9}\ref{satisfies x9 i}, a contradiction. Hence $\Frs{\om_r}{s_1}$ does not satisfy \X{\al}{4}.
				
				Suppose $\Frs{\om_r}{s_1}$ satisfies \X{\al}{5}. Then $\om_r\le\mb{c}-\frac{\al+1}{2}$, $s_2=2\mb{c}-\om_r+p-\al$ and $\om_s\ge s_2-2p+\al$. Since $\om_s<\om_r$, \Cref{proposition: c and p relation}\ref{c-(bt+1)/2<=n-2p-1} implies $\om_s<n-2p-1$. Using $s_1\ge2\mb{c}-\om_s$, we get $s_1>2\mb{c}-\om_r=s_2-p+\al>s_2-p$. 
				Therefore $F_s\notin\mcl{M}_0$ by \Cref{proposition:in M_bt+gm}\ref{satisfies x9}\ref{satisfies x9 ii}, a contradiction. Hence $\Frs{\om_r}{s_1}$ does not satisfy \X{\al}{5}. 
				
				Suppose $\Frs{\om_r}{s_1}$ satisfies \X{\al}{6}. Then $\om_r\ge\mb{c}+\frac{\al}{2}$, $\om_s=2\mb{c}-\om_r-p+\al-1$ and $s_2\le2\mb{c}-\om_r+p-1$.
				It follows that $s_2>s_1\ge2\mb{c}-\om_s=\om_r+p-\al+1\ge\mb{c}-\frac{\al}{2}+p+1\ge2\mb{c}-\om_r+p+1$, a contradiction. 
				Hence $\Frs{\om_r}{s_1}$ does not satisfy \X{\al}{6}.
				
				Suppose $\Frs{\om_r}{s_1}$ satisfies \X{\al}{7}. Then $\om_r<\mb{c}+\frac{p}{2}$, $\om_s=s_2-2p+\al-1$ and $s_2\le\om_r+p-\al$. Observe that $\om_r\ge\om_s+p+1$. Since $s_1\ge2\mb{c}-\om_s>\om_r$, we have $s_1>\om_s+p+1$. Moreover, $\om_s\le\om_r-p-1<\mb{c}-\frac{p}{2}$ implies $\om_s<n-p-1$ by \Cref{proposition: c and p relation}\ref{c-p/2<=n-2p-1}.  
				Therefore, \Cref{proposition:in M_bt+gm}\ref{satisfies x9}\ref{satisfies x9 i} contradicts $F_s\in\mcl{M}_0$. Hence $\Frs{\om_r}{s_1}$ does not satisfy \X{\al}{7}.
				
				Suppose $\Frs{\om_r}{s_1}$ satisfies \X{\al}{8}. Then $\om_r\ge\mb{c}+\frac{p}{2}$ and $\om_s=s_2-2p+\al-1$. Since $\om_r<2\mb{c}-\om_s$, it follows that $\om_s<2\mb{c}-\om_r\le\mb{c}-\frac{p}{2}\le n-2p-1$ by \Cref{proposition: c and p relation}\ref{c-p/2<=n-2p-1}. 
				Moreover, $s_1\ge2\mb{c}-\om_s$ implies $s_1>\om_r\ge\mb{c}-\frac{p}{2}+p\ge2\mb{c}-\om_r+p>\om_s+p$. Thus $F_s\notin\mcl{M}_0$ by \Cref{proposition:in M_bt+gm}\ref{satisfies x9}\ref{satisfies x9 i}, a contradiction. Hence $\Frs{\om_r}{s_1}$ does not satisfy \X{\al}{8}.
				
				Therefore $\Frs{\om_r}{s_1}\in\mcl{M}_0$.
			\end{proof}
		\end{enumerate}
		
		\noindent{\bf Case B:} $\om_s>\mb{c}$.
		
		Since $\om_r\os \om_s$, we get $2\mb{c}-\om_s\le\om_r<\om_s$ by \Cref{remark:order in S}\ref{part:iv}.
		\begin{enumerate}[label=(\MakeUppercase{\roman*})] 
			\item $s_1<s_2<\om_s$. 
			
			We have $\Frs{\om_r}{s_2}\in\mcl{A}_{r_0}$ and $s_1<\om_s$. Hence $(\Frs{\om_r}{s_2})^c=\{\om_r\}\sqcup\{s_1,\om_s\}$.
			Since $\om_s>\mb{c}$, $\om_s\os s_2$ and $s_2<\om_s$, \Cref{remark:order in S}\ref{part:ii} implies $s_2<2\mb{c}-\om_s$. 
			It follows that $s_1<s_2<2\mb{c}-\om_s\le\om_r<\om_s$. 
			By \Cref{proposition:i1_i2 nsim al_i}\ref{nsim i1} for $F_s$, $s_1\nsim\om_s$.
			
			If $s_1\nsim\om_r$, then $s_1\nsim\om_s$ implies $\Frs{\om_r}{s_2}\in\md$. Now, let $s_1\sim \om_r$. Since $s_1\nsim\om_s$, we get $s_1\ge\om_r-p$. 
			This gives $s_1\ge2\mb{c}-\om_s-p=2\mb{c}-\om_s-p+1-1$. Observe that if $\om_s<\mb{c}+\frac{p}{2}$, then \Cref{proposition:in M_bt+gm}\ref{satisfies x1} (for $\bt=1$) contradicts $F_s\in\mcl{M}_{0}$.
			Hence $\om_s\ge\mb{c}+\frac{p}{2}$. Now, if $s_1\ge\om_s-2p=\om_s-2p+1-1$, then $F_s\notin\mcl{M}_{0}$ by \Cref{proposition:in M_bt+gm}\ref{satisfies x2} (for $\bt=1$), again a contradiction. Therefore $s_1<\om_s-2p$.
			Since $s_1\ge\om_r-p$, we have $\om_r<\om_s-p$, which implies $\om_s\nsim\om_r$ (as $s_1\nsim\om_s$). Hence $\Frs{\om_r}{s_2}\in\md$.
			
			\begin{claim}\label{claim:case_B(I)}
				$\Frs{\om_r}{s_2}\in\mcl{M}_{0}$.
			\end{claim}
			\begin{proof}[Proof of \Cref{claim:case_B(I)}]
				Suppose $\Frs{\om_r}{s_2}\in\mcl{M}_{\al}$ for some $\al\in[p-1]$. Then $s_1<\om_r<\om_s$ implies $\Frs{\om_r}{s_2}$ satisfies one of the conditions from \X{\al}{3} to \X{\al}{8}.
				
				Suppose $\Frs{\om_r}{s_2}$ satisfies \X{\al}{3}. Then $\om_r<\mb{c}-\frac{p}{2}$ and $\om_s=s_1+2p-\al+1$. 
				Since $\om_r\ge2\mb{c}-\om_s$, $\om_s\ge2\mb{c}-\om_r>\mb{c}+\frac{p}{2} $. Moreover, $s_1=\om_s-2p+\al-1$. 
				Therefore $F_s\notin\mcl{M}_0$ by \Cref{proposition:in M_bt+gm}\ref{satisfies x2}\ref{satisfies x2 i}, a contradiction. 
				Hence $\Frs{\om_r}{s_2}$ does not satisfy \X{\al}{3}.
				
				Suppose $\Frs{\om_r}{s_2}$ satisfies \X{\al}{4}. Then $\om_r\ge\mb{c}-\frac{p}{2}$, $s_1\ge\om_r-p+\al$ and $\om_s=s_1+2p-\al+1$.
				This implies $\om_s\ge\om_r+p+1>\mb{c}+\frac{p}{2}$ and $s_1=\om_s-2p+\al-1$. 
				By \Cref{proposition:in M_bt+gm}\ref{satisfies x2}\ref{satisfies x2 i}, $F_s\notin\mcl{M}_0$, a contradiction.
				Hence $\Frs{\om_r}{s_2}$ does not satisfy \X{\al}{4}.
				
				Suppose $\Frs{\om_r}{s_2}$ satisfies \X{\al}{5}. Then $\om_r\le\mb{c}-\frac{\al+1}{2}$, $\om_s=2\mb{c}-\om_r+p-\al$ and $s_1\ge2\mb{c}-\om_r-p$. It follows that $s_1<s_2<2\mb{c}-\om_s=\om_r-p+\al\le\mb{c}+\frac{\al+1}{2}-p-1\le2\mb{c}-\om_r-p-1$, a contradiction.
				Hence $\Frs{\om_r}{s_2}$ does not satisfy \X{\al}{5}. 
				
				Suppose $\Frs{\om_r}{s_2}$ satisfies \X{\al}{6}. Then $s_1=2\mb{c}-\om_r-p+\al-1$ and $\om_s\le s_1+2p-\al$. Since $\om_s>\om_r$, we have $s_1\ge2\mb{c}-\om_s-p+\al$. 
				If $\om_s<\mb{c}+\frac{p}{2}$, then \Cref{proposition:in M_bt+gm}\ref{satisfies x1}\ref{satisfies x1 ii} contradicts $F_s\notin\mcl{M}_{0}$. Hence $\om_s\ge\mb{c}+\frac{p}{2}$. Note that $s_1\ge\om_s-2p+\al$.
				By \Cref{proposition:in M_bt+gm}\ref{satisfies x2}\ref{satisfies x2 ii}, $F_s\notin\mcl{M}_{0}$, again a contradiction. Therefore $\Frs{\om_r}{s_2}$ does not satisfy \X{\al}{6}.
				
				Suppose $\Frs{\om_r}{s_2}$ satisfies \X{\al}{7}. Then $s_1=\om_s-2p+\al-1$ and $\om_s\ge2\mb{c}-\om_r+p$.
				It follows that $s_1\ge2\mb{c}-\om_r-p+\al-1$. Since $\om_s>\om_r$, we get $s_1\ge2\mb{c}-\om_s-p+\al$.
				If $\om_s<\mb{c}+\frac{p}{2}$, then $F_s\notin\mcl{M}_{0}$ by \Cref{proposition:in M_bt+gm}\ref{satisfies x1}\ref{satisfies x1 ii}, a contradiction. 
				Hence $\om_s\ge\mb{c}+\frac{p}{2}$. Then $s_1=\om_s-2p+\al-1$ implies $F_s\notin\mcl{M}_0$ by \Cref{proposition:in M_bt+gm}\ref{satisfies x2}\ref{satisfies x2 i}, again a contradiction. Therefore $\Frs{\om_r}{s_2}$ does not satisfy \X{\al}{7}.
				
				Suppose $\Frs{\om_r}{s_2}$ satisfies \X{\al}{8}. Then $\om_r\ge\mb{c}+\frac{p}{2}$ and $s_1=\om_s-2p+\al-1$. Since $\om_s>\om_r$, we get $\om_s>\mb{c}+\frac{p}{2}$.
				Therefore, $F_s\notin\mcl{M}_0$ by \Cref{proposition:in M_bt+gm}\ref{satisfies x2}\ref{satisfies x2 i}, a contradiction. Hence $\Frs{\om_r}{s_2}$ does not satisfy \X{\al}{8}.
				
				Therefore $\Frs{\om_r}{s_2}\in\mcl{M}_0$.
			\end{proof}
			
			\item $s_1<\om_s<s_2$. 
			
			Recall that $2\mb{c}-\om_s\le\om_r<\om_s$.  Since $\om_s\os s_1$, $\om_s>\mb{c}$ and $s_1<\om_s$, we get $s_1<2\mb{c}-\om_s$ by \Cref{remark:order in S}\ref{part:ii}. Thus $s_1<2\mb{c}-\om_s\le\om_r<\om_s<s_2$.
			Observe that $(\Frs{\om_r}{\om_s})^c=\{\om_r\}\sqcup\{s_1, s_2\}$. We show that if $\Frs{\om_r}{\om_s}\in\md$, then $\Frs{\om_r}{\om_s}\in\mcl{M}_0$. Otherwise, $\Frs{\om_r}{s_1}\in\mcl{M}_0$.
			\begin{enumerate}[label=(\alph*)]
				\item $\Frs{\om_r}{\om_s}\in\md$. 
				
				Suppose $\Frs{\om_r}{\om_s}\in\mcl{M}_{\al}$ for some $\al\in[p-1]$. Then $s_1<\om_r<s_2$ implies $\Frs{\om_r}{\om_s}$ satisfies one of the conditions from \X{\al}{3} to \X{\al}{8}.
				
				Suppose $\Frs{\om_r}{\om_s}$ satisfies \X{\al}{3}. Then $\om_r<\mb{c}-\frac{p}{2}$ and $s_2=s_1+2p-\al+1$. 
				Since $\om_r\ge2\mb{c}-\om_s$, $\om_s\ge2\mb{c}-\om_r>\mb{c}+\frac{p}{2}$. Moreover, $s_1=s_2-2p+\al-1$. By \Cref{proposition:in M_bt+gm}\ref{satisfies x8}\ref{satisfies x8 i}, $F_s\notin \mcl{M}_0$, a contradiction. Hence $\Frs{\om_r}{\om_s}$ does not satisfy \X{\al}{3}.
				
				Suppose $\Frs{\om_r}{\om_s}$ satisfies \X{\al}{4}. Then $\om_r-p+\al\le s_1\le2\mb{c}-\om_r-p-1$ and $s_2=s_1+2p-\al+1$. If $\om_s\ge\mb{c}+\frac{p}{2}$, then \Cref{proposition:in M_bt+gm}\ref{satisfies x8}\ref{satisfies x8 i} contradicts $F_s\in\mcl{M}_0$. Hence $\om_s<\mb{c}+\frac{p}{2}$. 
				Since $\om_r\ge2\mb{c}-\om_s$, we get $2\mb{c}-\om_s-p+\al\le s_1\le\om_s-p-1$. It follows that $2\mb{c}-\om_s+p+1\le s_2\le\om_s+p-\al<\om_s+p$. 
				By \Cref{proposition:in M_bt+gm}\ref{satisfies x7}\ref{satisfies x7 i}, $F_s\notin\mcl{M}_0$, again a contradiction. Therefore $\Frs{\om_r}{\om_s}$ does not satisfy \X{\al}{4}. 
				
				Suppose $\Frs{\om_r}{\om_s}$ satisfies \X{\al}{5}. Then $s_2=2\mb{c}-\om_r+p-\al$ and $s_1\ge s_2-2p+\al=2\mb{c}-\om_r-p$.
				If $\om_s\ge\mb{c}+\frac{p}{2}$, then $F_s\notin\mcl{M}_0$ by \Cref{proposition:in M_bt+gm}\ref{satisfies x8}\ref{satisfies x8 ii}, a contradiction. Hence $\om_s<\mb{c}+\frac{p}{2}$.
				Since $2\mb{c}-\om_s\le\om_r<\om_s$, we get $s_2\le\om_s+p-\al<\om_s+p$ and $s_1\ge2\mb{c}-\om_s-p+1$. 
				Now, if $s_2\le2\mb{c}-\om_s+p-1$, then \Cref{proposition:in M_bt+gm}\ref{satisfies x6}\ref{satisfies x6 ii} (for $\bt=1$) contradicts $F_s\in\mcl{M}_0$; and if $s_2\ge2\mb{c}-\om_s+p$, then \Cref{proposition:in M_bt+gm}\ref{satisfies x7}\ref{satisfies x7 ii} contradicts $F_s\in\mcl{M}_0$. Therefore $\Frs{\om_r}{\om_s}$ does not satisfy \X{\al}{5}.
				
				Suppose $\Frs{\om_r}{\om_s}$ satisfies \X{\al}{6}. Then $s_1=2\mb{c}-\om_r-p+\al-1$ and $s_2\le s_1+2p-\al=2\mb{c}-\om_r+p-1$.
				Since $2\mb{c}-\om_s\le\om_r<\om_s$, we get $s_1>2\mb{c}-\om_s-p+\al-1$ and $s_2<\om_s+p$. By \Cref{corollary:notin M_0}\ref{satisfies 6_7_8}, $F_s\notin\mcl{M}_0$, a contradiction. Hence $\Frs{\om_r}{\om_s}$ does not satisfy \X{\al}{6}.
				
				Suppose $\Frs{\om_r}{\om_s}$ satisfies \X{\al}{7}. Then $s_1=s_2-2p+\al-1$ and $2\mb{c}-\om_r+p\le s_2\le\om_r+p-\al$. 
				Since $\om_r<\om_s$, $2\mb{c}-\om_s+p<s_2<\om_s+p-\al<\om_s+p$. If $\om_s<\mb{c}+\frac{p}{2}$, then \Cref{proposition:in M_bt+gm}\ref{satisfies x7}\ref{satisfies x7 i} contradicts $F_s\in\mcl{M}_0$; and 
				if $\om_s\ge\mb{c}+\frac{p}{2}$, then \Cref{proposition:in M_bt+gm}\ref{satisfies x8}\ref{satisfies x8 i} contradicts $F_s\in\mcl{M}_0$. 
				Hence $\Frs{\om_r}{\om_s}$ does not satisfy \X{\al}{7}. 
				
				Suppose $\Frs{\om_r}{\om_s}$ satisfies \X{\al}{8}. Then $\om_r\ge\mb{c}+\frac{p}{2}$ and $s_1=s_2-2p+\al-1$. 
				Since $\om_s>\om_r$, \Cref{proposition:in M_bt+gm}\ref{satisfies x8}\ref{satisfies x8 i} contradicts $F_s\in\mcl{M}_0$. Hence $\Frs{\om_r}{\om_s}$ does not satisfy \X{\al}{8}.
				
				Therefore $\Frs{\om_r}{\om_s}\in\mcl{M}_0$.
				
				\item $\Frs{\om_r}{\om_s}\notin\md$. 
				
				In this case, we show that $\Frs{\om_r}{s_1}\in\mcl{M}_0$. We have $(\Frs{\om_r}{\om_s})^c=\{\om_r\}\sqcup\{s_1,s_2\}$ and $s_1<\om_r<\om_s<s_2$. Since $\Frs{\om_r}{\om_s}\notin\md$, $\cn{(\Frs{\om_r}{\om_s})}$ is connected. If $s_1\nsim\om_r$, then $s_1\sim s_2$ and $s_2\sim \om_r$, which implies $s_2\sim\om_s$. This contradicts $F_s\in\md$. 
				Hence $s_1\sim\om_r$. Since $C_n^p[F_s^c]$ is disconnected, $s_1\ge\om_r-p$. 
				
				Suppose $s_2\sim\om_r$. It follows that $s_2\le\om_r+p$ or $s_2\ge\om_r-p\ (\text{mod} \ n)$.
				First, let $s_2\le\om_r+p$. Then $s_2<\om_s+p$. Moreover, $s_1\ge\om_r-p$ implies $s_1\ge s_2-2p=s_2-2p+1-1$. Since $\om_r\ge2\mb{c}-\om_s$, we have $s_1\ge2\mb{c}-\om_s-p=2\mb{c}-\om_s-p+1-1$. By \Cref{corollary:notin M_0}\ref{satisfies 6_7_8}, $F_s\notin\mcl{M}_0$, a contradiction. Hence $s_2\ge\om_r-p\ (\text{mod} \ n)$. 
				Since $s_1<\om_r<\om_s<s_2$, we get $s_1\sim s_2$ and $\om_r\le p-1$. This gives $n-\om_s\le2\mb{c}-\om_s\le\om_r\le p-1$ (as $2\mb{c}\ge n$), and thus $\om_s\ge n-p+1$. 
				Therefore $s_2\sim\om_s$, contradicting $F_s\in\md$. Hence $s_2\nsim\om_r$. 
				
				Since $\cn{(\Frs{\om_r}{\om_s})}$ is connected and $s_2\nsim\om_r$, we get $s_1\sim s_2$. Then $F_s\in\md$ implies $s_2\nsim\om_s$. 
				We have $\Frs{\om_r}{s_1}\in\mcl{A}_{r_0}$ and $\om_s<s_2$. Hence $(\Frs{\om_r}{s_1})^c=\{\om_r\}\sqcup\{\om_s, s_2\}$. Observe that $\Frs{\om_r}{s_1}\in\md$. 
				
				Suppose $\Frs{\om_r}{s_1}\in\mcl{M}_{\al}$ for some $\al\in[p-1]$. Since $\om_r<\om_s<s_2$, $\Frs{\om_r}{s_1}$ satisfies \X{\al}{9} or \X{\al}{10}.
				By \Cref{proposition:M_al conditions}\ref{ob_x9} and \ref{ob_x10}, $s_2\sim \om_s$, a contradiction. Therefore $\Frs{\om_r}{s_1}\in\mcl{M}_0$.
			\end{enumerate}
			
			\item $\om_s<s_1<s_2$. 
			
			We have $2\mb{c}-\om_s\le\om_r<\om_s$. Moreover, $(\Frs{\om_r}{\om_s})^c=\{\om_r\}\sqcup\{s_1,s_2\}$. Since $C_n^p[F_s^c]$ is disconnected, $\om_s\le n-p-3$. This implies $2\mb{c}-\om_s\ge2\mb{c}-n+p+3\ge p+3$ (as $2\mb{c}\ge n$), and hence $p+3\le2\mb{c}-\om_s\le\om_r<\om_s<s_1<s_2$.
			By \Cref{proposition:i1_i2 nsim al_i}\ref{nsim i2} for $F_s$, $s_2\nsim\om_s$, which implies $s_2\nsim\om_r$. If $s_1\nsim s_2$, then ${\Frs{\om_r}{\om_s}}\in\md$. Now, let $s_1\sim s_2$. Since $F_s\in\md$, $s_1\nsim\om_s$. It follows that $s_1\nsim\om_r$. Therefore $\Frs{\om_r}{\om_s}\in\md$.
			
			Suppose $\Frs{\om_r}{\om_s}\in\mcl{M}_{\al}$ for some $\al\in[p-1]$. Since $\om_r<s_1<s_2$,  $\Frs{\om_r}{\om_s}$ satisfies \X{\al}{9} or \X{\al}{10}.
			
			Suppose $\Frs{\om_r}{\om_s}$ satisfies \X{\al}{9}. Then $\om_r\le n-2p-2$, $s_1\ge\om_r+p+1$ and $s_2=\om_r+2p-\al+1$. This gives $s_1\ge s_2-p+\al>s_2-p$ and $s_2\le n-\al-1$. Since $\om_r<\om_s$, we have $s_2\le\om_s+2p-\al$. If $\om_s\le n-2p-2$, then \Cref{proposition:in M_bt+gm}\ref{satisfies x9}\ref{satisfies x9 ii} contradicts $F_s\in\mcl{M}_0$; and if $\om_s>n-2p-2$, then \Cref{proposition:in M_bt+gm}\ref{satisfies x10} contradicts $F_s\in\mcl{M}_0$. Hence $\Frs{\om_r}{\om_s}$ does not satisfy \X{\al}{9}.
			
			Suppose $\Frs{\om_r}{\om_s}$ satisfies \X{\al}{10}. Then $\om_r>n-2p-2$, $s_1\ge\om_r+p+1$ and $s_2=n-\al-1$. 
			It follows that $s_1>n-p-1=s_2-p+\al>s_2-p$. Since $\om_s>\om_r>n-2p-2$, \Cref{proposition:in M_bt+gm}\ref{satisfies x10}\ref{satisfies x10 i} contradicts $F_s\in\mcl{M}_0$. Hence $\Frs{\om_r}{\om_s}$ does not satisfy \X{\al}{10}.
			
			Therefore $\Frs{\om_r}{\om_s}\in\mcl{M}_0$. 
		\end{enumerate}
		\vspace{-0.2 cm}
	\end{proof}
	
	\begin{lemma}\label{lemma: case M_al}
		Suppose $F_s\notin\mcl{M}_0$, \textit{i.e.}, $F_s\in\mcl{M}_{\al}$ for some $\al \in[p-1]$. Then there exists a facet $F_t\in\md$ that satisfies \eqref{eqn:asterick} for the pair $F_r\prec F_s$.
	\end{lemma}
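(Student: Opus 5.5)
Let $F_s\in\mcl{M}_{\al}$ with $\al\in[p-1]$. I would split according to which of the ten conditions \X{\al}{1}--\X{\al}{10} $F_s$ satisfies, grouping them by the position of $\om_s$ relative to $s_1,s_2$: (I) \X{\al}{1},\X{\al}{2} with $s_1<s_2<\om_s$; (II) \X{\al}{3}--\X{\al}{8} with $s_1<\om_s<s_2$; (III) \X{\al}{9},\X{\al}{10} with $\om_s<s_1<s_2$. The guiding idea is that $F_s$ is pinned by one ``extremal'' coordinate: $s_1$ in (I), the gap $s_2-s_1=2p-\al+1$ or the value of $s_1$ (resp. $s_2$) in (II), and $s_2$ in (III). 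Moving a vertex $u\in F_s\setminus F_r$ into $F_s^c$ in place of a suitable $v$ either lands in some $\mcl{M}_{\al'}$ with $\al'<\al$, or stays in $\mcl{M}_{\al}$ but earlier in $\ll$. In both cases $\Frs{u}{v}\prec F_s$ by \Cref{def:prec}, and \Cref{remark:asterisk} finishes.

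In each case I would first choose $u\in F_s\setminus F_r$. Since $F_r\ne F_s$, at least one of $\om_r,r_1,r_2$ lies in $F_s$. The subcase $F_r\in\mcl{M}_{\al'}$ with $\al'<\al$ must be handled alongside $F_r\in\mcl{M}_\al$ with $F_r\ll F_s$; I would not use the precedence beyond this.

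In (II), if $u<s_1$ or $u>s_2$, then \Cref{proposition: F_r1_s1 or F_r2_s2}\ref{F_r1_s1 is a facet},\ref{F_r2_s2 is a facet} give $\Frs{u}{s_1}$ or $\Frs{u}{s_2}\in\md$. The adjacency and range hypotheses there ($s_1>p$, $s_2<n-p$, $s_1\sim\om_s$ or $s_2\sim\om_s$) are exactly what \Cref{proposition:M_al conditions}\ref{ob_x3}--\ref{ob_x8} supply. For \X{\al}{3},\X{\al}{4},\X{\al}{7},\X{\al}{8}, where $s_1=s_2-2p+\al-1$, precedence follows from \Cref{proposition: F_r1_s1 or F_r2_s2}\ref{F_s1_s2 precedes F_s}. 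If instead $s_1\le u\le s_2$ and $u=\om_r$ with $\om_r\os\om_s$, I would use \Cref{proposition:al_r s_1 s_2} and \Cref{proposition:s_1<=r_1<r_2<=s_2} to replace $\om_s$ by $\om_r$, again landing earlier. The remaining possibility is $\om_r=\om_s$ with $u\in\{r_1,r_2\}$ strictly between $s_1$ and $s_2$. Here I would swap with $s_1$ or $s_2$ and compare using \Cref{definition:order_ll}\ref{om_i=om_j}, invoking \Cref{proposition:in M_bt+gm}.

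For \X{\al}{5},\X{\al}{6}, where $s_2$ or $s_1$ is pinned by reflection through $\mb{c}$, the analogous argument uses \Cref{proposition:2c-i_2<=al_i<i_2} and \Cref{proposition:x1 and x6}. Those lemmas were set up precisely to rule out the bad configuration in which the swapped facet lands in some $\mcl{M}_\gm$ with $\gm\ge\bt$. In (III), \Cref{proposition:9 or 10 u<om_i} directly handles $u<\om_s$ and $\om_s<u<s_1$. The case $u>s_1$ is handled by swapping with $s_2$ if $u<s_2$, or with $s_1$ if $u>s_2$, checking disconnectedness via $s_2\nsim\om_s$ and membership via \Cref{proposition:in M_bt+gm}\ref{satisfies x9},\ref{satisfies x10}. (I) is symmetric to (III), with $s_1$ pinned: swap $u$ with $s_1$ (or with $\om_s$ when $u=\om_r$), using \Cref{proposition:al_i>c} for the needed non-adjacency.

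I expect the main obstacle to be the middle subcase of (II), where $\om_r=\om_s$ and the new vertex lies between $s_1$ and $s_2$. There the natural swap may push the facet into a higher $\mcl{M}_\gm$, or fail to be disconnected. My first attempt would be to try both $\Frs{u}{s_1}$ and $\Frs{u}{s_2}$ and show that at least one works, with \Cref{corollary:notin M_0} and \Cref{proposition:om_i>c x6 x7 x8} controlling the index $\gm$. Failing that, I would fall back on replacing $\om_s$ by another vertex, as in \Cref{lemma: case M_0 a}(II)(b), via \Cref{proposition: om_i<=2p-1} and \Cref{proposition:in M_0}.
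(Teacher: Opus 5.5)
Your plan is to exhibit, in every configuration, a swap $\Frs{u}{v}$ with $u\in F_s\setminus F_r$ that lands before $F_s$. You also say you will not use $F_r\prec F_s$ beyond the dichotomy on the class of $F_r$. This is where the argument breaks: some configurations admit no valid $F_t$ at all, and the lemma survives only because those configurations contradict $F_r\prec F_s$.

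Take $p=5$, $n=27$ (so $\mb{c}=14$), $F_s^c=\{11\}\sqcup\{8,18\}$, which satisfies \X{1}{3} (so $F_s\in\mcl{A}_6\cap\mcl{M}_1$), and $F_r^c=\{11\}\sqcup\{9,17\}$. This is precisely your ``main obstacle'' ($\om_r=\om_s$, $s_1<r_1<r_2<s_2$). Here $F_s\setminus F_r=\{9,17\}$ and $\Frs{9}{18}$ is not a facet. The other five swaps have complements $\{18\}\sqcup\{8,9\}$ (satisfying \X{1}{2}, in $\mcl{A}_9$), $\{11\}\sqcup\{9,18\}$ (\X{2}{3}), $\{17\}\sqcup\{8,18\}$ (\X{1}{8}, in $\mcl{A}_7$), $\{11\}\sqcup\{17,18\}$ (\X{4}{9}) and $\{11\}\sqcup\{8,17\}$ (\X{2}{3}). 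Every one of them comes after $F_s$.

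So ``try both $\Frs{u}{s_1}$ and $\Frs{u}{s_2}$'' cannot succeed. Your fallback through \Cref{proposition: om_i<=2p-1} and \Cref{proposition:in M_0} targets $\om_s\le 2p-1$, which \Cref{proposition:in M_0} itself rules out for $F_s\in\mcl{M}_{\al}$ with $s_1<\om_s<s_2$. The pair is harmless only because $F_r$ satisfies \X{3}{3}, so in fact $F_s\prec F_r$.

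That is the missing idea. Suppose $F_r$ sits inside $F_s$ and $\om_r$ does not precede $\om_s$, for example:
\begin{itemize}
\item $\om_r,r_1,r_2\in[s_1,s_2]$ under \X{\al}{3}--\X{\al}{8};
\item $\om_r=\om_s$ and $s_1<r_1<r_2<\om_s$ under \X{\al}{1}, \X{\al}{2};
\item $\om_r=\om_s$ and $s_1<r_1<r_2<s_2$ under \X{\al}{9}, \X{\al}{10}.
\end{itemize}
Then $F_s\ll F_r$, and \Cref{proposition:in M_bt+gm} or \Cref{corollary:notin M_0}, applied with $F=F_r$ and $F'=F_s$, gives $F_s\prec F_r$, a contradiction. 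This is why those results conclude $F'\prec F$, and the paper closes several subcases as ``not possible'' in exactly this way.

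The same omission leaves concrete holes in your case analysis and recipes.
\begin{itemize}
\item In your case (II), after $u<s_1$, $u>s_2$ and $u=\om_r\os\om_s$, the remaining case is not only $\om_r=\om_s$. The case $\om_s\os\om_r$ with $\om_r\in[s_1,s_2]$ (e.g.\ $s_1\le\om_r<\om_s<\mb{c}$) is unaddressed, and it too must be excluded via precedence.
\item Under \X{\al}{9}, for $s_1<u<s_2$ the facet $\Frs{u}{s_2}$ has complement $\{\om_s\}\sqcup\{s_1,u\}$. It satisfies \X{\al'}{9} with $\al'=\om_s+2p+1-u>\al$, so it comes after $F_s$; the same happens under \X{\al}{10}.
\item Under \X{\al}{1}, \X{\al}{2}, neither $v=s_1$ nor $v=\om_s$ works in general. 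Take $p=3$, $n=15$, $F_s^c=\{9\}\sqcup\{4,5\}$ (\X{1}{1}) and $F_r^c=\{8\}\sqcup\{4,9\}$ (so $F_r\in\mcl{M}_0$). The only choice is $u=8$, and neither $\Frs{8}{4}$ nor $\Frs{8}{9}$ is a facet; the paper uses $\Frs{\om_r}{s_2}$.
\item \Cref{proposition:M_al conditions} gives only $s_2\le n-p$ under \X{\al}{8}. If $s_2=n-p$ (forcing $p=2$) and $r_1=0$, then $\Frs{r_1}{s_1}$ is not a facet, since $0\sim n-p\sim\om_s$. One needs $\Frs{r_1}{s_2}$ instead.
\end{itemize}
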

	
	\begin{proof}
		We have $F_r^c=\{\om_r\}\sqcup\{r_1,r_2\}$ and $F_s^c=\{\om_s\}\sqcup\{s_1,s_2\}$ with $F_r\prec F_s$. Moreover, $\om_r\os r_1,r_2$ and $\om_s\os s_1,s_2$.
		Observe that if $\om_r=\om_s$ and $\{r_1, r_2\}\cap\{s_1, s_2\}\ne\emptyset$, then $F_t=F_r$ satisfies \eqref{eqn:asterick}. Therefore, we assume that $\{r_1, r_2\}\cap\{s_1, s_2\}=\emptyset$ whenever $\om_r=\om_s$.
		
		Recall that $\Frs{u}{v}=(F_s\setminus\{u\})\sqcup\{v\}$, where $u\in F_s\setminus F_r$ and $v\in F_s^c$. If $\Frs{u}{v}\in\md$ and $\Frs{u}{v}\prec F_s$, then $F_t=\Frs{u}{v}$ satisfies \eqref{eqn:asterick} by \Cref{remark:asterisk}.
		Thus, it suffices to show that there exist $u\in F_s\setminus F_r$ and $v\in F_s^c$ such that $\Frs{u}{v}\in\md$ and $\Frs{u}{v}\prec F_s$.
		Since $F_s\in\mcl{M}_{\al}$, $F_s$ satisfies one of the conditions from \X{\al}{1} to \X{\al}{10}.
		\begin{enumerate}[label=(\arabic*)]
			\item $F_s$ satisfies \X{\al}{1} or \X{\al}{2}. We first prove the following claim.
			
			\begin{claim}\label{claim:case12condition}
				$s_1<s_2<\om_s$, $\mb{c}<\om_s<n-p$ and $s_1\nsim\om_s$.
			\end{claim}
			
			\begin{proof}[Proof of \Cref{claim:case12condition}]
				Since $F_s$ satisfies \X{\al}{1} or \X{\al}{2}, $s_1<s_2<\om_s$. Hence $s_1\nsim\om_s$ by \Cref{proposition:i1_i2 nsim al_i}\ref{nsim i1}. If $F_s$ satisfies \X{\al}{1}, then $\om_s>\mb{c}$ by \Cref{proposition:M_al conditions}\ref{ob_x1}; and if $F_s$ satisfies \X{\al}{2}, then $\om_s>\mb{c}$ by definition. Thus $\om_s>\mb{c}$. Moreover, \Cref{proposition:M_al conditions}\ref{ob_x1} and \ref{ob_x2} imply $\om_s<n-p$. 
			\end{proof} 
			
			We consider three cases: (I) $r_1<s_1$, (II) $s_1\le r_1<r_2\le\om_s$, (III) $s_1\le r_1$ and $\om_s<r_2$.    
			\begin{enumerate}[label=(\Roman*)]
				\item Let $r_1<s_1$. Since $s_1<s_2<\om_s$ by \Cref{claim:case12condition}, $r_1<s_1<\om_s$. By \Cref{proposition: al_i precedes lambda}, $\om_s\os r_1$. Observe that $r_1\in F_s\setminus F_r$ and $(\Frs{r_1}{s_2})^c=\{\om_s\}\sqcup\{r_1,s_1\}$. 
				By \Cref{claim:case12condition}, $\om_s<n-p$ and $s_1\nsim\om_s$, which implies $r_1\nsim\om_s$. Therefore $\Frs{r_1}{s_2}\in\md$.   
				
				We now show that $\Frs{r_1}{s_2}\prec F_s$. Suppose $\Frs{r_1}{s_2}\in\mcl{M}_{\al'}$ for some $\al\le\al'\le p-1$. We have $r_1<s_1<\om_s$.
				If $F_s$ satisfies \X{\al}{1}, then $\om_s<\mb{c}+\frac{p}{2}$ and $s_1=2\mb{c}-\om_s-p+\al-1$. Since $\om_s<\mb{c}+\frac{p}{2}$, $\Frs{r_1}{s_2}$ satisfies \X{\al'}{1}, which implies $r_1=2\mb{c}-\om_s-p+\al'-1\ge2\mb{c}-\om_s-p+\al-1=s_1$, a contradiction. 
				If $F_s$ satisfies \X{\al}{2}, then $\om_s\ge\mb{c}+\frac{p}{2}$ and $s_1=\om_s-2p+\al-1$. Since $\om_s\ge\mb{c}+\frac{p}{2}$, $\Frs{r_1}{s_2}$ satisfies \X{\al'}{2}, which implies $r_1=\om_s-2p+\al'-1\ge\om_s-2p+\al-1=s_1$, again a contradiction. 
				Therefore, $\Frs{r_1}{s_2}\in\mcl{M}_{\al''}$ for some $0\le\al''<\al$. 
				Hence $\Frs{r_1}{s_2}\prec F_s$ by \Cref{def:prec}\ref{def 2}.
				
				\item Let $s_1\le r_1<r_2\le\om_s$. By \Cref{claim:case12condition}, $\om_s>\mb{c}$. Hence $2\mb{c}-\om_s<\om_s$. We consider the following subcases based on the value of $\om_r$:
				\begin{enumerate}[label=(\alph*)]
					\item $\om_r<2\mb{c}-\om_s$.
					
					If $F_s$ satisfies \X{\al}{1}, then $s_1=2\mb{c}-\om_s-p+\al-1$. Moreover, if $F_s$ satisfies \X{\al}{2}, then $\om_s\ge\mb{c}+\frac{p}{2}$ and $s_1=\om_s-2p+\al-1$, which yields $s_1\ge2\mb{c}-\om_s-p+\al-1$. Since $r_1\ge s_1$ and $\al\ge 1$, we obtain $r_1\ge2\mb{c}-\om_s-p+\al-1>\om_r-p$ in both cases.
					Therefore, $\cn{F_r}$ is disconnected and $r_1<r_2$ imply $r_2>\om_r$. Using $\om_s>\mb{c}$ and $\om_r<2\mb{c}-\om_s$, we get $\om_r<\mb{c}$. By \Cref{remark:order in S}\ref{part:i}, $\om_r\os r_2$ implies $r_2\ge2\mb{c}-\om_r$, and hence $r_2>\om_s$. This contradicts $r_2\le\om_s$. Thus $\om_r\not<2\mb{c}-\om_s$. 
					
					\item $2\mb{c}-\om_s\le\om_r<\om_s$.
					
					Using $\om_s\os s_1$, $s_1<\om_s$ and $\om_s>\mb{c}$, \Cref{remark:order in S}\ref{part:ii} implies $s_1<2\mb{c}-\om_s$. Hence $s_1<\om_r$. Further, since $\om_s>\mb{c}$, $\om_r\os\om_s$ by \Cref{remark:order in S}\ref{part:iv}. Then $\om_s\os s_1,s_2$ implies $\om_r\os s_1,s_2$. Thus $\om_r\in F_s\setminus F_r$ and $(\Frs{\om_r}{s_2})^c=\{\om_r\}\sqcup\{s_1,\om_s\}$. 
					
					\begin{claim}\label{claim:1lastcase} 
						$\Frs{\om_r}{s_2}\in\md$.
					\end{claim}
					
					\begin{proof}[Proof of \Cref{claim:1lastcase}]
						By \Cref{claim:case12condition}, $s_1\nsim\om_s$. We have either $r_1<r_2<\om_r$ or $r_1<\om_r<r_2$ or $\om_r<r_1<r_2$.
						
						First, let $r_1<r_2<\om_r$. Then $r_1\nsim\om_r$ by \Cref{proposition:i1_i2 nsim al_i}\ref{nsim i1}. Since $s_1\le r_1<r_2<\om_r<\om_s$ and $s_1\nsim\om_s$, we get $s_1\nsim\om_r$. Hence $\Frs{\om_r}{s_2}\in\md$.
						
						Now, let $r_1<\om_r<r_2$. Since $F_r\in\md$, we have $r_1\nsim\om_r$ or $r_2\nsim\om_r$. Using $s_1\le r_1<\om_r<r_2\le\om_s$ and $s_1\nsim\om_s$, it follows that if $r_1\nsim\om_r$, then $s_1\nsim\om_r$; and if $r_2\nsim\om_r$, then $\om_s\nsim\om_r$. Hence $\Frs{\om_r}{s_2}\in\md$. 
						
						Finally, let $\om_r<r_1<r_2$. Then $r_2\nsim\om_r$ by \Cref{proposition:i1_i2 nsim al_i}\ref{nsim i2}. Since $s_1<\om_r$, we get $s_1<\om_r<r_1<r_2\le\om_s$. Therefore $s_1\nsim\om_s$ implies $\om_s\nsim\om_r$, and hence $\Frs{\om_r}{s_2}\in\md$.
						This completes the proof of \Cref{claim:1lastcase}.
					\end{proof}
					
					We now show that $\Frs{\om_r}{s_2}\prec F_s$. Since $\om_r\os\om_s$, we get $\Frs{\om_r}{s_2}\ll F_s$ by \Cref{definition:order_ll}\ref{om_i<om_j}. First, assume that $F_s$ satisfies \X{\al}{1}. Then $\om_s<\mb{c}+\frac{p}{2}$ and $s_1=2\mb{c}-\om_s-p+\al-1$. Since $s_1<\om_r<\om_s$ and $2\mb{c}-\om_s\le\om_r<\om_s$, \Cref{proposition:2c-i_2<=al_i<i_2} implies $\Frs{\om_r}{s_2}\prec F_s$. Now, let $F_s$ satisfies \X{\al}{2}. Then
					$s_1=\om_s-2p+\al-1$. Suppose $\Frs{\om_r}{s_2}\in\mcl{M}_{\al'}$ for some $\al<\al'\le p-1$. Since $s_1<\om_r<\om_s$, $\Frs{\om_r}{s_2}$ satisfies one of the conditions from \X{\al'}{3} to \X{\al'}{8}. Hence $s_1\ge\om_s-2p+\al'-1$. Then $\al'>\al$ implies $s_1>\om_s-2p+\al-1$, a contradiction. 
					Therefore, $\Frs{\om_r}{s_2}\in\mcl{M}_{\al''}$ for some $0\le\al''\le\al$. Since $\Frs{\om_r}{s_2}\ll F_s$, $\Frs{\om_r}{s_2}\prec F_s$ by \Cref{def:prec}.
					
					\item $\om_r=\om_s$. 
					
					In this case, $\{r_1, r_2\}\cap\{s_1, s_2\}=\emptyset$ by our assumption. Hence $s_1\le r_1$ implies $s_1<r_1$. Moreover, $r_1<r_2\le\om_s$ and $\om_r\ne r_2$ imply $r_1<r_2<\om_r$. If $F_s$ satisfies \X{\al}{1}, then $\om_r=\om_s<\mb{c}+\frac{p}{2}$ and $r_1>s_1=2\mb{c}-\om_r-p+\al-1$. Since $F_s\in\mcl{M}_{\al}$, it follows that $F_s\prec F_r$ by \Cref{proposition:in M_bt+gm}\ref{satisfies x1}\ref{satisfies x1 ii}, a contradiction. 
					If $F_s$ satisfies \X{\al}{2}, then $\om_r=\om_s\ge\mb{c}+\frac{p}{2}$ and $r_1>s_1=\om_r-2p+\al-1$. By \Cref{proposition:in M_bt+gm}\ref{satisfies x2}\ref{satisfies x2 ii}, $F_s\prec F_r$, again a contradiction. Therefore $\om_r\ne\om_s$.
					
					\item $\om_r>\om_s$.
					
					By \Cref{claim:case12condition}, $s_1<s_2<\om_s$, $\om_s>\mb{c}$ and $s_1\nsim\om_s$. Note that $\om_r>\mb{c}$, $\om_s\os\om_r$ and $s_1<\om_r$. Moreover, $\om_r\in F_s\setminus F_r$ and $(\Frs{\om_r}{s_2})^c=\{\om_s\}\sqcup\{s_1,\om_r\}$. We show that $\Frs{\om_r}{s_2}\in\md$ and $\Frs{\om_r}{s_2}\prec F_s$.
					
					We have $s_1\le r_1<r_2\le\om_s<\om_r$ and $\om_r>\mb{c}$. By \Cref{remark:order in S}\ref{part:ii}, $\om_r\os r_2$ implies $r_2<2\mb{c}-\om_r$. This gives $s_1\le r_1\le2\mb{c}-\om_r-2$. Hence $\om_r\le2\mb{c}-s_1-2$.
					
					If $F_s$ satisfies \X{\al}{1}, then $\om_s<\mb{c}+\frac{p}{2}$ and $s_1=2\mb{c}-\om_s-p+\al-1\ge2\mb{c}-\om_s-p$, which implies $\om_r\le\om_s+p-2<\mb{c}+\frac{3p-4}{2}$.
					If $F_s$ satisfies \X{\al}{2}, then $\om_s\ge\mb{c}+\frac{p}{2}$ and $s_1=\om_s-2p+\al-1\ge\om_s-2p$, which implies $\om_r\le2\mb{c}-\om_s+2p-2\le\mb{c}-\frac{p}{2}+2p-2=\mb{c}+\frac{3p-4}{2}$.
					Therefore, in both cases, $\om_r\le\mb{c}+\frac{3p-4}{2}$. By \Cref{proposition: c and p relation}\ref{<=n-p}, $\om_r<n-p$. Then, $ s_1<\om_s<\om_r$ and $s_1\nsim\om_s$ imply $s_1\nsim\om_r$. Hence $\Frs{\om_r}{s_2}\in\md$. 
					
					Suppose $\Frs{\om_r}{s_2}\in\mcl{M}_{\al'}$ for some $\al\le\al'\le p-1$. We have $s_1<\om_s<\om_r$. 
					
					First, assume that $F_s$ satisfies \X{\al}{1}. Then $\om_s<\mb{c}+\frac{p}{2}$ and $s_1=2\mb{c}-\om_s-p+\al-1$. Since $r_1\ge s_1$ and $\om_r>\om_s$, we get $r_1\ge2\mb{c}-\om_r-p+\al$. We have $r_1<r_2\le\om_s<\om_r$.   
					If $\om_r<\mb{c}+\frac{p}{2}$, then $F_s\prec F_r$ by \Cref{proposition:in M_bt+gm}\ref{satisfies x1}\ref{satisfies x1 ii}, a contradiction. Hence $\om_r\ge\mb{c}+\frac{p}{2}$.
					Note that $\om_s>\mb{c}$ implies $\Frs{\om_r}{s_2}$ satisfies \X{\al'}{6} or \X{\al'}{7} by \Cref{proposition:om_i>c x6 x7 x8}. This gives $s_1\ge\om_r-2p+\al'-1$. Using $r_1\ge s_1$ and $\al'\ge\al$, it follows that $r_1\ge\om_r-2p+\al-1$.   
					Since $\om_s\os\om_r$, $F_s\ll F_r$ by \Cref{definition:order_ll}\ref{om_i<om_j}. Therefore $F_s\prec F_r$ by \Cref{proposition:in M_bt+gm}\ref{satisfies x2}, again a contradiction. 
					
					We now assume that $F_s$ satisfies \X{\al}{2}. Then $\om_s\ge\mb{c}+\frac{p}{2}$ and $s_1=\om_s-2p+\al-1$. Since $\om_s\ge\mb{c}+\frac{p}{2}$, $\Frs{\om_r}{s_2}$ satisfies \X{\al'}{4} or \X{\al'}{6} or \X{\al'}{8}. If $\Frs{\om_r}{s_2}$ satisfies \X{\al'}{4}, then $\om_s\le\mb{c}-\frac{\al'+1}{2}$ by \Cref{proposition:M_al conditions}\ref{ob_x4}; and if $\Frs{\om_r}{s_2}$ satisfies \X{\al'}{6}, then $\om_s\le\mb{c}+\frac{p-2}{2}$ by \Cref{proposition:M_al conditions}\ref{ob_x6}. Both contradict $\om_s\ge\mb{c}+\frac{p}{2}$. Now, if $\Frs{\om_r}{s_2}$ satisfies \X{\al'}{8}, then $s_1=\om_r-2p+\al'-1$. Since $\om_r>\om_s$ and $\al'\ge\al$, we get $s_1>\om_s-2p+\al-1$, a contradiction. 
					
					Therefore $\Frs{\om_r}{s_2}\in\mcl{M}_{\al''}$ for some $0\le\al''<\al$. By \Cref{def:prec}\ref{def 2}, $\Frs{\om_r}{s_2}\prec F_s$.
				\end{enumerate}
				
				\item Let $s_1\le r_1$ and $\om_s<r_2$. By \Cref{claim:case12condition}, $s_1<s_2<\om_s$, $\om_s>\mb{c}$ and $s_1\nsim\om_s$. Note that $s_1<r_2$ and $r_2\in F_s\setminus F_r$. Since $r_2>\om_s>\mb{c}$, $\om_s\os r_2$. Thus $(\Frs{r_2}{s_2})^c=\{\om_s\}\sqcup\{s_1,r_2\}$. 
				If $F_s$ satisfies \X{\al}{1}, then $\om_s<\mb{c}+\frac{p}{2}$ and $s_1=2\mb{c}-\om_s-p+\al-1$, which implies $s_1\ge2\mb{c}-\om_s-p>\mb{c}-\frac{3p}{2}$.
				If $F_s$ satisfies \X{\al}{2}, then $\om_s\ge\mb{c}+\frac{p}{2}$ and $s_1=\om_s-2p+\al-1$, which implies $s_1\ge\om_s-2p\ge\mb{c}-\frac{3p}{2}$. In both cases, $s_1\ge p$ by \Cref{proposition: c and p relation}\ref{>=p}.
				Therefore, $s_1<\om_s<r_2$ and $s_1\nsim\om_s$ imply $s_1\nsim r_2$. Hence $\Frs{r_2}{s_2}\in\md$.
				\begin{itemize}
					\item $F_s$ satisfies \X{\al}{1}. 
					
					Since $\Frs{r_2}{s_2}\in\md$, $\Frs{r_2}{s_2}\in\mcl{M}_{\al'}$ for some $\al'\in[0,p-1]$. 
					If $0\le\al'<\al$, then $\Frs{r_2}{s_2}\prec F_s$ by \Cref{def:prec}\ref{def 2}. 
					
					So, assume that $\al\le\al'\le p-1$.
					Since $F_s$ satisfies \X{\al}{1}, we have $\om_s<\mb{c}+\frac{p}{2}$ and $s_1=2\mb{c}-\om_s-p+\al-1$. By \Cref{proposition:M_al conditions}\ref{ob_x1}, $\om_s\ge\mb{c}+\frac{\al+1}{2}$. Therefore, $\mb{c}+\frac{\al}{2}<\om_s<\mb{c}+\frac{p}{2}$.
					Note that $s_1\le r_1<r_2$, $s_1<\om_s<r_2$ and $r_2\in F_s$. Moreover, if $\om_r=\om_s$, then $\{r_1,r_2\}\cap\{s_1,s_2\}=\emptyset$, which implies $s_1<r_1$. By \Cref{proposition:x1 and x6} for $F=F_r$ and $F'=F_s$, $r_2\le2\mb{c}-\om_s+p$ and $2\mb{c}-\om_s\le\om_r<\om_s$. 
					
					Since $\om_s>\mb{c}$, we get $\om_r\os\om_s$ by \Cref{remark:order in S}\ref{part:iv}. Then $\om_s\os s_1,s_2$ implies $\om_r\os s_1,s_2$. Hence $\om_r\in F_s\setminus F_r$ and $(\Frs{\om_r}{s_2})^c=\{\om_r\}\sqcup\{s_1,\om_s\}$. 
					Using $\om_r<\om_s<r_2\le2\mb{c}-\om_s+p\le\om_r+p$, we obtain $r_1\le\om_r-p-1$ by \Cref{proposition:i1_i2 nsim al_i}\ref{sim i2}. 
					This implies $s_1\le r_1\le\om_r-p-1<\om_r<\om_s$. Since $s_1\nsim\om_s$, $s_1\nsim\om_r$. Hence $\Frs{\om_r}{s_2}\in\md$.
					By \Cref{definition:order_ll}\ref{om_i<om_j}, $\om_r\os\om_s$ implies $\Frs{\om_r}{s_2}\ll F_s$. We have $s_1<\om_r<\om_s$, $2\mb{c}-\om_s\le\om_r<\om_s$, $s_1=2\mb{c}-\om_s-p+\al-1$ and $\om_s<\mb{c}+\frac{p}{2}$. Therefore $\Frs{\om_r}{s_2}\prec F_s$ \Cref{proposition:2c-i_2<=al_i<i_2}. 
					
					\item $F_s$ satisfies \X{\al}{2}. 
					
					Then $\om_s\ge\mb{c}+\frac{p}{2}$ and $s_1=\om_s-2p+\al-1$. We show that $\Frs{r_2}{s_2}\prec F_s$. Suppose $\Frs{r_2}{s_2}\in\mcl{M}_{\al'}$ for some $\al\le\al'\le p-1$. Since $s_1<\om_s<r_2$ and $\om_s\ge\mb{c}+\frac{p}{2}>\mb{c}$, $\Frs{r_2}{s_2}$ satisfies \X{\al'}{6} or \X{\al'}{7} or \X{\al'}{8} by \Cref{proposition:om_i>c x6 x7 x8}. 
					If $\Frs{r_2}{s_2}$ satisfies \X{\al'}{6}, then $\om_s\le\mb{c}+\frac{p-2}{2}$ by \Cref{proposition:M_al conditions}\ref{ob_x6}; and if $\Frs{r_2}{s_2}$ satisfies \X{\al'}{7}, then $\om_s<\mb{c}+\frac{p}{2}$. Both contradict $\om_s\ge\mb{c}+\frac{p}{2}$. Now, if $\Frs{r_2}{s_2}$ satisfies \X{\al'}{8}, then $s_1=r_2-2p+\al'-1$. Since $\al'\ge\al$ and $r_2>\om_s$, it follows that $s_1>\om_s-2p+\al-1$, a contradiction.
					Therefore, $\Frs{r_2}{s_2}\in\mcl{M}_{\al''}$ for some $0\le\al''<\al$. Hence $\Frs{r_2}{s_2}\prec F_s$ by \Cref{def:prec}\ref{def 2}.
				\end{itemize}
			\end{enumerate}
			This completes the case.
			
			\item $F_s$ satisfies \X{\al}{3} or \X{\al}{4} or \X{\al}{5}. 
			
			\begin{claim}\label{claim:case345condition}
				$s_1<\om_s<s_2$, $\om_s<\mb{c}$, $s_2\le s_1+2p-\al+1\le s_1+2p$, $s_1>p$, $s_2\le n-p$, $s_1\sim\om_s$, $s_1\nsim s_2$ and $s_2\nsim\om_s$.
			\end{claim} 
			
			\begin{proof}[Proof of \Cref{claim:case345condition}]
				Since $F_s$ satisfies \X{\al}{3} or \X{\al}{4} or \X{\al}{5}, $s_1<\om_s<s_2$.
				If $F$ satisfies \X{\al}{3} or \X{\al}{5}, then $\om_s<\mb{c}$; and if $F$ satisfies \X{\al}{4}, then $\om_s<\mb{c}$ by \Cref{proposition:M_al conditions}\ref{ob_x4}. 
				Moreover, if $F_s$ satisfies \X{\al}{3} or \X{\al}{4}, then $s_2=s_1+2p-\al+1$; and if $F_s$ satisfies \X{\al}{5}, then $s_1\ge s_2-2p+\al$. Hence, in all these three cases, we get $\om_s<\mb{c}$ and $s_2\le s_1+2p-\al+1\le s_1+2p$.
				Using \Cref{proposition:M_al conditions}\ref{ob_x3}, \ref{ob_x4} and \ref{ob_x5}, it follows that $s_1>p$, $s_2\le n-p$ and $s_1\sim\om_s$. Therefore, $F_s\in\md$ implies $s_1\nsim s_2$ and $s_2\nsim\om_s$.
			\end{proof}
			
			We consider three cases: (I) $r_1<s_1$, (II) $s_1\le r_1<r_2\le s_2$, (III) $s_1\le r_1$ and $s_2<r_2$.
			\begin{enumerate}[label=(\Roman*)] 
				\item Let $r_1<s_1$. By \Cref{claim:case345condition}, $s_1<\om_s<s_2$, $s_2\le s_1+2p$ and $s_2\nsim\om_s$. Hence $r_1<s_1<\om_s<s_2$, which implies $r_1\in F_s\setminus F_r$. Moreover, $(\Frs{r_1}{s_1})^c=\{\om_s\}\sqcup\{r_1,s_2\}$ and $\Frs{r_1}{s_1}\in\md$ by \Cref{proposition: F_r1_s1 or F_r2_s2}\ref{F_r1_s1 is a facet}.
				
				If $F_s$ satisfies \X{\al}{3} or \X{\al}{4}, then $s_1=s_2-2p+\al-1$. Hence $\Frs{r_1}{s_1}\prec F_s$ by \Cref{proposition: F_r1_s1 or F_r2_s2}\ref{F_s1_s2 precedes F_s}\ref{prob:f_r1_s1_3_i}. Now, let $F_s$ satisfies \X{\al}{5}. Then $\om_s\le\mb{c}-\frac{\al+1}{2}$, $s_2=2\mb{c}-\om_s+p-\al$, and by \Cref{proposition:M_al conditions}\ref{ob_x5}, $\om_s\ge\mb{c}-\frac{p-1}{2}$.
				Suppose $\Frs{r_1}{s_1}\in\mcl{M}_{\al'}$ for some $\al<\al'\le p-1$. Since $r_1<\om_s<s_2$ and  $\mb{c}-\frac{p-1}{2}\le\om_s<\mb{c}$, $\Frs{r_1}{s_1}$ satisfies \X{\al'}{4} or \X{\al'}{5} or \X{\al'}{7}. If $\Frs{r_1}{s_1}$ satisfies \X{\al'}{7}, then $\om_s>\mb{c}$ by \Cref{proposition:M_al conditions}\ref{ob_x7}, a contradiction. Now, if $\Frs{r_1}{s_1}$ satisfies \X{\al'}{4}, then by \Cref{proposition:M_al conditions}\ref{ob_x4}, $s_2\le2\mb{c}-\om_s+p-\al'$; and if $\Frs{r_1}{s_1}$ satisfies \X{\al'}{5}, then $s_2=2\mb{c}-\om_s+p-\al'$. In both cases, $s_2\le2\mb{c}-\om_s+p-\al'<2\mb{c}-\om_s+p-\al$, a contradiction.
				Therefore, $\Frs{r_1}{s_1}\in\mcl{M}_{\al''}$ for some $0\le\al''\le\al$. By \Cref{definition:order_ll}\ref{om_i=om_j}, $r_1<s_1$ implies $\Frs{r_1}{s_1}\ll F_s$, and hence $\Frs{r_1}{s_1}\prec F_s$ by \Cref{def:prec}.
				
				\item Let $s_1\le r_1<r_2\le s_2$. By \Cref{claim:case345condition}, $s_1<\om_s<s_2$ and $\om_s<\mb{c}$. Since $\om_s\os s_2$, \Cref{remark:order in S}\ref{part:i} implies $s_2\ge2\mb{c}-\om_s$. Moreover, $2\mb{c}-\om_s>\mb{c}>\om_s$. Thus $s_1<\om_s<2\mb{c}-\om_s\le s_2$. We deal with the following subcases based on the value of $\om_r$:
				\begin{enumerate}[label=(\alph*)]
					\item $\om_r<s_1$.
					
					We have $\om_r<s_1<\om_s<s_2$. Clearly, $\om_r\in F_s\setminus F_r$. By \Cref{claim:case345condition}, $s_2\le n-p$ and $s_1\nsim s_2$. Therefore, by \Cref{proposition:s_1<=r_1<r_2<=s_2}\ref{al_r<s_1}, $(\Frs{\om_r}{\om_s})^c=\{s_1\}\sqcup\{\om_r,s_2\}$ and $\Frs{\om_r}{\om_s}\in\md$. 
					
					If $F_s$ satisfies \X{\al}{3} or \X{\al}{4}, then $s_1=s_2-2p+\al-1$, which implies $\Frs{\om_r}{\om_s}\prec F_s$ by \Cref{proposition:s_1<=r_1<r_2<=s_2}\ref{F_al_s precedes F_s}. 
					
					Now, let $F_s$ satisfies \X{\al}{5}. Then $\om_s\le\mb{c}-\frac{\al+1}{2}$ and $s_2=2\mb{c}-\om_s+p-\al$. Suppose $\Frs{\om_r}{\om_s}\in\mcl{M}_{\al'}$ for some $\al\le\al'\le p-1$. Since $\om_r<s_1<s_2$, $\Frs{\om_r}{\om_s}$ satisfies one of the conditions from \X{\al'}{3} to \X{\al'}{8}. 
					It follows that $s_2\le\om_r+2p-\al'+1$. Using $r_2\le s_2$ and $\al\le\al'$, we get $r_2\le\om_r+2p-\al+1$. By \Cref{proposition: c and p relation}\ref{c-(bt+1)/2<=n-2p-1},  $\om_r<\om_s\le\mb{c}-\frac{\al+1}{2}$ implies $\om_r\le n-2p-2$.
					Since $\om_r<s_1\le r_1$ and $s_1<\om_s<\mb{c}$, we have $\om_r<r_1$ and $\om_r<\mb{c}$. By \Cref{remark:order in S}\ref{part:i}, $\om_r\os r_1$ implies $r_1\ge2\mb{c}-\om_r$. This gives $r_2\le s_2=2\mb{c}-\om_s+p-\al<2\mb{c}-\om_r+p-\al\le r_1+p-\al$, and hence $r_1>r_2-p$. Note that $\om_r<r_1<r_2$.
					Moreover, $\om_r<s_1<\om_s<\mb{c}$ implies $\om_s\os\om_r$. By \Cref{definition:order_ll}\ref{om_i<om_j}, $F_s\ll F_r$. Therefore, $F_s\prec F_r$ by \Cref{proposition:in M_bt+gm}\ref{satisfies x9}, a contradiction.
					This means that $\Frs{\om_r}{\om_s}\in\mcl{M}_{\al''}$ for some $0\le\al''<\al$. Hence $\Frs{\om_r}{\om_s}\prec F_s$ by \Cref{def:prec}\ref{def 2}.
					
					\item $s_1\le\om_r\le\om_s$. 
					
					From \Cref{claim:case345condition}, $\om_s<\mb{c}$, $\om_s<s_2\le s_1+2p-\al+1$, $s_1\sim\om_s$ and $s_1\nsim s_2$. If $\om_r<\om_s$, then $\om_s<\mb{c}$ implies $\om_s\os\om_r$, and hence $F_s\ll F_r$ by \Cref{definition:order_ll}\ref{om_i<om_j}. Moreover, if $\om_r=\om_s$, then $\{r_1,r_2\}\cap\{s_1,s_2\}=\emptyset$ by our assumption, which implies $s_1<r_1$, and thus $F_s\ll F_r$ by \Cref{definition:order_ll}\ref{om_i=om_j}. In either case, $F_s\ll F_r$.
					
					\begin{claim}\label{claim:3lastcase}
						$r_1\ge\om_r+p+1$.
					\end{claim}
					
					\begin{proof}[Proof of \Cref{claim:3lastcase}]    
						Recall that $s_1\le r_1<r_2\le s_2$. Suppose $r_1<\om_r$. Then $s_1\le r_1<\om_r\le\om_s<s_2$. Since $s_1\sim\om_s$ and $s_1\nsim s_2$, we get $s_1\ge\om_s-p$, which implies $r_1\ge\om_r-p$. By \Cref{proposition:i1_i2 nsim al_i}\ref{sim i1}, $r_2>\om_r$. Thus $\om_r-p\le r_1<\om_r<r_2$. We have $r_2\le s_2\le s_1+2p-\al+1\le r_1+2p-\al+1$ and $F_s\ll F_r$. Observe that if $\om_r<\mb{c}-\frac{p}{2}$, then $F_s\prec F_r$ by \Cref{proposition:in M_bt+gm}\ref{satisfies x3}, a contradiction. So $\om_r\ge\mb{c}-\frac{p}{2}$.
						Since $\om_s\ge\om_r$, $F_s$ satisfies \X{\al}{4} or \X{\al}{5}. By \Cref{remark:x4_x5 and x6_x7}\ref{x4_x5}, $s_2\le2\mb{c}-\om_s+p-\al$. Using $r_2\le s_2$ and $\om_r\le\om_s$, we obtain $r_2\le2\mb{c}-\om_r+p-\al$. 
						By \Cref{corollary:notin M_0}\ref{satisfies 3_4_5}, $F_s\prec F_r$, again a contradiction. Hence $r_1>\om_r$. By \Cref{remark:order in S}\ref{part:i}, $\om_r\os r_1$ and $\om_r<\mb{c}$ imply $r_1\ge2\mb{c}-\om_r$. 
						
						If $F_s$ satisfies \X{\al}{3}, then $\om_r\le\om_s<\mb{c}-\frac{p}{2}$, which implies $r_1>\mb{c}+\frac{p}{2}>\om_r+p$. 
						Now, let $F_s$ satisfies \X{\al}{4} or \X{\al}{5}. Then $r_1<r_2\le2\mb{c}-\om_r+p-\al\le r_1+p-\al$, which implies $r_1\sim r_2$. Since $C_n^p[F_r^c]$ is disconnected and $r_1>\om_r$, we get $r_1\ge\om_r+p+1$. Thus $r_1\ge\om_r+p+1$.
					\end{proof}    
					
					We have $F_s\ll F_r$, $r_2\le s_2\le s_1+2p-\al+1\le\om_r+2p-\al+1$ and by \Cref{claim:3lastcase}, $\om_r<\om_r+p+1\le r_1<r_2$. 
					If $\om_r\le n-2p-2$, then \Cref{proposition:in M_bt+gm}\ref{satisfies x9} contradicts $F_r\prec F_s$. Hence $\om_r>n-2p-2$. It follows that $s_2\ge r_2>r_1\ge\om_r+p+1\ge n-p$. Further, $n\ge 6p-3$ implies $s_2>5p-3$.
					Since $\mb{c}-\frac{p}{2}\le n-2p-1$ by \Cref{proposition: c and p relation}\ref{c-p/2<=n-2p-1}, we get $\om_s\ge\om_r\ge n-2p-1\ge\mb{c}-\frac{p}{2}$. Therefore, $F_s$ satisfies \X{\al}{4} or \X{\al}{5}. By \Cref{remark:x4_x5 and x6_x7}\ref{x4_x5}, $s_2\le2\mb{c}-\om_s+p-\al$. This implies $s_2\le2\mb{c}-\om_s+p-\al\le2\mb{c}-\om_r+p-\al\le2\mb{c}-n+3p-\al+1$. Using $2\mb{c}\le n+1$ and $\al\ge 1$, it follows that $s_2\le 3p+1$. Thus $5p-3<s_2\le 3p+1$, which contradicts $p\ge2$. Hence, this case is not possible.
					
					\item $\om_s<\om_r<2\mb{c}-\om_s$.
					
					Recall that $s_1\le r_1<r_2\le s_2$. By  \Cref{claim:case345condition}, $s_1<\om_s<s_2$, $\om_s<\mb{c}$ and $s_1 \nsim s_2$. 
					Hence $\om_r\in F_s\setminus F_r$, $(\Frs{\om_r}{\om_s})^c=\{\om_r\}\sqcup\{s_1,s_2\}$ and $s_1<\om_r<s_2$ by \Cref{proposition:al_r s_1 s_2}\ref{al_r s_1 s_2 complement}. Since $s_1\nsim s_2$, $\Frs{\om_r}{\om_s}\in\md$ by \Cref{proposition:al_r s_1 s_2}\ref{al_r s_1 s_2 is a facet}. We show that 
					$\Frs{\om_r}{\om_s}\prec F_s$.
					
					If $F_s$ satisfies \X{\al}{3} or \X{\al}{4}, then $s_1=s_2-2p+\al-1$, which implies $\Frs{\om_r}{\om_s}\prec F_s$ by \Cref{proposition:al_r s_1 s_2}\ref{al_r s_1 s_2 precedes F}. 
					
					Now, let $F_s$ satisfies \X{\al}{5}. Then $s_1\ge2\mb{c}-\om_s-p$, $s_2=2\mb{c}-\om_s+p-\al$ and by \Cref{proposition:M_al conditions}\ref{ob_x5}, $\om_s>\mb{c}-\frac{p}{2}$.
					Suppose $\Frs{\om_r}{\om_s}\in\mcl{M}_{\al'}$ for some $\al<\al'\le p-1$.
					Since $\mb{c}-\frac{p}{2}<\om_s<\om_r<2\mb{c}-\om_s<\mb{c}+\frac{p}{2}$ and $s_1<\om_r<s_2$, it follows that $\Frs{\om_r}{\om_s}$ satisfies one of the conditions from \X{\al'}{4} to \X{\al'}{7}.
					
					Suppose $\Frs{\om_r}{\om_s}$ satisfies \X{\al'}{4} or \X{\al'}{5}. Then $s_2\le2\mb{c}-\om_r+p-\al'$ by \Cref{remark:x4_x5 and x6_x7}\ref{x4_x5}. Since $\om_s<\om_r$ and $\al'>\al$, we get $s_2<2\mb{c}-\om_s+p-\al$, a contradiction. 
					Hence $\Frs{\om_r}{\om_s}$ does not satisfy \X{\al'}{4} and \X{\al'}{5}.
					
					Suppose $\Frs{\om_r}{\om_s}$ satisfies \X{\al'}{6}. Then $\om_r\ge\mb{c}+\frac{\al'}{2}$ and $s_2\le2\mb{c}-\om_r+p-1$. 
					Since $\om_r>\mb{c}$, $2\mb{c}-\om_r<\om_r$. This implies $r_2\le s_2\le2\mb{c}-\om_r+p-1<\om_r+p$.
					We have $r_1\ge s_1\ge2\mb{c}-\om_s-p>\om_r-p$. Thus $\om_r-p<r_1<r_2<\om_r+p$. This contradicts $\cn{F_r}$ is disconnected. Hence $\Frs{\om_r}{\om_s}$ does not satisfy \X{\al'}{6}.
					
					Suppose $\Frs{\om_r}{\om_s}$ satisfies \X{\al'}{7}. Then $s_2\le\om_r+p-\al'$. Since $\om_r<2\mb{c}-\om_s$ and $\al'>\al$, it follows that $s_2<2\mb{c}-\om_s+p-\al$, a contradiction. Hence $\Frs{\om_r}{\om_s}$ does not satisfy \X{\al'}{7}.
					
					Therefore, $\Frs{\om_r}{\om_s}\in\mcl{M}_{\al''}$ for some $0\le\al''\le\al$. Since $\om_s<\om_r<2\mb{c}-\om_s$ and $\om_s<\mb{c}$, \Cref{remark:order in S}\ref{part:iii} implies $\om_r\os\om_s$. By \Cref{definition:order_ll}\ref{om_i<om_j}, $\Frs{\om_r}{\om_s}\ll F_s$. Hence $\Frs{\om_r}{\om_s}\prec F_s$ by \Cref{def:prec}.
					
					\item $2\mb{c}-\om_s\le\om_r\le s_2$.
					
					We have $s_2\le s_1+2p-\al+1$ and $s_1<\om_s<\mb{c}$ by \Cref{claim:case345condition}. Since $\om_s<\mb{c}$, $\om_s\os\om_r$ by \Cref{remark:order in S}\ref{part:i}. Hence $F_s\ll F_r$ by \Cref{definition:order_ll}\ref{om_i<om_j}. Moreover, $s_1\le r_1$ implies $s_2\le s_1+2p-\al+1\le r_1+2p-\al+1$.
					
					If $F_s$ satisfies \X{\al}{3}, then $s_1\ge\om_s-p+\al$ by \Cref{proposition:M_al conditions}\ref{ob_x3}. If \X{\al}{4}, then $s_1\ge\om_s-p+\al$ by definition. If $F_s$ satisfies \X{\al}{5}, then $\om_s\le\mb{c}-\frac{\al+1}{2}$ and $s_1\ge2\mb{c}-\om_s-p$, which implies $s_1\ge\mb{c}+\frac{\al+1}{2}-p\ge\om_s-p+\al+1$. 
					Thus $s_1\ge\om_s-p+\al$. Since $2\mb{c}-\om_s\le\om_r$, we get $r_1\ge s_1\ge2\mb{c}-\om_r-p+\al$. 
					
					Suppose $r_2<\om_r$. Then $r_1<r_2<\om_r$. If $\om_r<\mb{c}+\frac{p}{2}$, then $r_1\ge2\mb{c}-\om_r-p+\al$ implies $F_s\prec F_r$ by \Cref{proposition:in M_bt+gm}\ref{satisfies x1}\ref{satisfies x1 ii}, a contradiction. So, $\om_r\ge\mb{c}+\frac{p}{2}$.
					Using $\om_r\le s_2\le r_1+2p-\al+1$, we get $r_1\ge\om_r-2p+\al-1$. Since $F_s\ll F_r$, $F_s\prec F_r$ by \Cref{proposition:in M_bt+gm}\ref{satisfies x2}, again a contradiction. Hence $r_2>\om_r$. 
					
					Since $r_2\le s_2$ and $s_2\le r_1+2p-\al+1$, we get $r_1\ge r_2-2p+\al-1$. 
					
					We now show that $r_2<\om_r+p$. First, suppose $F_s$ satisfies \X{\al}{3}. Then $\om_s<\mb{c}-\frac{p}{2}$ and $s_2=s_1+2p-\al+1$. It follows that $\om_r\ge2\mb{c}-\om_s>\mb{c}+\frac{p}{2}>\om_s+p$. Since $s_1<\om_s$, we get $r_2\le s_2=s_1+2p-\al+1\le\om_s+2p-\al<\om_r+p-\al$. If $F_s$ satisfies \X{\al}{4} or \X{\al}{5}, then $s_2\le2\mb{c}-\om_s+p-\al$ by \Cref{remark:x4_x5 and x6_x7}\ref{x4_x5}, which implies $r_2\le s_2\le2\mb{c}-\om_s+p-\al\le\om_r+p-\al$. Thus $r_2\le\om_r+p-\al<\om_r+p$.
					
					Since $F_s\ll F_r$, it follows that $F_s\prec F_r$ by \Cref{corollary:notin M_0}\ref{satisfies 6_7_8}, a contradiction. Therefore, this case is not possible.
					
					\item $\om_r>s_2$.
					
					By \Cref{claim:case345condition}, $s_1<\om_s<s_2$ and $\om_s<\mb{c}$. Clearly, $\om_r\in F_s\setminus F_r$. Suppose $F_s$ satisfies \X{\al}{5}. Then $\om_s\le\mb{c}-\frac{\al+1}{2}$, $s_1\ge2\mb{c}-\om_s-p$ and $s_2=2\mb{c}-\om_s+p-\al$. Since $\al\le p-1$, we have $\om_r>s_2>2\mb{c}-\om_s>\mb{c}$. By \Cref{remark:order in S}\ref{part:ii}, $r_2\le s_2<\om_r$ and $\om_r\os r_2$ imply $r_2<2\mb{c}-\om_r$. It follows that $2\mb{c}-\om_s-p\le s_1\le r_1<r_2<2\mb{c}-\om_r<2\mb{c}-s_2\le\om_s-p+\al$.
					This gives $\om_s>\mb{c}-\frac{\al}{2}$, a contradiction. Therefore, $F_s$ satisfies \X{\al}{3} or \X{\al}{4}, and hence $s_1=s_2-2p+\al-1$. 
					Moreover, $s_1>p$ and $s_1\nsim s_2$ by \Cref{claim:case345condition}. Thus, \Cref{proposition:s_1<=r_1<r_2<=s_2}\ref{al_r>s_2} and \ref{F_al_s precedes F_s} yield $\Frs{\om_r}{\om_s}\in\md$ and $\Frs{\om_r}{\om_s}\prec F_s$, respectively.    
				\end{enumerate}
				
				\item Let $s_1\le r_1$ and $s_2<r_2$. From \Cref{claim:case345condition}, $s_1<\om_s<s_2$, $\om_s<\mb{c}$, $s_1>p$ and $s_2\nsim\om_s$. Note that $s_1<\om_s<r_2$ and $r_2\in F_s\setminus F_r$. By \Cref{proposition: F_r1_s1 or F_r2_s2}\ref{F_r2_s2 is a facet}, $(\Frs{r_2}{s_2})^c=\{\om_s\}\sqcup\{s_1,r_2\}$ and $\Frs{r_2}{s_2}\in\md$.
				
				If $F_s$ satisfies \X{\al}{3} or \X{\al}{4}, then $s_1=s_2-2p+\al-1$, and thus $\Frs{r_2}{s_2}\prec F_s$ by \Cref{proposition: F_r1_s1 or F_r2_s2}\ref{F_s1_s2 precedes F_s}\ref{prob:f_r1_s1_3_ii}. Now, let $F_s$ satisfies \X{\al}{5}. Then $s_1\ge2\mb{c}-\om_s-p$ and $s_2=2\mb{c}-\om_s+p-\al$. By \Cref{proposition:M_al conditions}\ref{ob_x5}, $\om_s>\mb{c}-\frac{p}{2}$. 
				Suppose $\Frs{r_2}{s_2}\in\mcl{M}_{\al'}$ for some $\al\le\al'\le p-1$. Since $s_1<\om_s<r_2$ and $\mb{c}-\frac{p}{2}<\om_s<\mb{c}$, $\Frs{r_2}{s_2}$ satisfies \X{\al'}{4} or \X{\al'}{5} or \X{\al'}{7}. If $\Frs{r_2}{s_2}$ satisfies \X{\al'}{4}, then $s_1\le2\mb{c}-\om_s-p-1$, a contradiction. If $\Frs{r_2}{s_2}$ satisfies \X{\al'}{7}, then $\om_s>\mb{c}$ by \Cref{proposition:M_al conditions}\ref{ob_x7}, again a contradiction. Hence $\Frs{r_2}{s_2}$ satisfies \X{\al'}{5}. This gives $s_2<r_2=2\mb{c}-\om_s+p-\al'\le2\mb{c}-\om_s+p-\al$, a contradiction.
				Therefore, $\Frs{r_2}{s_2}\in\mcl{M}_{\al''}$ for some $0\le\al''<\al$. By \Cref{def:prec}\ref{def 2}, $\Frs{r_2}{s_2}\prec F_s$. 
			\end{enumerate}
			
			\item $F_s$ satisfies \X{\al}{6} or \X{\al}{7} or \X{\al}{8}. 
			
			\begin{claim} \label{claim:case678condition}
				$s_1<\om_s<s_2$, $\om_s>\mb{c}$, $s_1\ge s_2-2p+\al-1\ge s_2-2p$, $s_2\le\om_s+p-\al$, $s_1>p$, $s_2\le n-p$, $s_2\sim\om_s$, $s_1\nsim s_2$ and $s_1\nsim\om_s$.
			\end{claim}
			
			\begin{proof}[Proof of \Cref{claim:case678condition}]
				Since $F_s$ satisfies \X{\al}{6} or \X{\al}{7} or \X{\al}{8}, $s_1<\om_s<s_2$. 
				
				If $F_s$ satisfies \X{\al}{6}, then $\om_s>\mb{c}$, $s_2\le s_1+2p-\al$ and by \Cref{proposition:M_al conditions}\ref{ob_x6}, $s_2\le\om_s+p-\al-1$. 
				If $F_s$ satisfies \X{\al}{7}, then $s_1=s_2-2p+\al-1$, $s_2\le\om_s+p-\al$, and by \Cref{proposition:M_al conditions}\ref{ob_x7}, $\om_s>\mb{c}$. 
				If $F_s$ satisfies \X{\al}{8}, then $\om_s>\mb{c}$, $s_1=s_2-2p+\al-1$, and by \Cref{proposition:M_al conditions}\ref{ob_x8}, $s_2\le\om_s+p-\al$. 
				Hence, in all these three cases, we get $\om_s>\mb{c}$, $s_1\ge s_2-2p+\al-1\ge s_2-2p$ and $s_2\le\om_s+p-\al$.
				
				Using \Cref{proposition:M_al conditions}\ref{ob_x6}, \ref{ob_x7} and \ref{ob_x8}, it follows that $s_1>p$, $s_2\le n-p$ and $s_2\sim\om_s$. Therefore, $F_s\in\md$ implies $s_1\nsim s_2$ and $s_1\nsim\om_s$.
			\end{proof}
			
			We consider three cases: (I) $r_1<s_1$, (II) $s_1\le r_1<r_2\le s_2$, (III) $s_1\le r_1$ and $s_2<r_2$.  
			\begin{enumerate}[label=(\Roman*)]
				\item Let $r_1<s_1$. By \Cref{claim:case678condition}, $s_1<\om_s<s_2$, $\om_s>\mb{c}$, $s_2\le n-p$ and $s_1\nsim\om_s$. Note that $r_1\in F_s\setminus F_r$.
				\begin{enumerate}[label=(\alph*)]
					\item Let $s_2<n-p$. Since $r_1<s_1$ and $s_1\nsim\om_s$, \Cref{proposition: F_r1_s1 or F_r2_s2}\ref{F_r1_s1 is a facet} implies $(\Frs{r_1}{s_1})^c$ $=\{\om_s\}\sqcup\{r_1,s_2\}$ and $\Frs{r_1}{s_1}\in\md$.
					
					If $F_s$ satisfies \X{\al}{7} or \X{\al}{8}, then $s_1=s_2-2p+\al-1$, and hence $\Frs{r_1}{s_1}\prec F_s$ by \Cref{proposition: F_r1_s1 or F_r2_s2}\ref{F_s1_s2 precedes F_s}\ref{prob:f_r1_s1_3_i}. Now, let $F_s$ satisfies \X{\al}{6}.
					Then $s_1=2\mb{c}-\om_s-p+\al-1$, $s_2\le2\mb{c}-\om_s+p-1$ and by \Cref{proposition:M_al conditions}\ref{ob_x6}, $\om_s<\mb{c}+\frac{p}{2}$. 
					Suppose $\Frs{r_1}{s_1}\in\mcl{M}_{\al'}$ for some $\al\le\al'\le p-1$. Since $r_1<\om_s<s_2$ and $\mb{c}<\om_s<\mb{c}+\frac{p}{2}$, $\Frs{r_1}{s_1}$ satisfies \X{\al'}{4} or \X{\al'}{6} or \X{\al'}{7}.
					If $\Frs{r_1}{s_1}$ satisfies \X{\al'}{4}, then $\om_s<\mb{c}$ by \Cref{proposition:M_al conditions}\ref{ob_x4}, a contradiction. If $\Frs{r_1}{s_1}$ satisfies \X{\al'}{7}, then $s_2\ge2\mb{c}-\om_s+p$, again a contradiction. Hence $\Frs{r_1}{s_1}$ satisfies \X{\al'}{6}. This gives $s_1>r_1=2\mb{c}-\om_s-p+\al'-1\ge2\mb{c}-\om_s-p+\al-1$, a contradiction.
					Therefore, $\Frs{r_1}{s_1}\in\mcl{M}_{\al''}$ for some $0\le\al''<\al$. By \Cref{def:prec}\ref{def 2}, $\Frs{r_1}{s_1}\prec F_s$.
					
					\item Let $s_2=n-p$. From \Cref{proposition:M_al conditions}\ref{ob_x6} and \Cref{proposition:M_al conditions}\ref{ob_x7},  $F_s$ satisfies \X{\al}{8}. By \Cref{proposition:M_al conditions}\ref{ob_x8}, $n-p=s_2\le\mb{c}+\frac{3p}{2}-1$. Since $\mb{c}\le\frac{n+1}{2}$, $n\le 5p-1$. Therefore, $n\ge 6p-3$ implies $p\le2$. Hence  $p=2$ (as $p\ge2$), and thus $\al=1$.
					
					By \Cref{proposition: al_i precedes lambda}, $r_1<s_1<\om_s$ implies $\om_s\os r_1$. Hence $(\Frs{r_1}{s_2})^c=\{\om_s\}\sqcup\{r_1,s_1\}$. 
					Since $0\le r_1<s_1<\om_s<s_2=n-p$ and $s_1\nsim\om_s$, we get $r_1\nsim\om_s$. Therefore $\Frs{r_1}{s_2}\in\md$. This means that $\Frs{r_1}{s_2}\in\mcl{M}_{\al'}$ for some $\al'\in[0,p-1]=\{0,1\}$. 
					If $\al'=0$, then $\Frs{r_1}{s_2}\prec F_s$ by \Cref{def:prec}\ref{def 2}. Now, assume $\al'=1$. Then $\al'=\al$. Since $r_1<s_1$, $\Frs{r_1}{s_2}\ll F_s$ by \Cref{definition:order_ll}\ref{om_i=om_j}. 
					Hence $\Frs{r_1}{s_2}\prec F_s$ by \Cref{def:prec}\ref{def 1}. 
				\end{enumerate}  
				
				\item Let $s_1\le r_1<r_2\le s_2$. From \Cref{claim:case678condition}, $s_1<\om_s<s_2$ and $\om_s>\mb{c}$. Since $\om_s\os s_1$, \Cref{remark:order in S}\ref{part:i} implies $s_1<2\mb{c}-\om_s$. Moreover, $2\mb{c}-\om_s<\mb{c}<\om_s$. Thus $s_1<2\mb{c}-\om_s<\om_s<s_2$. We consider the following subcases based on the value of $\om_r$:
				\begin{enumerate}[label=(\alph*)]  
					\item $\om_r<s_1$. 
					
					Suppose $F_s$ satisfies \X{\al}{6}. Then $\om_s\ge\mb{c}+\frac{\al}{2}$, $s_1=2\mb{c}-\om_s-p+\al-1$ and $s_2\le2\mb{c}-\om_s+p-1$. We have $\om_r<s_1<2\mb{c}-\om_s<\mb{c}$. By \Cref{remark:order in S}\ref{part:i}, $\om_r\os r_1$ and $\om_r<s_1\le r_1$ imply $r_1\ge2\mb{c}-\om_r$. Moreover, $\om_s+p-\al+1\le2\mb{c}-s_1<2\mb{c}-\om_r$.
					Therefore $\om_s+p-\al+1<r_1<r_2\le s_2\le2\mb{c}-\om_s+p-1$, which gives $\om_s<\mb{c}+\frac{\al-2}{2}$, a contradiction. Hence $F_s$ satisfies \X{\al}{7} or \X{\al}{8}. This means that $s_1=s_2-2p+\al-1$. 
					From \Cref{claim:case678condition}, $s_1<\om_s<s_2$, $s_2\le n-p$ and $s_1\nsim s_2$. Observe that $\om_r\in F_s\setminus F_r$. By \Cref{proposition:s_1<=r_1<r_2<=s_2}\ref{al_r<s_1} and \ref{F_al_s precedes F_s}, we obtain $\Frs{\om_r}{\om_s}\in\md$ and $\Frs{\om_r}{\om_s}\prec F_s$, respectively.
					
					\item $s_1\le\om_r<2\mb{c}-\om_s$. 
					
					We have $\om_s>\mb{c}$, $s_1\ge s_2-2p+\al-1$ and $s_2\le\om_s+p-\al$ by \Cref{claim:case678condition}. From \Cref{remark:order in S}\ref{part:ii}, $\om_s>\mb{c}$ implies $\om_s\os\om_r$. By \Cref{definition:order_ll}\ref{om_i<om_j}, $F_s\ll F_r$. 
					
					\begin{claim}\label{claim:6_7_8 II}
						$r_1\ge\om_r+p+1$.
					\end{claim}
					
					\begin{proof}[Proof of \Cref{claim:6_7_8 II}]    
						We first show that $r_1>\om_r-p$. If $F_s$ satisfies \X{\al}{6} or \X{\al}{7}, then $s_1\ge2\mb{c}-\om_s-p+\al-1$ by \Cref{remark:x4_x5 and x6_x7}\ref{x6_x7}, which implies $r_1\ge s_1\ge2\mb{c}-\om_s-p+\al-1\ge\om_r-p+\al$. Now, suppose $F_s$ satisfies \X{\al}{8}. Then $\om_s\ge\mb{c}+\frac{p}{2}$ and $s_1=s_2-2p+\al-1$. It follows that  $\om_r<2\mb{c}-\om_s\le\mb{c}-\frac{p}{2}\le\om_s-p$. Since $\om_s<s_2$, we get $r_1\ge s_1=s_2-2p+\al-1\ge\om_s-2p+\al>\om_r-p+\al$. Thus, $r_1\ge \om_r-p+\al>\om_r-p$.
						
						Suppose $r_1<\om_r$. Then $\om_r-p<r_1<\om_r$. Since $s_1\le r_1<r_2\le s_2$, it follows that $r_1\ge s_1\ge s_2-2p+\al-1\ge r_2-2p+\al-1$ and $r_2\le s_2\le\om_s+p-\al<2\mb{c}-\om_r+p-\al$.
						We have $F_s\ll F_r$. By \Cref{corollary:notin M_0}\ref{satisfies 3_4_5}, $F_s\prec F_r$, a contradiction. Hence $r_1>\om_r$.
						
						Since $\om_r\os r_1$ and $\om_r<2\mb{c}-\om_s<\mb{c}$, \Cref{remark:order in S}\ref{part:i} implies $r_1\ge2\mb{c}-\om_r$.
						Thus $\om_s<2\mb{c}-\om_r\le r_1<r_2\le s_2\le\om_s+p-\al$. It follows that $r_1\sim r_2$. Since $C_n^p[F_r^c]$ is disconnected and $r_1>\om_r$, we get $r_1\ge\om_r+p+1$. 
					\end{proof}   
					
					We have $\om_r<\om_r+p+1\le r_1<r_2$, $r_2\le s_2\le s_1+2p-\al+1\le\om_r+2p-\al+1$ and $F_s\ll F_r$. If $\om_r\le n-2p-2$, then $F_s\prec F_r$ by \Cref{proposition:in M_bt+gm}\ref{satisfies x9}, a contradiction. So, assume $\om_r>n-2p-2$.
					By \Cref{proposition: c and p relation}\ref{c-p/2<=n-2p-1}, $\mb{c}-\frac{p}{2}\le n-2p-1$. Hence $2\mb{c}-\om_s>\om_r\ge n-2p-1\ge\mb{c}-\frac{p}{2}$, which implies $\om_s<\mb{c}+\frac{p}{2}$. This means that $F_s$ satisfies \X{\al}{6} or \X{\al}{7}. 
					If $F_s$ satisfies \X{\al}{6}, then $\om_s\ge\mb{c}+\frac{\al}{2}$; and if $F_s$ satisfies \X{\al}{7}, then $\om_s\ge\mb{c}+\frac{\al}{2}$ by \Cref{proposition:M_al conditions}\ref{ob_x7}. Thus $\om_s\ge\mb{c}+\frac{\al}{2}$.
					By \Cref{proposition: c and p relation}\ref{c-(bt+1)/2<=n-2p-1}, $\om_r<2\mb{c}-\om_s\le n-2p-1$, a contradiction. Hence $\om_r\not>n-2p-2$.
					Therefore, this case is not possible.
					
					\item $2\mb{c}-\om_s\le\om_r<\om_s$.
					
					We have $s_1\le r_1<r_2\le s_2$. Moreover, $s_1<\om_s<s_2$, $\om_s>\mb{c}$ and $s_1\nsim s_2$ by \Cref{claim:case678condition}. 
					Therefore, $\om_r\in F_s\setminus F_r$, $(\Frs{\om_r}{\om_s})^c=\{\om_r\}\sqcup\{s_1,s_2\}$ and $s_1<\om_r<s_2$ by \Cref{proposition:al_r s_1 s_2}\ref{al_r s_1 s_2 complement}. Since $s_1\nsim s_2$, $\Frs{\om_r}{\om_s}\in\md$ by \Cref{proposition:al_r s_1 s_2}\ref{al_r s_1 s_2 is a facet}. We now show that $\Frs{\om_r}{\om_s}\prec F_s$. 
					
					If $F_s$ satisfies \X{\al}{7} or \X{\al}{8}, then $s_1=s_2-2p+\al-1$, which implies $\Frs{\om_r}{\om_s}\prec F_s$ by \Cref{proposition:al_r s_1 s_2}\ref{al_r s_1 s_2 precedes F}. 
					
					Now, let $F_s$ satisfies \X{\al}{6}. Then $s_1=2\mb{c}-\om_s-p+\al-1$ and $s_2\le2\mb{c}-\om_s+p-1$. From \Cref{proposition:M_al conditions}\ref{ob_x6}, $\om_s<\mb{c}+\frac{p}{2}$.
					Suppose $\Frs{\om_r}{\om_s}\in\mcl{M}_{\al'}$ for some $\al<\al'\le p-1$. Since $\mb{c}-\frac{p}{2}<2\mb{c}-\om_s\le\om_r<\om_s<\mb{c}+\frac{p}{2}$ and $s_1<\om_r<s_2$, it follows that $\Frs{\om_r}{\om_s}$ satisfies one of the conditions from \X{\al'}{4} to \X{\al'}{7}.
					
					Suppose $\Frs{\om_r}{\om_s}$ satisfies \X{\al'}{4}. Then $s_1\ge\om_r-p+\al'$. Since $\om_r\ge2\mb{c}-\om_s$ and $\al'>\al$, it follows that $s_1>2\mb{c}-\om_s-p+\al$, a contradiction. Hence $\Frs{\om_r}{\om_s}$ does not satisfy \X{\al'}{4}.
					
					Suppose $\Frs{\om_r}{\om_s}$ satisfies \X{\al'}{5}. Then $\om_r\le\mb{c}-\frac{\al'+1}{2}<\mb{c}$ and $s_1\ge2\mb{c}-\om_r-p$. 
					Since $\om_r<\mb{c}$, $2\mb{c}-\om_r>\om_r$. This implies $r_1\ge s_1\ge2\mb{c}-\om_r-p>\om_r-p$.
					We have $r_2\le s_2\le2\mb{c}-\om_s+p-1<\om_r+p$.   Thus $\om_r-p<r_1<r_2<\om_r+p$. This contradicts $\cn{F_r}$ is disconnected. Hence $\Frs{\om_r}{\om_s}$ does not satisfy \X{\al'}{5}.
					
					Suppose $\Frs{\om_r}{\om_s}$ satisfies \X{\al'}{6} or \X{\al'}{7}. Then $s_1\ge2\mb{c}-\om_r-p+\al'-1$ by \Cref{remark:x4_x5 and x6_x7}\ref{x6_x7}. Since $\om_r<\om_s$ and $\al'>\al$, we get $s_1>2\mb{c}-\om_s-p+\al-1$, a contradiction. Hence $\Frs{\om_r}{\om_s}$ does not satisfy \X{\al'}{6} and \X{\al'}{7}.
					
					Therefore, $\Frs{\om_r}{\om_s}\in\mcl{M}_{\al''}$ for some $0\le\al''\le\al$. Since $\om_s>\mb{c}$, $\om_r\os\om_s$ by \Cref{remark:order in S}\ref{part:iv}. By \Cref{definition:order_ll}\ref{om_i<om_j}, $\Frs{\om_r}{\om_s}\ll F_s$. Hence $\Frs{\om_r}{\om_s}\prec F_s$ by \Cref{def:prec}.
					
					\item $\om_s\le\om_r\le s_2$.
					
					From \Cref{claim:case678condition}, $\om_s>\mb{c}$, $\om_s>s_1\ge s_2-2p+\al-1, s_2\sim\om_s$ and $s_1\nsim s_2$. If $\om_r>\om_s$, then $\om_s>\mb{c}$ implies $\om_s\os\om_r$, and hence $F_s\ll F_r$ by \Cref{definition:order_ll}\ref{om_i<om_j}. Moreover, if $\om_r=\om_s$, then $\{r_1,r_2\}\cap\{s_1,s_2\}=\emptyset$, which implies $s_1<r_1$, and thus $F_s\ll F_r$ by \Cref{definition:order_ll}\ref{om_i=om_j}. In either case, $F_s\ll F_r$.
					
					Suppose $r_2>\om_r$. Then $s_1<\om_s\le\om_r<r_2\le s_2$. Since $s_2\sim\om_s$ and $s_1\nsim s_2$, we get $s_2\le\om_s+p$, and hence $r_2\le\om_r+p$. By \Cref{proposition:i1_i2 nsim al_i}\ref{sim i2}, $r_1<\om_r$. Thus $r_1<\om_r<r_2\le\om_r+p$. We have $r_1\ge s_1\ge s_2-2p+\al-1\ge r_2-2p+\al-1$ and $F_s\ll F_r$. Observe that if $\om_r\ge\mb{c}+\frac{p}{2}$, then $F_s\prec F_r$ by \Cref{proposition:in M_bt+gm}\ref{satisfies x8}, a contradiction. So $\om_r<\mb{c}+\frac{p}{2}$.
					Since $\om_s\le\om_r$, $F_s$ satisfies \X{\al}{6} or \X{\al}{7}. By \Cref{remark:x4_x5 and x6_x7}\ref{x6_x7}, $s_1\ge2\mb{c}-\om_s-p+\al-1$. Using $s_1\le r_1$ and $\om_s\le\om_r$, we obtain $r_1\ge2\mb{c}-\om_r-p+\al-1$. Therefore, $F_s\prec F_r$ by \Cref{corollary:notin M_0}\ref{satisfies 6_7_8}, again a contradiction. Hence $r_2<\om_r$. This gives $r_1<r_2<\om_r$.
					
					Note that $r_1\ge s_1\ge s_2-2p+\al-1\ge\om_r-2p+\al-1$. If $\om_r\ge\mb{c}+\frac{p}{2}$, then $F_s\prec F_r$ by \Cref{proposition:in M_bt+gm}\ref{satisfies x2}, a contradiction. So $\om_r<\mb{c}+\frac{p}{2}$.
					Then $F_s$ satisfies \X{\al}{6} or \X{\al}{7}. We get $r_1\ge2\mb{c}-\om_r-p+\al-1$. This implies $F_s\prec F_r$ by \Cref{proposition:in M_bt+gm}\ref{satisfies x1}, again a contradiction. Hence, this case is not possible.
					
					\item $\om_r>s_2$.
					
					From \Cref{claim:case678condition}, $s_1<\om_s<s_2$, $\om_s>\mb{c}$, $s_1>p$ and $s_1\nsim s_2$. Observe that $\om_r\in F_s\setminus F_r$. By \Cref{proposition:s_1<=r_1<r_2<=s_2}\ref{al_r>s_2}, $(\Frs{\om_r}{\om_s})^c=\{s_2\}\sqcup\{s_1,\om_r\}$ and $\Frs{\om_r}{\om_s}\in\md$. 
					
					If $F_s$ satisfies \X{\al}{7} or \X{\al}{8}, then $s_1=s_2-2p+\al-1$, which implies $\Frs{\om_r}{\om_s}\prec F_s$ by \Cref{proposition:s_1<=r_1<r_2<=s_2}\ref{F_al_s precedes F_s}. 
					
					Now, let $F_s$ satisfies \X{\al}{6}.
					Then $\om_s\ge\mb{c}+\frac{\al}{2}$ and $s_1=2\mb{c}-\om_s-p+\al-1$. Suppose $\Frs{\om_r}{\om_s}\in\mcl{M}_{\al'}$ for some $\al\le\al'\le p-1$. Since $s_1<s_2<\om_r$, $\Frs{\om_r}{\om_s}$ satisfies one of the conditions from \X{\al'}{3} to \X{\al'}{8}.
					This implies $s_1\ge\om_r-2p+\al'-1$. Using $s_1\le r_1$ and $\al\le\al'$, we get $r_1\ge\om_r-2p+\al-1$.
					Moreover, $\om_r>\om_s$ implies $r_1\ge s_1=2\mb{c}-\om_s-p+\al-1>2\mb{c}-\om_r-p+\al-1$.
					Since $\om_r>s_2>\om_s>\mb{c}$, $\om_s\os\om_r$. By \Cref{definition:order_ll}\ref{om_i<om_j}, $F_s\ll F_r$. We have $r_1<r_2<\om_r$ (as $r_2\le s_2<\om_r$).
					Observe that if $\om_r<\mb{c}+\frac{p}{2}$, then $F_s\prec F_r$ by \Cref{proposition:in M_bt+gm}\ref{satisfies x1}\ref{satisfies x1 ii}, and if $\om_r\ge\mb{c}+\frac{p}{2}$, then $F_s\prec F_r$ by \Cref{proposition:in M_bt+gm}\ref{satisfies x2}. Both cases contradict $F_r\prec F_s$.
					Therefore, $\Frs{\om_r}{\om_s}\in\mcl{M}_{\al''}$ for some $0\le\al''<\al$. By \Cref{def:prec}\ref{def 2}, $\Frs{\om_r}{\om_s}\prec F_s$.
				\end{enumerate} 
				
				\item Let $s_1\le r_1$ and $s_2<r_2$. We have $s_1<\om_s<s_2$, $\om_s>\mb{c}$, $s_1\ge s_2-2p$, $s_1 \nsim s_2$ and $s_1\nsim\om_s$ by \Cref{claim:case678condition}. Note that $r_2\in F_s\setminus F_r$.
				By \Cref{proposition: F_r1_s1 or F_r2_s2}\ref{F_r2_s2 is a facet}, $(\Frs{r_2}{s_2})^c=\{\om_s\}\sqcup\{s_1,r_2\}$ and $\Frs{r_2}{s_2}\in\md$.
				
				If $F_s$ satisfies \X{\al}{7} or \X{\al}{8}, then $s_1=s_2-2p+\al-1$, which implies that $\Frs{r_2}{s_2}\prec F_s$ by \Cref{proposition: F_r1_s1 or F_r2_s2}\ref{F_s1_s2 precedes F_s}\ref{prob:f_r1_s1_3_ii}. 
				
				Now, let $F_s$ satisfies \X{\al}{6}. Then $\om_s\ge\mb{c}+\frac{\al}{2}$, $s_1=2\mb{c}-\om_s-p+\al-1$ and by \Cref{proposition:M_al conditions}\ref{ob_x6}, $\om_s<\mb{c}+\frac{p}{2}$.
				Since $\Frs{r_2}{s_2}\in\md$, $\Frs{r_2}{s_2}\in\mcl{M}_{\al'}$ for some $\al'\in[0,p-1]$. 
				If $0\le\al'<\al$, then $\Frs{r_2}{s_2}\prec F_s$ by \Cref{def:prec}\ref{def 2}. 
				
				So, assume that $\al\le\al'\le p-1$.
				
				\begin{claim}\label{claim:case678_last_condiction}
					$\Frs{\om_r}{\om_s} \in\md$ and $\Frs{\om_r}{\om_s}\prec F_s$.
				\end{claim}
				
				\begin{proof}[Proof of \Cref{claim:case678_last_condiction}]       
					We have $\mb{c}+\frac{\al}{2}\le\om_s<\mb{c}+\frac{p}{2}$.   
					Recall that if $\om_r=\om_s$, then $\{r_1,r_2\}\cap\{s_1,s_2\}=\emptyset$, which implies $s_1<r_1$. Moreover, $s_1<\om_s<r_2$, $s_1\le r_1<r_2$ and $r_2\in F_s$. By \Cref{proposition:x1 and x6} for $F=F_r$ and $F'=F_s$, we get $r_2\le2\mb{c}-\om_s+p$ and $2\mb{c}-\om_s\le\om_r<\om_s$. Since $\om_s>\mb{c}$, \Cref{proposition:al_r s_1 s_2}\ref{al_r s_1 s_2 complement} yields $\om_r\in F_s\setminus F_r$, $(\Frs{\om_r}{\om_s})^c=\{\om_r\}\sqcup\{s_1,s_2\}$ and $s_1<\om_r<s_2$.
					
					Note that $\om_r<s_2<r_2\le2\mb{c}-\om_s+p\le\om_r+p$. By \Cref{proposition:i1_i2 nsim al_i}\ref{sim i2}, $r_1\le\om_r-p-1$. This gives $s_1\le r_1\le\om_r-p-1<\om_r<s_2$. Since $s_1\nsim s_2$, we get $s_1\nsim\om_r$. Hence $\Frs{\om_r}{\om_s}\in\md$.
					
					Suppose $\Frs{\om_r}{\om_s}\in\mcl{M}_{\bt}$ for some $\al<\bt\le p-1$. Then $s_1<\om_r<s_2$ and $\om_r<\om_s<\mb{c}+\frac{p}{2}$ implies that $\Frs{\om_r}{\om_s}$ satisfies one of the conditions from \X{\bt}{3} to \X{\bt}{7}. If $\Frs{\om_r}{\om_s}$ satisfies \X{\bt}{3} or \X{\bt}{4} or \X{\bt}{5}, then using \Cref{proposition:M_al conditions}\ref{ob_x3}, \ref{ob_x4} and \ref{ob_x5} we get $s_1\sim\om_r$, a contradiction. 
					Hence $\Frs{\om_r}{\om_s}$ satisfies \X{\bt}{6} or \X{\bt}{7}. By \Cref{remark:x4_x5 and x6_x7}\ref{x6_x7}, $s_1\ge2\mb{c}-\om_r-p+\bt-1$. Using $\om_r<\om_s$ and $\al<\bt$, we get $s_1>2\mb{c}-\om_s-p+\al-1$, a contradiction as $s_1=2\mb{c}-\om_s-p+\al-1$. 
					Therefore, $\Frs{\om_r}{\om_s}\in\mcl{M}_{\bt'}$ for some $0\le\bt'\le\al$. Since $\om_s>\mb{c}$ and and $2\mb{c}-\om_s\le\om_r<\om_s$, \Cref{remark:order in S}\ref{part:iv} implies $\om_r\os\om_s$.
					By \Cref{definition:order_ll}\ref{om_i<om_j}, $\Frs{\om_r}{\om_s}\ll F_s$. Hence $\Frs{\om_r}{\om_s}\prec F_s$ by \Cref{def:prec}. 
				\end{proof}
				
				This completes the case. 
			\end{enumerate}   
			
			\item $F_s$ satisfies \X{\al}{9} or \X{\al}{10}.
			
			\begin{claim}\label{claim:9_10}
				$\om_s<s_1<s_2$, $\om_s\ge2p$, $s_1\ge\om_s+p+1$, $s_1\sim s_2$, $s_1\nsim\om_s$ and $s_2\nsim\om_s$.
			\end{claim}
			
			\begin{proof}[Proof of \Cref{claim:9_10}]
				Since $F_s$ satisfies \X{\al}{9} or \X{\al}{10}, $\om_s<s_1<s_2$ and $s_1\ge\om_s+p+1$. By \Cref{proposition:M_al conditions}\ref{ob_x9} and \ref{ob_x10}, $\om_s\ge2p$ and $s_1\sim s_2$. Therefore, $F_s\in\md$ implies $s_1\nsim\om_s$ and $s_2\nsim\om_s$. 
			\end{proof}
			
			We consider three cases: (I) $r_1<\om_s$, (II) $\om_s\le r_1<r_2\le s_2$, (III) $\om_s\le r_1$ and $s_2<r_2$.
			\begin{enumerate}[label=(\Roman*)]  
				\item Let $r_1<\om_s$. Then $r_1<\om_s<s_1<s_2$ implies $r_1\in F_s\setminus F_r$. By \Cref{proposition:9 or 10 u<om_i}\ref{u<om_i}, $\Frs{r_1}{s_1}\in\md$ and $\Frs{r_1}{s_1}\prec F_s$.
				
				\item Let $\om_s\le r_1<r_2\le s_2$. We consider the following subcases based on the value of $\om_r$:
				\begin{enumerate}[label=(\alph*)]
					\item $\om_r<\om_s$.
					
					Since $\om_r<\om_s<s_1<s_2$, we have $\om_r\in F_s\setminus F_r$. By \Cref{proposition:9 or 10 u<om_i}\ref{u<om_i}, $\Frs{\om_r}{s_1}\in\md$ and $\Frs{\om_r}{s_1}\prec F_s$.
					
					\item $\om_r=\om_s$.
					
					In this case, $\{r_1,r_2\}\cap\{s_1,s_2\}=\emptyset$ by our assumption. Hence $r_1\notin\{s_1,s_2\}$, and $r_2\le s_2$ implies $r_2<s_2$. Moreover, since $\om_s=\om_r\os r_1$ and $\om_s\le r_1$, we have $\om_s<r_1$. Therefore, $\om_s<r_1<r_2<s_2$ and $r_1\in F_s\setminus F_r$. Observe that $(\Frs{r_1}{s_1})^c=\{\om_s\}\sqcup\{r_1,s_2\}$. By \Cref{claim:9_10}, $s_2\nsim\om_s$.
					If $\om_s\sim r_1$ and $r_1\sim s_2$, then $s_2\nsim\om_s$ implies $r_1\sim r_2$. This contradicts $F_r\in\md$ (as $\om_r=\om_s$). Hence $\om_s\nsim r_1$ or $r_1\nsim s_2$. Using $s_2\nsim\om_s$, we get $\Frs{r_1}{s_1}\in\md$.
					
					Suppose $r_1>s_1$. Since $s_1\ge\om_s+p+1$ by \Cref{claim:9_10}, $r_1>\om_s+p+1=\om_r+p+1$.  We have $r_1<r_2<\om_r$. 
					If $F_s$ satisfies \X{\al}{9}, then $\om_r=\om_s\le n-2p-2$ and $r_2<s_2=\om_s+2p-\al+1=\om_r+2p-\al+1$. By \Cref{proposition:in M_bt+gm}\ref{satisfies x9}\ref{satisfies x9 ii}, $F_s\prec F_r$, a contradiction.
					If $F_s$ satisfies \X{\al}{10}, then $\om_r=\om_s>n-2p-2$ and $r_2<s_2=n-\al-1$. By \Cref{proposition:in M_bt+gm}\ref{satisfies x10}\ref{satisfies x10 ii}, $F_s\prec F_r$, again a contradiction. Hence $r_1<s_1$. By \Cref{proposition:9 or 10 u<om_i}\ref{om_i<u<i_1}, $\Frs{r_1}{s_1}\prec F_s$.
					
					\item $\om_r>\om_s$.
					
					We deal with three subcases: 
					
					\begin{enumerate}[label=(\roman*)]
						\item Let $\om_s<\mb{c}$ and $\om_r<2\mb{c}-\om_s$. Then $\om_r\os\om_s$ by \Cref{remark:order in S}\ref{part:iii}. Since $\om_s\os s_1,s_2$, we get $\om_r\os s_1,s_2$. Hence $\om_r\in F_s\setminus F_r$. Observe that $(\Frs{\om_r}{s_1})^c=\{\om_r\}\sqcup\{\om_s,s_2\}$. 
						By \Cref{remark:order in S}\ref{part:i}, $\om_s\os s_2$ and $s_2>\om_s$ imply $s_2\ge2\mb{c}-\om_s$. Thus $\om_s<\om_r<s_2$. We have $\om_s\le r_1<r_2\le s_2$. Since $s_2\nsim\om_s$ by \Cref{claim:9_10},  \Cref{proposition:al_i j_1 j_2 is a facet} yields $\Frs{\om_r}{s_1}\in\md$. 
						If $F_s$ satisfies \X{\al}{10}, then \Cref{proposition:M_al conditions}\ref{ob_x10} contradicts $\om_s<\mb{c}$. Hence $F_s$ satisfies \X{\al}{9}, which implies $s_2=\om_s+2p-\al+1$. Suppose $\Frs{\om_r}{s_1}\in\mcl{M}_{\al'}$ for some $\al<\al'\le p-1$. Since $\om_s<\om_r<s_2$, $\Frs{\om_r}{s_1}$ satisfies one of the conditions from \X{\al'}{3} to \X{\al'}{8}.
						It follows that $s_2\le\om_s+2p-\al'+1<\om_s+2p-\al+1$, a contradiction. Therefore, $\Frs{\om_r}{s_1}\in\mcl{M}_{\al''}$ for some $0\le\al''\le\al$. By \Cref{proposition:M_al conditions}\ref{ob_x10}, $\om_r\os\om_s$ implies $\Frs{\om_r}{s_1}\ll F_s$. Therefore $\Frs{\om_r}{s_1}\prec F_s$ by \Cref{def:prec}.
						
						\item Let $\om_s<\mb{c}$ and $\om_r\ge2\mb{c}-\om_s$. Then $2\mb{c}-\om_s>\mb{c}$. Clearly, $\om_r>\mb{c}$ and $\om_s\ge2\mb{c}-\om_r$.
						We have $2\mb{c}-\om_r\le\om_s\le r_1<r_2\le s_2$. Since $\om_r\os r_1$, $\om_r>\mb{c}$ and $r_1\ge2\mb{c}-\om_r$,  \Cref{remark:order in S}\ref{part:ii} yields $r_1>\om_r$.
						By \Cref{claim:9_10}, $s_1\sim s_2$ and $s_2\nsim\om_s$. Therefore, $\om_s<s_1<s_2$ implies $s_2\le s_1+p$. If $\om_r\ge s_1$, then $\om_s<s_1\le\om_r<r_1<r_2\le s_2$, which contradicts $\cn{F_r}$ is disconnected. Hence $\om_r<s_1$. This means that $\om_s<\om_r<s_1<s_2$, and thus $\om_r\in F_s\setminus F_r$.
						By \Cref{remark:order in S}\ref{part:iv}, $\om_r>\mb{c}$ and $2\mb{c}-\om_r\le\om_s<\om_r$ imply $\om_s\os\om_r$. Therefore, $(\Frs{\om_r}{s_1})^c=\{\om_s\}\sqcup\{\om_r,s_2\}$. 
						We have $s_2\nsim\om_s$, $\om_s<\om_r<s_2$ and  $\om_s\le r_1<r_2\le s_2$. By \Cref{proposition:al_i j_1 j_2 is a facet}, $\Frs{\om_r}{s_1}\in\md$. Hence $\Frs{\om_r}{s_1}\prec F_s$ by \Cref{proposition:9 or 10 u<om_i}\ref{om_i<u<i_1}.
						
						\item Let $\om_s\ge\mb{c}$. Then $\om_r>\om_s\ge\mb{c}$ implies $\om_r>\mb{c}$. Moreover, $\om_s\ge\mb{c}>2\mb{c}-\om_r$, which gives $\om_s\ge 2\mb{c}-\om_r$. Using the inequalities $\om_r>\mb{c}$ and $\om_s\ge 2\mb{c}-\om_r$, the remainder of the argument proceeds exactly as in case (ii). This gives $\om_r\in F_s\setminus F_r$, $\Frs{\om_r}{s_1}\in\md$ and $\Frs{\om_r}{s_1}\prec F_s$.
					\end{enumerate}
				\end{enumerate}
				
				\item Let $\om_s\le r_1$ and $s_2<r_2$. Then $\om_s<s_1<s_2<r_2$ implies $r_2\in F_s\setminus F_r$. Since $\om_s<s_2$, $\om_s\os r_2$ by \Cref{proposition: al_i precedes lambda}. Thus $(\Frs{r_2}{s_1})^c=\{\om_s\}\sqcup\{s_2,r_2\}$. Using $p<2p\le\om_s<s_2<r_2$ and $s_2\nsim\om_s$, we get $r_2\nsim\om_s$. Hence $\Frs{r_2}{s_1}\in\md$. 
				
				Suppose $\Frs{r_2}{s_1}\in\mcl{M}_{\al'}$ for some $\al\le\al'\le p-1$. We have $\om_s<s_2<r_2$. If $\om_s\le n-2p-2$, then $\Frs{r_2}{s_1}$ satisfies \X{\al'}{9} and $F_s$ satisfies \X{\al}{9}. This implies $r_2=\om_s+2p-\al'+1\le\om_s+2p-\al+1=s_2$, a contradiction. If $\om_s>n-2p-2$, then $\Frs{r_2}{s_1}$ satisfies \X{\al'}{10} and $F_s$ satisfies \X{\al}{10}. This implies $r_2=n-\al'-1\le n-\al-1=s_2$, again a contradiction. Therefore $\Frs{r_2}{s_1}\in\mcl{M}_{\al''}$ for some $0\le\al''<\al$. Hence $\Frs{r_2}{s_1}\prec F_s$ by \Cref{def:prec}\ref{def 2}.
			\end{enumerate} 
		\end{enumerate}	
		\vspace{-0.2 cm}
	\end{proof}
	
	Using \Cref{lemma: case M_0 a,lemma: case M_0 b,lemma: case M_al}, we conclude that the order $\prec$ provides a shelling order for the facets of $\Delta_3(C_n^p)$. 
	
	\subsection{Spanning Facets}\label{subsection:spanning facets}
	
	Having proved that $\prec$ provides a shelling order for the facets of $\Delta_3(C_n^p)$, we now characterize and count the spanning facets. We continue to use the notation and results established prior to \Cref{subsection:shelling order}. 
	
	We first define several subsets of $V(C_n^p)$. 
	Let $\mcl{U}_1=\{p+1, p+2, \dots, 2p-1\}$, $\mcl{U}_2=\{2p, 2p+1, \dots, \mb{c}-p\}$ and $\mcl{U}_3=\{\mb{c}-p+1, \mb{c}-p+2, \dots, n-p-1\}$. Clearly, $\mcl{U}_1$, $\mcl{U}_2$ and $\mcl{U}_3$ are pairwise disjoint.
	For each $u\in\mcl{U}_1\sqcup\mcl{U}_2\sqcup\mcl{U}_3$, define $\mcl{V}^{u}$ as follows:
	\begin{enumerate}
		\item If $u\in\mcl{U}_1$, then $\mcl{V}^{u} =\{u-2p \ (\text{mod} \ n), u-2p+1 \ (\text{mod} \ n), \dots, n-1\} \sqcup \{0, 1, \dots, 2\mb{c}-u-1\}$.
		
		\item If $u\in\mcl{U}_2$, then $\mcl{V}^{u}=\{u-2p, u-2p+1, \dots, u-2\} \sqcup \{u, u+1, \dots, 2\mb{c}-u-1\} \sqcup \{n-1\}$.
		
		\item If $u\in\mcl{U}_3$, then $\mcl{V}^{u}=\{\mu, \mu+1, \dots, u, u+1, \dots, u+p\} \sqcup \{u+t\ (\text{mod $n$})|\ p+1\le t\le 2p-1\} \sqcup \{\nu\}$, 
		where $\mu=\min\{2\mb{c}-u, u-2p\}$ and $\nu=\begin{cases} 
			n-1 & \text{if } u<n-2p-1, \\ 
			u+2p \ (\text{mod} \ n) & \text{if } u \ge n-2p-1. 
		\end{cases}$
	\end{enumerate}
	
	Recall that the set of facets  $M(\Delta_3(C_p^n))\subset\bigsqcup_{i=1}^{n-2}\mcl{A}_i$, where the sets $\mcl{A}_i$ are defined in \Cref{equation:Ai}. Let $\Sigma_1, \Sigma_2$ and  $\Sigma_3$ be subsets of $M(\Delta_3(C_p^n))$ defined as  follows: for $m\in[3]$, a facet $F \in \Sg_m$ if and only if $F\in\mcl{A}_i$ for some $i\in[n-2]$ such that $F^c=\{\om_i\} \sqcup \{i_1, n-1\}$, $\omega_i \in \mcl{U}_m,$ and $i_1 \in V(C_p^n) \setminus \mcl{V}^{\om_i}$. 
	
	Since $\mcl{U}_1$, $\mcl{U}_2$ and $\mcl{U}_3$ are pairwise disjoint, it follows that $\Sg_1$, $\Sg_2$ and $\Sg_3$ are pairwise disjoint. Let 
	$$\Sigma :=\Sigma_1 \sqcup \Sigma_2 \sqcup \Sigma_3.$$
	
	Our aim is to show that a facet $F$ is a spanning facet if and only if $F \in \Sg$. For this, we begin with some important results. 
	
	\begin{proposition}\label{proposition:structure in M_al}
		Let $F\in\md$. If  $F\in\mcl{M}_{\al}$ for $\al\in[p-1]$, then $F^c=\{u,v,w\}$ for some $u,v,w\in V(C_n^p)$ with $u<v<w\le u+2p$.
	\end{proposition}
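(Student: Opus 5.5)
The plan is a case check over the ten defining conditions \X{\al}{1}--\X{\al}{10}. Since $F\in\mcl{M}_{\al}$ with $\al\in[p-1]$, we have $F\in\mcl{A}_i$ with $F^c=\{\om_i\}\sqcup\{i_1,i_2\}$ and $i_1<i_2$, and $F$ satisfies one of these conditions. In each case I will name the three elements of $F^c$ in increasing order as $u<v<w$ and verify $w-u\le 2p$, using the identities in the definition together with \Cref{proposition:M_al conditions}, \Cref{proposition:al_i>c} and \Cref{remark:order in S}.

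For \X{\al}{1} and \X{\al}{2} the order is $u=i_1<v=i_2<w=\om_i$.
\begin{itemize}
\item Under \X{\al}{2}, $i_1=\om_i-2p+\al-1$, so $w-u=2p-\al+1\le 2p$.
\item Under \X{\al}{1}, $i_1=2\mb{c}-\om_i-p+\al-1$ and $\om_i<\mb{c}+\frac p2$. Hence $\om_i-i_1=2\om_i-2\mb{c}+p-\al+1<2p-\al+1$, which is at most $2p$ since $\al\ge 1$.
\end{itemize}

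For \X{\al}{3}--\X{\al}{8} the order is $u=i_1<v=\om_i<w=i_2$, so the claim is $i_2-i_1\le 2p$.
\begin{itemize}
\item In \X{\al}{3}, \X{\al}{4}, \X{\al}{7} and \X{\al}{8} it is immediate from $i_2=i_1+2p-\al+1$ or $i_1=i_2-2p+\al-1$.
\item In \X{\al}{5} it follows from $i_1\ge i_2-2p+\al$.
\item In \X{\al}{6} it follows from $i_2\le i_1+2p-\al$.
\end{itemize}

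For \X{\al}{9} the order is $u=\om_i<v=i_1<w=i_2$, and $i_2=\om_i+2p-\al+1\le\om_i+2p$.

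The only nontrivial case is \X{\al}{10}. There $i_2=n-\al-1$ and $\om_i>n-2p-2$, i.e.\ $\om_i\ge n-2p-1$, so $w-u=i_2-\om_i\le (n-\al-1)-(n-2p-1)=2p-\al<2p$.

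I expect no real obstacle; the argument is bookkeeping. The one point to watch is in \X{\al}{1}: the bound uses the strict inequality $\om_i<\mb{c}+\frac p2$, which gives $2\om_i-2\mb{c}<p$. When $p$ is odd and $2\om_i-2\mb{c}=p-1$, the difference is exactly $2p-\al$, which is still within range. If the intended reading is cyclic, one might also note that all three vertices lie in $\{0,\dots,n-1\}$ with this linear ordering, so no wraparound is needed.
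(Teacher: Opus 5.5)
Your proposal is correct and matches the paper's proof essentially step for step. Both use the same three-way case split (\X{\al}{1}--\X{\al}{2}, \X{\al}{3}--\X{\al}{8}, \X{\al}{9}--\X{\al}{10}) and read $w-u\le 2p$ directly off the defining relations, needing only $\om_i<\mb{c}+\frac{p}{2}$ with $\al\ge 1$ for \X{\al}{1} and $\om_i>n-2p-2$ for \X{\al}{10}; as your own computations show, the auxiliary results you list (\Cref{proposition:M_al conditions}, \Cref{proposition:al_i>c}, \Cref{remark:order in S}) are never actually needed.
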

	
	\begin{proof}
		Let $F\in\mcl{A}_i$ and $F^c=\{\om_i\}\sqcup\{i_1,i_2\}$. Since $F\in\mcl{M}_{\al}$, $F$ satisfies one of the conditions from \X{\al}{1} to \X{\al}{10}. 
		First, suppose $F$ satisfies \X{\al}{1} or \X{\al}{2}. Then $i_1<i_2<\om_i$. If $F$ satisfies \X{\al}{1}, then $\om_i<\mb{c}+\frac{p}{2}$ and $i_1=2\mb{c}-\om_i-p+\al-1$. Using $\al\ge 1$, we get $i_1\ge 2\mb{c}-\om_i-p>\om_i-2p$.    
		If $F$ satisfies \X{\al}{2}, then $i_1=\om_i-2p+\al-1\ge \om_i-2p$. It follows that $i_1<i_2<\om_i\le i_1+2p$.
		Now, suppose $F$ satisfies any of the conditions from  \X{\al}{3} to \X{\al}{8}. Then $i_1<\om_i<i_2\le i_1+2p-\al+1\le i_1+2p$.
		Finally, suppose $F$ satisfies \X{\al}{9} or \X{\al}{10}. Then $\om_i<i_1<i_2$. If $F$ satisfies \X{\al}{9}, then $\om_i=i_2-2p+\al-1\ge i_2-2p$.
		If $F$ satisfies \X{\al}{10}, then $\om_i>n-2p-2$ and $i_2=n-\al-1$, which implies $i_2\le n-2<\om_i+2p$. We get $\om_i<i_1<i_2\le \om_i+2p$.
		
		Therefore $F^c=\{u,v,w\}$ for some $u,v,w\in V(C_n^p)$ with $u<v<w\le u+2p$. 
	\end{proof}
	
	This proposition implies the following corollary.
	
	\begin{corollary}\label{corollary:structure in M_al}
		Let $F\in\md$. If there exist $u,v\in F^c$ with $u<v$ and $v-u> 2p$, then $F\in\mcl{M}_0$.
	\end{corollary}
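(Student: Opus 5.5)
This follows from \Cref{proposition:structure in M_al} by contraposition. Suppose $F\notin\mcl{M}_0$. Since $\md=\bigsqcup_{\al=0}^{p-1}\mcl{M}_{\al}$, we have $F\in\mcl{M}_{\al}$ for some $\al\in[p-1]$. By \Cref{proposition:structure in M_al}, $F^c=\{u',v',w'\}$ with $u'<v'<w'\le u'+2p$.

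Next we compare this with the pair in the hypothesis. Any two elements $x<y$ of $F^c$ satisfy $u'\le x<y\le w'$, so $y-x\le w'-u'\le 2p$. In particular, for the given $u,v\in F^c$ with $u<v$ we get $v-u\le 2p$. This contradicts $v-u>2p$, so $F\in\mcl{M}_0$.

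There is no real obstacle. The only point to check is that the differences $v-u$ are taken as ordinary integer differences of labels in $\{0,\dots,n-1\}$ and not modulo $n$. This is the convention used in \Cref{proposition:structure in M_al}, so the comparison with $w'-u'$ is valid.
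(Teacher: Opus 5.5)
Your proposal is correct and matches the paper, which states this corollary as an immediate consequence of \Cref{proposition:structure in M_al} without further proof. You are right that the inequalities $u'<v'<w'\le u'+2p$ in that proposition are ordinary integer inequalities on the labels in $\{0,\dots,n-1\}$, so any two elements of $F^c$ differ by at most $2p$.
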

	
	For $F \in\md$, recall that  $\Fuv{u}{v}=(F\setminus\{u\})\sqcup\{v\}$ for some $u\in F$ and $v\in F^c$.
	
	\begin{proposition}\label{proposition:order ll}
		Let $F \in \md$ such that  $F \in \mcl{A}_i$ and $F^c=\{\om_i\}\sqcup\{i_1, n-1\}$. For each $u \in F$, it follows that $\Fuv{u}{n-1}\ll F$.
	\end{proposition}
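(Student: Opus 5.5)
The plan is to compare $\Fuv{u}{n-1}$ with $F$ directly through \Cref{definition:order_ll}. We do not need $\Fuv{u}{n-1}$ to be a facet, because $\ll$ is a total order on all of $\mcl{A}$ and $\Fuv{u}{n-1}\in\mcl{A}$ has size $n-3$. Fix $u\in F$. Then
$$(\Fuv{u}{n-1})^c=(F^c\setminus\{n-1\})\sqcup\{u\}=\{\om_i,i_1,u\},$$
and $u\notin\{\om_i,i_1,n-1\}$. In particular $u<n-1$, since $n-1$ is the largest vertex and $u\ne n-1$. By \Cref{remark:Aj}, $\om_i\os i_1$ and $\om_i\os n-1$; since $i_1<n-1$ under the standing convention, the ordered pair attached to $F$ is $(i_1,n-1)$.

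We split according to the position of $u$ relative to $\om_i$ in $\Omega$.

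\emph{Case $u\os\om_i$.} The element of $\{\om_i,i_1,u\}$ that comes earliest in $\Omega$ is then $u$ or something even earlier. Indeed $\om_i\os i_1$ and $u\os\om_i$, so the $\os$-minimum of the complement is $u$. Hence $\Fuv{u}{n-1}\in\mcl{A}_k$ with $\om_k=u\os\om_i$, i.e.\ $k<i$. \Cref{definition:order_ll}\ref{om_i<om_j} then gives $\Fuv{u}{n-1}\ll F$.

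\emph{Case $\om_i\os u$.} Now $\om_i$ precedes both $i_1$ and $u$ in $\Omega$, so $\Fuv{u}{n-1}\in\mcl{A}_i$. Its associated ordered pair is $(\min\{i_1,u\},\max\{i_1,u\})$, and we compare it with $(i_1,n-1)$ using \Cref{definition:order_ll}\ref{om_i=om_j}.
\begin{enumerate}[label=(\alph*)]
\item If $u<i_1$, the first coordinate $u$ is smaller than $i_1$, so $\Fuv{u}{n-1}\ll F$.
\item If $u>i_1$, the first coordinates agree and the second coordinates satisfy $u<n-1$, so again $\Fuv{u}{n-1}\ll F$.
\end{enumerate}

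There is no real obstacle in this argument. The only points requiring care are, first, that $u\ne n-1$ (which holds because $n-1\in F^c$), so the second-coordinate comparison is strict; and second, the correct identification of which $\mcl{A}_k$ the set $\Fuv{u}{n-1}$ lies in, which is settled by \Cref{remark:Aj} together with the transitivity of $\os$.
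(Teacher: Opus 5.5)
Your proof is correct and matches the paper's argument. Both proofs split on whether $u\os\om_i$, which places $\Fuv{u}{n-1}$ in $\mcl{A}_j$ with $j<i$, or $\om_i\os u$, which places $\Fuv{u}{n-1}$ in $\mcl{A}_i$; in the second case both compare first coordinates when $u<i_1$ and second coordinates ($u<n-1$) when $u>i_1$.
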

	
	\begin{proof}
		We have $(\Fuv{u}{n-1})^c=(F^c\setminus\{n-1\})\sqcup\{u\}=\{\om_i, i_1, u\}$ and $u\ne\om_i$. 
		Hence, either $\om_i\os u$ or $u\os\om_i$. 
		
		First, let $\om_i\os u$. By \Cref{remark:Aj}, $\om_i\os i_1$. This implies $\Fuv{u}{n-1}\in\mcl{A}_i$. If $u<i_1$, then $(\Fuv{u}{n-1})^c=\{\om_i\}\sqcup\{u,i_1\}$, and $\Fuv{u}{n-1}\ll F$  by \Cref{definition:order_ll}\ref{om_i=om_j}. 
		If $u>i_1$, then $(\Fuv{u}{n-1})^c=\{\om_i\}\sqcup\{i_1,u\}$, and thus $u<n-1$ implies that $\Fuv{u}{n-1}\ll F$ by \Cref{definition:order_ll}\ref{om_i=om_j}.
		
		Now, let $u\os\om_i$. Then $u=\om_j$ for some $j<i$ by the definition of $\os$, and $\Fuv{u}{n-1}\in\mcl{A}_j$. Thus, \Cref{definition:order_ll}\ref{om_i<om_j} implies that $\Fuv{u}{n-1}\ll F$. 
	\end{proof}
	
	By the definition of spanning facets, a facet $F$ is spanning if and only if for each $u\in F$, there exists $v\in F^c$ such that
	\begin{equation}  \label{eqn:hash}
		\Fuv{u}{v}\in\md  \text{ and }  \Fuv{u}{v} \prec F. \hfill \tag{{$\#$}}
	\end{equation}
	
	The following results are used to prove that if a facet $F\in\Sigma$, then $F$ is spanning.
	
	\begin{proposition}\label{proposition: al_i-2p and al_i+2p}
		Let $F\in\md$ such that $F\in\mcl{A}_i$  and $F^c=\{\om_i\}\sqcup\{i_1, n-1\}$. Suppose the following conditions hold: $(a)$ $\om_i>p$, $(b)$ $i_1\in V(C_n^p)\setminus\{\om_i+t\ (\text{mod $n$})| -2p\le t\le 2p-1\}$, and $(c)$ either $i_1\ne\om_i+2p\ (\text{mod $n$})$, or if $i_1=\om_i+2p\ (\text{mod $n$})$, then $\om_i+2p\ (\text{mod $n$})=\om_i+2p$.
		Then $F$ is a spanning facet.
	\end{proposition}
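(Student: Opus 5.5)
The plan is to use, for every $u\in F$, the single exchange $v=n-1$ (and one exceptional exchange $v=i_1$), and to verify \eqref{eqn:hash}. Since $F\prec$-precedence inside $\mcl{M}_0$ reduces to $\ll$ by \Cref{def:prec}\ref{def 1}, I would show that both $F$ and every exchanged facet lie in $\mcl{M}_0$. Then I would obtain $\ll$ from \Cref{proposition:order ll}, or from \Cref{definition:order_ll} directly.

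\emph{Step 1: $F\in\mcl{M}_0$.} Condition $(b)$ says that $i_1$ is not among $\om_i-2p,\dots,\om_i+2p-1 \pmod n$.
\begin{itemize}
\item If $i_1\neq \om_i+2p \pmod n$, the cyclic distance between $\om_i$ and $i_1$ exceeds $2p$. In particular their linear difference exceeds $2p$, so \Cref{corollary:structure in M_al} gives $F\in\mcl{M}_0$.
\item If $i_1=\om_i+2p \pmod n$, then by $(c)$ we have $i_1=\om_i+2p$ with no wraparound. Since $i_1\ne n-1$, we get $\om_i+2p\le n-2$, so $(n-1)-\om_i>2p$. Again \Cref{corollary:structure in M_al} applies.
\end{itemize}
The same reasoning gives the following fact, used repeatedly below: any $3$-set containing $\om_i$ and $i_1$ (when their cyclic distance exceeds $2p$), or containing $\om_i$ and $n-1$ (in the special case), lies in $\mcl{M}_0$ as soon as it is a facet.

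\emph{Step 2: the generic exchange $v=n-1$.} For $u\in F$ we have $(\Fuv{u}{n-1})^c=\{\om_i,i_1,u\}$.
\begin{itemize}
\item \emph{Facet.} The vertices $\om_i$ and $i_1$ are nonadjacent. If their cyclic distance is at least $2p+1$, no vertex $u$ is adjacent to both, so the induced graph is disconnected and $\Fuv{u}{n-1}\in\md$.
\item \emph{Order.} By Step 1 this facet lies in $\mcl{M}_0$. By \Cref{proposition:order ll} we have $\Fuv{u}{n-1}\ll F$, hence $\Fuv{u}{n-1}\prec F$.
\end{itemize}
This settles every $u$ unless $i_1=\om_i+2p$. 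In that case the same argument still works for every $u\ne \om_i+p$, because only $u=\om_i+p$ is adjacent to both $\om_i$ and $\om_i+2p$.

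\emph{Step 3: the exceptional vertex $u=\om_i+p$ when $i_1=\om_i+2p$.} This is the step I expect to need care. Here I take $v=i_1$, so that $(\Fuv{u}{i_1})^c=\{\om_i,u,n-1\}$.
\begin{itemize}
\item \emph{Facet.} Since $\om_i+2p\le n-2$, we have $p<u\le n-2-p$, so $u\nsim n-1$. By $(a)$, $p<\om_i<n-1-2p$, so $\om_i\nsim n-1$. Thus $n-1$ is isolated in the induced graph and $\Fuv{u}{i_1}\in\md$.
\item \emph{Membership in $\mcl{M}_0$.} We have $(n-1)-\om_i>2p$, so \Cref{corollary:structure in M_al} gives $\Fuv{u}{i_1}\in\mcl{M}_0$.
\item \emph{Order, case $u\os\om_i$.} Then $\Fuv{u}{i_1}\in\mcl{A}_j$ with $j<i$, and \Cref{definition:order_ll}\ref{om_i<om_j} gives $\Fuv{u}{i_1}\ll F$.
\item \emph{Order, case $\om_i\os u$.} Then $\Fuv{u}{i_1}\in\mcl{A}_i$ with sorted complement pair $(u,n-1)$. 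Comparing with $(i_1,n-1)$ and using $u<i_1$, \Cref{definition:order_ll}\ref{om_i=om_j} gives $\Fuv{u}{i_1}\ll F$.
\end{itemize}
In either case \Cref{def:prec}\ref{def 1} gives $\Fuv{u}{i_1}\prec F$.

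Steps 2 and 3 together verify \eqref{eqn:hash} for every $u\in F$, so $F$ is spanning. The main delicate points are the cyclic-versus-linear distance bookkeeping in Step 1 and the boundary checks around the vertex $n-1$ in Step 3, which is exactly where hypotheses $(a)$ and $(c)$ are used.
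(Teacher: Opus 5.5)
Your Step 1 and the generic case of Step 2 ($i_1\neq\om_i+2p \pmod n$) are correct and match the paper. The gap is in the special case $i_1=\om_i+2p$, where you claim the exchange $v=n-1$ ``still works for every $u\neq\om_i+p$''. Facetness of $\Fuv{u}{n-1}$ is fine for such $u$. Your justification that $\Fuv{u}{n-1}\in\mcl{M}_0$, however, no longer applies, for two reasons:
\begin{itemize}
\item its complement $\{\om_i,u,\om_i+2p\}$ does not contain $n-1$;
\item $\om_i$ and $i_1$ are at distance exactly $2p$, so \Cref{corollary:structure in M_al} says nothing about that pair.
\end{itemize}
The claim is in fact false for $u$ strictly between $\om_i$ and $\om_i+2p$. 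Take $p=2$, $n=9$, so $\mb{c}=5$ and $\Omega=(5,4,6,3,7,2,8,1,0)$, and let $F^c=\{3,7,8\}$. This is a facet in $\mcl{A}_4$ satisfying (a), (b), (c), with $\om_i=3$, $i_1=7=\om_i+2p$ and $\om_i+p=5$. For $u=6$, the set $\Fuv{6}{8}$ has complement $\{3,6,7\}$. This is a facet (the vertex $3$ is isolated) lying in $\mcl{A}_3$, with $\om_3=6\ge\mb{c}+\frac p2$ and $3=7-2p+1-1$. So it satisfies \X{1}{8} and lies in $\mcl{M}_1$, while $F\in\mcl{M}_0$. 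Hence $F\prec\Fuv{6}{8}$, and your witness fails. The same happens below $\om_i+p$: for $p=4$, $n=21$, $F^c=\{7,15,20\}$ and $u=8$, the complement $\{7,8,15\}$ of $\Fuv{8}{20}$ satisfies \X{1}{3}.

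The repair is what the paper does: split into the cases $u<\om_i$, $\om_i<u<i_1$ and $u>i_1$.
\begin{itemize}
\item In the two outer cases, keep $v=n-1$, but justify membership in $\mcl{M}_0$ by $(\om_i+2p)-u>2p$, respectively $u-\om_i>2p$.
\item In the middle case, use $v=i_1$ for \emph{every} such $u$, not only $u=\om_i+p$. The complement $\{\om_i,u,n-1\}$ lies in $\mcl{M}_0$ because $(n-1)-\om_i>2p$, and your ordering argument from Step 3 carries over verbatim.
\end{itemize}
For facetness in the middle case, your check ``$u\nsim n-1$'' is valid only for $u=\om_i+p$. If $i_1$ is near $n-2$, then $u=i_1-1$ can be adjacent to $n-1$. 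Argue instead as follows. First, $\om_i\nsim n-1$ by (a) and $\om_i+2p\le n-2$. Second, since $u>p$, $u\sim n-1$ forces $u\ge n-1-p\ge\om_i+p+1$, and hence $u\nsim\om_i$. So no $u$ in this range is adjacent to both $\om_i$ and $n-1$, and the complement is disconnected.
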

	
	\begin{proof}
		Note that $i_1\notin\{\om_i+t\ (\text{mod $n$})| -2p\le t\le 2p-1\}$ and $i_1<n-1$. To prove that $F$ is a spanning facet, for each $u\in F$, we need to find $v\in F^c$ such that $\Fuv{u}{v}$ satisfies \eqref{eqn:hash}. 
		Let $u\in F$. We have $n-1\in F^c$ and $(\Fuv{u}{n-1})^c=\{\om_i, i_1, u\}$. By \Cref{proposition:order ll}, $\Fuv{u}{n-1}\ll F$. 
		
		First, suppose that $i_1\ne\om_i+2p\ (\text{mod $n$})$. Then $i_1\notin\{\om_i-2p \ (\text{mod $n$}), \om_i-2p+1\ (\text{mod $n$}), \ldots, \om_i+2p\ (\text{mod $n$})\}$. 
		This implies $i_1\nsim\om_i$, and $u\nsim\om_i$ or $u\nsim i_1$. Hence $\Fuv{u}{n-1}\in \md$. Since $\om_i\os i_1$ by \Cref{remark:Aj}, $i_1\ne\om_i$. Observe that if $i_1<\om_i$, then $i_1<\om_i-2p$; and if $i_1>\om_i$, then $i_1>\om_i+2p$. 
		In either case, we have $F,\Fuv{u}{n-1}\in\mcl{M}_0$ by \Cref{corollary:structure in M_al}. By \Cref{def:prec}\ref{def 1}, $\Fuv{u}{n-1}\ll F$ implies $\Fuv{u}{n-1}\prec F$. Hence \eqref{eqn:hash} is satisfied by $\Fuv{u}{n-1}$. Therefore $F$ is a spanning facet.
		
		Now suppose that $i_1=\om_i+2p\ (\text{mod $n$})$. Then $\om_i+2p\ (\text{mod $n$})=\om_i+2p$. Since $\om_i>p$, it follows that $p<\om_i<\om_i+2p=i_1<n-1$. Hence $\om_i\nsim i_1$ and $\om_i<i_1$. Clearly, $u\ne \om_i, i_1$. We consider three cases: (i) $u<\om_i<i_1$, (ii) $\om_i<u<i_1$, (iii) $\om_i<i_1<u$.
		\begin{enumerate}[label=(\roman*)]
			\item Let $u<\om_i<i_1$. Since $i_1\notin\{\om_i-2p\ (\text{mod $n$}),\om_i-2p+1\ (\text{mod $n$}),\ldots,\om_i-1\ (\text{mod $n$})\}$, we have $u\nsim\om_i$ or $u\nsim i_1$.  Hence $\om_i\nsim i_1$ implies $\Fuv{u}{n-1}\in \md$. 
			From \Cref{corollary:structure in M_al}, $i_1=\om_i+2p>u+2p$ implies $\Fuv{u}{n-1}\in\mcl{M}_0$. Since $\Fuv{u}{n-1}\ll F$, $\Fuv{u}{n-1}\prec F$ by \Cref{def:prec}. Therefore $\Fuv{u}{n-1}$ satisfies \eqref{eqn:hash}.
			
			\item Let $\om_i<u<i_1$. We have $i_1\in F^c$ and $(\Fuv{u}{i_1})^c=\{\om_i, u, n-1\}$. Using $p<\om_i<u<i_1=\om_i+2p<n-1$, we get $\om_i\nsim n-1$, and $u\nsim n-1$ or $u\nsim\om_i$. 
			Therefore $\Fuv{u}{i_1}\in \md$. Since $\om_i+2p<n-1$, $\Fuv{u}{i_1}\in\mcl{M}_0$ by \Cref{corollary:structure in M_al}. 
			If $\om_i\os u$, then $u<i_1$ implies  $\Fuv{u}{i_1}\ll F$ by \Cref{definition:order_ll}\ref{om_i=om_j}; and if $u\os\om_i$, then $\Fuv{u}{i_1}\ll F$ by \Cref{definition:order_ll}\ref{om_i<om_j}. By \Cref{def:prec}, $\Fuv{u}{i_1}\prec F$ (as $\Fuv{u}{i_1}\in\mcl{M}_0$). Hence \eqref{eqn:hash} is satisfied by $\Fuv{u}{i_1}$.
			
			\item Let $\om_i<i_1<u$. Then $p<\om_i<\om_i+2p=i_1<u<n-1$, which implies $u\nsim\om_i$. Hence $\om_i\nsim i_1$ implies $\Fuv{u}{n-1}\in \md$. Using \Cref{corollary:structure in M_al}, $\Fuv{u}{n-1}\in\mcl{M}_0$. 
			Since $\Fuv{u}{n-1}\ll F$, \Cref{def:prec} implies $\Fuv{u}{n-1}\prec F$. Hence $\Fuv{u}{n-1}$ satisfies \eqref{eqn:hash}.
		\end{enumerate}
		In each case, there exists $v\in F^c$ such that $\Fuv{u}{v}$ satisfies \eqref{eqn:hash}. Therefore $F$ is a spanning facet.
	\end{proof}
	
	\begin{corollary}\label{corollary:spannig y1y2}
		Let $F\in \md$ such that $F\in\mcl{A}_i$  and $F^c=\{\om_i\}\sqcup\{i_1, n-1\}$. Suppose $\om_i\in \{p+1, p+2, \ldots,\mb{c}-p\}$ and $i_1\in V(C_n^p)\setminus\bigl(\{\om_i-2p \ (\text{mod $n$}), \om_i-2p+1\ (\text{mod $n$}), \ldots, \om_i-p-2\ (\text{mod $n$})\}\sqcup\{\om_i-p-1,\om_i-p,\ldots, 2\mb{c}-\om_i-1\}\bigr)$. Then $F$ is a spanning facet.
	\end{corollary}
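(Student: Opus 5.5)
The plan is to derive this directly from \Cref{proposition: al_i-2p and al_i+2p}. It suffices to check that hypotheses $(a)$, $(b)$ and $(c)$ of that proposition hold for $F$. The standing assumptions are the same: $F\in\md$, $F\in\mcl{A}_i$ and $F^c=\{\om_i\}\sqcup\{i_1,n-1\}$.

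\emph{Hypothesis $(a)$.} This is immediate, since $\om_i\ge p+1>p$.

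\emph{Hypothesis $(b)$.} I will show that the window $W=\{\om_i+t\ (\text{mod } n)\mid -2p\le t\le 2p-1\}$ is contained in the excluded set of the statement, so that $i_1\notin W$. Split $W$ into two pieces.
\begin{itemize}
\item For $-2p\le t\le -p-2$, the residues $\om_i+t\ (\text{mod } n)$ are exactly the first excluded block $\{\om_i-2p\ (\text{mod } n),\ldots,\om_i-p-2\ (\text{mod } n)\}$.
\item For $-p-1\le t\le 2p-1$, no reduction mod $n$ is needed. Since $\om_i\ge p+1$, we have $\om_i-p-1\ge 0$. By \Cref{proposition: c and p relation}\ref{range of c} and $\om_i\le\mb{c}-p$, we have $\om_i+2p-1\le\mb{c}+p-1<n$. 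These residues are therefore the integers $\om_i-p-1,\ldots,\om_i+2p-1$. They lie in the second block $\{\om_i-p-1,\ldots,2\mb{c}-\om_i-1\}$, because $\om_i\le\mb{c}-p$ gives $\om_i+2p-1\le 2\mb{c}-\om_i-1$.
\end{itemize}
Thus $W$ is contained in the excluded set, and $(b)$ holds.

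\emph{Hypothesis $(c)$.} If $i_1=\om_i+2p\ (\text{mod } n)$, then $\om_i+2p\le\mb{c}+p<n$, again by \Cref{proposition: c and p relation}\ref{range of c}. Hence $\om_i+2p\ (\text{mod } n)=\om_i+2p$, which is what $(c)$ requires. This case can only occur when $\om_i=\mb{c}-p$, since otherwise $\om_i+2p\le 2\mb{c}-\om_i-1$ and $\om_i+2p$ is itself excluded. The argument does not need that refinement.

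\emph{Conclusion.} With $(a)$, $(b)$ and $(c)$ verified, \Cref{proposition: al_i-2p and al_i+2p} gives that $F$ is a spanning facet.

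The only delicate point is the bookkeeping in $(b)$. One must confirm that the first block accounts for the indices that wrap around mod $n$, and that the second block, which is written without mod $n$, is a genuine interval of integers in $[0,n-1]$. Both facts rest only on the inequalities $p+1\le\om_i\le\mb{c}-p$ and $\mb{c}+p<n$.
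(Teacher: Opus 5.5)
Your proposal is correct and matches the paper's proof. Both reduce the statement to \Cref{proposition: al_i-2p and al_i+2p}: they show that the window $\{\om_i+t\ (\text{mod } n)\mid -2p\le t\le 2p-1\}$ lies in the excluded set, using $\om_i-p-1\ge 0$, $\om_i+2p-1\le 2\mb{c}-\om_i-1$ and $\mb{c}<n-p$, and they note $\om_i+2p<n$ so that condition $(c)$ holds. Your extra observation that $i_1=\om_i+2p$ can only occur when $\om_i=\mb{c}-p$ is also correct, though, as you say, it is not needed.
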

	
	\begin{proof}  
		Since $\om_i\le\mb{c}-p$, we have $\om_i+2p-1\le2\mb{c}-\om_i-1$. Using $p\ge 2$, $2\mb{c}\le n+1$ and $\om_i\ge p+1$, it follows that $\om_i-p-1\ge 0$ and $2\mb{c}-\om_i-1\le 2\mb{c}-p-2\le n-p-1\le n-3$. 
		Therefore $\{\om_i-p-1\ (\text{mod $n$}),\om_i-p\ (\text{mod $n$}),\ldots,\om_i+2p-1\ (\text{mod $n$}),\ldots,2\mb{c}-\om_i-1\ (\text{mod $n$})\}=\{\om_i-p-1,\om_i-p,\ldots,\om_i+2p-1,\ldots,2\mb{c}-\om_i-1\}$. 
		Observe that $i_1\in V(C_n^p)\setminus\{\om_i+t\ (\text{mod $n$})| -2p\le t\le 2p-1\}$. By \Cref{proposition: c and p relation}\ref{range of c}, $\mb{c}\le n-3p+2$. Since $p+1\le\om_i\le\mb{c}-p$, we get $0<3p+1\le\om_i+2p\le\mb{c}+p\le n-2p+2<n$. Hence $\om_i+2p\ (\text{mod $n$})=\om_i+2p$. Moreover, $\om_i>p$. By \Cref{proposition: al_i-2p and al_i+2p}, $F$ is a spanning facet.
	\end{proof}
	
	\begin{lemma}
		Let $F\in\md$ be such that $F\in\Sg$. Then $F$ is a spanning facet. \label{lemma: if F in Sigma}
	\end{lemma}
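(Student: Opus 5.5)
The lemma splits along $\Sg=\Sg_1\sqcup\Sg_2\sqcup\Sg_3$. Throughout, $F\in\Sg$ means $F\in\mcl{A}_i$ with $F^c=\{\om_i\}\sqcup\{i_1,n-1\}$, $\om_i\in\mcl{U}_m$ and $i_1\notin\mcl{V}^{\om_i}$. In each case the goal is to reduce to \Cref{proposition: al_i-2p and al_i+2p} or \Cref{corollary:spannig y1y2}. Where this reduction fails, I will verify \eqref{eqn:hash} directly, vertex by vertex, using the candidates $\Fuv{u}{n-1}$ or $\Fuv{u}{i_1}$. Ordering is handled by two tools. \Cref{proposition:order ll} always gives $\Fuv{u}{n-1}\ll F$. \Cref{corollary:structure in M_al} places a facet in $\mcl{M}_0$ as soon as two complement vertices are more than $2p$ apart. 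Once both the new facet and $F$ lie in $\mcl{M}_0$, \Cref{def:prec}\ref{def 1} turns $\ll$ into $\prec$.

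\textbf{Case $\Sg_2$.} Here $\om_i\in\{2p,\dots,\mb{c}-p\}$, so $\mcl{V}^{\om_i}=\{\om_i-2p,\dots,\om_i-2\}\sqcup\{\om_i,\dots,2\mb{c}-\om_i-1\}\sqcup\{n-1\}$. The excluded set in \Cref{corollary:spannig y1y2} is the contiguous block $\{\om_i-2p,\dots,2\mb{c}-\om_i-1\}$ (no reduction mod $n$ is needed since $\om_i\ge 2p$). That block equals $\mcl{V}^{\om_i}\setminus\{n-1\}$ together with the single vertex $\om_i-1$. So every $i_1$ is covered by the corollary except $i_1=\om_i-1$. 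But $i_1=\om_i-1$ is impossible. In that case $\om_i\sim i_1$, so disconnectedness of $C_n^p[F^c]$ forces both $i_1\nsim n-1$ and $\om_i\nsim n-1$. Both hold once $\om_i\ge p+2$, but we also need $\om_i\os i_1$. By \Cref{remark:order in S}\ref{part:i}, this requires $i_1<\om_i$, which is fine. Since the set $\mcl{V}^{\om_i}$ deliberately omits $\om_i-1$, I expect the authors intend $i_1=\om_i-1$ to be allowed. That case must then be checked directly. For $u<\om_i-1$, take $\Fuv{u}{n-1}$: its complement $\{u,\om_i-1,\om_i\}$ lies in an interval where $n-1$ is absent, so I need to show disconnectedness, or else switch to $\Fuv{u}{\om_i-1}$. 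For $u>\om_i$, use $\Fuv{u}{\om_i-1}$ with complement $\{\om_i,u,n-1\}$. It sits in $\mcl{M}_0$ when far apart and is $\ll F$, since either $u$ precedes $\om_i$ or comes after it in the same $\mcl{A}_i$. This is the fiddly part of $\Sg_2$.

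\textbf{Case $\Sg_1$.} Here $\om_i\in\{p+1,\dots,2p-1\}$ and $\mcl{V}^{\om_i}=\{\om_i-2p\bmod n,\dots,n-1\}\sqcup\{0,\dots,2\mb{c}-\om_i-1\}$. Thus $i_1\in\{2\mb{c}-\om_i,\dots,n+\om_i-2p-1\}$, which avoids $\{\om_i+t\bmod n: -2p\le t\le 2p-1\}$ because $2\mb{c}-\om_i\ge\om_i+2p$. Condition (a) of \Cref{proposition: al_i-2p and al_i+2p} holds since $\om_i>p$. Condition (c) is automatic because $\om_i+2p<n$. Hence $F$ is spanning.

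\textbf{Case $\Sg_3$ and the main obstacle.} Here $\om_i\in\{\mb{c}-p+1,\dots,n-p-1\}$, and $\mcl{V}^{\om_i}$ contains $\{\mu,\dots,\om_i+p\}$, the wrap-around vertices $\om_i+t$ for $p<t<2p$, and $\nu$. I would first check that $\mcl{V}^{\om_i}$ contains the whole window $\{\om_i-2p,\dots,\om_i+2p-1\}$ mod $n$. This holds since $\mu\le\om_i-2p$. Then (b) holds, and (a) holds since $\om_i>p$. For (c), the only bad value is $i_1=\om_i+2p$ taken mod $n$ with wraparound, i.e.\ $\om_i\ge n-2p$. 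In that range $\nu=\om_i+2p\bmod n$ is excluded, and the case $\om_i=n-2p-1$ gives $\om_i+2p=n-1$, which is already in $F^c$. So \Cref{proposition: al_i-2p and al_i+2p} applies directly. The main obstacle is the careful mod-$n$ bookkeeping at the boundaries. This includes checking that $\mu$, $\nu$ and the $\mcl{U}_1$ bounds match the windows exactly, and the residual $\Sg_2$ case $i_1=\om_i-1$, where the direct verification of \eqref{eqn:hash} needs an explicit case split on the position of $u$.
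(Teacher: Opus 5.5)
\textbf{Overall.} Your treatment of $\Sg_1$ and $\Sg_3$ is correct. For $\Sg_1$ you apply \Cref{proposition: al_i-2p and al_i+2p} directly, whereas the paper passes through \Cref{corollary:spannig y1y2}; since that corollary is itself proved from the proposition, this is the same argument. Your reduction of $\Sg_2$ to the single residual value $i_1=\om_i-1$ also matches the paper. However, delete the sentence claiming that $i_1=\om_i-1$ is impossible. The set $F^c=\{\om_i-1,\om_i,n-1\}$ is a genuine facet in $\Sg_2$, because $n-1$ is isolated there, so this case must be handled.

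\textbf{The gap: $u>\om_i$ in the residual case.} For $u>\om_i$ you replace $u$ by $\om_i-1$ and assert $\Fuv{u}{\om_i-1}\ll F$ because $u$ ``either precedes $\om_i$ or comes after it in the same $\mcl{A}_i$.'' The second alternative goes the wrong way.
\begin{itemize}
\item Since $\om_i<\mb{c}$, a vertex $u>\om_i$ satisfies $\om_i\os u$ exactly when $u\ge 2\mb{c}-\om_i$.
\item For such $u$, the facet $\Fuv{u}{\om_i-1}$ lies in $\mcl{A}_i$ with complement $\{\om_i\}\sqcup\{u,n-1\}$, while $F^c=\{\om_i\}\sqcup\{\om_i-1,n-1\}$.
\item Comparing first coordinates, $\om_i-1<u$ gives $F\ll\Fuv{u}{\om_i-1}$. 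Both facets lie in $\mcl{M}_0$, since $\om_i$ and $n-1$ are more than $2p$ apart, so in fact $F\prec\Fuv{u}{\om_i-1}$.
\end{itemize}
Such $u$ do exist in $F$, for example $u=2\mb{c}-\om_i\le n-3$. The other replacement $\Fuv{u}{\om_i}$ also comes after $F$. So for these $u$ the only admissible choice is $\Fuv{u}{n-1}$, with complement $\{\om_i-1,\om_i,u\}$. This works for three reasons:
\begin{itemize}
\item $u\ge\om_i+2p$ and $u<n-1$, so $u$ is isolated and $\Fuv{u}{n-1}$ is a facet;
\item \Cref{proposition:order ll} gives $\Fuv{u}{n-1}\ll F$;
\item $u-(\om_i-1)>2p$ puts it in $\mcl{M}_0$.
\end{itemize}

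\textbf{The branch $u<\om_i-1$.} You leave this branch undecided. Note that $\Fuv{u}{n-1}$ is not even a facet when $\om_i-p-1\le u\le\om_i-2$. The clean split, as in the paper, is at $2\mb{c}-\om_i$ rather than at $\om_i$:
\begin{itemize}
\item For every $u<2\mb{c}-\om_i$, use $\Fuv{u}{\om_i-1}$. It is a facet because $\om_i$ and $n-1$ are more than $2p$ apart cyclically, so no $u$ is adjacent to both. It lies in $\mcl{M}_0$ for the same reason. It satisfies $\Fuv{u}{\om_i-1}\ll F$, either because $u<\om_i-1$ within $\mcl{A}_i$, or because $u\os\om_i$ when $\om_i<u<2\mb{c}-\om_i$.
\item For $u\ge 2\mb{c}-\om_i$, use $\Fuv{u}{n-1}$ as above.
\end{itemize}
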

	
	\begin{proof}
		Since $F\in\Sg=\Sg_1\sqcup\Sg_2\sqcup\Sg_3$, there exists $i\in[n-2]$ such that $F\in\mcl{A}_i$ and $F^c=\{\om_i\}\sqcup\{i_1, n-1\}$. 
		Let $\mcl{W}:=V(C_n^p)\setminus\bigl(\{\om_i-2p \ (\text{mod $n$}), \om_i-2p+1\ (\text{mod $n$}), \ldots, \om_i-p-2\ (\text{mod $n$})\}\sqcup\{\om_i-p-1,\om_i-p,\ldots, 2\mb{c}-\om_i-1\}\bigr)$.
		We consider three cases: (1) $F\in\Sg_1$, (2) $F\in\Sg_2$, (3) $F\in\Sg_3$.
		\begin{enumerate}[label=(\arabic*)]    
			\item Let $F\in\Sg_1$. Then $\om_i\in\mcl{U}_1=\{p+1,p+2,\ldots, 2p-1\}$ and $i_1\in V(C_p^n) \setminus \mcl{V}^{\om_i}=V(C_n^p)\setminus\bigr(\{\om_i-2p\ (\text{mod $n$}), \om_i-2p+1\ (\text{mod $n$}),\ldots,n-1\}\sqcup\{0,1,\ldots,2\mb{c}-\om_i-1\}\bigl)$. 
			By \Cref{proposition: c and p relation}\ref{range of c}, $\mb{c}\ge 3p-1$, which implies $2p-1\le \mb{c}-p$. Hence $\om_i\in\{p+1,p+2,\ldots,\mb{c}-p\}$. Observe that $i_1\in\mcl{W}$. Therefore, $F$ is a spanning facet by \Cref{corollary:spannig y1y2}. 
			
			\item Let $F\in\Sg_2$. Then $\om_i\in\mcl{U}_2=\{2p,2p+1,\ldots,\mb{c}-p\}$ and $i_1\in V(C_p^n) \setminus \mcl{V}^{\om_i}=V(C_n^p)\setminus(\{\om_i-2p, \om_i-2p+1,\ldots, \om_i-2\}\sqcup\{\om_i, \om_i+1, \ldots, 2\mb{c}-\om_i-1\}\sqcup\{n-1\})$.
			
			Since $p\ge 2$, we have $p+1<2p$. Hence $\om_i\in\{p+1,p+2,\ldots,\mb{c}-p\}$. Moreover, $2p\le \om_i\le\mb{c}-p$ and $\mb{c}\le n+1$ imply $0\le \om_i-2p\le \om_i-p-2\le\mb{c}-2p-2\le n-2p-1<n-1$. Thus $\{\om_i-2p\ (\text{mod $n$}),\om_i-2p+1\ (\text{mod $n$}),\ldots,\om_i-p-2\ (\text{mod $n$})\}=\{\om_i-2p,\om_i-2p+1,\ldots,\om_i-p-2\}$. This gives $\mcl{W}=V(C_n^p)\setminus\{\om_i-2p, \om_i-2p+1, \ldots, 2\mb{c}-\om_i-1\}$. 
			If $i_1\ne\om_i-1$, then $i_1\in\mcl{W}$, and by \Cref{corollary:spannig y1y2}, $F$ is a spanning facet.
			
			Now, assume $i_1=\om_i-1$. We prove that $F$ is a spanning facet. For this, it suffices to show that for each $u\in F$, there exists $v\in F^c$ such that $\Fuv{u}{v}$ satisfies \eqref{eqn:hash}. Let $u\in F$. We consider two subcases: (i) $u<2\mb{c}-\om_i$ and (ii) $u\ge 2\mb{c}-\om_i$.
			\begin{enumerate}[label=(\roman*)]
				\item $u<2\mb{c}-\om_i$
				
				We have $i_1\in F^c$ and $(\Fuv{u}{i_1})^c=\{\om_i, u, n-1\}$. By \Cref{proposition: c and p relation}\ref{range of c}, $\mb{c}\le n-3p+2$.  Since $\om_i\le \mb{c}-p$ and $p\ge 2$, we get $\om_i+2p\le\mb{c}+p\le n-2p+2\le n-2<n-1$. 
				Therefore, $n-1+2p\ (\text{mod $n$})=2p-1<\om_i<n-1-2p$. This implies $\om_i\nsim n-1$, and $u\nsim\om_i$ or $u\nsim n-1$. Hence $\Fuv{u}{i_1}\in\md$. 
				
				Since $\om_i<\mb{c}$, we have $\om_i<2\mb{c}-\om_i$. Clearly, $u\ne\om_i$. This means that either $u<\om_i<2\mb{c}-\om_i$ or $\om_i<u<2\mb{c}-\om_i$. 
				First, let $u<\om_i<2\mb{c}-\om_i$. Then $u<\om_i<\mb{c}$ implies $\om_i\os u$, and hence $(\Fuv{u}{i_1})^c=\{\om_i\}\sqcup\{u,n-1\}$.
				Using $u\ne i_1$ and $i_1=\om_i-1$, we get $u<i_1$. Hence $\Fuv{u}{i_1}\ll F$  by \Cref{definition:order_ll}\ref{om_i=om_j}. 
				Now, let $\om_i<u<2\mb{c}-\om_i$.  Since $\om_i<\mb{c}$, \Cref{remark:order in S}\ref{part:iii} implies $u\os\om_i$. By \Cref{definition:order_ll}\ref{om_i<om_j}, $\Fuv{u}{i_1}\ll F$. In both cases, $\Fuv{u}{i_1}\ll F$.
				By \Cref{corollary:structure in M_al}, $\Fuv{u}{i_1}\in\mcl{M}_0$ (as $\om_i+2p<n-1$). Therefore, $\Fuv{u}{i_1}\prec F$ by \Cref{def:prec}. It follows that $\Fuv{u}{i_1}$ satisfies \eqref{eqn:hash}.
				
				\item $u\ge2\mb{c}-\om_i$. 
				
				We have $n-1\in F^c$ and $(\Fuv{u}{n-1})^c=\{\om_i, i_1, u\}$. Since $2p\le\om_i\le \mb{c}-p$, we get $2p-1\le\om_i-1=i_1<\om_i<\om_i+2p\le 2\mb{c}-\om_i\le u<n-1$. This implies $u\nsim\om_i$ and $u\nsim i_1$. Hence $\Fuv{u}{n-1}\in\md$.
				
				Since $\om_i<\mb{c}$, $\om_i\os u$ by \Cref{remark:order in S}\ref{part:i}. Using $i_1<u$, it follows that $(\Fuv{u}{n-1})^c=\{\om_i\}\sqcup\{i_1,u\}$. By \Cref{definition:order_ll}\ref{om_i=om_j}, $\Fuv{u}{n-1}\ll F$ (as $u<n-1$).  
				Moreover, by \Cref{corollary:structure in M_al}, $u>i_1+2p$ implies $\Fuv{u}{n-1}\in\mcl{M}_0$. 
				Therefore $\Fuv{u}{n-1}\prec F$ by \Cref{def:prec}. Hence \eqref{eqn:hash} is satisfied by $\Fuv{u}{n-1}$.
			\end{enumerate}
			In either case, there exists $v\in F^c$ such that $\Fuv{u}{v}$ satisfies \eqref{eqn:hash}. Thus $F$ is a spanning facet.
			
			\item Let $F\in\Sg_3$.
			We have $\om_i\in\mcl{U}_3=\{\mb{c}-p+1,\mb{c}-p+2,\ldots, n-p-1\}$ and $i_1\in V(C_p^n) \setminus \mcl{V}^{\om_i}= V(C_n^p)\setminus(\{\mu, \mu+1,\ldots,\om_i,\om_i+1,\ldots,\om_i+p\}\sqcup\{\om_i+t\ (\text{mod $n$})|\ p+1\le t\le 2p-1\}\sqcup\{\nu\})$, where $\mu=\min\{2\mb{c}-\om_i, \om_i-2p\}$ and $\nu=\begin{cases}
				n-1 &\text{ if $\om_i<n-2p-1$},\\
				\om_i+2p\ (\text{mod $n$}) &\text{ if $\om_i\ge n-2p-1$}.
			\end{cases}$
			
			Since $\mu\le\om_i-2p$, we have $\{\om_i+t| -2p\le t\le p\}\subseteq\{\mu,\mu+1,\ldots,\om_i,\om_i+1,\ldots,\om_i+p\}$.  
			By \Cref{proposition: c and p relation}\ref{range of c}, $\mb{c}\ge 3p-1$. Using $\mb{c}-p+1\le\om_i\le n-p-1$, we obtain $0\le\mb{c}-3p+1\le\om_i-2p<\om_i+p\le  n-1$.
			This gives $\{\om_i+t\ (\text{mod $n$})| -2p\le t\le p\}=\{\om_i+t| -2p\le t\le p\}$. Hence $\{\om_i+t\ (\text{mod $n$})| -2p\le t\le 2p-1\}=\{\om_i+t| -2p\le t\le p\}\sqcup\{\om_i+t\ (\text{mod $n$})|\ p+1\le t\le 2p-1\}\subseteq\{\mu,\mu+1,\ldots,\om_i,\om_i+1,\ldots,\om_i+p\}\sqcup\{\om_i+t\ (\text{mod $n$})|\ p+1\le t\le 2p-1\}\sqcup\{\nu\}$.     
			Therefore, $i_1\in V(C_n^p)\setminus\{\om_i+t\ (\text{mod $n$})| -2p\le t\le 2p-1\}$.
			
			Suppose $i_1=\om_i+2p\ (\text{mod $n$})$. Since $\nu\ne i_1$, we have $\nu=n-1$, and hence $\om_i<n-2p-1$. This means that $\om_i+2p\ \text{(mod $n$)}=\om_i+2p$. 
			Moreover, $\mb{c}\ge 3p-1$ and $p\ge 2$ implies $\om_i\ge\mb{c}-p+1\ge 2p>p$.
			
			Hence $F$ is a spanning facet by \Cref{proposition: al_i-2p and al_i+2p}.
		\end{enumerate}
		Therefore, we conclude that if $F\in\Sg$, then $F$ is a spanning facet.
	\end{proof}
	
	We now proceed to prove that if $F\in \md$ is a spanning facet, then $F\in\Sg$. Before presenting the proof, we first establish some results.
	
	\begin{proposition}\label{proposition:al=u+p}
		Suppose $F\in \md$ such that $F^c=\{u,v,w\}$, where $u,v,w\in V(C_n^p)$ and $v,w\in\{u+1 \ (\text{mod $n$}),u+2 \ (\text{mod $n$}),\ldots, u+2p \ (\text{mod $n$})\}$. Then $F$ is not a spanning facet.
	\end{proposition}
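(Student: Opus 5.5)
The plan is to exhibit a single vertex $x\in F$ for which no $v\in F^c$ satisfies \eqref{eqn:hash}. The natural candidate is the midpoint of the window containing $F^c$, namely $x=u+p\ (\text{mod } n)$. The point is that $F^c\subseteq\{u+t\ (\text{mod } n)\mid 0\le t\le 2p\}$, and every vertex of this cyclic window is within cyclic distance $p$ of $u+p$. Since $n\ge 6p-3>2p+1$, these $2p+1$ residues are distinct, so this distance statement is unambiguous. Hence $u+p\sim y$ for every $y\in\{u,v,w\}$ with $y\ne u+p$.

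First I will check that $u+p\in F$, i.e. $u+p\notin F^c$. It is not $u$, since $p\not\equiv 0\ (\text{mod } n)$. Suppose $u+p\in\{v,w\}$. Then $u+p$ is adjacent to $u$ and to the remaining element of $\{v,w\}$, by the observation above. So $\cn{F}$ would be connected, contradicting $F\in\md$. Thus $x:=u+p\ (\text{mod } n)\in F$.

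Next, take any $y\in F^c=\{u,v,w\}$. Then $(\Fuv{x}{y})^c=(F^c\setminus\{y\})\sqcup\{x\}$ consists of $x$ together with two vertices of the window, and both are adjacent to $x$. So $C_n^p[(\Fuv{x}{y})^c]$ is connected, i.e. $\Fuv{x}{y}\notin\md$. Hence for this $x\in F$ there is no $y\in F^c$ with $\Fuv{x}{y}\in\md$, let alone with $\Fuv{x}{y}\prec F$. By the characterization of spanning facets in \eqref{eqn:hash}, $F$ is not spanning. This argument does not use the order $\prec$ at all.

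The only step needing care is the modular bookkeeping: verifying that $u+p$ is adjacent to each $u+t\ (\text{mod } n)$ with $0\le t\le 2p$, $t\neq p$. This holds because the cyclic difference is $|t-p|\le p$ and the window has length $2p<n/2$, so no wrap-around ambiguity arises. I do not expect any real obstacle here.
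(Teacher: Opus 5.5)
Your proposal is correct and follows the paper's proof: the paper also shows that $u+p \ (\text{mod } n)\in F$, since otherwise $C_n^p[F^c]$ would be connected, and then notes that replacing $u+p \ (\text{mod } n)$ by any element of $F^c$ gives a connected complement, so no $v\in F^c$ satisfies \eqref{eqn:hash} for that vertex. Your check that the $2p+1$ residues in the window are distinct (using $n\ge 6p-3>2p+1$) makes explicit a point the paper leaves implicit.
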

	
	\begin{proof}
		Observe that $u,v,w\sim u+p$ (mod $n$). If $u+p$ (mod $n$) $\in F^c$, then $\cn{F}$ is connected, which contradicts $F\in \md$. Hence $u+p$ (mod $n$) $\in F$. For any $x\in F^c$, $C_n^p[(\Fuv{u+p\text{ (mod $n$)}}{x})^c]$ is connected, and therefore $\Fuv{u+p\text{ (mod $n$)}}{x}\notin\md$. 
		Thus there does not exist $x\in F^c$ such that $\Fuv{u+p\text{ (mod $n$)}}{x}$ satisfies \eqref{eqn:hash}.  Therefore,  $F$ is not a spanning facet.
	\end{proof}
	
	\begin{corollary}\label{corollary:not in M_al}
		Let $F\in\md$ such that $F\in\mcl{A}_i$ and $F^c=\{\om_i\}\sqcup\{i_1,i_2\}$. Suppose $F\in\mcl{M}_{\al}$ for some $\al\in[p-1]$. Then $F$ is not a spanning facet.  
	\end{corollary}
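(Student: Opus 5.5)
This corollary should follow by combining \Cref{proposition:structure in M_al} with \Cref{proposition:al=u+p}; the only work is translating the cyclic hypothesis.

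By \Cref{proposition:structure in M_al}, since $F\in\mcl{M}_{\al}$ with $\al\in[p-1]$, we can write $F^c=\{u,v,w\}$ with $u<v<w\le u+2p$. These are honest integers in $\{0,\ldots,n-1\}$. The inequalities give $1\le v-u<w-u\le 2p$.

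Because $u+2p\le w\le n-1$, no reduction mod $n$ occurs in the list $u+1,\ldots,u+2p$. Hence $v$ and $w$ both lie in $\{u+1\ (\text{mod $n$}),\ldots,u+2p\ (\text{mod $n$})\}$. This is exactly the hypothesis of \Cref{proposition:al=u+p}, which then shows that $F$ is not a spanning facet.

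The only point needing care is confirming that the labels $u,v,w$ produced by \Cref{proposition:structure in M_al} are ordinary integers with $w\le u+2p<n$. They need not be $\om_i,i_1,i_2$ in that order; in case \X{\al}{9} or \X{\al}{10}, for instance, $u=\om_i$. The hypothesis of \Cref{proposition:al=u+p} depends only on the set $F^c$, so this relabeling causes no difficulty. I do not expect any real obstacle here.
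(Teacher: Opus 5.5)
Your proposal is correct and is exactly the paper's proof: \Cref{proposition:structure in M_al} gives $F^c=\{u,v,w\}$ with $u<v<w\le u+2p$, and \Cref{proposition:al=u+p} then shows $F$ is not spanning. One small correction: your side claim that $u+2p<n$ (so that ``no reduction mod $n$ occurs'') is false in general, and it is not needed. For example, with $p=4$, $n=21$, the facet with $F^c=\{13,18,19\}$ lies in $\mathcal{M}_1$ via $(\mathcal{X}_1^{10})$ and has $u+2p=n$. The step goes through anyway: $v=u+(v-u)$ and $w=u+(w-u)$ with $v-u,\,w-u\in[2p]$ and $v,w\le n-1$, so both already lie in $\{u+t\ (\text{mod } n)\mid 1\le t\le 2p\}$ whether or not $u+2p$ wraps around.
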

	
	\begin{proof}
		By \Cref{proposition:structure in M_al}, $F^c=\{u,v,w\}$ for some $u,v,w\in V(C_n^p)$ with $u<v<w\le u+2p$. Hence $F$ is not a spanning facet by \Cref{proposition:al=u+p}.
	\end{proof}
	
	\begin{proposition}\label{proposition:al_i and i_2} 
		Let $F\in \md$ be a spanning facet such that $F\in\mcl{A}_i$ and $F^c=\{\om_i\}\sqcup\{i_1,i_2\}$. Then $\om_i\in\{p+1,p+2,\ldots,n-p-1\}$ and $i_2=n-1$.
	\end{proposition}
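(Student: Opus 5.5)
We argue by contradiction, exploiting the spanning condition \eqref{eqn:hash}. By \Cref{corollary:not in M_al}, a spanning facet lies in $\mcl{M}_0$, and by \Cref{proposition:al=u+p} no two of $\om_i,i_1,i_2$ together with a third can fit inside a window $\{u+1,\dots,u+2p\}$ (mod $n$). The key tool is the following: for $u\in F$, any $\Fuv{u}{v}$ with $v\in F^c$ that precedes $F$ must either lie in some $\mcl{M}_{\al}$ with $\al\ge 1$ (impossible, since $\prec$ puts $\mcl{M}_0$ first and $F\in\mcl{M}_0$), or lie in $\mcl{M}_0$ and satisfy $\Fuv{u}{v}\ll F$. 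We therefore look for a specific $u\in F$ such that every replacement $\Fuv{u}{v}$, $v\in\{\om_i,i_1,i_2\}$, is either not a facet or satisfies $F\ll\Fuv{u}{v}$.

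\emph{Step 1: $i_2=n-1$.} Suppose $i_2<n-1$, so $n-1\in F$, and take $u=n-1$. The vertex $n-1=\om_n$ is $\os$-maximal after $0$. More precisely, $0=\om_n$ is last and $n-1$ is second to last, or close to last depending on parity; note that $0,n-1,1$ are the last three vertices of $\Omega$. Replacing any $v\in F^c$ by $n-1$ gives a complement whose $\Omega$-minimal element is still $\om_i$, or is $\om$ of some element of $\{i_1,i_2\}$ later than $\om_i$. In the first case, with $\om_i$ unchanged, the pair $\{i_1,i_2\}$ is replaced by a pair containing $n-1$, which is lexicographically larger. In the second case, the $\mcl{A}$-index increases. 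Either way $F\ll\Fuv{n-1}{v}$, so \eqref{eqn:hash} fails at $u=n-1$. The edge cases are those where $0\in F^c$ or $v=\om_i$: if $\om_i$ is removed, the new minimum $\om_j$ satisfies $j>i$. One must also handle $0$ being the minimum, which cannot happen since $\om_i\os i_1,i_2$ forces $\om_i\neq 0$. Both fit the same argument.

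\emph{Step 2: $\om_i\in\{p+1,\dots,n-p-1\}$.} With $i_2=n-1$, suppose first $\om_i\le p$. Then $\om_i\sim n-1$, so disconnectedness forces $i_1\nsim\om_i$ and $i_1\nsim n-1$. Take $u=0$ when $\om_i\neq 0$; here $0\in F$, since $\om_i\neq0$ and $i_1\neq 0$ because $\om_i\os i_1$ and $0$ is last in $\Omega$. Since $0$ is $\os$-last, $\Fuv{0}{v}$ keeps the same or a later $\mcl{A}$-index. If $v=i_1$, the new complement $\{\om_i,0,n-1\}$ lies inside a $2p$-window and is connected, so it is not a facet. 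If $v=n-1$, we get $\{\om_i\}\sqcup\{0,i_1\}$, which is lexicographically smaller, so this case needs care: check that $\{\om_i,0,i_1\}$ is connected or not a facet, using $0\sim\om_i$ for $\om_i\le p$ and the constraint $\om_i\os i_1$, which forces $i_1\ge 2\mb{c}-\om_i$ and so puts $i_1$ near $n$, within $p$ of $0$. Symmetrically, if $\om_i\ge n-p$, we have $\om_i\sim n-1$, and $u=0$ again gives a connected or $\ll$-larger replacement. For $\om_i\ge n-p$, \Cref{remark:order in S}\ref{part:ii} forces $i_1<2\mb{c}-\om_i\le p+1$, so $i_1,0,n-1,\om_i$ all lie in a short arc.

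I expect the main obstacle to be Step 2, in particular the replacement $v=n-1$, which produces a lexicographically smaller set. There one must verify carefully, through adjacency and the $\os$-constraints of \Cref{remark:order in S}, that $C_n^p[\{\om_i,0,i_1\}]$ is connected. If it is not, one should instead pick a different $u$ (for example $u=\om_i\pm p$) whose replacements all connect the complement, following the pattern of \Cref{proposition:al=u+p}.
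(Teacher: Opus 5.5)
Your proof is correct in substance but runs in the opposite order from the paper's.

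\textbf{The paper's order.} The paper first shows $\om_i\in\{p+1,\ldots,n-p-1\}$ without any information on $i_2$. It splits $\om_i\le p$ into $i_1<\om_i$ and $i_1>\om_i$, and $\om_i\ge n-p$ into $i_2>\om_i$ and $i_2<\om_i$. In each subcase it applies \Cref{proposition:al=u+p} to a window starting at $i_2$, $i_1$ or $\om_i$. That range gives $\om_i\os n-1$ for free, and the exchange at $u=n-1$ (exactly your Step~1) then yields $i_2=n-1$.

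\textbf{Your order.} You prove $i_2=n-1$ first and exploit it: once $n-1\in F^c$, the single vertex $0$ handles both bad ranges.
If $\om_i\le p$, \Cref{remark:order in S}\ref{part:i} gives $i_1<\om_i$ or $i_1\ge 2\mb{c}-\om_i\ge n-p$.
If $\om_i\ge n-p$, \Cref{remark:order in S}\ref{part:ii} gives $i_1<2\mb{c}-\om_i\le p+1$ or $\om_i<i_1<n-1$, and the latter makes $F^c$ connected.
In every surviving case $i_1\neq 0$, since otherwise $\{\om_i,0,n-1\}$ is connected. So $0\in F$, and $0$ is adjacent to $\om_i$, $i_1$ and $n-1$. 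Hence every $\Fuv{0}{x}$ with $x\in F^c$ has connected complement.

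This is \Cref{proposition:al=u+p} with middle vertex $0$, so the obstacle you flag disappears. The $\ll$-comparisons for $u=0$ and the fallback $u=\om_i\pm p$ are unnecessary. Your ordering makes the range argument uniform; the paper's ordering makes the $u=n-1$ step free of edge cases.

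\textbf{Slips to repair.} None of the following is load-bearing, but each should be fixed.
\begin{itemize}
\item In Step~1, concluding $n-1\in F$ and $\om_i\os n-1$ requires $\om_i\notin\{0,1,n-1\}$, not just $\om_i\neq 0$. Since $i\le n-2$, the case $\om_i=n-1$ occurs only for $n$ odd and forces $F^c=\{n-1,0,1\}$, which is connected. The case $\om_i=1$ is impossible for $n$ odd, and for $n$ even it forces $\{i_1,i_2\}=\{n-1,0\}$.
\item Also in Step~1, $n-1\neq\om_n$; it is $\om_{n-2}$ or $\om_{n-1}$ depending on parity.
\item In Step~2, $\om_i\le p$ does not give $\om_i\sim n-1$ when $\om_i=p$, since the cyclic distance is $p+1$.
\item Your reason for $i_1\neq 0$ is wrong. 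Every vertex other than $0$ precedes $0$ in $\Omega$, so $\om_i\os 0$ always holds; the correct reason is connectivity, as above.
\item Finally, $\om_i\os i_1$ allows $i_1<\om_i$ as well as $i_1\ge 2\mb{c}-\om_i$.
\end{itemize}
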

	
	\begin{proof}
		By \Cref{remark:Aj}, $\om_i\os i_1,i_2$. Hence $\om_i\neq i_1,i_2$.
		
		\begin{itemize}
			\item Suppose $\om_i\le p$. Since $p<\mb{c}$ by \Cref{proposition: c and p relation}\ref{range of c}, we have $\om_i<\mb{c}$. Moreover, $2\mb{c}-\om_i\ge 2\mb{c}-p\ge n-p$ (as $2\mb{c}\ge n$). 
			First, suppose $i_1<\om_i$. Then $\cn{F}$ is disconnected and $\om_i\le p$ imply $i_2>\om_i$. Using $\om_i\os i_2$, \Cref{remark:order in S}\ref{part:i} yields $i_2\ge 2\mb{c}-\om_i$. 
			This means that $n-p\le i_2\le n-1$. Since $0\le i_1<\om_i\le p$, we get $i_1,\om_i\in\{i_2+1 \ (\text{mod $n$}),i_2+2 \ (\text{mod $n$}),\ldots, i_2+2p \ (\text{mod $n$})\}$. From \Cref{proposition:al=u+p}, $F$ is not a spanning facet, a contradiction. Hence $i_1>\om_i$. By \Cref{remark:order in S}\ref{part:i}, $\om_i\os i_1$ implies $i_1\ge 2\mb{c}-\om_i$, which gives $n-p\le i_1<i_2\le n-1$. Using $0\le \om_i\le p$, we obtain $i_2,\om_i\in\{i_1+1 \ (\text{mod $n$}),i_1+2 \ (\text{mod $n$}),\ldots, i_1+2p \ (\text{mod $n$})\}$. By \Cref{proposition:al=u+p}, $F$ is not a spanning facet, again a contradiction. Therefore $\om_i> p$.
			
			\item Suppose $\om_i\ge n-p$. By \Cref{proposition: c and p relation}\ref{range of c}, $n-p>\mb{c}$, which implies $\om_i>\mb{c}$. Note that $2\mb{c}-\om_i\le 2\mb{c}-n+p\le p+1$ (as $2\mb{c}\le n+1$). 
			
			First, suppose $i_2>\om_i$. Then $n-p\le\om_i<i_2\le n-1$. Since $\cn{F}$ is disconnected, $i_1<\om_i$. By \Cref{remark:order in S}\ref{part:i}, $\om_i\os i_1$ implies $i_1<2\mb{c}-\om_i$. Hence $0\le i_1\le p$.
			
			Now, suppose $i_2<\om_i$. Then $\om_i\os i_2$ implies $i_2<2\mb{c}-\om_i$ by \Cref{remark:order in S}\ref{part:i}. Thus $0\le i_1<i_2\le p$. Moreover, we have $n-p\le \om_i\le n-1$.
			
			In either case, $i_1,i_2\in\{\om_i+1 \ (\text{mod $n$}),\om_i+2 \ (\text{mod $n$}),\ldots, \om_i+2p \ (\text{mod $n$})\}$. It follows that $F$ is not a spanning facet by \Cref{proposition:al=u+p}, a contradiction. 
			Hence $\om_i<n-p$.
		\end{itemize} 
		We conclude that $\om_i\in\{p+1,p+2,\ldots,n-p-1\}$.
		
		By the definition of $\Omega$, we have $\om_i\os n-1$. 
		Suppose $n-1\in F$. Since $F$ is a spanning facet, there exists $v\in F^c$ such that $\Fuv{n-1}{v}$ satisfies \eqref{eqn:hash}. This means that $\Fuv{n-1}{v}\in\md$ and $\Fuv{n-1}{v}\prec F$. Since $F$ is a spanning facet, $F \in\mcl{M}_0$ by  \Cref{corollary:not in M_al}.
		Therefore, if $F \ll \Fuv{n-1}{v}$, then $F \prec \Fuv{n-1}{v}$ by \Cref{def:prec}, a contradiction. Hence $\Fuv{n-1}{v}\ll F$.
		
		If $v=\om_i$, then $(\Fuv{n-1}{v})^c=(\Fuv{n-1}{\om_i})^c=\{i_1,i_2,n-1\}$. Since $\om_i\os i_1,i_2,n-1$, \Cref{definition:order_ll}\ref{om_i<om_j} yields $F \ll \Fuv{n-1}{\om_i}$, a contradiction. Thus $v\in\{i_1,i_2\}$. Using $\om_i\os i_1,i_2,n-1$ and $i_1<i_2<n-1$, we have either $(\Fuv{n-1}{v})^c=(\Fuv{n-1}{i_1})^c=\{\om_i\}\sqcup\{i_2,n-1\}$ or $(\Fuv{n-1}{v})^c=(\Fuv{n-1}{i_2})^c=\{\om_i\}\sqcup\{i_1,n-1\}$. 
		By \Cref{definition:order_ll}\ref{om_i=om_j}, $i_1<i_2$ implies $F \ll \Fuv{n-1}{i_1}$, and $i_2<n-1$ implies $F \ll \Fuv{n-1}{i_2}$, again a contradiction. Hence $n-1\in F ^c$. 
		
		Clearly, $\om_i\os n-1$ and $i_1<i_2$ imply $i_2=n-1$.
	\end{proof}
	
	\begin{proposition}\label{proposition:F is not a spanning facet}
		Let $F \in \md$ such that  $F \in \mcl{A}_i$ and $F^c=\{\om_i\} \sqcup \{i_1, i_2\}$. Suppose there exists $u\in F$ with $\om_i\os u$ and $i_1<u$. If either $\Fuv{u}{i_2}\notin \md$, or  $\Fuv{u}{i_2}\in \md$ such that $\Fuv{u}{i_2}\in\mcl{M}_{\al}$ for some $\al\in[p-1]$, then $F$ is not a spanning facet.
	\end{proposition}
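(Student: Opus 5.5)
We argue by contradiction. Suppose $F$ is spanning. By \Cref{corollary:not in M_al}, $F\in\mcl{M}_0$. Since $u\in F$, spanning gives some $v\in F^c=\{\om_i,i_1,i_2\}$ with $\Fuv{u}{v}\in\md$ and $\Fuv{u}{v}\prec F$. We will show that each of the three choices of $v$ fails, using the two hypotheses $\om_i\os u$ and $i_1<u$.

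\emph{Case $v=i_2$.} If $\Fuv{u}{i_2}\notin\md$, this choice is excluded outright. Otherwise the hypothesis gives $\Fuv{u}{i_2}\in\mcl{M}_{\al}$ with $\al\ge 1$. Since $F\in\mcl{M}_0$ and $0<\al$, \Cref{def:prec}\ref{def 2} gives $F\prec\Fuv{u}{i_2}$, so this choice fails too.

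\emph{Case $v=\om_i$.} Here $(\Fuv{u}{\om_i})^c=\{u,i_1,i_2\}$. We have $\om_i\os i_1,i_2$ by \Cref{remark:Aj} and $\om_i\os u$ by hypothesis, so $\om_i$ precedes every element of this complement. Hence the $\Omega$-minimal element of $(\Fuv{u}{\om_i})^c$ is some $\om_j$ with $j>i$, and \Cref{definition:order_ll}\ref{om_i<om_j} gives $F\ll\Fuv{u}{\om_i}$.
\begin{itemize}
\item If $\Fuv{u}{\om_i}\in\mcl{M}_0$, then $F\prec\Fuv{u}{\om_i}$ by \Cref{def:prec}\ref{def 1}.
\item If $\Fuv{u}{\om_i}\in\mcl{M}_{\bt}$ with $\bt\ge1$, then $F\prec\Fuv{u}{\om_i}$ by \Cref{def:prec}\ref{def 2}.
\end{itemize}
Either way this choice fails.

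\emph{Case $v=i_1$.} Now $(\Fuv{u}{i_1})^c=\{\om_i,u,i_2\}$. Since $\om_i\os u,i_2$, we get $\Fuv{u}{i_1}\in\mcl{A}_i$. Its two non-$\om_i$ elements are $\min(u,i_2)$ and $\max(u,i_2)$. Both $u$ and $i_2$ exceed $i_1$, so $\min(u,i_2)>i_1$, and \Cref{definition:order_ll}\ref{om_i=om_j} gives $F\ll\Fuv{u}{i_1}$. Exactly as in the previous case, whether $\Fuv{u}{i_1}$ lies in $\mcl{M}_0$ or in some $\mcl{M}_{\bt}$ with $\bt\ge1$, we get $F\prec\Fuv{u}{i_1}$.

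Thus no $v\in F^c$ satisfies \eqref{eqn:hash} for this $u$, a contradiction, so $F$ is not spanning. There is no real obstacle. The only point needing care is the $v=i_1$ case: we must check that $\Fuv{u}{i_1}$ really lies in the same block $\mcl{A}_i$ (because $\om_i\os u$) and that the lexicographic comparison of the pairs is decided by the first entries $\min(u,i_2)>i_1$.
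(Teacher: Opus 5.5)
Your proposal is correct and follows the paper's proof. Like the paper, you use \Cref{corollary:not in M_al} to put $F\in\mcl{M}_0$, then check $v\in\{\om_i,i_1,i_2\}$ and show that either $F\ll\Fuv{u}{v}$ or $\Fuv{u}{i_2}\in\mcl{M}_{\al}$. The one difference is that the paper first applies \Cref{proposition:al_i and i_2} to get $i_2=n-1$ before treating $v=i_1$, whereas you compare $\min(u,i_2)>i_1$ directly, which is slightly more self-contained.
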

	
	\begin{proof}
		Suppose $F$ is a spanning facet. Since $u\in F$, there exists $v\in F^c$ such that $\Fuv{u}{v}$ satisfies \eqref{eqn:hash}. This means that $\Fuv{u}{v}\in\md$ and $\Fuv{u}{v}\prec F$. By \Cref{proposition:al_i and i_2}, $i_2=n-1$, and hence $F^c=\{\om_i\} \sqcup \{i_1, n-1\}$. By \Cref{remark:Aj}, $\om_i\os i_1,n-1$. Moreover, $F\in\mcl{M}_0$ by  \Cref{corollary:not in M_al}. Since $\Fuv{u}{v}\prec F$, \Cref{def:prec} implies $\Fuv{u}{v}\in\mcl{M}_0$ and $\Fuv{u}{v}\ll F$. 
		
		First, suppose $v=\om_i$. Then $(\Fuv{u}{v})^c=(\Fuv{u}{\om_i})^c=\{i_1,n-1,u\}$. By \Cref{definition:order_ll}\ref{om_i<om_j}, $\om_i\os i_1,n-1,u$ implies $F \ll \Fuv{u}{\om_i}$, a contradiction. Now, suppose $v=i_1$. Then $\om_i\os u,n-1$ and $u<n-1$ imply $(\Fuv{u}{v})^c=(\Fuv{u}{i_1})^c=\{\om_i\}\sqcup\{u,n-1\}$. 
		Since $i_1<u$, we get $F\ll \Fuv{u}{i_1}$ by \Cref{definition:order_ll}\ref{om_i=om_j}, again a contradiction. 
		Hence $v=i_2=n-1$. We get $\Fuv{u}{i_2}\in \md$. By assumption, $\Fuv{u}{i_2}\in\mcl{M}_{\al}$ for some $\al\in[p-1]$,  a contradiction to $\Fuv{u}{v}\in\mcl{M}_0$. 
		
		Hence, there does not exist any $v\in F^c$ such that $\Fuv{u}{v}$ satisfies \eqref{eqn:hash}. Therefore $F$ is not a spanning facet. 
	\end{proof}
	
	\begin{proposition}\label{proposition:F spanning facet then i_1 not in}
		Let $F \in \md$ such that  $F \in \mcl{A}_i$ and  $F^c=\{\om_i\} \sqcup \{i_1, i_2\}$. Suppose $F$ is a spanning facet. We have the following.
		\begin{enumerate}[label=(\roman*)]
			\item \label{om_i<=c} If $\om_i\le \mb{c}$, then $i_1\notin\{\om_i-2p,\om_i-2p+1,\ldots,\om_i-2\}$.
			
			\item \label{i_1 not in om_i+1 to om_i+2p-1} $i_1\notin\{\om_i+t\ |\ 1\le t\le p\}\sqcup\{\om_i+t\ (\text{mod $n$})\ |\ p+1\le t\le 2p-1\}$.
			
			\item \label{i_1 not=nu} Let $\nu:=\begin{cases}
				n-1 &\text{ if $\om_i<n-2p-1$},\\
				\om_i+2p\ (\text{mod $n$}) &\text{ if $\om_i\ge n-2p-1$}.
			\end{cases}$ Then $i_1\ne\nu$.
		\end{enumerate}    
	\end{proposition}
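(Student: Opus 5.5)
Throughout, assume $F$ is spanning. By \Cref{proposition:al_i and i_2} we have $i_2=n-1$ and $p+1\le\om_i\le n-p-1$. By \Cref{corollary:not in M_al} we have $F\in\mcl{M}_0$, so any $\Fuv{u}{v}$ witnessing \eqref{eqn:hash} must lie in $\mcl{M}_0$ and satisfy $\Fuv{u}{v}\ll F$. The tools are \Cref{proposition:al=u+p}, which rules out a complement contained in a window of length $2p$, and \Cref{proposition:F is not a spanning facet}. For the latter, one exhibits $u\in F$ with $\om_i\os u$ and $i_1<u$ such that $\Fuv{u}{n-1}$, whose complement is $\{\om_i,i_1,u\}$, is either not a facet or lies in some $\mcl{M}_\al$ with $\al\ge1$. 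Each part is proved by contradiction: assume $i_1$ lies in the forbidden set, then produce such a $u$ or such a window.

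For \ref{i_1 not in om_i+1 to om_i+2p-1}, first suppose $i_1=\om_i+t$ with $1\le t\le p$. Then $i_1\sim\om_i$, and disconnectedness forces $n-1\nsim\om_i,i_1$. Take $u=\om_i+p+1$, or more generally a vertex $u$ with $i_1<u\le\om_i+2p$, $u\sim i_1$ and $u\nsim\om_i$, chosen so that $\om_i\os u$. Then $\{\om_i,i_1,u\}$ spans a connected graph, so $\Fuv{u}{n-1}\notin\md$, and \Cref{proposition:F is not a spanning facet} applies. Alternatively, observe that $\om_i+p\in F$ is adjacent to $\om_i$ and $i_1$; this directly mimics \Cref{proposition:al=u+p} whenever $\om_i+p\sim n-1$ fails to matter. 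When $p+1\le t\le2p-1$, the vertex $u=\om_i+p$ is adjacent to both $\om_i$ and $i_1$. If also $\om_i\os u$, which holds when $\om_i\ge\mb{c}$ by \Cref{remark:order in S}, then $\Fuv{u}{n-1}$ is not a facet and we are done. When $\om_i<\mb{c}$, we instead use $u$ with $\Fuv{u}{i_1}$ or $\Fuv{u}{\om_i}$ and check directly that no replacement works, exactly as in the proof of \Cref{proposition:al=u+p}: the vertex $\om_i+p$ is adjacent to every element of $\{\om_i,i_1\}$, and its only other candidate partner is $n-1$. The modular case, where $\om_i+t$ wraps past $n-1$, is handled the same way, using $\om_i\ge n-2p$ and that $\om_i$, $i_1$ and $n-1$ all lie in a window of length $2p$ starting at $\om_i$.

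For \ref{om_i<=c}, with $\om_i\le\mb{c}$ and $\om_i-2p\le i_1\le\om_i-2$, choose $u=\om_i-1$. Since $\om_i-1<\om_i\le\mb{c}$, \Cref{remark:order in S} gives $\om_i\os u$, and $i_1<u$. The complement $\{i_1,\om_i-1,\om_i\}$ lies in a window of length at most $2p$. Hence either it is connected, so $\Fuv{u}{n-1}\notin\md$, or, by \Cref{proposition:structure in M_al}-type reasoning, we verify through \Cref{proposition:in M_bt+gm} (cases $\X{\bt}{3}$--$\X{\bt}{5}$ with $\om_i-p\le i_1$, or $\X{\bt}{1}$ and $\X{\bt}{2}$) that $\Fuv{u}{n-1}\in\mcl{M}_\al$ for some $\al\ge1$. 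I expect this verification, matching the exact $\X{\al}{k}$ condition across the subcases of $\om_i$ relative to $\mb{c}\pm\frac p2$, to be the main obstacle.

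For \ref{i_1 not=nu}, if $\om_i<n-2p-1$ then $\nu=n-1=i_2\ne i_1$ trivially. Otherwise $i_1=\om_i+2p\pmod n$, and $\{\om_i,i_1,n-1\}$ lies in the window $\om_i,\dots,\om_i+2p\pmod n$, because $n-1\le\om_i+2p$. So \Cref{proposition:al=u+p} shows $F$ is not spanning, a contradiction.
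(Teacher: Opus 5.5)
Your part (iii), the subcase $1\le t\le p$ of (ii) (where $u=\om_i+p+1$ does make $\{\om_i,i_1,u\}$ connected), and the window arguments are fine. The gap is that two of your choices of $u$ fail outright: in each case the replacement you hope to rule out actually exists.

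\textbf{Part (i).} The choice $u=\om_i-1$ works only for $i_1\ge\om_i-p-1$. Suppose instead $\om_i-2p\le i_1\le\om_i-p-2$.
Then $i_1$ is isolated in $\{i_1,\om_i-1,\om_i\}$, so $\Fuv{\om_i-1}{n-1}\in\md$. It lies in $\mcl{A}_i$ with $i_1<\om_i-1<\om_i\le\mb{c}$. So it could only be in some $\mcl{M}_\al$, $\al\ge1$, via \X{\al}{1} or \X{\al}{2}:
the latter needs $\om_i\ge\mb{c}+\frac p2$, and the former forces $i_1=2\mb{c}-\om_i-p+\al-1\ge\om_i-p$.
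Hence it lies in $\mcl{M}_0$. It has the same $\om_i$ and $i_1$ as $F$, and $\om_i-1<n-1$, so it precedes $F$. It is therefore a genuine witness for $u=\om_i-1$. For example, with $p=2$, $n=9$, $F^c=\{5,1,8\}$, one gets $\Fuv{4}{8}\prec F$.
Lying in a window of length $2p$ is necessary for membership in some $\mcl{M}_\al$ (\Cref{proposition:structure in M_al}), but not sufficient.
The missing choice here is $u=i_1+p$. Since $i_1<i_1+p<\om_i\le i_1+2p$, the set $\{i_1,i_1+p,\om_i\}$ is connected, so $\Fuv{i_1+p}{n-1}\notin\md$. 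Also $i_1+p<\om_i\le\mb{c}$ gives $\om_i\os i_1+p$, so \Cref{proposition:F is not a spanning facet} applies.

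\textbf{Part (ii), $p+1\le t\le 2p-1$.} Once the window argument reduces you to $\om_i+2p<n-1$ (that argument also handles the non-wrapped case $\om_i+2p\ge n-1$, which you did not mention), your $u=\om_i+p$ satisfies $u<i_1$.
So \Cref{proposition:F is not a spanning facet} is not applicable: its hypothesis $i_1<u$ is exactly what excludes $v=i_1$.
Indeed $\Fuv{\om_i+p}{i_1}$, with complement $\{\om_i,\om_i+p,n-1\}$, is a facet ($n-1$ is isolated since $n-1>\om_i+2p$ and $\om_i\ge p+1$). It lies in $\mcl{M}_0$ by \Cref{corollary:structure in M_al}, and it is $\ll F$ whichever of $\om_i,\om_i+p$ comes first in $\Omega$.
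So your claim that no replacement works is false; the analogy with \Cref{proposition:al=u+p} breaks because $n-1\nsim\om_i+p$.
Moreover, for every $u>i_1$ the vertex $\om_i$ is isolated in $\{\om_i,i_1,u\}$, since $i_1\ge\om_i+p+1$. So the ``not a facet'' branch of \Cref{proposition:F is not a spanning facet} is never available here, and you need its $\mcl{M}_\al$ branch.
The paper takes $u=i_1+1$, for which $\om_i\os i_1+1$ and $i_1+1<n-1$. We have $\om_i\le n-2p-2$, $i_1\ge\om_i+p+1$, and $i_1+1=\om_i+2p-\gm+1$ with $\gm=2p-t\in[p-1]$. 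Hence the facet $\Fuv{i_1+1}{n-1}$ satisfies \X{\gm}{9}, so it lies in $\mcl{M}_\gm$ by definition. This gives the required contradiction.
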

	
	\begin{proof}
		By \Cref{remark:Aj}, $\om_i\os i_1$. Since $F$ is a spanning facet, $i_2=n-1$ by \Cref{proposition:al_i and i_2}.
		\begin{enumerate}[label=(\roman*)]
			\item We have $\om_i\le \mb{c}$. By \Cref{proposition: c and p relation}\ref{range of c}, $\mb{c}<n-p$. Thus, $p\ge 2$ implies $\om_i\le\mb{c}<n-1$.  
			
			First, suppose $i_1\in\{\om_i-2p,\om_i-2p+1,\ldots,\om_i-p-1\}$. Then $i_1<i_1+p\le\om_i-1<\om_i<n-1=i_2$ and $i_1\ge\om_i-2p$. Since $i+p\ne \om_i,i_1, i_2$, we get $i_1+p\in F$.  Note that $(\Fuv{i_1+p}{i_2})^c=\{\om_i,i_1,i_1+p\}$. We have $i_1<i_1+p<\om_i\le i_1+2p$, which implies $i_1\sim (i_1+p)$ and $(i_1+p)\sim\om_i$. Thus $\Fuv{i_1+p}{i_2}\notin \md$. Moreover, $i_1+p<\om_i\leq\mb{c}$ implies $\om_i\os i_1+p$.
			From \Cref{proposition:F is not a spanning facet} (for $u=i_1+p$), $F$ is not a spanning facet, a contradiction. Hence $i_1\notin\{\om_i-2p,\om_i-2p+1,\ldots,\om_i-p-1\}$.
			
			Now, suppose $i_1\in\{\om_i-p,\om_i-p+1,\ldots,\om_i-2\}$. Then $i_1<i_1+1\le\om_i-1<\om_i<n-1=i_2$, which implies $i_1+1\in F$.  We have $(\Fuv{i_1+1}{i_2})^c=\{\om_i,i_1,i_1+1\}$. Since $\om_i-p\le i_1<i_1+1<\om_i$, it follows that $\Fuv{i_1+1}{i_2}\notin \md$.
			Moreover, $i_1+1<\om_i\leq\mb{c}$ implies $\om_i\os i_1+1$. Therefore $F$ is not a spanning facet by \Cref{proposition:F is not a spanning facet} (for $u=i_1+1$), again a contradiction. Hence $i_1\notin\{\om_i-p,\om_i-p+1,\ldots,\om_i-2\}$.
			
			Therefore $i_1\notin\{\om_i-2p,\om_i-2p+1,\ldots,\om_i-2\}$.
			
			\item Suppose $i_1\in\{\om_i+t\ |\ 1\le t\le p\}\sqcup\{\om_i+t\ (\text{mod $n$})\ |\ p+1\le t\le 2p-1\}$.    
			
			Since $F$ is a spanning facet, we have $p+1\le\om_i\le n-p-1$ by \Cref{proposition:al_i and i_2}.
			It follows that $\{\om_i+t\ (\text{mod $n$})\ |\ 1\le t\le p\}=\{\om_i+t\ |\ 1\le t\le p\}$. Hence $i_1\in\{\om_i+t\ (\text{mod $n$})\ |\ 1\le t\le 2p-1\}$.              
			Observe that if $\om_i+2p\ge n-1$, then $i_1<i_2=n-1$ implies $i_2\in\{\om_i+t\ (\text{mod $n$})\ |\ 2\le t\le 2p\}$. By \Cref{proposition:al=u+p}, $F$ is not a spanning facet, a contradiction. So, $\om_i+2p<n-1$.
			
			This implies $\{\om_i+t\ (\text{mod $n$})\ |\ p+1\le t\le 2p-1\}=\{\om_i+t\ |\ p+1\le t\le 2p-1\}$, and therefore $i_1\in\{\om_i+t\ |\ 1\le t\le 2p-1\}$. Thus $\om_i<i_1<i_1+1\le\om_i+2p<n-1$, which implies $i_1+1\in F$.
			
			If $\om_i<\mb{c}$, then $\om_i\os i_1$ and $i_1>\om_i$ imply $i_1\ge 2\mb{c}-\om_i$ by \Cref{remark:order in S}\ref{part:i}, and hence $\om_i\os i_1+1$. If $\om_i\ge\mb{c}$, then $i_1+1>\om_i$ implies $\om_i\os i_1+1$. 
			Since $F$ is a spanning facet, \Cref{proposition:F is not a spanning facet} implies $\Fuv{i_1+1}{i_2}\in\md$ and $\Fuv{i_1+1}{i_2}\in\mcl{M}_0$.
			
			We have $(\Fuv{i_1+1}{i_2})^c=\{\om_i\}\sqcup\{i_1,i_1+1\}$ with $\om_i<i_1<i_1+1$. Since $\cn{(\Fuv{i_1+1}{i_2})}$ is disconnected, $i_1\ge \om_i+p+1$. Hence $i_1\le \om_i+2p-1$ implies $i_1+1=\om_i+2p-\gm+1$ for some $ \gm\in[p-1]$.
			Since $\om_i\le n-2p-2$, \Cref{proposition:in M_bt+gm}\ref{satisfies x9}\ref{satisfies x9 i} yields $\Fuv{i_1+1}{i_2}\notin\mcl{M}_0$, a contradiction.
			
			Therefore $i_1\notin\{\om_i+1,\om_i+2,\ldots,\om_i+p\}\sqcup\{\om_i+p+1\ (\text{mod $n$}),\om_i+p+2\ (\text{mod $n$}),\ldots,\om_i+2p-1\ (\text{mod $n$})\}$. 
			
			\item Suppose $i_1=\nu$. Since $i_1<i_2=n-1$, we have $i_1\ne n-1$. Note that if $\om_i\le n-2p-1$, then $\nu=n-1$. Hence $\om_i\ge n-2p$. This implies $i_1=\nu=\om_i+2p\ (\text{mod $n$})$.
			Since $\om_i<n-1=i_2$ and $\om_i+2p\ge n>n-1=i_2$, it follows that $i_2\in\{\om_i+1\ (\text{mod $n$}),\om_i+2\ (\text{mod $n$}),\ldots,\om_i+2p-1\ (\text{mod $n$})\}$. By \Cref{proposition:al=u+p}, $F$ is not a spanning facet, a contradiction. Thus $i_1\ne\nu$. 
		\end{enumerate}
	\end{proof}
	
	\begin{lemma}\label{lemma: then F in Sigma}
		Suppose $F\in \md$ is a spanning facet. Then $F\in\Sg$. 
	\end{lemma}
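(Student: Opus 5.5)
By \Cref{proposition:al_i and i_2}, a spanning facet $F\in\mcl{A}_i$ has $F^c=\{\om_i\}\sqcup\{i_1,n-1\}$ with $\om_i\in\{p+1,\ldots,n-p-1\}=\mcl{U}_1\sqcup\mcl{U}_2\sqcup\mcl{U}_3$. So it remains to show that $i_1\notin\mcl{V}^{\om_i}$. The plan is to split according to which $\mcl{U}_m$ contains $\om_i$. In each case we assume, for contradiction, that $i_1\in\mcl{V}^{\om_i}$, and show that this either contradicts \Cref{proposition:F spanning facet then i_1 not in} or yields a vertex $u\in F$ for which \Cref{proposition:F is not a spanning facet} or \Cref{proposition:al=u+p} applies. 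Throughout, $\om_i\os i_1$ (\Cref{remark:Aj}) and \Cref{remark:order in S} restrict where $i_1$ can lie.

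\emph{Case $\om_i\in\mcl{U}_2$.} Here $\om_i\le\mb{c}-p$, so $\om_i<\mb{c}$, and $\om_i\os i_1$ forces either $i_1<\om_i$ or $i_1\ge 2\mb{c}-\om_i$. Hence $i_1\notin\{\om_i,\ldots,2\mb{c}-\om_i-1\}$ automatically, and $i_1\ne n-1$ since $i_2=n-1$. The remaining block $\{\om_i-2p,\ldots,\om_i-2\}$ is excluded by \Cref{proposition:F spanning facet then i_1 not in}\ref{om_i<=c}.

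\emph{Case $\om_i\in\mcl{U}_1$.} Now $\om_i-2p<0$, so $\mcl{V}^{\om_i}$ consists of the wrap-around segment $\{\om_i-2p+n,\ldots,n-1\}$ together with $\{0,\ldots,2\mb{c}-\om_i-1\}$. The part $\{0,\ldots,\om_i-2\}$ is excluded by \ref{om_i<=c}, and $\{\om_i,\ldots,2\mb{c}-\om_i-1\}$ is excluded by the order argument above. For $i_1=\om_i-1$ (if $\om_i-1\ge 0$) I would check directly that $\om_i-1,\om_i,n-1$ all lie within a window of length $2p$ starting at $n-1$, namely $\{n-1+1,\ldots,n-1+2p\}\pmod n$, since $\om_i\le 2p-1$; this contradicts \Cref{proposition:al=u+p}. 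Similarly, for $i_1\in\{n+\om_i-2p,\ldots,n-2\}$ we have $n-1,\om_i\in\{i_1+1,\ldots,i_1+2p\}\pmod n$, and \Cref{proposition:al=u+p} applies again.

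\emph{Case $\om_i\in\mcl{U}_3$.} Here $\mcl{V}^{\om_i}$ is $\{\mu,\ldots,\om_i+p\}\sqcup\{\om_i+t \pmod n: p+1\le t\le 2p-1\}\sqcup\{\nu\}$. The last two pieces, as well as $\{\om_i+1,\ldots,\om_i+p\}$, are excluded by \Cref{proposition:F spanning facet then i_1 not in}\ref{i_1 not in om_i+1 to om_i+2p-1} and \ref{i_1 not=nu}. What remains is $i_1\in\{\mu,\ldots,\om_i-1\}$, where $\mu=\min\{2\mb{c}-\om_i,\om_i-2p\}$. If $\om_i\le\mb{c}$, this interval lies in $\{\om_i-2p,\ldots,\om_i-1\}$; the values up to $\om_i-2$ are excluded by \ref{om_i<=c}, and $i_1=\om_i-1$ is impossible because $\om_i\os\om_i-1$ fails when $\om_i\le\mb{c}$. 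If $\om_i>\mb{c}$, then $i_1<\om_i$ together with $\om_i\os i_1$ gives $i_1<2\mb{c}-\om_i$. This rules out the range $[2\mb{c}-\om_i,\om_i-1]$ directly, leaving only $i_1\in[\om_i-2p,\,2\mb{c}-\om_i-1]$ when $\om_i-2p<2\mb{c}-\om_i$.

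This leftover subcase is the main obstacle. For it I would mimic the proof of \ref{om_i<=c}. Take $u=i_1+p$ when $i_1\le\om_i-p-1$, or $u=i_1+1$ when $i_1\ge\om_i-p$. Then $\Fuv{u}{n-1}$ has complement inside a connected cluster, so $\Fuv{u}{n-1}\notin\md$. One must still verify the hypothesis $\om_i\os u$ of \Cref{proposition:F is not a spanning facet}, using \Cref{remark:order in S}\ref{part:ii}, which requires $u<2\mb{c}-\om_i$. If instead $u\ge 2\mb{c}-\om_i$, I expect to choose a different $u$, for instance $i_1+1$ in place of $i_1+p$, or to derive a contradiction through \Cref{proposition:al=u+p}, since in that regime all three points lie within $2p$ of $i_1$. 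With these cases done, $i_1\notin\mcl{V}^{\om_i}$, and hence $F\in\Sg_m$ for the appropriate $m$.
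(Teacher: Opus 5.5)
Your split by $\mcl{U}_m$ mirrors the paper's, and your $\mcl{U}_1$ and $\mcl{U}_2$ cases are sound. For $\mcl{U}_1$, ruling out $i_1<\om_i$ and $i_1\ge n+\om_i-2p$ through \Cref{proposition:al=u+p} is actually more reliable than the paper's case~(i), whose deductions from disconnectedness are non sequiturs (e.g.\ $F^c=\{0,p+1,n-1\}$ is a facet with $j_1<\om_j$). The genuine gap is in $\mcl{U}_3$, and it has one root. You only use the first alternative of \Cref{proposition:F is not a spanning facet}, namely $\Fuv{u}{n-1}\notin\md$. The paper also needs the second alternative: $\Fuv{u}{n-1}$ is a facet lying in some $\mcl{M}_\gm$ with $\gm\in[p-1]$, which it obtains from \Cref{proposition:in M_bt+gm}.

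The first failure is at $\mb{c}-p+1\le\om_i\le\mb{c}$. Your assertion that $\om_i\os\om_i-1$ fails there is false: since $\mb{c}=\om_1$ and by \Cref{remark:order in S}\ref{part:i}, such an $\om_i$ precedes every smaller vertex. So $F^c=\{\om_i-1,\om_i,n-1\}$ is a genuine candidate in $\mcl{V}^{\om_i}$. It is not excluded by \Cref{proposition:F spanning facet then i_1 not in}\ref{om_i<=c}, which stops at $\om_i-2$, nor by \Cref{proposition:al=u+p}. When $\om_i<\mb{c}-\frac{p}{2}$ (possible for $p\ge3$), no $u>\om_i-1$ with $\om_i\os u$ makes $\{\om_i-1,\om_i,u\}$ connected, since that would force $2\mb{c}-\om_i\le u\le\om_i+p$. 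So the second alternative is unavoidable. The paper takes $u=\mb{c}+p$. For $\om_i=\mb{c}$ the set $\{\mb{c}-1,\mb{c},\mb{c}+p\}$ is connected. For $\om_i<\mb{c}$ it shows $\Fuv{\mb{c}+p}{n-1}$ cannot be a facet in $\mcl{M}_0$, via \Cref{proposition:in M_bt+gm}\ref{satisfies x3}\ref{satisfies x3 i}, \ref{satisfies x4}\ref{satisfies x4 i} or \ref{satisfies x5}\ref{satisfies x5 i}.

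The second failure is the subcase you flag, $\mb{c}<\om_i<\mb{c}+p$ with $\om_i-2p\le i_1\le2\mb{c}-\om_i-1$; your proposed tools cannot close it. Suppose $\om_i\ge\mb{c}+\frac{p}{2}$. Then $i_1\le\om_i-p-1$, so $i_1\nsim\om_i$. Any $u>i_1$ adjacent to both therefore lies in $[\om_i-p,\,i_1+p]\subseteq[2\mb{c}-\om_i,\,\om_i)$, and hence $u\os\om_i$ by \Cref{remark:order in S}\ref{part:iv}. So no admissible $u$ gives $\Fuv{u}{n-1}\notin\md$. Your fallback through \Cref{proposition:al=u+p} is also unavailable. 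The vertices $i_1,\om_i,n-1$ never lie in $2p+1$ cyclically consecutive vertices here, because $n-1-i_1$, $\om_i+1$ and $n-\om_i+i_1$ all exceed $2p$; your claim that all three points lie within $2p$ of $i_1$ overlooks $n-1$. The paper instead takes $u=i_1+1$ and places $\Fuv{i_1+1}{n-1}$ in some $\mcl{M}_\gm$ by \ref{satisfies x2}\ref{satisfies x2 i} or \ref{satisfies x1}\ref{satisfies x1 i} of \Cref{proposition:in M_bt+gm}. Only when $i_1<2\mb{c}-\om_i-p$ does it resort to your $u=i_1+p$. For $i_1=2\mb{c}-\om_i-1$, where both $i_1+1$ and $i_1+p$ precede $\om_i$, it takes $u=\mb{c}+p$ and uses \ref{satisfies x7}\ref{satisfies x7 i} or \ref{satisfies x8}\ref{satisfies x8 i}.
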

	
	\begin{proof}
		We have $F\in \md$. Suppose $F\in\mcl{A}_j$ such that $F^c=\{\om_j\}\sqcup\{j_1,j_2\}$. Since $F$ is a spanning facet, \Cref{proposition:al_i and i_2} implies $\om_j\in\{p+1,p+2,\ldots,n-p-1\}$ and $j_2=n-1$. 
		
		In this proof, for each $\omega_j$, we first identify all possible values of $j_1$ for which $F$ is a spanning facet. Then, we show that for all such values of $j_1$ corresponding to a given $\omega_j$, $F\in\Sg_m$ for some $m\in[3]$, thereby implying that $F \in \Sigma$.
		Before proceeding further, recall that $\om_j\os j_1$ by \Cref{remark:Aj}. 
		
		Let $$\nu:=\begin{cases}
			n-1 &\text{ if $\om_j<n-2p-1$},\\
			\om_j+2p\ (\text{mod $n$}) &\text{ if $\om_j\ge n-2p-1$}.
		\end{cases}$$ 
		Based on the values of $\om_j$, we consider the following four cases.
		\begin{enumerate}[label=(\roman*)]
			\item $\om_j\in\{p+1,p+2,\ldots,2p-1\}$.
			
			By \Cref{proposition: c and p relation}\ref{range of c}, $\mb{c}\ge3p-1$. We have $\om_j\le 2p-1<3p-1\le\mb{c}$ and $\om_j\os j_1$. By \Cref{remark:order in S}\ref{part:i}, either $j_1<\om_j$ or $j_1\ge 2\mb{c}-\om_j$. 
			Suppose $j_1<\om_j$. Then $0\le j_1<\om_j<n-1=j_2$. Since $\cn{F}$ is disconnected, $j_2\nsim j_1$ or $j_1\nsim \om_j$. It follows that $\om_j>j_2+2p\ (\text{mod $n$})=2p-1\ge\om_j$, a contradiction. 
			Hence $j_1\ge 2\mb{c}-\om_j$, and therefore $j_1\notin\{0,1,\ldots,2\mb{c}-\om_j-1\}$.
			
			Since $\cn{F}$ is disconnected, $j_2\nsim\om_j$ or $\om_j\nsim j_1$. Using $p+1\le \om_j<2\mb{c}-\om_j\le j_1<n-1=j_2$, we obtain $j_1<\om_j-2p\ (\text{mod $n$})\le n-1$. This implies $j_1\notin\{\om_j-2p\ (\text{mod $n$}), \om_j-2p+1\ (\text{mod $n$}),\ldots,n-1\}$.
			
			Hence $j_1\in V(C_n^p)\setminus\bigl(\{\om_j-2p\ (\text{mod $n$}), \om_j-2p+1\ (\text{mod $n$}),\ldots,n-1\}\sqcup\{0,1,\ldots,2\mb{c}-\om_j-1\}\bigr)$. Note that $\om_j\in\mcl{U}_1$ and $j_1\in V(C_n^p)\setminus\mcl{V}^{\om_j}$. Thus $F\in\Sg_1$. 
			
			\item $\om_j\in\{2p,2p+1,\ldots,\mb{c}-p\}$.
			
			Since $\om_j\le\mb{c}-p<\mb{c}$  and $\om_j\os j_1$, either $j_1<\om_j$ or $j_1\ge 2\mb{c}-\om_j$ by \Cref{remark:order in S}\ref{part:i}. Hence $j_1\notin\{\om_j,\om_j+1,\ldots,2\mb{c}-\om_j-1\}$. Moreover, $j_1<j_2=n-1$ implies $j_1\ne n-1$.
			Since $F$ is a spanning facet and $\om_j<\mb{c}$, by \Cref{proposition:F spanning facet then i_1 not in}\ref{om_i<=c}, $j_1\notin\{\om_j-2p,\om_j-2p+1,\ldots,\om_j-2\}$.
			
			Therefore, $j_1\in V(C_n^p)\setminus\bigl(\{\om_j-2p, \om_j-2p+1,\ldots, \om_j-2\}\sqcup\{\om_j,\om_j+1,\ldots,2\mb{c}-\om_j-1\}\sqcup\{n-1\}\bigr)$. Here, $\om_j\in\mcl{U}_2$ and $j_1\in V(C_n^p)\setminus\mcl{V}^{\om_j}$, which implies that $F\in\Sg_2$.
			
			\item  $\om_j\in\{\mb{c}-p+1,\mb{c}-p+2,\ldots,\mb{c}\}$.
			
			Since $F$ is a spanning facet and $\om_j\le\mb{c}$, $j_1\notin\{\om_j-2p,\om_j-2p+1,\ldots,\om_j-2\}$ by \Cref{proposition:F spanning facet then i_1 not in}\ref{om_i<=c}.
			
			We now prove that $j_1\ne\om_j-1$. Suppose $j_1=\om_j-1$. By \Cref{proposition: c and p relation}\ref{range of c}, $\mb{c}\le n-3p+2$. Hence $j_1=\om_j-1<\om_j\le\mb{c}<\mb{c}+p\le n-2p+2<n-1=j_2 \implies \mb{c}+p\in F$. 
			We have $(\Fuv{\mb{c}+p}{j_2})^c=\{\om_j,\om_j-1,\mb{c}+p\}$. If $\om_j=\mb{c}$, then $\om_j\os\mb{c}+p$ and $\Fuv{\mb{c}+p}{j_2}\notin\md$. By \Cref{proposition:F is not a spanning facet} (for $u=\mb{c}+p$), $F$ is not a spanning facet, a contradiction. Hence $\om_j<\mb{c}$.
			Since $\om_j>\mb{c}-p$, $2\mb{c}-\om_j<\mb{c}+p$. So $\om_j\os\mb{c}+p$ by \Cref{remark:order in S}\ref{part:i}. 
			Using the fact that $F$ is a spanning facet,  \Cref{proposition:F is not a spanning facet} yields $\Fuv{\mb{c}+p}{j_2}\in\md$ with $\Fuv{\mb{c}+p}{j_2}\in\mcl{M}_0$.      
			
			Note that $(\Fuv{\mb{c}+p}{j_2})^c=\{\om_j\}\sqcup\{\om_j-1,\mb{c}+p\}$ with $j_1=\om_j-1<\om_j<\mb{c}+p$.
			Since $\mb{c}-p+1\le\om_j\le\mb{c}-1$,  $j_1=\om_j-1$ implies $j_1+p+2\le\mb{c}+p\le j_1+2p$. This means that $\mb{c}+p=j_1+2p-\gm+1$ for some $\gm\in[p-1]$. 
			First, suppose $\om_j<\mb{c}-\frac{p}{2}$. Then $\Fuv{\mb{c}+p}{j_2}\notin\mcl{M}_0$ by \Cref{proposition:in M_bt+gm}\ref{satisfies x3}\ref{satisfies x3 i}, a contradiction.
			Now, suppose $\om_j\ge\mb{c}-\frac{p}{2}$. We have $\om_j-p<\om_j-1=j_1<\om_j$. Since $\mb{c}+p=j_1+2p-\gm+1$ with $\gm\in[p-1]$, if $j_1\le 2\mb{c}-\om_j-p-1$, then $\Fuv{\mb{c}+p}{j_2}\notin\mcl{M}_0$ by \Cref{proposition:in M_bt+gm}\ref{satisfies x4}\ref{satisfies x4 i}, a contradiction. 
			This implies $j_1\ge 2\mb{c}-\om_j-p$. Since $\mb{c}-p+1\le\om_j<\mb{c}$, we have $\om_j=\mb{c}-\gm$ for some $\gm\in[p-1]$. Moreover, $\mb{c}+p=2\mb{c}-(\mb{c}-\gm)+p-\gm=2\mb{c}-\om_j+p-\gm$. 
			By \Cref{proposition:in M_bt+gm}\ref{satisfies x5}\ref{satisfies x5 i}, $\Fuv{\mb{c}+p}{j_2}\notin\mcl{M}_0$, again a contradiction. Hence $j_1\ne\om_j-1$.  
			
			Since $\om_j\os j_1$, we have $j_1\ne\om_j$. By \Cref{proposition:F spanning facet then i_1 not in}\ref{i_1 not in om_i+1 to om_i+2p-1} and \ref{i_1 not=nu}, $j_1\notin\{\om_j+1,\om_j+2,\ldots,\om_j+p\}\sqcup\{\om_j+p+1\ (\text{mod \ $n$}),\om_j+p+2\ (\text{mod \ $n$}),\ldots,\om_j+2p-1\ (\text{mod \ $n$})\}\sqcup\{\nu\}$.
			
			Therefore, $j_1\notin\{\om_j-2p,\om_j-2p+1,\ldots,\om_j,\om_j+1,\ldots,\om_j+p\}\sqcup\{\om_j+p+1\ (\text{mod $n$}),\om_j+p+2\ (\text{mod $n$}),\ldots,\om_j+2p-1\ (\text{mod $n$})\}\sqcup\{\nu\}$. 
			Since $\om_j-2p\le\mb{c}-2p<\mb{c}\le 2\mb{c}-\om_j$ implies $\min\{2\mb{c}-\om_j,\om_j-2p\}=\om_j-2p$ and $\om_j\in\mcl{U}_3$, we have $j_1\in V(C_n^p)\setminus\mcl{V}^{\om_j}$. Hence $F\in\Sg_3$.
			
			\item $\om_j\in\{\mb{c}+1,\mb{c}+2,\ldots,n-p-1\}$.  
			
			In this case, $\om_j\in\mcl{U}_3$. Since $\om_j>\mb{c}$ and $\om_j\os j_1$, by \Cref{remark:order in S}\ref{part:ii},  $j_1\notin\{2\mb{c}-\om_j,2\mb{c}-\om_j+1,\ldots,\om_j\}$. 
			Moreover, by \Cref{proposition:F spanning facet then i_1 not in}\ref{i_1 not in om_i+1 to om_i+2p-1} and \ref{i_1 not=nu}, $j_1\notin\{\om_j+1,\om_j+2,\ldots,\om_j+p\}\sqcup\{\om_j+p+1\ (\text{mod \ $n$}),\om_j+p+2\ (\text{mod \ $n$}),\ldots,\om_j+2p-1\ (\text{mod \ $n$})\}\sqcup\{\nu\}$.
			Therefore, $j_1\in V(C_n^p)\setminus(\{2\mb{c}-\om_j,2\mb{c}-\om_j+1,\ldots,\om_j,\om_j+1,\ldots,\om_j+p\}\sqcup\{\om_j+p+1\ (\text{mod $n$}),\om_j+p+2\ (\text{mod $n$}),\ldots,\om_j+2p-1\ (\text{mod $n$})\}\sqcup\{\nu\})$.
			
			First, assume $\mb{c}+p\le\om_j\le n-p-1$. Then $2\mb{c}-\om_j\le\mb{c}-p\le\om_j-2p$, which implies $\min\{2\mb{c}-\om_j,\om_j-2p\}=2\mb{c}-\om_j$. 
			Since $\om_j\in\mcl{U}_3$, we have $j_1\in V(C_n^p)\setminus\mcl{V}^{\om_j}$. Hence $F\in\Sg_3$. 
			
			Now, assume $\mb{c}+1\le\om_j\le\mb{c}+p-1$. 
			Then $\om_j-2p\le\mb{c}-p-1<\mb{c}-p+1\le 2\mb{c}-\om_j$. Hence $\min\{2\mb{c}-\om_j,\om_j-2p\}=\om_j-2p$. Since $\om_j\in\mcl{U}_3$, we aim to prove that $F\in\Sg_3$. For this we need to show that $j_1\in V(C_n^p)\setminus\mcl{V}^{\om_j}$. Observe that it suffices to show that $j_1\notin\{\om_j-2p,\om_j-2p+1,\ldots,2\mb{c}-\om_j-1\}$. 
			
			\begin{itemize}
				\item Suppose $j_1\in\{\om_j-2p,\om_j-2p+1,\ldots,2\mb{c}-\om_j-2\}$. Since $\mb{c}\le n-3p+2$ by \Cref{proposition: c and p relation}\ref{range of c} and $\om_j>\mb{c}$, we get $j_1<j_1+1\le 2\mb{c}-\om_j-1<\mb{c}<\om_j\le\mb{c}+p-1\le n-2p+1<n-1=j_2$. This gives $j_1+1\in F$.
				By \Cref{remark:order in S}\ref{part:ii}, $\om_j>\mb{c}$ and $j_1+1<2\mb{c}-\om_j$ imply $\om_j\os j_1+1$.  
				Since $F$ is a spanning facet,  \Cref{proposition:F is not a spanning facet} (for $u=j_1+1$) yields $\Fuv{j_1+1}{j_2}\in\md$ and $\Fuv{j_1+1}{j_2}\in\mcl{M}_0$. We have $(\Fuv{j_1+1}{j_2})^c=\{\om_j\}\sqcup\{j_1,j_1+1\}$ with $j_1<j_1+1<\om_j$.
				Since $\Fuv{j_1+1}{j_2}\in\md$, it follows that $j_1\le \om_j-p-2$. Then $j_1\ge \om_j-2p$ implies $j_1=\om_j-2p+\gm-1$ for some $\gm\in[p-1]$. Therefore, if $\om_j\ge\mb{c}+\frac{p}{2}$, then $\Fuv{j_1+1}{j_2}\notin\mcl{M}_0$ by \Cref{proposition:in M_bt+gm}\ref{satisfies x2}\ref{satisfies x2 i}, a contradiction. So, assume $\om_j<\mb{c}+\frac{p}{2}$.
				
				Suppose $j_1\ge 2\mb{c}-\om_j-p$. Then $j_1\le 2\mb{c}-\om_j-2$ implies $j_1=2\mb{c}-\om_j-p+\gm-1$ for some $1\le\gm\le p-1$. 
				Therefore, \Cref{proposition:in M_bt+gm}\ref{satisfies x1}\ref{satisfies x1 i} yields $\Fuv{j_1+1}{j_2}\notin\mcl{M}_0$, a contradiction. Hence $j_1<2\mb{c}-\om_j-p$. 
				
				It follows that $j_1<j_1+p<2\mb{c}-\om_j<\om_j<n-1=j_2$, which gives $j_1+p\in F$. 
				We have $(\Fuv{j_1+p}{j_2})^c=\{\om_j,j_1,j_1+p\}$. Using $j_1<j_1+p<\om_j\le j_1+2p$ (as $j_1\ge\om_j-2p$), we obtain $j_1\sim (j_1+p)$ and $(j_1+p)\sim\om_j$. Hence $\Fuv{j_1+p}{j_2}\notin \md$. Moreover, $\om_j>\mb{c}$ implies $\om_j\os j_1+p$ by \Cref{remark:order in S}\ref{part:ii}. 
				By \Cref{proposition:F is not a spanning facet} (for $u=j_1+p$), $F$ is not a spanning facet, a contradiction. Therefore $j_1\notin\{\om_j-2p,\om_j-2p+1,\ldots,2\mb{c}-\om_j-2\}$.
				
				\item Suppose $j_1=2\mb{c}-\om_j-1$. By \Cref{proposition: c and p relation}\ref{range of c}, $\mb{c}\le n-3p+2$. It follows that $j_1=2\mb{c}-\om_j-1<\mb{c}<\om_j<\mb{c}+p\le n-2p+2<n-1=j_2$. Hence $\mb{c}+p\in F$. 
				Since $F$ is a spanning facet, $\Fuv{\mb{c}+p}{j_2}\in\md$ and $\Fuv{\mb{c}+p}{j_2}\in\mcl{M}_0$ by \Cref{proposition:F is not a spanning facet} (for $u=\mb{c}+p$).  
				Using $\mb{c}<\om_j<\mb{c}+p$, we get $\om_j\os\mb{c}+p$. This implies $(\Fuv{\mb{c}+p}{j_2})^c=\{\om_j\}\sqcup\{j_1,\mb{c}+p\}$ with $j_1<\om_j<\mb{c}+p$.
				Since $\mb{c}+1\le\om_j\le\mb{c}+p-1$ and $j_1=2\mb{c}-\om_j-1$, we have $\mb{c}-p\le j_1\le\mb{c}-2$, which implies $j_1+p+2\le\mb{c}+p\le j_1+2p$.
				This means that $j_1=(\mb{c}+p)-2p+\gm-1$, where $\gm\in[p-1]$. Therefore, if $\om_j\ge\mb{c}+\frac{p}{2}$, then $\Fuv{\mb{c}+p}{j_2}\notin\mcl{M}_0$ by \Cref{proposition:in M_bt+gm}\ref{satisfies x8}\ref{satisfies x8 i}, a contradiction.
				So, let $\om_j<\mb{c}+\frac{p}{2}$.
				Since $\om_j>\mb{c}$, we get $2\mb{c}-\om_j+p<\mb{c}+p<\om_j+p$. We have $\om_j<\mb{c}+p$, and $j_1=(\mb{c}+p)-2p+\gm-1$, where $\gm\in[p-1]$.  
				By \Cref{proposition:in M_bt+gm}\ref{satisfies x7}\ref{satisfies x7 i}, $\Fuv{\mb{c}+p}{j_2}\notin\mcl{M}_0$, again a contradiction. Hence $j_1\ne2\mb{c}-\om_j-1$.
			\end{itemize}
		\end{enumerate}
		In all four cases, $F\in\Sg_m$ for some $m\in[3]$. Hence $F\in\Sg$.
	\end{proof}
	
	\subsubsection{Counting of Spanning Facets}\label{subsubsection:counting}
	From \Cref{lemma: if F in Sigma,lemma: then F in Sigma}, a facet $F\in \md$ is spanning for the shelling order $\prec$ on $\Delta_{3}(C_n^p)$ if and only if $F\in\Sg$. Since $\Sg=\Sg_1\sqcup\Sg_2\sqcup\Sg_3$, the total number of spanning facets is $|\Sg_1|+|\Sg_2|+|\Sg_3|$.
	
	Recall that for $m\in[3]$, a facet $F \in \Sg_m$ if and only if $F\in\mcl{A}_i$ for some $i\in[n-2]$ such that $F^c=\{\om_i\} \sqcup \{i_1, n-1\}$, $\om_i \in \mcl{U}_m$ and $i_1 \in V(C_p^n) \setminus \mcl{V}^{\om_i}$.
	\begin{enumerate}[label=(\roman*)]
		\item Let $F\in\Sg_1$. Then $\om_i \in\mcl{U}_1=\{p+1, p+2, \dots, 2p-1\}$ and  $\mcl{V}^{\om_i} =\{\om_i-2p \ (\text{mod} \ n), \om_i-2p+1 \ (\text{mod} \ n), \dots, n-1\} \sqcup \{0, 1, \dots, 2\mb{c}-\om_i-1\}$.
		Since $\om_i\le 2p-1$, we have $\{\om_i-2p\ (\text{mod $n$}), \om_i-2p+1\ (\text{mod $n$}),\ldots,n-1\}=\{n+\om_i-2p, n+\om_i-2p+1,\ldots,n-1\}$. It follows that $|\mcl{V}^{\om_i}|=(2p-\om_i)+(2\mb{c}-\om_i)=2\mb{c}+2p-2\om_i$ and hence $|V(C_p^n) \setminus \mcl{V}^{\om_i}|=n-(2\mb{c}+2p-2\om_i)$. Moreover, $|\mcl{U}_1|=p-1$. This implies
		\begin{align*}
			|\Sg_1|&=(p-1)(n-2\mb{c}-2p)+\left(2\textstyle\sum\limits_{\om_i=p+1}^{2p-1}\om_i\right)\\
			&=(p-1)(n-2\mb{c}-2p)+(2p-1)(2p)-p(p+1)=p^2+np-2\mb{c}p-p-n+2\mb{c}.
		\end{align*}
		
		\item Let $F\in\Sg_2$. Then $\om_i \in\mcl{U}_2=\{2p, 2p+1, \dots, \mb{c}-p\}$ and $\mcl{V}^{\om_i}=\{\om_i-2p, \om_i-2p+1, \dots, \om_i-2\} \sqcup \{\om_i, \om_i+1, \dots, 2\mb{c}-\om_i-1\} \sqcup \{n-1\}$.
		We have $|\mcl{U}_2|=\mb{c}-3p+1$ and $|V(C_p^n) \setminus \mcl{V}^{\om_i}|=n-(2\mb{c}+2p-2\om_i)$. Hence 
		\begin{align*}
			|\Sg_2|&=(\mb{c}-3p+1)(n-2\mb{c}-2p)+\left(2\textstyle\sum\limits_{\om_i=2p}^{\mb{c}-p}\om_i\right)\\
			&=(\mb{c}-3p+1)(n-2\mb{c}-2p)+(\mb{c}-p)(\mb{c}-p+1)-(2p-1)(2p)\\
			&=3p^2-\mb{c} ^2+\mb{c}n+2\mb{c}p-3np+n-p-\mb{c}.
		\end{align*}
		
		\item Let $F\in\Sg_3$. Then $\om_i\in\mcl{U}_3=\{\mb{c}-p+1, \mb{c}-p+2, \dots, n-p-1\}$ and $\mcl{V}^{\om_i}=\{\mu, \mu+1, \dots, \om_i, \om_i+1, \dots, \om_i+p\} \sqcup \{\om_i+t\ (\text{mod $n$})|\ p+1\le t\le 2p-1\} \sqcup \{\nu\}$, 
		where $\mu=\min\{2\mb{c}-\om_i, \om_i-2p\}$ and $\nu=\begin{cases} 
			n-1 & \text{if } \om_i<n-2p-1, \\ 
			\om_i+2p \ (\text{mod} \ n) & \text{if } \om_i \ge n-2p-1. 
		\end{cases}$
		
		If $\mb{c}-p+1\le\om_i\le\mb{c}+p-1$, then there are $2p-1$ possibilities for $\om_i$, and $\mu=\om_i-2p$. This implies $|V(C_p^n) \setminus \mcl{V}^{\om_i}|=n-(4p+1)$. 
		If $\mb{c}+p\le\om_i\le n-p-1$, then there are $n-\mb{c}-2p$ possibilities for $\om_i$, and $\mu=2\mb{c}-\om_i$, which implies $|V(C_p^n) \setminus \mcl{V}^{\om_i}|=n-(2\om_i-2\mb{c}+2p+1)$. Therefore 
		\begin{align*}
			|\Sg_3|&=(2p-1)(n-4p-1)+(n-\mb{c}-2p)(n+2\mb{c}-2p-1)-\left(2\textstyle\sum\limits_{\om_i=\mb{c}+p}^{n-p-1}\om_i\right)\\
			&=(2p-1)(n-4p-1)+(n-\mb{c}-2p)(n+2\mb{c}-2p-1)-(n-p-1)(n-p)\\
			&\quad+(\mb{c}+p-1)(\mb{c}+p)\\
			&=\mb{c}n-\mb{c} ^2-4p^2-n+2p+1.
		\end{align*}
	\end{enumerate}
	
	Total number of spanning facets for the shelling order $\prec$
	\begin{align*}
		&=|\Sg_1|+|\Sg_2|+|\Sg_3|\\    
		&=(p^2+np-2\mb{c}p-p-n+2\mb{c})+(3p^2-\mb{c} ^2+\mb{c}n+2\mb{c}p-3np+n-p-\mb{c} )\\&\quad+ (\mb{c}n-\mb{c} ^2-4p^2-n+2p+1)\\
		&=2\mb{c} n-2\mb{c}^2-2np+ \mb{c}-n+1=\frac{n^2-4np-n+2}{2}
		=\binom{n-2p}{2}-(2p^2+p-1).
	\end{align*}
	
	\begin{proof}[Proof of \Cref{theorem:main}]
		In \Cref{subsection:shelling order}, we established that $\prec$ is a shelling order on $\Delta_3(C_n^p)$ for $p\ge2$ and $n\ge 6p-3$. Therefore, $\Delta_{3}(C_n^p)$ is shellable. 
		In \Cref{subsubsection:counting}, we determined that $\Delta_3(C_n^p)$ has exactly $\binom{n-2p}{2}-(2p^2+p-1)$ spanning facets with respect to the shelling order $\prec$. 
		
		By \Cref{theorem:wedge}, $\Delta_3(C_n^p) \simeq \bigvee\limits_{\binom{n-2p}{2}-(2p^2+p-1)} \mathbb{S}^{n-4}.$
	\end{proof}
	
	\section{Conclusion and Future Directions}\label{section:future_directions}
	
	Bravo \cite{Andres_total_cut_dual} proved that the complex $\Delta_2(C_n^p)$ is not shellable. In \cite[Conjecture 4.2]{Shellability2025}, the authors conjectured that $\Delta_3(C_n^p)$ is  not shellable for $2p+3 \leq n \leq 4p$ (for $n \leq 2p+2$, $\Delta_3(C_n^p)$ is void), and shellable for $n \geq 4p+1$.
	In this article, we proved that for $p\ge 2$ and $n\ge6p-3$, $\Delta_3(C_n^p)$ is shellable. 
	
	Furthermore, based on SageMath computations for $k=4,5,6$, the authors in \cite{Shellability2025} raised the question of whether the complexes $\Delta_k(C_n^p)$, if nonvoid, are not shellable for $k\ge 4$ and $p\ge 3$.
	
	Since $C_n^p$ is a circulant graph on the generating set $\{1,2,\ldots,p\}$ and hence a Cayley graph of $\mathbb{Z}_n$, the following questions naturally arise.
	\begin{question}
		What can be said about the shellability of $3$-cut complexes of general circulant graphs? 
	\end{question}
	
	\begin{question}
		For which classes of Cayley graphs are the cut complexes shellable? 
	\end{question}
	
	\section*{Acknowledgement}
	The first author is supported by HTRA fellowship by IIT Mandi, India. The second author is supported by the seed grant project IITM/SG/SMS/95 by IIT Mandi, India.
	
	\bibliographystyle{abbrv}
	\bibliography{ref_New}
	
	\addcontentsline{toc}{section}{References}
	
\end{document}